\documentclass[11pt,a4paper,oneside,onecolumn,fleqn]{article}
\usepackage{mathrsfs}
\usepackage{amsfonts}
\usepackage{latexsym}
\usepackage{amsmath,amsthm}
\usepackage{amssymb}
\usepackage{epsf}
\usepackage{graphicx}
\usepackage{indentfirst}
\usepackage{cite}
\usepackage{color}
\usepackage{diagbox}
\usepackage{booktabs}
\usepackage{multirow}

\usepackage{caption}

\renewcommand{\arraystretch}{1}

\theoremstyle{plain}
\newtheorem{thm}{\bf Theorem}[section]
\newtheorem{con}[thm]{\bf Construction}
\newtheorem{cor}[thm]{\bf Corollary}

\newtheorem{lem}[thm]{\bf Lemma}

\theoremstyle{definition}

\newtheorem{exam}{\bf Example}

\theoremstyle{remark}

\title{\bf On the existence of mixed orthogonal arrays of strength two with six factors}
\author{ Xiuling Shan$^{1,2,3}$, Guangzhou Chen$^{4*}$, Xiangrong Sun$^{1}$ \\
\small1. {\it School of Mathematical Sciences, Hebei
Normal University, Shijiazhuang, 050024, P.R.China}\\
\small2. {\it Hebei Research Center of the Basic Discipline Pure Mathematics, Shijiazhuang 050024, P.R.China}\\
\small3. {\it
Hebei Key Laboratory of Computational Mathematics and Applications, Shijiazhuang 050024, P.R.China}\\
\small4. {\it School of Mathematics and Statistics, Henan Normal University, Xinxiang, 453007, P.R.China}
\\}
\date{}
\begin{document}
\maketitle
\begin{center}
\begin{minipage}{16cm}
{
\begin{center}
{\bf Abstract}
\end{center}

\vspace{0.2cm} \ \ \ \
Orthogonal arrays are foundational objects in combinatorial design theory and experimental design theory, with broad applications in computer science, coding theory, and cryptography. In this paper, we investigate the existence for mixed orthogonal arrays of strength two with six factors, and propose several new constructions based on incomplete orthogonal Latin squares, incomplete transversal designs, and $\Sigma$-resolvable transversal designs. As a result, we show that the necessary conditions for the existence of such mixed orthogonal arrays are also sufficient, except for a small number of known exceptions and nine unsettled cases.

{\textit{Keywords:}} Mixed orthogonal arrays; Latin squares; Transversal designs; Asymmetric; $\alpha$-resolvable

}
\end{minipage}
\end{center}

\vskip 0.5cm

{ \begingroup\makeatletter  \let\@makefnmark\relax  \footnotetext{$^*$Corresponding author: chenguangzhou0808@163.com}  \makeatother\endgroup}

\section{Introduction}

Orthogonal arrays have been extensively studied since their introduction
by Rao \cite{Rao0}. Numerous construction methods and theoretical results are presented in the monographs \cite{Hedayat,Dey} and the handbook \cite{Handbook}. Mixed (or asymmetric) orthogonal arrays, also introduced by Rao \cite{Rao}, have received considerable attention due to their important role in experimental design.
In particular, they provide universally optimal fractions for asymmetric factorial designs, see Cheng \cite{Cheng} and Mukerjee \cite{MuR}. Beyond their theoretical significance, mixed orthogonal arrays have found widespread applications in industrial experiments, particularly in quality improvement, as well as in various other experimental settings. With the rapid advancement of science and technology, investigating the existence of mixed orthogonal arrays with prescribed parameters has become an increasingly important problem.

A \emph{mixed} (or \emph{asymmetric}) \emph{orthogonal array}, denoted by MOA$(N; s_1s_2\cdots s_k, t)$, is an $N\times k$ array, in which the $i$-th column takes symbols from the set $S_i=\{0,1,\ldots,s_i-1\}$, for $1\leq i\leq k$, such that in any $N\times t$ subarray, every possible $t$-tuple occurs equally often as a row. In an MOA$(N;s_1s_2 \cdots s_k, t)$, the rows are called \emph{runs}, and thus $N$ denotes the number of runs, or equivalently, the \emph{size} of the array. Each symbol set $S_i(1\leq i\leq k )$ corresponds to a \emph{factor}, so $k$ is the number of factors. The integer $s_i$ is the \emph{level} of the $i$-th factor, and the sequence $s_1s_2\cdots s_k$ is referred to as the \emph{type} of the array. The parameter $t$ is called the \emph{strength}.

Note that the levels $s_1,s_2,\ldots,s_k$ need not be distinct. For convenience, repeated levels are usually grouped using exponential notation, for example, $2^22^33^2$ is written as $2^53^2$. It is straightforward to see that an MOA$(N;1s_2\cdots s_k,t)$ exists if and only if an MOA$(N;s_2\cdots s_k,t)$ exists.
Hence, unless otherwise stated, we assume throughout this paper that every MOA under consideration has no factor with only one level.

Clearly, the run size $N$ of an MOA$(N; s_1 s_2 \cdots s_k, t)$ must be
divisible by the least common multiple of all products $\prod_{i\in I}s_i$, where $I\subseteq \{1,2,\ldots,k\}$ and $|I|=t$. An MOA$(N; s_1 s_2
\cdots s_k, t)$ is called \emph{trivial} if every possible $k$-tuple
occurs equally often as a row.

We recall the following well-known bounds on the size of an orthogonal array (see \cite{Dey,Hedayat}).
\begin{lem}[\!\!\cite{Dey,Hedayat}]\label{bound}
Suppose an MOA$(N; s_1s_2\cdots s_k, t)$ exists, where $1\leq t\leq k$.

\begin{enumerate}
\item[(i)] If $t$ is even, then $N \geq
1+\sum\limits_{j=1}^{\frac{t}{2}}\sum\limits_{|I|=j}\prod\limits_{i\in I}(s_i-1)$.

\item[(ii)] If $t$ is odd, then a lower bound for $N$ is obtained by applying
the bound in (i) to the derived design MOA$(\frac{N}{s_k};
s_1s_2\cdots s_{k-1}, t-1)$, where $s_k=\max\{s_i: 1\leq i\leq k\}$.
\end{enumerate}
\end{lem}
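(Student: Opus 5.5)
We argue by the standard linear-algebra (Rao) argument, with the MOA viewed as the uniform measure on its $N$ runs.

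\textbf{Step 1: the function space.} For each factor $i$ fix an orthonormal basis $\{\phi^{(i)}_0\equiv 1,\phi^{(i)}_1,\dots,\phi^{(i)}_{s_i-1}\}$ of real functions on $S_i$ with respect to the uniform distribution on $S_i$. For a set $I\subseteq\{1,\dots,k\}$ and indices $a_i\in\{1,\dots,s_i-1\}$ ($i\in I$), define the product function on $S_1\times\cdots\times S_k$ by
$$f_{I,a}(x)=\prod_{i\in I}\phi^{(i)}_{a_i}(x_i).$$
Evaluating each such function on the $N$ rows of the array gives a vector in $\mathbb{R}^N$.

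\textbf{Step 2: orthonormality for (i).} Take all $I$ with $|I|\le t/2$; this includes the constant function for $I=\emptyset$. Consider two such functions $f_{I,a}$ and $f_{J,b}$. Their product depends only on the coordinates in $I\cup J$, and $|I\cup J|\le t$. Since the array has strength $t$, it also has strength $|I\cup J|$, because projecting a strength-$t$ array onto fewer columns keeps every tuple equally frequent. Hence the restriction of the rows to $I\cup J$ is uniform, and the empirical inner product $\frac1N\sum_{\text{rows}} f_{I,a}f_{J,b}$ equals the inner product under the uniform product measure.

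Under the product measure this inner product factors coordinatewise:
\begin{itemize}
\item on $i\in I\cap J$ it is $\delta_{a_ib_i}$;
\item on $i\in I\triangle J$ it is $E[\phi^{(i)}_c]=\langle\phi^{(i)}_c,1\rangle=0$, since $c\ge1$.
\end{itemize}
So the vectors are orthonormal in $\mathbb{R}^N$, and therefore linearly independent. Their number is
$$1+\sum_{j=1}^{t/2}\sum_{|I|=j}\prod_{i\in I}(s_i-1)\le N,$$
which proves (i).

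\textbf{Step 3: part (ii).} Let $t$ be odd and $s_k=\max_i s_i$. Fix a level $v$ of factor $k$ and take the rows with $x_k=v$, deleting column $k$. Strength $t\ge1$ applied to column $k$ alone shows there are exactly $N/s_k$ such rows. For any $t-1$ columns among the first $k-1$, append column $k$ to get $t$ columns; equal frequency of all $t$-tuples with last entry $v$ shows the subarray has strength $t-1$. Thus we have an MOA$(N/s_k;s_1\cdots s_{k-1},t-1)$, and $t-1$ is even. Applying (i) to it gives a lower bound on $N/s_k$, and multiplying by $s_k$ gives the bound on $N$.

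Choosing $s_k$ to be the largest level gives the strongest bound among the possible choices. This choice does not affect validity, since any factor could be used to derive.

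\textbf{Main obstacle.} The only delicate point is the inner-product computation in Step 2, specifically using $|I\cup J|\le t$ to pass from the empirical inner product on the rows to the product measure. Everything else is routine.
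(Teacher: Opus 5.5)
Your proof is correct. It is the standard Rao linear-algebra argument: orthonormal contrast vectors indexed by all interactions of order at most $t/2$ give (i), and passing to the derived array handles odd $t$. The paper gives no proof of its own and only cites \cite{Dey,Hedayat}. Your remark that deriving on the largest level gives the strongest bound is true but left unproved; as you note, the bound's validity does not depend on it.
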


A variety of construction techniques for mixed orthogonal arrays have been extensively developed in the literature. These include the difference matrix method \cite{Bose,Hedayat1,Ji,WJ0,WJ}, the compression replacement method \cite{Hedayat,Suen1}, constructions based on finite field \cite{Bose,CS2,ZhangT1}, the expansion replacement method \cite{Jiang,ZhangT2}, a grouping-based construction method \cite{Suen1}, orthogonal decomposition of projection matrix \cite{Zhang2,Zhang3}, several elementary construction techniques \cite{Br,Chen,Pang} and arithmetic constructions \cite{Nguyen}. For a comprehensive overview of known results on mixed orthogonal arrays, we refer the reader to \cite{Handbook,Hedayat,Wangzhi}.

The greatest common divisor of integers $s_1,s_2,\ldots,s_k$ is
denoted by $\gcd\{s_1,s_2,\ldots,s_k\}$, and their least common multiple is denoted by $\operatorname{lcm} \{s_1,s_2,\ldots,s_k\}$. The notation \(b\mid a\) means that the integer $a$ is divisible by $b$.

The existence of mixed orthogonal arrays of strength two
with two and three factors was given as an exercise
in \cite{Hedayat}.
\begin{lem}[\!\!\cite{Hedayat}]\label{k=23}
Let $s_1,s_2,s_3$ be positive integers. Then an
MOA$(N;s_1s_2,2)$ exists if and only if $s_1s_2 \mid N$, and an
MOA$(N;s_1s_2s_3,2)$ exists if and only if $M \mid N$, where
$M=\operatorname{lcm}\{s_1s_2,s_1s_3,s_2s_3\}$.
\end{lem}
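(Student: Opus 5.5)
Necessity is immediate in both cases. In an MOA$(N;s_1s_2,2)$ every pair in $S_1\times S_2$ appears equally often, so $s_1s_2\mid N$. In an MOA$(N;s_1s_2s_3,2)$ each of the three projections onto two columns is an MOA$(N;s_is_j,2)$. Hence each $s_is_j$ divides $N$, and so $M\mid N$. The real work is sufficiency.

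\textbf{Two factors.} If $N=\lambda s_1s_2$, take all $s_1s_2$ pairs, each repeated $\lambda$ times. This is trivially an MOA of strength two.

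\textbf{Three factors.} It suffices to build an MOA$(M;s_1s_2s_3,2)$: for $N=\lambda M$ we stack $\lambda$ copies. For a prime $p$, let $a,b,c$ be the exponents of $p$ in $s_1,s_2,s_3$, ordered so that $a\le b\le c$. The $p$-part of $M$ is then $p^{b+c}$. Write $s_i=\prod_p p^{e_{i,p}}$ and $M=\prod_p M_p$, where $M_p=p^{\text{(sum of the two largest }e_{i,p})}$.

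The plan has two steps.
\begin{enumerate}
\item[(1)] For each prime $p$, construct an MOA$(M_p;p^{e_{1,p}}p^{e_{2,p}}p^{e_{3,p}},2)$.
\item[(2)] Take the Kronecker-type product of these arrays over all primes.
\end{enumerate}

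For step (2): given MOA$(N_1;u_1u_2u_3,2)$ and MOA$(N_2;v_1v_2v_3,2)$, the array whose rows are all pairs of rows is an MOA$(N_1N_2;(u_1v_1)(u_2v_2)(u_3v_3),2)$. Here the symbol in column $i$ is the pair of the two symbols, identified with $S_{u_iv_i}$ via $(x,y)\mapsto x v_i+y$. To check strength two, fix columns $i,j$ and a target pair of pair-symbols. The count factors as a product of the counts in the two component arrays, so it is constant.

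For step (1), with exponents $a\le b\le c$, we need an MOA$(p^{b+c};p^ap^bp^c,2)$. Use a trivial construction. Take all $p^{b+c}$ pairs $(y,z)\in \mathbb{Z}_{p^b}\times\mathbb{Z}_{p^c}$ as the second and third columns. For the first column use $x=(y+z)\bmod p^a$, which is well defined since $p^a$ divides both $p^b$ and $p^c$.
\begin{itemize}
\item Columns $2,3$ are trivially balanced.
\item For columns $1,3$: fixing $z$ and $x$, the number of $y\in\mathbb{Z}_{p^b}$ with $y\equiv x-z \pmod{p^a}$ is $p^{b-a}$, independent of the choice.
\item For columns $1,2$: similarly, for fixed $y$ and $x$ there are $p^{c-a}$ values of $z$.
\end{itemize}

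Indeed, this shortcut works globally, with no prime decomposition needed. Order the factors so that $s_1$ is the one of smallest $p$-exponent. That ordering, however, may depend on $p$, which is exactly why the per-prime step is used.

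\textbf{Main obstacle.} The main point needing care is this mismatch of orderings across primes. The product construction in step (2) resolves it, since each prime may be treated with its own ordering. So the step I expect to require the most attention is verifying that the product preserves strength two with the mixed-radix symbol identification. This is a routine counting check.
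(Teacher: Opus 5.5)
Your proof is correct. Each step checks out:
\begin{itemize}
\item The $p$-part of $M$ is $p^{b+c}$.
\item The array of rows $((y+z)\bmod p^a,\,y,\,z)$ has constant pair counts $p^{b-a}$, $p^{c-a}$ and $1$.
\item Permuting columns back to the original order for each prime is harmless.
\item Your gluing step is exactly the paper's Product construction (Lemma~\ref{jb1}).
\end{itemize}
So $\prod_p M_p=M$ and $\prod_p p^{e_{i,p}}=s_i$ finish the argument.

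The paper gives no proof of this statement. It quotes it from Hedayat, Sloane and Stufken, where it is left as an exercise. Its three-factor half is also the case $t=2$ of Lemma~\ref{k=t+1}, which the paper likewise only cites.

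Your route therefore supplies a self-contained, elementary proof. It extends verbatim to $t+1$ factors and strength $t$: take the full factorial on the $t$ columns of largest $p$-exponent, and let the remaining column be their sum modulo $p^{e_{\min}}$. This proves Lemma~\ref{k=t+1} as well.

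The one blemish is your aside that the shortcut ``works globally, with no prime decomposition needed.'' Taken literally this is false. The rule $x=(y+z)\bmod s_1$ on $\mathbb{Z}_{s_2}\times\mathbb{Z}_{s_3}$ requires $s_1\mid s_2$ and $s_1\mid s_3$, in which case $M=s_2s_3$. For a type such as $6\cdot 10\cdot 15$, no level divides the other two, and $M=900$ is none of the pairwise products $60,90,150$. The per-prime step is genuinely needed, as you go on to say, so this is only a matter of phrasing and does not affect the proof.
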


The existence problem for mixed orthogonal arrays of strength two with
$4,5,6,\ldots$ factors was posed by Hedayat, Sloane, and Stufken \cite{Hedayat}. Subsequently, Chen, Ji, and Lei \cite{Chen} resolved the existence of strength two with four and five factors, with one possible exception, and further established the existence of strength $t\geq 3$ with $t+1$ factors. A summary of these results is given below.

\begin{lem}[\!\!\cite{Chen}]\label{k=4}
Let $a, b, c, d$ be integers greater than $1$. There is an
MOA$(N;abcd,2)$ of type $abcd\not\in \mathcal{G}=\{2^4,6^4\}\cup \{2^3d^1:
 d\equiv 2\pmod 4\}$
if and only if $M\mid N$, while there is an MOA$(N;abcd,2)$ of type $abcd\in \mathcal{G}$
if and only if $M\mid N$ and $N>M$, where $M=\operatorname{lcm}\{\alpha\beta:
\{\alpha,\beta\}\subset \{a,b,c,d\}\}$.
\end{lem}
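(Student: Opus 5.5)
The plan is to prove necessity, then handle the exceptional types in $\mathcal{G}$, then prove sufficiency by direct constructions split by the arithmetic of $a,b,c,d$. Necessity is immediate: any two columns of an MOA$(N;abcd,2)$ form an MOA$(N;\alpha\beta,2)$, so $\alpha\beta\mid N$ for every pair, hence $M\mid N$. Since two MOAs of the same type can be stacked, it then suffices, for each type, to build arrays of size $M$ (or to rule them out) and, for the exceptional types, to build arrays of sizes $2M$ and $3M$. Every larger multiple of $M$ is then a nonnegative combination of these sizes.

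\emph{Exceptional types.}
\begin{itemize}
\item For $2^4$, $M=4$ and Lemma~\ref{bound}(i) gives $N\ge 5$, so $N=4$ is impossible. Arrays with $N=8,12$ are obtained from a Hadamard-type OA$(8;2^7,2)$ and OA$(12;2^{11},2)$.
\item For $6^4$, $M=36$. An array of size $36$ would be a pair of orthogonal Latin squares of order $6$, which do not exist. For $N=72,108$ I would take known OA$(72;6^4,2)$ and OA$(108;6^4,2)$ constructions, for instance from difference matrices over $\mathbb{Z}_6$ with $\lambda=2,3$.
\item For $2^3d$ with $d\equiv2\pmod 4$, $M=2d$. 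Write the rows split by the value of the $d$-column: each of the $d$ blocks has two rows. Take the three binary columns in $\pm1$ form with row vectors $x$. Orthogonality forces each block to consist of either two identical rows or two antipodal rows. Counting the pairwise products over blocks then gives a parity contradiction when $d/2$ is odd. This is the step I expect to need the most care. Sizes $4d$ and $6d$ come from $2^3$ arrays of size $8$ or $12$ combined with $d$-level columns via lcm-products.
\end{itemize}

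\emph{Sufficiency.} The main tool is the product construction. If $A$ is an MOA$(N_1;a_1b_1c_1d_1,2)$ and $B$ is an MOA$(N_2;a_2b_2c_2d_2,2)$, then the coordinatewise product is an MOA$(N_1N_2;(a_1a_2)(b_1b_2)(c_1c_2)(d_1d_2),2)$. Together with Lemma~\ref{k=23}, this reduces the problem to prime-power-like local pieces.

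I would factor $M$ prime by prime. For each prime $p$, let $p^{e_1}\ge p^{e_2}\ge\cdots$ be the $p$-parts of the levels. The $p$-part of $M$ is $p^{e_1+e_2}$. An array of that size for the $p$-local type $p^{e_1}p^{e_2}p^{e_3}p^{e_4}$ exists via expansive replacement of an OA$(p^{2e_2};(p^{e_2})^{5},2)$ (from a finite field with $e_2\ge e_3\ge e_4$), after enlarging one factor by a trivial $p^{e_1-e_2}$ array. Taking the product over all primes yields an array of size $M$.

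The obstacle is that the local pieces for $p=2$ or $p=3$ may fail to exist, namely the $2^4$ and $6^4$-like cases. When the 2-parts are all equal to $2$, one needs a $2^4$ of size $4$, which is impossible. Such cases must then be repaired by exploiting a factor whose 2-part exceeds another prime's share. This is done by building the array directly from orthogonal Latin squares of order $n\ne 2,6$ together with the transversal-design argument, and checking that only the types in $\mathcal{G}$ remain unresolved.
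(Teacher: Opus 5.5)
Your necessity argument, your handling of $2^4$, $6^4$ and $2^3d^1$, and your prime-by-prime product reduction are sound as far as they go. The reduction is a different route from the coprime-pair and gcd case analysis this paper uses; the paper itself only quotes the lemma from Chen, Ji and Lei, so your proposal has to stand on its own.

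\textbf{The gap.} Your reduction settles only the types in which some level is $\not\equiv 2\pmod 4$. Your last paragraph does not handle the remaining types, and they carry the whole content of the lemma.
\begin{itemize}
\item If $a,b,c,d\equiv 2\pmod 4$, every $2$-part equals $2$. So there is no ``factor whose $2$-part exceeds another prime's share''.
\item Let $p$ be an odd prime and $p^{f}$ the second largest $p$-part of the levels. Building the $\{2,p\}$-local piece from an OA$(n^2;n^4,2)$ with $n=2p^{f}$ works only when $n\notin\{2,6\}$, i.e.\ when $p^{f}\geq 5$.
\item What remains is the family in which every prime $\geq 5$ divides at most one level and the second largest $3$-part is at most $3$. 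This family is infinite, MOLS of order $n\neq 2,6$ no longer help there, and $\mathcal{G}$ is cut out exactly inside it.
\end{itemize}

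For instance, take $2^26^110^1$, with $M=60$. Keeping only the $\{2,3\}$-parts or only the $\{2,5\}$-parts of the levels gives $2^36^1$ and $2^310^1$, both in $\mathcal{G}$. So no splitting into local pieces works. An MOA$(60;2^26^110^1,2)$ comes instead from juxtaposing (Lemma~\ref{jf}) an MOA$(24;2^26^14^1,2)$ and an MOA$(36;2^26^2,2)$ along the last column. The same trick handles $2^2(2m)^1(2n)^1$ for coprime odd $1<m<n$.

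Likewise, $6^330^1$ needs an MOA$(72;6^312^1,2)$ juxtaposed with an MOA$(108;6^318^1,2)$. The latter is an RTD$_3(3,6)$ (Lemma~\ref{rtd}) and must be constructed by hand. The types $2^16^3$ and $2^26^2$ need the sporadic $36$-run arrays.

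A complete proof must supply three things: the juxtaposition construction, these sporadic arrays, and a case analysis of the residual family showing that exactly the types in $\mathcal{G}$ fail at $N=M$. Your phrase ``checking that only the types in $\mathcal{G}$ remain unresolved'' is the sufficiency proof itself, not a final check.

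\textbf{Smaller points.}
\begin{itemize}
\item The $3$-local pieces never fail: only four columns are needed, and an OA$(9;3^4,2)$ exists. The type $6^4$ is exceptional because its $2$-part cannot be merged with anything, not because of the prime $3$.
\item At $p=2$, the local type $2^3(2^{e_1})^1$ with $e_1\geq 2$ is not covered by a finite-field array, since there is no OA$(4;2^4,2)$. It does follow from an OA$(8;4^12^4,2)$ by taking multiples (Lemma~\ref{jb}).
\item In the $2^3d^1$ argument, orthogonality with the $d$-level column forces the two rows of each block to be complementary in every binary column; identical rows cannot occur. That is what makes the pairwise products constant on blocks. Let $y$ and $z$ be the block-wise products of the first binary column with the second and with the third. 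Then $\mathbf{1},y,z\in\{\pm1\}^d$ are pairwise orthogonal, which forces $4\mid d$.
\item There is no $(6,k;3)$ difference matrix over $\mathbb{Z}_6$ with $k\geq 3$. Reducing modulo $2$ would give binary words $u,v$ of length $18$ with $u$, $v$ and $u+v$ all of weight $9$, which parity forbids. For $6^4$, use instead the products OA$(8;2^4,2)\times$OA$(9;3^4,2)$ and OA$(12;2^4,2)\times$OA$(9;3^4,2)$ (Lemma~\ref{jb1}).
\end{itemize}
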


\begin{lem}[\!\!\cite{Chen}]\label{k=5}
Let $a, b, c, d, e$ be integers greater than $1$. There is an
MOA$(N;abcde,2)$ of type $abcde\not\in {\cal
L}_5=\{2^5,3^5,6^42^1,6^43^1,6^5,10^5\}\cup \{2^4g^1:
 g\equiv 2\pmod 4\}$
if and only if $M\mid N$, while there is an MOA$(N;abcde,2)$ of type
$abcde\in {\cal L}_5\setminus \{10^5\}$ if and only if $M\mid N$ and
$N>M$, and there is an MOA$(100u;10^5,2)$ for $u>1$, where
$M=\operatorname{lcm}\{\alpha\beta: \{\alpha,\beta\}\subset \{a,b,c,d,e\}\}$.
\end{lem}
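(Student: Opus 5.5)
This lemma is quoted from \cite{Chen}; here is how I would reprove it. The plan is to separate necessity, the exceptional nonexistence results, and sufficiency. Sufficiency will go through a prime-by-prime product construction, with ad hoc repairs for the few primes and types where it breaks down.

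\emph{Necessity.} The condition $M\mid N$ is the divisibility remark made before Lemma~\ref{bound}. For the types in $\mathcal{L}_5\setminus\{10^5\}$ and $2^4g^1$, I would show that no array exists with $N=M$:
\begin{itemize}
\item For $2^5$ with $N=4$, Lemma~\ref{bound}(i) requires $N\ge 1+5=6$, a contradiction.
\item For $3^5$ with $N=9$, Lemma~\ref{bound}(i) requires $N\ge 1+10=11$, a contradiction.
\item For $6^5$, $6^42^1$ and $6^43^1$ with $N=36$, deleting factors leaves an MOA$(36;6^4,2)$. Such an array is equivalent to a pair of orthogonal Latin squares of order $6$, which do not exist.
\item For $2^4g^1$ with $g\equiv 2\pmod 4$ and $N=M$, deleting one level-$2$ factor gives an MOA$(M;2^3g^1,2)$ of the smallest size. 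This is excluded by Lemma~\ref{k=4}, since $2^3g^1\in\mathcal{G}$ and $M=4g$ is unchanged.
\end{itemize}
The type $10^5$ with $N=100$ corresponds to three mutually orthogonal Latin squares of order $10$. Its existence is open, which is why the statement only asserts $u>1$.

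\emph{Sufficiency.} I would use the product construction. If MOA$(N_1;a_1\cdots a_5,2)$ and MOA$(N_2;b_1\cdots b_5,2)$ exist, then taking pairs of rows gives an MOA$(N_1N_2;(a_1b_1)\cdots(a_5b_5),2)$. Factor each $s_i=\prod_p p^{e_{i,p}}$. Then $M=\prod_p p^{m_p}$, where $m_p$ is the sum of the two largest exponents $e_{i,p}$.
\begin{itemize}
\item For each prime $p$, construct an MOA$(p^{m_p};p^{e_{1,p}}\cdots p^{e_{5,p}},2)$. Start from the Rao--Hamming / Bose arrays over $\mathrm{GF}(p^{r})$, or from MOLS of prime-power order. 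Then use expansive or compressive replacement of columns by orthogonal arrays of smaller levels.
\item Multiplying these prime-power arrays over all $p$ gives the case $N=M$.
\item For $N=Mu$, multiply by the trivial array of size $u$ on one level-$1$ factor, i.e.\ repeat the rows $u$ times.
\item For the exceptional types with $N=uM$, $u\ge2$, it suffices to give a construction at $2M$ (and $3M$), then repeat rows for larger $u$. These can come from known arrays such as MOA$(72;6^5,2)$ and MOA$(8;2^5,2)$, or from a product such as $\mathrm{MOA}(8;\cdot)\times\mathrm{MOA}(\cdot)$.
\end{itemize}

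\emph{Main obstacle.} The prime-by-prime step fails exactly when some prime component is one of the small bad cases, for instance five factors of level $2$ forced into $4$ runs, or five factors of level $3$ into $9$ runs. It can also fail when the $2$-part has the shape $2^4\cdot 2^{e}$ with $e=1$ coming from $g\equiv2\pmod4$. In those cases I would avoid splitting along primes. Instead I would build the array directly at size $M$ by regrouping. For example, a factor of level $2^a3^b$ with large $a$ can absorb the deficient $2$-part: build the $\{2,3\}$-part jointly from orthogonal Latin squares of order $12$ or $18$, or from incomplete transversal designs and replacement. Similarly, types like $6^32^2$ can be handled via MOLS of order $12$. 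The bulk of the work is a finite, careful case analysis of these mixed small-prime types. That analysis is where I expect the real difficulty, and the possibility of one leftover case such as $10^5$ at $N=100$.
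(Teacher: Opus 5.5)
The paper does not prove this lemma; it is quoted from \cite{Chen}, so your argument has to stand on its own. Your necessity half is sound. The Rao bounds dispose of $2^5$ and $3^5$. The types $6^5$, $6^42^1$, $6^43^1$ reduce to two orthogonal Latin squares of order $6$, which do not exist. Column deletion plus Lemma~\ref{k=4} handles $2^4g^1$. There are two small slips:
\begin{itemize}
\item For $g\equiv 2\pmod 4$ one has $M=\operatorname{lcm}\{4,2g\}=2g$, not $4g$. This is harmless, since deleting a level-$2$ column leaves $M$ unchanged.
\item To reach every $u\ge 2$ from arrays of sizes $2M$ and $3M$, you must concatenate copies of both, as in Corollary~\ref{jf11}. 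Repeating rows alone is not enough.
\end{itemize}

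\textbf{The gap is sufficiency at $N=M$.} Your prime-by-prime product is MacNeish's construction, and it fails on infinite families, not on a finite list of small cases.
\begin{itemize}
\item At $p=2$ it needs an MOA$(4;2^4,2)$ or MOA$(4;2^5,2)$ whenever at least four levels are $\equiv 2\pmod 4$ and none is divisible by $4$. This already happens for $2^43^1$, or $2^4g^1$ with $g$ odd, which you did not flag.
\item At $p=3$ it needs an MOA$(9;3^5,2)$ whenever all five levels are divisible by $3$ but not by $9$.
\end{itemize}
These arrays do not exist. Since the relevant $p$-parts are all exactly $p$, no other array of size $p^{m_p}$ can replace them.

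Yet the lemma asserts an MOA$(n^2;n^5,2)$, i.e.\ three MOLS of order $n$, for every $n\notin\{2,3,6,10\}$. This includes $n=14,15,18,21,22,26,30,\ldots$, where MacNeish gives fewer than three. With all five levels equal there is no other factor to absorb the deficient prime part, so your regrouping idea does not apply. The missing ingredient is the theorem that $3$-MOLS$(n)$ exist for all $n\notin\{2,3,6,10\}$ (Lemma~\ref{MOLS}(2)). It rests on Wilson-type recursive and direct constructions and cannot be obtained from prime-power arrays plus replacement.

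The workable route, which this paper follows for six factors in Section~3, starts from MOA$(a^2;a^5,2)$. It then applies compression (Lemma~\ref{jj}), taking multiples (Lemma~\ref{jb}), juxtaposition (Lemma~\ref{jf}) and coprime splitting (Lemma~\ref{-1}), with a case analysis on equal levels and pairwise gcds.

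Even granting the MOLS theorem, the mixed members of these families still need explicit recursive constructions. Examples are $2^4g^1$ with $g$ odd, $3^4(3m)^1$ with $m>1$ and $3\nmid m$, and $2^36^110^1$. For instance, take an MOA$(8j;2^4(2j)^1,2)$ obtained by taking multiples of an MOA$(8;2^5,2)$. Juxtaposing it with an MOA$(12;2^43^1,2)$ gives an MOA$(4g;2^4g^1,2)$ with $g=2j+3$. Your proposal says this is where the difficulty lies but supplies none of it. As it stands, sufficiency is established only for types that avoid both failure patterns.
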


\begin{lem}[\!\!\cite{Chen}]\label{k=t+1}
There is an MOA$(M;a_1a_2\cdots a_{t+1},t)$, where
$M=\operatorname{lcm}\{\frac{a_1a_2\cdots a_{t+1}}{a_i}: 1\leq i\leq t+1$\}.
\end{lem}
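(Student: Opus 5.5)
We build the array explicitly as the rows of $S_1\times\cdots\times S_{t+1}$ that satisfy one suitable linear relation. Put $P=a_1a_2\cdots a_{t+1}$ and $b_i=P/a_i$, so that $M=\operatorname{lcm}\{b_1,\ldots,b_{t+1}\}$. Let $g=P/M$. Since $M$ divides the product of any $t$ of the $a_i$, and that product is $P/a_i$, we get $g\mid a_i$ for every $i$. Also $M=P/g$, so this $g$ is the largest integer dividing every $a_i$ for which $P/g$ is still a multiple of each $b_i$.

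Let $G=\mathbb{Z}_{a_1}\times\cdots\times\mathbb{Z}_{a_{t+1}}$. The plan is to find a homomorphism $\varphi:G\to\mathbb{Z}_g$ such that, for each $i$, the restriction of $\varphi$ to $\mathbb{Z}_{a_i}$ is surjective. The array then consists of all rows of $\ker\varphi$. This gives exactly $P/g=M$ runs, provided $\varphi$ itself is onto.

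For strength $t$, delete any column $i$ and fix values in the remaining $t$ coordinates. The condition $\varphi(x)=0$ then becomes $\varphi_i(x_i)=c$ for some $c\in\mathbb{Z}_g$. Because $\varphi_i$ is surjective from $\mathbb{Z}_{a_i}$, this equation has exactly $a_i/g$ solutions $x_i$. So every $t$-tuple appears $a_i/g=M/b_i$ times, which is the required balance.

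The natural choice is $\varphi(x)=\sum_i x_i \bmod g$. This is well defined because $g\mid a_i$, and each $\varphi_i$ is onto. In the projection onto the $t$ columns other than $i$, each $t$-tuple appears exactly once for each admissible value of $x_i$, so the count above is correct.

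The one real obstacle is confirming that $g=P/M$ divides every $a_i$; the rest of the argument depends on this. We have $b_i\mid M$ for each $i$. Hence $g=P/M$ divides $P/b_i=a_i$. This closes the only gap, and the construction, an MOA$(M;a_1\cdots a_{t+1},t)$ of $M$ runs, follows.
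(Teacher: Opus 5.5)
Your construction is correct. Each restriction $x_i\mapsto x_i \bmod g$ is onto, so the kernel of $x\mapsto\sum_i x_i \bmod g$ has $P/g=M$ elements. Deleting any column $i$ leaves every $t$-tuple with exactly $a_i/g=M/b_i$ completions, and with only $t+1$ factors that is all strength $t$ requires.

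One sentence must go, however. In your first paragraph you say that $M$ divides the product of any $t$ of the $a_i$. That is backwards: it is $b_i\mid M$, and for type $2\cdot3\cdot6$ one has $M=36\nmid 18=b_1$. Moreover, $M\mid P/a_i$ would give $a_i\mid g$ rather than $g\mid a_i$. The valid argument is the one in your last paragraph, $a_i=P/b_i=g\cdot(M/b_i)$. It is also worth saying explicitly that $g$ is an integer because $P$ is a common multiple of the $b_i$.

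The paper itself gives no proof of this lemma; it is quoted from \cite{Chen}. Measured against the paper's toolkit, your argument is a direct form of the standard one. Lemma \ref{common} with $m=P$ gives $M=P/\gcd\{a_1,\ldots,a_{t+1}\}$, so $g=\gcd\{a_1,\ldots,a_{t+1}\}$. Your description of $g$ (the largest integer dividing every $a_i$ with $P/g$ a multiple of each $b_i$) says exactly this, because $P/h$ is a multiple of every $b_i$ precisely when $h$ divides every $a_i$. One then takes the zero-sum MOA$(g^t;g^{t+1},t)$ over $\mathbb{Z}_g$. Applying the product construction (Lemma \ref{jb1}) with the full factorial on levels $a_1/g,\ldots,a_{t+1}/g$, or taking multiples (Lemma \ref{jb}) in each column, gives the required array. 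With the encoding $x_i=u_i+gv_i$, the result is literally your kernel.

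That route is shorter once the general constructions are available, and it explains why the run size is $P/\gcd$. Yours never needs to identify $g$: it uses only $g\mid a_i$ and a count of solutions to a single congruence.
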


\begin{thm}
Let $M=\operatorname{lcm}\{\frac{a_1a_2\cdots a_{t+1}}{a_i}: 1\leq i\leq t+1$\}. Then
there is an MOA$(N;a_1a_2\cdots a_{t+1},t)$ if and only if $M\mid N$.
\end{thm}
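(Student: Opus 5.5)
Necessity is immediate from the divisibility condition stated above: in an MOA$(N;a_1\cdots a_{t+1},t)$, deleting column $i$ leaves an $N\times t$ subarray in which every $t$-tuple of $\prod_{j\ne i}a_j=\frac{a_1\cdots a_{t+1}}{a_i}$ possibilities occurs equally often. Hence $\frac{a_1\cdots a_{t+1}}{a_i}\mid N$ for every $i$, and so $M\mid N$.

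For sufficiency, write $N=uM$ with $u\ge 1$. Lemma \ref{k=t+1} gives an MOA$(M;a_1a_2\cdots a_{t+1},t)$, say $A$. Stacking $u$ copies of $A$ vertically produces an $N\times(t+1)$ array. In any $t$ columns of this array, each $t$-tuple appears $u$ times as often as in $A$, which is still a constant number of times. So the stacked array is an MOA$(uM;a_1\cdots a_{t+1},t)$.

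There is no real obstacle: the substance is entirely in Lemma \ref{k=t+1}, and juxtaposing copies is the standard way to pass from the minimal size to every multiple. The only point to check is that the juxtaposition preserves strength $t$ for all choices of $t$ columns simultaneously, and this holds because the multiplicity is multiplied uniformly by $u$.
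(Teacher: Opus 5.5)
Your proof is correct and follows the paper's argument: it too takes the MOA$(M;a_1a_2\cdots a_{t+1},t)$ from Lemma \ref{k=t+1}, repeats it $N/M$ times, and gets necessity from the divisibility condition. You also spell out why necessity holds (the $t$-subsets of the $t+1$ columns are exactly the complements of single columns), which the paper leaves implicit.
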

\begin{proof}
Lemma \ref{k=t+1} guarantees the existence of an MOA$(M;a_1a_2\cdots a_{t+1},t)$, which we denote by $A$. Since $M\mid N$, repeating $A$ for $\frac{N}{M}$ copies yields an MOA$(N;a_1a_2\cdots a_{t+1},t)$. The desired conclusion then follows directly from the necessary condition.
\end{proof}

Recently, Zhu et al. \cite{Zhu-Pang-2026} investigated the existence of mixed orthogonal arrays of strength two with seven factors and obtained the following results. Let $X=\{2,3,4,5,6,10,14,15,20,22,26,\\30,34,38,46\}$.

\begin{lem}[\!\!\cite{Abel-1,Handbook,Zhu-Pang-2026}]\label{k=7}
There is an MOA$(s^2;s^7,2)$ for any $s\not\in X$.
\end{lem}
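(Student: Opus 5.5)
The statement is cited from \cite{Abel-1,Handbook,Zhu-Pang-2026}, so I would prove it by reduction to the classical equivalence with mutually orthogonal Latin squares. An OA$(s^2;s^7,2)$, that is an MOA of index one with seven $s$-level factors, exists if and only if there exist $5$ MOLS of order $s$, equivalently a TD$(7,s)$.

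For the equivalence, given $5$ MOLS $L_1,\ldots,L_5$ on $\{0,\ldots,s-1\}$, I would form the $s^2$ rows $(i,j,L_1(i,j),\ldots,L_5(i,j))$. Any two of the first two columns cover all pairs trivially. The first (or second) column together with column $L_m$ covers all pairs because each row (column) of a Latin square is a permutation. Two Latin-square columns cover all pairs exactly by orthogonality.

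The work is in showing $N(s)\ge 5$ for every $s\notin X$; this is exactly the content of the MOLS table in \cite{Handbook} together with the refinements in \cite{Abel-1}. I would proceed in three steps.
\begin{itemize}
\item[(a)] For prime powers $q\ge 7$, take the $q-1\ge 6$ MOLS $L_a(x,y)=ax+y$ over $\mathbb{F}_q$.
\item[(b)] For $s=\prod q_i$ a product of prime powers, MacNeish's product construction gives $N(s)\ge\min_i(q_i-1)$. This settles every $s$ whose prime-power factors are all at least $7$.
\item[(c)] For all sufficiently large $s$, apply Wilson-type constructions (TD$(7,m)$ plus truncated groups, i.e.\ $N(mt+u)\ge 5$ when suitable TDs exist), which yield $N(s)\ge 5$ for $s$ beyond an explicit bound.
\end{itemize}

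The main obstacle is the finite list of remaining small orders, especially those with a factor $2,3,4,5$, or of the form $2p$. For these I would quote the individual direct constructions recorded in \cite{Handbook}, such as quasi-difference matrices and $V(m,t)$ vectors. I would also quote the recent orders settled in \cite{Abel-1,Zhu-Pang-2026}, checking that each order not in $X$ is covered. The set $X$ is precisely the set of orders where $5$ MOLS are unknown or impossible, so in the paper the lemma is a citation rather than a new proof.
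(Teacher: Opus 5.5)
Your proposal is correct and matches the paper, which gives no independent proof: by Lemma~\ref{equivalent} with $k=7$, an MOA$(s^2;s^7,2)$ is the same as a TD$(7,s)$, i.e.\ a $5$-MOLS$(s)$. The set $X$ consists exactly of the orders for which five MOLS are impossible ($s\le 6$) or not known to exist, according to the MOLS tables in \cite{Handbook} as updated in \cite{Abel-1}. Your steps (a)--(c) accurately describe how those tables are obtained (finite fields, MacNeish, Wilson-type constructions, and direct constructions for the small leftover orders), but, as in the paper, the substance of the lemma lies in those cited constructions rather than in anything proved here.
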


\begin{cor}
There is an MOA$(N;s^7,2)$ for any $s\not\in X$, where
$s^2\mid N$.
\end{cor}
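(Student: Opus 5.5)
The plan is to follow the same pattern as the proof of the preceding theorem for $t+1$ factors. We take the array supplied by Lemma \ref{k=7} and replicate it the required number of times.

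Fix $s\not\in X$ and $N$ with $s^2\mid N$. By Lemma \ref{k=7} there is an MOA$(s^2;s^7,2)$; call it $A$. Write $N=\lambda s^2$ with $\lambda$ a positive integer. Form the $N\times 7$ array $B$ by stacking $\lambda$ copies of $A$ vertically.

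We then check that $B$ has strength two. Choose any two columns of $B$. In each copy of $A$, every ordered pair of symbols from $\{0,1,\ldots,s-1\}^2$ appears exactly once as a row of the corresponding $s^2\times 2$ subarray, because $A$ has strength two with index $s^2/s^2=1$. So in $B$ each such pair appears exactly $\lambda$ times. This is constant over all pairs, hence $B$ is an MOA$(N;s^7,2)$.

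No step is genuinely hard here, since all the substance lies in Lemma \ref{k=7}. The only point to note is that the divisibility hypothesis $s^2\mid N$ is exactly the necessary condition $M\mid N$, where $M=\operatorname{lcm}\{s\cdot s\}=s^2$. This divisibility is what allows $N$ to be written as an integral number of copies of $A$.
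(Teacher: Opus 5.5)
Your proposal is correct and is essentially the paper's proof: the paper likewise takes the MOA$(s^2;s^7,2)$ from Lemma \ref{k=7} and stacks $N/s^2$ copies of it, noting that $s^2\mid N$ is the necessary condition.
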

\begin{proof}
Lemma \ref{k=7} guarantees the existence of an MOA$(s^2;s^7,2)$, which we denote by $A$. Since $s^2\mid N$, repeating $A$ for $\frac{N}{s^2}$ copies yields an MOA$(N;s^7,2)$. The desired conclusion then follows directly from the necessary condition.
\end{proof}

\begin{lem}[\!\!\cite{Zhu-Pang-2026}]
Let $s_1,s_2$ be two distinct integers greater than one.
For any $s_1\notin X$, an MOA$(N;s_1^6s_2^1,2)$ exists if and only if  $\frac{s_1^2s_2}{r}\mid N$, where $r=\gcd\{s_1,s_2\}$.
\end{lem}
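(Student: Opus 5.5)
The plan is to prove necessity by the usual divisibility count and sufficiency by building one array of the minimal size $M$ from the array of Lemma~\ref{k=7}, then repeating it.

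\emph{Necessity.} The run size of an MOA$(N;s_1^6s_2^1,2)$ must be divisible by
\[
M=\operatorname{lcm}\{s_1^2,\ s_1s_2\}=s_1\operatorname{lcm}\{s_1,s_2\}=\frac{s_1^2s_2}{r},
\]
where $r=\gcd\{s_1,s_2\}$. This is exactly the stated condition.

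\emph{Sufficiency.} It suffices to construct an MOA$(M;s_1^6s_2^1,2)$. If $M\mid N$, we then repeat it $N/M$ times, as in the proof of the corollary to Lemma~\ref{k=7}. Since $s_1\notin X$, Lemma~\ref{k=7} gives an MOA$(s_1^2;s_1^7,2)$, which we call $A$.

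\emph{Step 1: parallel classes in $A$.} Use the seventh column of $A$ to split its $s_1^2$ rows into $s_1$ classes $R_0,\ldots,R_{s_1-1}$, where $R_x$ is the set of rows whose seventh entry is $x$. Strength two between column $7$ and any column $j\le 6$ says that each class has size $s_1$ and contains every symbol of column $j$ exactly once. So each $R_x$ is a parallel class simultaneously for all of the first six columns.

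\emph{Step 2: stacking copies.} Put $m=s_2/r$ and stack $m$ copies of the first six columns of $A$. This gives $m s_1^2=M$ rows, partitioned into $ms_1=s_1s_2/r=\operatorname{lcm}\{s_1,s_2\}$ parallel classes. Each class still meets every symbol of each of the six columns exactly once. Any two of the six columns remain orthogonal, since each copy is an MOA of strength two.

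\emph{Step 3: the new column.} The number of classes, $\operatorname{lcm}\{s_1,s_2\}$, is divisible by $s_2$. So we can assign to the classes symbols from $\{0,\ldots,s_2-1\}$ so that each symbol is used on exactly $s_1/r$ classes. The new seventh column takes, on every row of a class, the symbol assigned to that class.

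\emph{Step 4: checking strength two.} Fix a column $j\le 6$, a symbol $a$ of column $j$ and a symbol $y$ of the new column. The rows carrying $y$ form the union of $s_1/r$ classes, and each of these contains $a$ in column $j$ exactly once. Hence the pair $(a,y)$ occurs exactly $s_1/r=M/(s_1s_2)$ times, which is the required frequency. Together with Step 2, this shows the array is an MOA$(M;s_1^6s_2^1,2)$.

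The only real point, and the step I expect to need care, is Step 1: recognizing that the seventh column of $A$ induces classes that are simultaneously parallel for all other columns. Once that is in place, the counting in Steps 3 and 4 is routine.
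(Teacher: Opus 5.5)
Your proof is correct. The paper quotes this lemma from \cite{Zhu-Pang-2026} without proof, but your construction is its standard route, as in Lemma~\ref{lem3.2.3}(1): compress the seventh column of the MOA$(s_1^2;s_1^7,2)$ to $r$ levels by Lemma~\ref{jj}, then apply Lemma~\ref{jb} with $n=s_2/r$, and your Steps 1--4 carry out these two operations by hand.
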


For $s_1\in X$, Zhu et al. \cite{Zhu-Pang-2026} presented several infinite families of an MOA$(\frac{s_1^2s_2}{r};s_1^6s_2^1,2)$, where $r=\gcd\{s_1,s_2\}$. For further  results on MOA of strength two with seven factors, we refer the reader to \cite{Zhu-Pang-2026}.

In this paper, we focus on the existence of mixed orthogonal arrays of strength two with six factors. By employing incomplete orthogonal Latin squares, incomplete transversal designs, and $\Sigma$-resolvable transversal designs, we develop several novel constructions and obtain the following result.

\begin{thm}\label{mainR}
Let $a, b, c, d, e, f$ be integers greater than $1$,
\begin{center}
$M=\operatorname{lcm}\{\alpha\beta: \{\alpha,\beta\}\subset \{a,b,c,d,e,f\}\}$,
\end{center}
\begin{center}
$\mathcal{E}=\{3^6, 4^6, 6^6, 2^53^1, 2^54^1, 4^52^1, 6^52^1, 6^53^1, 6^42^2, 6^43^2,
6^43^12^1\}\cup\{2^5(4k+2)^1: k\geq0\},$
\end{center}
\begin{center}
$\mathcal{P}=\{(2s)^6, (2s)^5s^1, (2s)^52^1, (2s)^4s^2: s\in\{5,11\}\}\cup\{10^42^15^1\}$.
\end{center}
There is an MOA$(N;abcdef,2)$ for $abcdef\not\in \mathcal{E}\cup \mathcal{P}$
if and only if $M\mid N$, while there is an MOA$(N;abcdef,2)$ for
$abcdef\in \mathcal{E}$ if and only if $M\mid N$ and
$N>M$, and there is an MOA$(uM;abcdef,2)$ for $abcdef\in \mathcal{P}$, where $u>1$.
\end{thm}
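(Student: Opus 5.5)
Necessity is immediate: in any MOA$(N;abcdef,2)$ each pair of columns $\{\alpha,\beta\}$ contains every ordered pair of symbols equally often, so $\alpha\beta\mid N$ for all pairs, and hence $M\mid N$. For sufficiency I will use a reduction principle. If an MOA$(M;abcdef,2)$ exists, taking $N/M$ copies gives every $N$ with $M\mid N$. If instead only arrays of size $2M$ and $3M$ exist, then every $N=uM$ with $u>1$ is covered, because such a $u$ can be written as $2x+3y$ and the corresponding arrays can be stacked. The whole problem therefore reduces to three tasks: constructing an array of size $M$ for every type outside $\mathcal{E}\cup\mathcal{P}$; constructing arrays of sizes $2M$ and $3M$ for types in $\mathcal{E}\cup\mathcal{P}$; and proving nonexistence at $N=M$ for types in $\mathcal{E}$.

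For nonexistence, I treat each type in $\mathcal{E}$ separately.
\begin{itemize}
\item $3^6,4^6,6^6$: an MOA$(s^2;s^6,2)$ is equivalent to $4$ mutually orthogonal Latin squares of order $s$. Since $N(3)=2$, $N(4)=3$ and $N(6)=1$, none of these exist.
\item $2^53^1$ with $N=6$: Lemma \ref{bound}(i) requires $N\ge 1+5+2=8$, which fails.
\item $2^54^1$ with $N=8$: the bound gives $N\ge 9$, which fails.
\item $2^5(4k+2)^1$: deleting columns yields an MOA of type $2^3(4k+2)^1$ of size $M$, and this does not exist by Lemma \ref{k=4}.
\item $6^52^1,\ 6^53^1$: deleting a column gives a type in $\mathcal{L}_5$, which has no array of size $M$ by Lemma \ref{k=5}.
\item $6^42^2,\ 6^43^2,\ 6^43^12^1$: these contain $6^4$, and an MOA$(36;6^4,2)$ is impossible by Lemma \ref{k=4}. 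The same argument applies whenever the overall $M$ is still $36$.
\item $4^52^1$: I expect a short counting or bound argument at $N=16$, for instance via the bound or a parity argument on the binary column.
\end{itemize}

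For existence, I split according to the largest prime-power structure of the levels. The main recursive tool is to fill an incomplete transversal design or a set of incomplete orthogonal Latin squares. Concretely, I take an ITD$(6,n;h)$, or a TD$(6,n)$ obtained from $4$ MOLS of order $n$ (these exist for $n\ne 2,3,4,6,10$ and possibly some other small $n$), and then apply a \emph{compression} step: each column's symbol set is partitioned into classes of equal size, which collapses a symmetric OA$(n^2;n^6,2)$ into a mixed array of type $abcdef$ whenever each of $a,\ldots,f$ divides $n$ and $n^2=M$. When the levels do not all divide a common $n$ with $n^2=M$, I instead use a $\Sigma$-resolvable TD$(5,n)$. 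Its parallel classes can be indexed by a sixth factor, so the sixth column of level $f$ is attached via the resolution classes, and for the five-factor part I invoke Lemma \ref{k=5} blockwise. The general case is then an inductive product construction. Writing each level as a product over primes, $s_i=\prod_p p^{e_{i,p}}$, I build an array of size $M_p$ for each prime $p$ and take the Kronecker-type product. This works whenever the prime-power components of the type avoid the exceptions. The exceptional behaviour is localised at $p=2$ and $p=3$ together with $M_p$ small, which is exactly where $\mathcal{E}$ and $\mathcal{P}$ arise.

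I expect the main obstacle to be the residual types in which the $2$-part forces levels $2s$ with $s$ odd and no suitable $4$ MOLS of order $2s$ are available (orders $10,22,\ldots$). For these I would first try direct constructions: group-divisible or incomplete-TD fillings using ITD$(6,2s;2)$, and Hedayat-style difference-matrix constructions over $\mathbb{Z}_{2s}$, handling $10^6,\ 10^5 5^1$ and similar types one by one, with a computer search for the smallest cases. Where this fails, as for $s=5,11$, I settle for the $u>1$ statement by building arrays of sizes $2M$ and $3M$. To do this I take a product of a known array of type $2^6$ or $s^6$ at a doubled run size with the $2$-level and $s$-level components, which accounts for the set $\mathcal{P}$. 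Arrays of sizes $2M$ and $3M$ for the types in $\mathcal{E}$ follow similarly, from Lemmas \ref{k=4} and \ref{k=5} together with a product with a trivial array.
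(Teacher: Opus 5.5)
Your necessity argument and your treatment of $\mathcal{P}$ (products of $s$-level arrays with MOA$(4u;2^6,2)$, $u\ge 2$) are essentially the paper's. Two parts of the proposal, however, do not work.

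\textbf{Nonexistence and the $2M$, $3M$ arrays for $\mathcal{E}$.} For $2^53^1$ you have $M=\operatorname{lcm}\{4,6\}=12$, not $6$. Lemma \ref{bound} only gives $N\ge 1+5+2=8$, so it does not rule out $N=12$. The nonexistence of an MOA$(12;2^53^1,2)$ is a separate fact, quoted in the paper as Lemma \ref{notexist}, and you must prove or cite it. For $4^52^1$ the bound does work: $1+5\cdot3+1=17>16$.

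Your claim that the $2M$ and $3M$ arrays for types in $\mathcal{E}$ follow from Lemmas \ref{k=4}, \ref{k=5} and products with trivial arrays is also false in several cases. Such a product multiplies the run size by the new level, so it cannot give $3^6,4^6,6^6$ at $2M$ or $3M$. It also fails for $6^52^1$ at $108$, $6^53^1$ at $72$ and $4^52^1$ at $48$, since MOA$(54;6^5,2)$, MOA$(24;6^5,2)$ and MOA$(24;4^5,2)$ do not exist. You need the sporadic arrays MOA$(ma^2;a^6,2)$, $a\in\{3,4,6\}$, $m\in\{2,3\}$ (Lemma \ref{a=2346}), followed by compression.

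\textbf{Existence at $N=M$.} Your prime-by-prime product correctly uses $M=\prod_p M_p$, but its failure set is not ``exactly where $\mathcal{E}$ and $\mathcal{P}$ arise.'' It fails whenever the $2$-part of the type is $2^k1^{6-k}$ ($k\ge 4$), $4^12^5$, $4^52^1$ or $4^6$, or the $3$-part is $3^51^1$ or $3^6$. Infinitely many such types outside $\mathcal{E}\cup\mathcal{P}$ do have an MOA of size $M$, for example:
\begin{center}
$2^512^1$ ($M=24$),\ $6^54^1$ ($M=72$),\ $6^59^1$ ($M=108$),\ $14^54^1$ ($M=392$),\ $10^515^1$ ($M=300$),\ $2^46^2$ ($M=36$).
\end{center}
Your fallback of compressing a TD$(6,n)$ with $n^2=M$ is unavailable for all of these: either $M$ is not a square, or $n=6$. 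The ``$\Sigma$-resolvable TD$(5,n)$ with Lemma \ref{k=5} blockwise'' step is not a construction. By Lemma \ref{rtd}, an $\alpha$-resolvable TD$_\lambda(5,n)$ is equivalent to the MOA$(\lambda n^2;n^5(\lambda n/\alpha)^1,2)$ you are trying to build.

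The missing ingredients are the ones the paper actually uses:
\begin{itemize}
\item taking multiples and juxtaposition (Lemmas \ref{jb}, \ref{jf}). These produce $a^5b^1$ for every $b$ in a residue class from two or three base arrays, e.g.\ MOA$(16x+24y;2^5(8x+12y)^1,2)$, or MOA$(196;14^6,2)\to$ MOA$(392;14^54^1,2)$;
\item a gcd-based rather than prime-based splitting (Lemma \ref{bbb}, $M=r_1^2M'$ with $r_1=\gcd\{a,f\}$). This lets you use $4$ MOLS of composite order $r_1\notin T$ exactly where single primes fail;
\item the coprime-pair reduction to five factors (Lemma \ref{lem3.1});
\item genuinely new base designs:
  \begin{itemize}
  \item an RTD$_3(5,6)$, giving MOA$(108;6^518^1,2)$;
  \item a $[1,5]$-resolvable TD$_3(5,2)$ fed into Rees's Construction \ref{rtd1}, giving MOA$(300;10^530^1,2)$;
  \item an RTD$_3(5,22)$ obtained from a $4$-IMOLS$(22;3)$, giving MOA$(1452;22^566^1,2)$;
  \item IMOLS hole-filling, giving MOA$(22^2;22^42^2,2)$.
  \end{itemize}
\end{itemize}
Without these, and with ``computer search for the smallest cases'' left open-ended, the sufficiency direction is a programme rather than a proof. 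In particular, nothing in it certifies that the residual list of exceptions is exactly $\mathcal{E}\cup\mathcal{P}$.
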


The rest of this paper is organized as follows. In Section $2$, we recall several general constructions for mixed orthogonal arrays, and review some auxiliary designs and known results, together with some new constructions of mixed orthogonal arrays derived from these designs. Section 3 is devoted to the proof of our main result. In the final section, we summarize the main results of this paper and discuss some directions for future research.

\section{Preliminaries}

In this section, we present several simple general constructions for mixed orthogonal arrays, as well as new constructions via incomplete orthogonal Latin squares, incomplete transversal designs, and $\Sigma$-resolvable transversal designs.

\subsection{General constructions}

In this subsection, we present several simple yet highly useful general constructions for mixed orthogonal arrays. We refer the reader to \cite{Br,Chen,Hedayat} and the references therein.

\begin{lem}[\!\!\cite{Br,Chen}](Taking multiples)\label{jb}
If there is an MOA$(N;a_1a_2\cdots a_k,t)$, then there is an
MOA$(nN; a_1a_2\cdots (na_k), t)$ for any positive
integer $n$.
\end{lem}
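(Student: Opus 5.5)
The construction is explicit. Start from an MOA$(N;a_1a_2\cdots a_k,t)$, say $A$, and build a new array $B$ with $nN$ rows. The rows of $B$ are indexed by pairs $(r,j)$, where $r$ runs over the rows of $A$ and $j\in\{0,1,\ldots,n-1\}$. The entries of $B$ in row $(r,j)$ are:
\begin{itemize}
\item in each column $i<k$, the same entry $A_{r,i}$ as in $A$;
\item in column $k$, the entry $A_{r,k}+j\,a_k$, which lies in $\{0,1,\ldots,na_k-1\}$.
\end{itemize}
Equivalently, $B$ consists of $n$ vertically stacked copies of $A$, and in the $j$-th copy every symbol of the last column is shifted by $j a_k$. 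The map $(x,j)\mapsto x+ja_k$ is a bijection from $\{0,\ldots,a_k-1\}\times\{0,\ldots,n-1\}$ onto $\{0,\ldots,na_k-1\}$. Hence the last column of $B$ uses exactly $na_k$ levels.

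To verify strength $t$, take any $t$ columns of $B$. There are two cases.

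\emph{The last column is not among them.} The $nN\times t$ subarray is then $n$ copies of the corresponding subarray of $A$. Each $t$-tuple appears $\lambda=N/\prod_{i\in I}a_i$ times in $A$, so it appears $n\lambda$ times in $B$. This number is constant, so the condition holds.

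\emph{The last column is among them.} Take a target tuple whose last coordinate is $y=x+ja_k$, with $0\le x<a_k$. Only rows from copy $j$ can produce $y$. Within copy $j$, the rows producing the tuple are exactly the rows of $A$ producing the same tuple with last coordinate replaced by $x$. There are $N/\prod_{i\in I}a_i$ such rows, and this equals $nN/(na_k\prod_{i\in I\setminus\{k\}}a_i)$, again a constant.

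There is no real obstacle; the only thing to check is that the shifted symbol sets are disjoint and cover the $na_k$ levels, which the bijection above gives. Alternatively, one can view the construction as the product of $A$ with the trivial MOA$(n;n,1)$ via Lemma-style juxtaposition, with the last two columns merged into one factor of $na_k$ levels.
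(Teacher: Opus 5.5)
Your proof is correct, and it is the standard argument for this lemma, which the paper quotes from the literature without giving a proof. Stacking $n$ copies of $A$ and relabeling the last column of the $j$-th copy by $x\mapsto x+ja_k$ is exactly $n-1$ successive applications of the juxtaposition construction (Lemma~\ref{jf}), which applies at each step because the ratio $N/a_k$ is preserved. Your closing remark is cleanest in the form the paper itself uses later: Lemma~\ref{jb1} applied to $A$ and the trivial MOA$(n;1^{k-1}n^1,t)$ gives the result directly, with no merging of columns.
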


\begin{lem}[\!\!\cite{Br,Chen,Hedayat}](Juxtaposition construction)\label{jf}
If both an MOA$(N';a_1a_2\cdots a_k,t)$ and an
MOA$(N'';b_1a_2\cdots a_k,t)$ exist with $\frac{N'}{a_1}=\frac{N''}{b_1}$, then there is an MOA$(N'+N'';(a_1+b_1)a_2\cdots a_k,t)$.
\end{lem}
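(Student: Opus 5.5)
The plan is to build the array directly from the two given ones, stacked on top of each other, with the first column carrying the combined symbol set. Let $A$ be the MOA$(N';a_1a_2\cdots a_k,t)$ and $B$ the MOA$(N'';b_1a_2\cdots a_k,t)$. Keep $A$ unchanged. In $B$, relabel the symbols of the first column by the shift $x\mapsto x+a_1$, so they become $\{a_1,\ldots,a_1+b_1-1\}$; call the result $B'$. Then let $C$ be the $(N'+N'')\times k$ array obtained by placing $A$ above $B'$. Its first column takes values in $\{0,1,\ldots,a_1+b_1-1\}$, and columns $2,\ldots,k$ keep their symbol sets $S_2,\ldots,S_k$.

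Next, check the strength-$t$ property for an arbitrary $t$-subset $I$ of columns, split into two cases.

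\emph{Case $1\notin I$.} The subarray of $A$ on $I$ contains each $t$-tuple $N'/\prod_{i\in I}a_i$ times, and $B'$ contains each $N''/\prod_{i\in I}a_i$ times. The shift in column $1$ does not touch these columns. So $C$ contains each tuple $(N'+N'')/\prod_{i\in I}a_i$ times, which is a constant.

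\emph{Case $1\in I$.} Take a tuple whose first coordinate is $x$. If $x<a_1$, the tuple occurs only in $A$, namely $N'/(a_1\prod_{i\in I\setminus\{1\}}a_i)$ times. If $x\ge a_1$, it occurs only in $B'$, namely $N''/(b_1\prod_{i\in I\setminus\{1\}}a_i)$ times. The hypothesis $N'/a_1=N''/b_1$ is exactly what makes these two counts equal, so every tuple appears equally often.

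The case $1\in I$ is the only real point, and the one place the hypothesis $N'/a_1=N''/b_1$ is used. Once it is settled, $C$ is an MOA$(N'+N'';(a_1+b_1)a_2\cdots a_k,t)$.
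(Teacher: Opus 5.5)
Your proof is correct: stacking $A$ above the copy of $B$ whose first-column symbols are shifted by $a_1$, and checking the cases $1\notin I$ and $1\in I$, is the standard juxtaposition argument. The paper itself quotes this lemma from the literature without proof, and you use the hypothesis $N'/a_1=N''/b_1$ exactly where it is needed, to equalize the counts for tuples whose first coordinate lies in the original range versus the shifted range.
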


\begin{lem}[\!\!\cite{Br,Chen}](Split construction)\label{split}
If there is an MOA$(N; ba_3\cdots a_k,t)$ and $b=a_1a_2$, then there
is an MOA$(N; a_1a_2\cdots a_k,t)$.
\end{lem}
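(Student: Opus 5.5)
The last statement is the split construction (Lemma \ref{split}): from an MOA$(N; ba_3\cdots a_k,t)$ with $b=a_1a_2$ we want an MOA$(N; a_1a_2a_3\cdots a_k,t)$. The plan is to replace the first column by two new columns via a bijection. We fix the bijection $\phi:\{0,1,\ldots,b-1\}\to S_1\times S_2$, $\phi(x)=(\lfloor x/a_2\rfloor,\ x \bmod a_2)$, where $S_1=\{0,\ldots,a_1-1\}$ and $S_2=\{0,\ldots,a_2-1\}$. Let $A$ be the given array. We form $B$ by replacing each entry $x$ of the first column of $A$ with the pair of entries $\phi(x)$, placed in two adjacent columns. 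The result $B$ is an $N\times k$ array whose first two columns use levels $a_1$ and $a_2$, and whose remaining columns are those of $A$.

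To check strength $t$, we take any set $T$ of $t$ columns of $B$. There are three cases.

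First, if $T$ avoids the first two columns, the $N\times t$ subarray is literally a subarray of $A$. Its $t$-tuples are therefore balanced.

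Second, suppose $T$ contains exactly one of the two new columns, say column $j\in\{1,2\}$, together with a set $T'$ of $t-1$ old columns. We pick an old column $c\notin T'$; such a $c$ exists because $k-1\ge t$ in the nontrivial case $k>t$. Consider the set of $t$ columns $\{1\}\cup T'$ of $A$. Each tuple $(x,y)$ with $x\in\{0,\ldots,b-1\}$ and $y$ ranging over tuples on $T'$ occurs $\lambda=N/(b\prod_{i\in T'}a_i)$ times. Projecting $x$ to the $j$-th coordinate of $\phi(x)$ is an $(b/a_j)$-to-one map. Hence each tuple on $T$ occurs $\lambda b/a_j$ times, a constant, and this holds even without using $c$. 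The same projection argument works when $k=t$, as long as column $1$ of $A$ together with $T'$ has at most $t$ columns.

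Third, if $T$ contains both new columns, then $T$ corresponds to $\{1\}\cup T''$ in $A$ with $|T''|=t-2$. That is a set of $t-1\le t$ columns of $A$. An OA of strength $t$ is balanced on any fewer columns, since we can extend the set to $t$ columns and marginalize. So each $(x,z)$ occurs equally often, and since $\phi$ is a bijection, each $(\phi(x),z)$ does too.

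The only subtle point is this use of the fact that strength $t$ implies balance on every set of at most $t$ columns; it is standard and can be quoted. It needs care only in degenerate cases where $A$ has fewer than $t$ columns, and these are excluded by the statement's setting.
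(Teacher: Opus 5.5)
Your argument is correct and is the standard one: the paper gives no proof of its own and only cites this split construction from the literature. Its usual proof is exactly your replacement of the $b$-level column by the two coordinates of a bijection $\{0,\ldots,b-1\}\to S_1\times S_2$, with balance checked by the $(b/a_j)$-to-one projection when one new column is used, and by bijectivity together with marginalization to at most $t$ columns of the original array when both are used (the auxiliary column $c$ in your second case is never used and can simply be deleted).
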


\begin{lem}[\!\!\cite{Chen}]\label{-1}
Suppose that $k-1\geq t$, $b=a_1a_2$ and $\gcd\{a_1,a_2\}=1$. If
there is an MOA$(N$; $b$$a_3\cdots a_k$, $t)$, where
$N=\operatorname{lcm}\{\prod\limits_{i=1}^t
x_i:\{x_1,x_2,\ldots,x_t\}\subseteq\{b,a_3,\ldots,a_k\}\}$, then there
is an MOA$(N_0$; $a_1$$a_2\cdots$ $a_k$, $t)$, where
\begin{center}
$N_0=\operatorname{lcm}\{\prod\limits_{i=1}^ty_i:\{y_1,y_2,\ldots,y_t\}\subseteq\{a_1,a_2,a_3,\ldots,a_k\}\}=N$.
\end{center}
\end{lem}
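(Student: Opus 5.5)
The statement to prove is Lemma \ref{-1}. The plan is to take the array from the split construction and show its size $N$ equals the minimal admissible size $N_0$; the whole content is the arithmetic identity $N_0=N$.

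\textbf{Step 1 (existence).} Apply Lemma \ref{split} directly to the given MOA$(N;ba_3\cdots a_k,t)$. Replace the $b$-level column by two columns through the bijection $\{0,\ldots,b-1\}\to S_1\times S_2$, $x\mapsto(\lfloor x/a_2\rfloor, x \bmod a_2)$. This gives an MOA$(N;a_1a_2a_3\cdots a_k,t)$. Strength $t$ is preserved for two reasons. A $t$-set of columns containing both new columns is a projection of a $(t-1)$-set involving the old column, and the original array has strength $t$ there, hence also strength $t-1$. A $t$-set containing only one of the new columns is likewise a projection, since $k-1\ge t$ guarantees enough columns. So the only thing left is the identity $N_0=N$.

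\textbf{Step 2 ($N\mid N_0$).} It suffices to show that each generator $b\prod_{j\in J}a_j$ of $N$, with $|J|=t-1$ and $J\subseteq\{3,\ldots,k\}$, divides $N_0$. Generators not involving $b$ already occur among the generators of $N_0$.
\begin{itemize}
\item $N_0$ is divisible by $a_1\prod_{J}a_j$ and by $a_2\prod_J a_j$.
\item Therefore $N_0$ is divisible by their lcm, which is $\prod_J a_j\cdot\operatorname{lcm}\{a_1,a_2\}$.
\item Since $\gcd\{a_1,a_2\}=1$, this lcm equals $a_1a_2\prod_Ja_j=b\prod_Ja_j$.
\end{itemize}
This coprimality step is the only point where the hypothesis is really used.

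\textbf{Step 3 ($N_0\mid N$).} Each generator of $N_0$ has one of three forms.
\begin{itemize}
\item It avoids $a_1$ and $a_2$. Then it is a generator of $N$.
\item It contains exactly one of $a_1,a_2$, say $a_1\prod_J a_j$ with $|J|=t-1$. It divides $b\prod_J a_j$, which divides $N$.
\item It contains both, $a_1a_2\prod_{J'}a_j$ with $|J'|=t-2$. This equals $b\prod_{J'}a_j$, which divides $b\prod_{J'}a_j\cdot a_m$ for any $m\notin J'$ with $m\ge 3$. Such an $m$ exists because $k-2\ge t-1>t-2$.
\end{itemize}

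\textbf{Conclusion.} Steps 2 and 3 give $N_0=N$, and Step 1 then yields the required MOA$(N_0;a_1a_2\cdots a_k,t)$. I do not expect a real obstacle. The only delicate points are invoking coprimality in Step 2 and checking the counting condition $k-1\ge t$, which is needed both for Step 1 and for the existence of $m$ in Step 3.
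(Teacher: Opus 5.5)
Your proof is correct and takes the natural route. Existence is just the split construction (Lemma \ref{split}), and the real content is the identity $N_0=N$, which your two divisibility checks settle: coprimality of $a_1,a_2$ gives $N\mid N_0$, and the spare column guaranteed by $k-1\geq t$ gives $N_0\mid N$. The paper itself quotes this lemma from \cite{Chen} without proof and relies on exactly this identity in Lemma \ref{lem3.1}.

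One small misattribution in Step 1. The case with exactly one new column is a direct projection of a $t$-set of the original array and needs no counting. The hypothesis $k-1\geq t$ is what lets strength $t$ imply strength $t-1$, which you need in the case containing both new columns.
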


\begin{lem}[\!\!\cite{Chen}](Compression construction)\label{jj}
If there is an MOA$(N;a_1a_2\cdots a_k,t)$ and $d_i \mid  a_i$ for
$1\leq i\leq k$, then there is an MOA$(N;d_1d_2\cdots d_k,t)$.
\end{lem}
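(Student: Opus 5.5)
The statement to prove is the compression construction (Lemma \ref{jj}): from an MOA$(N;a_1a_2\cdots a_k,t)$ and divisors $d_i\mid a_i$, obtain an MOA$(N;d_1d_2\cdots d_k,t)$. The plan is to apply, column by column, a surjective map that reduces symbols modulo $d_i$ and has equal-sized fibres, and then to check that the strength-$t$ balance property is preserved by counting.

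\emph{Step 1: the reduction maps.} Let $A$ be the given MOA$(N;a_1a_2\cdots a_k,t)$. For each $i$, define $\phi_i:S_i=\{0,\ldots,a_i-1\}\to\{0,\ldots,d_i-1\}$ by $\phi_i(x)=x \bmod d_i$. Since $d_i\mid a_i$, each fibre $\phi_i^{-1}(y)$ has exactly $a_i/d_i$ elements.

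\emph{Step 2: the new array.} Let $B$ be the $N\times k$ array obtained by replacing each entry $x$ in column $i$ of $A$ with $\phi_i(x)$. Then column $i$ of $B$ takes values in $\{0,\ldots,d_i-1\}$.

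\emph{Step 3: verification of strength $t$.} Fix columns $i_1<\cdots<i_t$ and a target $t$-tuple $(y_1,\ldots,y_t)$. A row of $B$ has this tuple in these columns exactly when the corresponding row of $A$ has, in the same columns, some tuple from the product of fibres $\phi_{i_1}^{-1}(y_1)\times\cdots\times\phi_{i_t}^{-1}(y_t)$. These product tuples are pairwise distinct, so the row sets they determine are disjoint. By the strength-$t$ property of $A$, each such tuple appears in exactly $N/(a_{i_1}\cdots a_{i_t})$ rows. Summing over the $\prod_j a_{i_j}/d_{i_j}$ tuples in the product, the count is
\[
\frac{N}{a_{i_1}\cdots a_{i_t}}\cdot\prod_{j=1}^t\frac{a_{i_j}}{d_{i_j}}=\frac{N}{d_{i_1}\cdots d_{i_t}}.
\]
This count does not depend on $(y_1,\ldots,y_t)$, so $B$ is an MOA$(N;d_1d_2\cdots d_k,t)$.

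There is no real obstacle here. The only point that needs care is the disjointness of the row sets summed in Step 3, and that follows because distinct tuples in the product of fibres cannot occur in the same row. If some $d_i=1$, the corresponding column of $B$ is constant; this is consistent with the convention in the introduction that such a column may simply be deleted.
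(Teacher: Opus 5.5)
Your proof is correct. It is the standard argument for this lemma, which the paper does not prove but simply cites from \cite{Chen}: apply an equal-fibre surjection to each column (reduction mod $d_i$ in your case; the paper's proof of Construction \ref{daidong} uses the analogous map $s\mapsto\lfloor s/n'\rfloor$), so that each target $t$-tuple count becomes a sum of $\prod_j a_{i_j}/d_{i_j}$ disjoint counts, each equal to $N/(a_{i_1}\cdots a_{i_t})$, and your remark on $d_i=1$ is also fine, since the count formula still holds and the paper later uses one-level factors.
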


\begin{lem}[\!\!\cite{Chen}](Product construction)\label{jb1}
If both an MOA$(N_1;a_1a_2\cdots a_k,t)$ and an
MOA$(N_2;b_1\\b_2\cdots b_k,t)$ exist, then there is an
MOA$(N_1N_2;(a_1b_1)(a_2b_2)\cdots (a_kb_k),t)$.
\end{lem}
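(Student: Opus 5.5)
The product construction (Lemma \ref{jb1}) should follow from a direct Kronecker-type combination of the two arrays. Let $A=(a_{r,i})$ be the MOA$(N_1;a_1a_2\cdots a_k,t)$ and $B=(b_{s,i})$ the MOA$(N_2;b_1b_2\cdots b_k,t)$. Index the rows of the new array $C$ by pairs $(r,s)$ with $1\le r\le N_1$, $1\le s\le N_2$, so $C$ has $N_1N_2$ rows. In column $i$ put
\[
c_{(r,s),i}=a_{r,i}+a_i\, b_{s,i}.
\]
Since $0\le a_{r,i}\le a_i-1$ and $0\le b_{s,i}\le b_i-1$, this lies in $\{0,1,\ldots,a_ib_i-1\}$. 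The map $(x,y)\mapsto x+a_iy$ is a bijection $S_{a_i}\times S_{b_i}\to S_{a_ib_i}$ (mixed-radix representation). So column $i$ of $C$ takes symbols from a set of size $a_ib_i$, as required.

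Next I check strength $t$. Fix columns $i_1<\cdots<i_t$ and a target tuple $(z_1,\ldots,z_t)$ with $z_j\in S_{a_{i_j}b_{i_j}}$. By the bijection, each $z_j$ decomposes uniquely as $x_j+a_{i_j}y_j$. A row $(r,s)$ realizes the target exactly when $(a_{r,i_1},\ldots,a_{r,i_t})=(x_1,\ldots,x_t)$ and $(b_{s,i_1},\ldots,b_{s,i_t})=(y_1,\ldots,y_t)$. These two conditions involve $r$ and $s$ separately.

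Because $A$ has strength $t$, the first condition holds for exactly $\lambda_1=N_1/\prod_j a_{i_j}$ values of $r$. Likewise the second holds for exactly $\lambda_2=N_2/\prod_j b_{i_j}$ values of $s$. The number of matching rows is therefore $\lambda_1\lambda_2=N_1N_2/\prod_j a_{i_j}b_{i_j}$, which does not depend on the target tuple. Hence $C$ is an MOA$(N_1N_2;(a_1b_1)\cdots(a_kb_k),t)$.

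The only step needing care is the claim that the digit encoding is a bijection, so that target tuples of $C$ correspond one-to-one with pairs of target tuples of $A$ and $B$. This is elementary, since it is just division with remainder by $a_i$. Beyond that the argument is routine counting, and I do not expect any genuine obstacle.
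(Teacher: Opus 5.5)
Your proof is correct: the encoding $(x,y)\mapsto x+a_iy$ is a bijection onto $\{0,\ldots,a_ib_i-1\}$, and the number of rows realizing a fixed target is $\frac{N_1}{\prod_j a_{i_j}}\cdot\frac{N_2}{\prod_j b_{i_j}}$, which is independent of the target, so the new array has strength $t$. The paper states this lemma without proof, citing Chen, Ji and Lei, and your construction (pair every row of the first array with every row of the second, merging symbols column by column) is the standard argument behind that citation.
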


\begin{lem}[\!\!\cite{Chen}](Column-deletion construction)\label{quyinz}
If there is an MOA$(N;a_1a_2\cdots a_k,t)$, then there is an
MOA$(N;a_{i_1}a_{i_2}\cdots a_{i_s},t)$, where $s\leq k$ and
$\{i_1,i_2,\ldots, i_s\}\subseteq \{1,2,\ldots,k\}$.
\end{lem}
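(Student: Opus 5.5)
The statement immediately above is the column-deletion construction (Lemma \ref{quyinz}). It follows directly from the definition of an MOA, so the plan is a single verification rather than a construction. Let $A$ be an MOA$(N;a_1a_2\cdots a_k,t)$, and form the $N\times s$ array $B$ consisting of the columns of $A$ indexed by $i_1,i_2,\ldots,i_s$, taken in that order. Column $i_j$ of $A$ uses the symbol set $S_{i_j}=\{0,1,\ldots,a_{i_j}-1\}$, so $B$ has the right number of runs and the right symbol sets. It remains to check strength $t$.

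The argument splits into two cases according to whether $s\geq t$. If $s\geq t$, take any $t$ columns of $B$. They are $t$ distinct columns of $A$, so the $N\times t$ subarray they form is literally an $N\times t$ subarray of $A$. By the defining property of $A$, every $t$-tuple over the corresponding symbol sets appears equally often as a row. Hence $B$ is an MOA$(N;a_{i_1}\cdots a_{i_s},t)$.

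If $s<t$, the condition is vacuous, or is read in the usual way as strength $\min\{s,t\}$. The $s$ chosen columns can be extended by $t-s$ further columns of $A$ to a $t$-subset, on which every $t$-tuple appears $N/\prod a$ times. Summing over the symbols in the extra columns shows that every $s$-tuple appears equally often. This is the standard fact that an OA of strength $t$ also has strength $t'$ for every $t'\le t$.

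The only step needing care is this second case, and it amounts to noting that projecting a uniform distribution onto fewer coordinates keeps it uniform. Beyond that there is no real obstacle: the result is immediate from the definition.
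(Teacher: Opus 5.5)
Your proof is correct, and it is the standard argument. The paper gives no proof of its own, citing \cite{Chen}, and the intended justification is exactly yours: any $t$ columns of the retained $N\times s$ subarray are $t$ distinct columns of the original array. Your handling of the degenerate case $s<t$ via projection is also correct, though the paper only applies the lemma with $s\ge t$ (e.g.\ deleting one column of an MOA$(2b;2^5b^1,2)$).
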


\begin{lem}[\!\!\cite{Chen}]\label{jf1}
If an MOA$(M_1;a_1a_2\cdots a_k,t)$ and an
MOA$(M_2;a_1a_2\cdots a_k,t)$ exist, then there is an
MOA$(xM_1+yM_2;a_1a_2\cdots a_k,t)$, where $x$ and $y$ are nonnegative
integers.
\end{lem}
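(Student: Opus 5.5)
The statement immediately above is Lemma \ref{jf1}: from an MOA$(M_1;a_1a_2\cdots a_k,t)$ and an MOA$(M_2;a_1a_2\cdots a_k,t)$ we want an MOA$(xM_1+yM_2;a_1a_2\cdots a_k,t)$ for all nonnegative integers $x,y$. The plan is to build the array by vertical concatenation of copies, i.e.\ by stacking rows.

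Let $A_1$ and $A_2$ be the given arrays. Form the $(xM_1+yM_2)\times k$ array $B$ consisting of $x$ copies of $A_1$ stacked on top of $y$ copies of $A_2$. Every column $i$ of $B$ still takes its symbols from $S_i=\{0,1,\ldots,s_i-1\}$, so the factor levels are $a_1a_2\cdots a_k$ as required.

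It remains to check strength $t$. Fix any $t$ columns $I$. In $A_1$, each $t$-tuple in $\prod_{i\in I}S_i$ occurs exactly $\lambda_1=M_1/\prod_{i\in I}a_i$ times. In $A_2$, each occurs exactly $\lambda_2=M_2/\prod_{i\in I}a_i$ times. Since occurrence counts add over disjoint sets of rows, each $t$-tuple occurs $x\lambda_1+y\lambda_2$ times in $B$. This number is independent of the tuple, so $B$ is an MOA$(xM_1+yM_2;a_1a_2\cdots a_k,t)$.

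There is one degenerate case: $x=y=0$ gives the empty array. This should be read as a convention, or the statement should be restricted to $xM_1+yM_2>0$.

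There is no real obstacle here. The only point needing care is that the count $x\lambda_1+y\lambda_2$ is the same for every $t$-tuple, and this holds because $\lambda_1$ and $\lambda_2$ each depend only on $I$ and not on the particular tuple.
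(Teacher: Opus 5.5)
Your proof is correct and takes the standard approach. The paper cites this lemma from \cite{Chen} without proof, but the row-stacking (``repeating copies'') argument you give is the one it uses implicitly elsewhere, e.g.\ in the proof of the theorem following Lemma~\ref{k=t+1}. Your caveat about $x=y=0$ is fair, since the statement glosses over that case.
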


The following lemma can be regarded as a generalization of the above, its proof is omitted.
\begin{lem}\label{jf2}
If an
MOA$(N_i;a_{1}a_{2}\cdots $ $a_{k},t)$ exists for $1 \leq i\leq m$, then there is an MOA$(\sum \limits_{i=1}^{m}x_{i}N_{i}$; $a_{1}a_{2}\cdots a_{k},t)$ for any positive integers $x_{1}, x_{2},\ldots, x_{m}$.
\end{lem}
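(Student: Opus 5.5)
The statement is a generalization of Lemma \ref{jf1}, and I would prove it by induction on $m$, using concatenation of arrays row-wise (vertical juxtaposition). The basic observation is this. Let $A$ be an MOA$(N_A;a_1a_2\cdots a_k,t)$ and $B$ an MOA$(N_B;a_1a_2\cdots a_k,t)$ of the same type. Then the $(N_A+N_B)\times k$ array $\binom{A}{B}$ is an MOA$(N_A+N_B;a_1a_2\cdots a_k,t)$. To verify this, fix any $t$ columns $i_1,\ldots,i_t$ and any $t$-tuple $(x_1,\ldots,x_t)$ with $x_j\in S_{i_j}$. The tuple occurs $N_A/\prod_j a_{i_j}$ times in the corresponding subarray of $A$ and $N_B/\prod_j a_{i_j}$ times in that of $B$. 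In the stacked array it therefore occurs $(N_A+N_B)/\prod_j a_{i_j}$ times, which is independent of the chosen tuple. This is exactly the defining property.

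From the observation, stacking $x$ copies of the same array gives an MOA$(xN_i;a_1\cdots a_k,t)$ for every positive integer $x$. Equivalently, this follows by repeated application of the observation with $A=B$; it is also Lemma \ref{jf1} with $y=0$.

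For the induction:
\begin{itemize}
\item The base case $m=1$ is exactly the stacking of $x_1$ copies just described.
\item For the inductive step, suppose an MOA$(\sum_{i=1}^{m-1}x_iN_i;a_1\cdots a_k,t)$ has been constructed. Stack it with the MOA$(x_mN_m;a_1\cdots a_k,t)$ obtained from $x_m$ copies of the $m$-th array. This yields an MOA$(\sum_{i=1}^{m}x_iN_i;a_1\cdots a_k,t)$.
\end{itemize}
Alternatively, the inductive step can cite Lemma \ref{jf1} directly, with $M_1=\sum_{i<m}x_iN_i$, $M_2=N_m$, $x=1$ and $y=x_m$.

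There is no real obstacle here. The only point requiring care is that all arrays must share the same type $a_1a_2\cdots a_k$ with identical symbol sets $S_i$ in each column, so that the occurrence counts add column-by-column. This holds by hypothesis. The statement allows only positive $x_i$; the argument works equally well if some $x_i=0$, as long as at least one $x_i$ is positive.
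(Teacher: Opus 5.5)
Your proposal is correct and is exactly the argument the paper has in mind: the paper omits the proof, presenting the lemma as a direct generalization of Lemma \ref{jf1}. That lemma is precisely the row-wise stacking of copies of arrays of the same type, and you carry it out correctly, with induction on $m$ and the occurrence counts adding column-by-column.
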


Since every integer $u\geq 2$ can be written as $u=2x+3y$ for some nonnegative integers $x$ and $y$, the following corollary follows from Lemma \ref{jf1}.

\begin{cor}\label{jf11}
If an MOA$(2M;a_1a_2\cdots a_k,t)$ and an
MOA$(3M;a_1a_2\cdots a_k,t)$ exist, then an
MOA$(uM;a_1a_2\cdots a_k,t)$ exists for any integer $u\geq 2$.
\end{cor}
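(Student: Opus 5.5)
The statement to prove is Corollary \ref{jf11}: if MOAs of sizes $2M$ and $3M$ with type $a_1a_2\cdots a_k$ and strength $t$ both exist, then one of size $uM$ exists for every integer $u\geq 2$. The approach is a direct application of Lemma \ref{jf1} with $M_1=2M$ and $M_2=3M$. The only real content is the arithmetic claim that every integer $u\geq 2$ can be written as $u=2x+3y$ with nonnegative integers $x,y$.

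First, I establish this representation by splitting on parity.
\begin{itemize}
\item If $u$ is even, take $x=u/2$ and $y=0$.
\item If $u$ is odd, then $u\geq 3$; take $y=1$ and $x=(u-3)/2$, which is a nonnegative integer.
\end{itemize}
In both cases $u=2x+3y$ with $x,y\geq 0$.

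Second, I invoke Lemma \ref{jf1} with the given arrays of sizes $M_1=2M$ and $M_2=3M$. This produces an MOA$(xM_1+yM_2;a_1a_2\cdots a_k,t)$. Since $xM_1+yM_2=(2x+3y)M=uM$, this is the required MOA$(uM;a_1a_2\cdots a_k,t)$.

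I should note one degenerate case. When $x=0$ or $y=0$, the lemma simply stacks copies of a single array. This is exactly what is needed, since stacking copies of an orthogonal array preserves orthogonality.

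There is no genuine obstacle here. The only point requiring care is that the odd case needs $u\geq 3$, and this holds because $u\geq 2$ and $u$ is odd.
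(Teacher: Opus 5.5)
Your proof is correct and is the paper's argument: write $u=2x+3y$ with $x,y\geq 0$ (your parity split makes this explicit) and apply Lemma \ref{jf1} with $M_1=2M$ and $M_2=3M$. Your remark on the degenerate case is harmless but unnecessary, since Lemma \ref{jf1} already allows $x$ or $y$ to be zero.
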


A well-known method for computing the least common multiple of a set of integers is stated in the following lemma.

\begin{lem}[\!\!\cite{Chen}]\label{common}
Let $m$ be a common multiple of integers
$a_1, a_2, \ldots, a_k$. Then
\begin{center}
$\operatorname{lcm}\{a_1,a_2,\ldots,a_{k}\}=\frac{m}{\gcd\{\frac{m}{a_1},\frac{m}{a_2},\ldots,\frac{m}{a_k}\}}$.
\end{center}
\end{lem}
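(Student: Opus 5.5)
The statement to prove is Lemma \ref{common}: if $m$ is a common multiple of $a_1,\ldots,a_k$, then $\operatorname{lcm}\{a_1,\ldots,a_k\}=m/\gcd\{m/a_1,\ldots,m/a_k\}$. I will argue through the divisor lattice of $m$.

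Write $L=\operatorname{lcm}\{a_1,\ldots,a_k\}$ and $g=\gcd\{m/a_1,\ldots,m/a_k\}$. Since $m$ is a common multiple of the $a_i$, we have $L\mid m$, and each $m/a_i$ is an integer.

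The main tool is the order-reversing bijection $x\mapsto m/x$ on the positive divisors of $m$. For divisors $x,y$ of $m$, we have $x\mid y$ if and only if $m/y \mid m/x$.

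\textbf{Step 1: $m/L$ divides $g$.} Since $a_i\mid L$ and both are divisors of $m$, the bijection gives $(m/L)\mid(m/a_i)$ for every $i$. Hence $(m/L)\mid g$.

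\textbf{Step 2: $g$ divides $m/L$.} Because $g\mid m/a_i$, we can write $m/a_i=g\,t_i$ with $t_i$ an integer. Then $m/g=a_i t_i$, so $a_i\mid m/g$ for every $i$. Thus $m/g$ is an integer common multiple of all the $a_i$, and therefore $L\mid m/g$. Both $L$ and $m/g$ divide $m$, so the bijection gives $g\mid m/L$.

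\textbf{Conclusion.} Steps 1 and 2 show $g=m/L$, that is, $L=m/g$.

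An alternative is to compare $p$-adic valuations. With $e=v_p(m)$, one checks $e-\min_i\bigl(e-v_p(a_i)\bigr)=\max_i v_p(a_i)$.

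The only point needing care is the integrality of $m/g$ in Step 2, and this follows immediately from $g\mid m/a_1$ and $m/a_1\mid m$. I therefore expect no real obstacle.
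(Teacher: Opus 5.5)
Your proof is correct and complete. Step 1 gives $m/L \mid g$ and Step 2 gives $g \mid m/L$; since both are positive, $g = m/L$. Your $p$-adic check $e-\min_i\bigl(e-v_p(a_i)\bigr)=\max_i v_p(a_i)$ is an equally valid one-line alternative.

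The paper gives no proof of this lemma. It quotes the result from \cite{Chen} as a well-known fact, so there is no in-paper argument to compare against, and yours serves as a self-contained justification.

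One point you use silently should be stated: $m$ and the $a_i$ are positive. This matches the paper's setting, where all levels exceed $1$. For $m=0$ the quotients $m/a_i$ and their gcd degenerate and the formula is undefined.
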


For a comprehensive list of mixed orthogonal arrays, the readers may consult Table 12.7 of the monograph \cite{Hedayat} or the webpage
\begin{center}
http://support.sas.com/techsup/technote/ts723b.pdf.
\end{center}
In what follows, we present only those MOAs that are used in the subsequent sections of this paper.

\begin{lem}[\!\!\cite{Hedayat,CS,CS1,SC,WJ,Handbook,Wangzhi}]\label{a=2346}
There is an MOA$(ma^2;a^6,2)$ for $a\in \{2,3,4,6,10\}$ and $m=2,3$.
\end{lem}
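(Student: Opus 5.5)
This lemma is cited from the literature (\cite{Hedayat,CS,CS1,SC,WJ,Handbook,Wangzhi}), so my plan is to assemble explicit constructions for each pair $(a,m)$ with $a\in\{2,3,4,6,10\}$ and $m\in\{2,3\}$. I will use the general tools of Subsection 2.1 together with difference matrices. The basic device is the following. A difference matrix over an abelian group $G$ of order $a$ with $\lambda$-fold differences and $k$ rows yields, after developing over $G$, an OA$(\lambda a^2;a^{k},2)$. Adjoining the constant column then gives $k+1$ factors. Hence it suffices to exhibit, for each case, either a $(a,6,m)$- or $(a,5,m)$-difference matrix, or some direct orthogonal array with at least six $a$-level columns.

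I will treat the prime powers $a\in\{2,3,4\}$ first. For $a=3,4$, a linear code over $\mathbb{F}_a$ (or the Bush/Rao--Hamming construction) gives OA$(a^3;a^{k},2)$ with $k=a^2+a+1\ge 6$. This handles $m=a$ directly, since $3\cdot 3^2=27$ and $4\cdot 4^2$ is not needed.

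For the remaining cases the plan is as follows:
\begin{itemize}
\item For $a=4,m=2$ and $a=4,m=3$, I would use the generalized difference matrices over $\mathbb{F}_4$ with $\lambda=2,3$ from the tables in \cite{Handbook,Hedayat}. Alternatively, Lemma \ref{jf1} applies once OA$(32;4^6,2)$ and OA$(48;4^6,2)$ are found. OA$(32;4^9,2)$ comes from a linear code over $\mathbb{F}_4$ with repetition, and OA$(48;4^{12},2)$ is known.
\item For $a=3,m=2$, OA$(18;3^7,2)$ is classical.
\item For $a=2$, Hadamard matrices give OA$(8;2^7,2)$ and OA$(12;2^{11},2)$.
\end{itemize}

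For $a=6$ and $a=10$, no MOA$(a^2;a^6,2)$ exists (there is no pair of MOLS of order 6, and the case $10^6$ is the reason these types are exceptional). So I must rely on the known difference matrices with $\lambda\ge 2$: the $(6,6,2)$- and $(6,6,3)$-difference matrices over $\mathbb{Z}_6$ or $\mathbb{Z}_3\times\mathbb{Z}_2$, and the $(10,6,2)$- and $(10,6,3)$-difference matrices. These are tabulated in \cite{Handbook} and the cited papers of Wang--Ji and Chen et al. Developing them gives OA$(72;6^6,2)$, OA$(108;6^6,2)$, OA$(200;10^6,2)$ and OA$(300;10^6,2)$. The main obstacle is exactly these four cases, because no algebraic construction is available. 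I would quote the explicit matrices, found by computer search in \cite{WJ,Wangzhi}, and verify the difference property for each pair of rows.

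Finally, I would apply column deletion (Lemma \ref{quyinz}) to reduce each array to exactly six columns.
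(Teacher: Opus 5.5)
\textbf{The gap is in the cases $(a,m)=(6,3)$ and $(10,3)$.} Your treatment of $a\in\{2,3,4\}$ is essentially fine, apart from small inaccuracies noted at the end. The $(6,6,3)$- and $(10,6,3)$-difference matrices you plan to quote do not exist. Neither do $(6,5,3)$- or $(10,5,3)$-difference matrices. So the route to OA$(108;6^6,2)$ and OA$(300;10^6,2)$ via developing a $\lambda=3$ difference matrix over a group of order $a$ is closed.

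The reason is Drake's classical parity obstruction. Any group $G$ of order $g\equiv 2\pmod 4$ has a subgroup of index $2$: an involution acts in the regular representation as a product of $g/2$ transpositions, which is an odd permutation. This gives an epimorphism $\phi:G\to\mathbb{Z}_2$. Apply $\phi$ entrywise to a $(g,3;\lambda)$-difference matrix. You get binary vectors $r_1,r_2,r_3$ of length $g\lambda$ such that each $r_i+r_j$ has weight exactly $\mu=g\lambda/2$. Put $u=r_1+r_2$ and $v=r_1+r_3$. Then $\mathrm{wt}(u+v)\equiv \mathrm{wt}(u)+\mathrm{wt}(v)=2\mu\equiv 0 \pmod 2$. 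But $u+v=r_2+r_3$ must have weight $\mu$. For $\lambda=3$ this is impossible, since $\mu=9$ when $g=6$ and $\mu=15$ when $g=10$.

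\textbf{How to repair it.} For $a=10$, drop difference matrices altogether. Apply the product construction (Lemma \ref{jb1}) to an MOA$(25;5^6,2)$ (from a $4$-MOLS$(5)$) and an MOA$(4m;2^6,2)$ (from Hadamard matrices of orders $8$ and $12$). This gives MOA$(100m;10^6,2)$ for $m=2,3$, which is exactly how the paper treats $10^6$ in Lemma \ref{lem3.2.1}.

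For $a=6$ no product is available, since there is no OA$(9;3^6,2)$ and no OA$(4;2^6,2)$. For $m=3$ you need a design not arising from a group, for instance the explicit RTD$_3(5,6)$ of Appendix I. It gives MOA$(108;6^518^1,2)$ (Lemma \ref{65181}), and compressing the $18$-level factor (Lemma \ref{jj}) yields MOA$(108;6^6,2)$. For $m=2$ the obstruction is absent ($\mu=6$), and compressing the MOA$(72;6^512^1,2)$ of Lemma \ref{ua=2346}(3) suffices.

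\textbf{Smaller points.}
\begin{itemize}
\item Your aside that no MOA$(100;10^6,2)$ exists is unjustified. It is equivalent to a $4$-MOLS$(10)$, whose existence is open, and the paper lists $10^6$ in $\mathcal{P}$.
\item OA$(32;4^9,2)$ cannot come from an $\mathbb{F}_4$-linear code, which always has $4^n$ runs, and repetition adds no columns. It arises from an $\mathbb{F}_2$-linear construction, e.g.\ nine pairwise disjoint lines of PG$(4,2)$.
\item After developing an $(a,k,\lambda)$-difference matrix, the column to adjoin is the one indexing the matrix columns. It has $\lambda a$ levels and must then be compressed to $a$ levels; a literally constant column is orthogonal to nothing.
\end{itemize}
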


\begin{lem}[\!\cite{Hedayat,CS,CS1,WJ,Handbook,Julian1,Wangzhi}]\label{ua=2346}
(1) Both an MOA$(8u;2^54^1,2)$ and an MOA$(8u;2^5(4u)^1,2)$ exist for $u=2,3$.

(2) There is an MOA$(ua^2;a^5(ua)^1,2)$ for $a\in \{3,4\}$ and $u=2,3$.

(3) There is an MOA$(2a^2;a^5(2a)^1,2)$ for $a\in \{6,10,22\}$.

(4) There is an MOA$((2u)^2;2^4(2u)^2,2)$ for $u=2,3$.

(5) An MOA$(16;4^42^2, 2)$, an MOA$(20;2^55^1,2)$, an MOA$(24;2^44^16^1,2)$ and an MOA$(100;\\10^42^2,2)$ exist.
\end{lem}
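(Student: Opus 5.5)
Every array in this lemma is a specific small object, so the plan is to use literature constructions and a few doubling or development tricks, together with the general constructions of this section. The one real difficulty is the few ingredients that only exist as computer-found or tabulated objects. There are two basic tools.

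\textbf{Doubling.} Let $D$ be an OA$(n;2^{n-1},2)$ from a Hadamard matrix, with rows $r_1,\dots,r_n$. Stack the $2n$ rows $[\,r_i \mid i\,]$ and $[\,\bar r_i \mid i\,]$, where $\bar r_i$ is the binary complement of $r_i$. The new column is $n$-level, and each pair (row index, binary symbol) occurs exactly once in it. The binary pairs are still balanced, because complementation preserves strength two. This gives an MOA$(2n;2^{n-1}n^1,2)$.

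\textbf{Development.} Let $D(r,c,s)$ be a difference scheme over an abelian group of order $s$. Develop each row $d$ into the rows $d+g$ and append the row index of $D$ as an $r$-level column. This gives an MOA$(rs;s^cr^1,2)$, since for a fixed row index each group element $g$ appears once in every column.

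\emph{Part (1).} Apply doubling with $n=8$ and $n=12$. After column deletion (Lemma \ref{quyinz}) this gives an MOA$(16;2^58^1,2)$ and an MOA$(24;2^512^1,2)$, i.e.\ the arrays MOA$(8u;2^5(4u)^1,2)$ for $u=2,3$. Since $4\mid 8$ and $4\mid 12$, compression (Lemma \ref{jj}) then gives an MOA$(8u;2^54^1,2)$ for $u=2,3$.

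\emph{Parts (2) and (3).} Apply development with $r=ua$, $s=a$ and $c\ge 5$. The required difference schemes are $D(6,6,3)$, $D(9,\ge5,3)$, $D(8,\ge5,4)$, $D(12,\ge5,4)$, $D(12,6,6)$, $D(20,6,10)$ and $D(44,6,22)$. For prime powers $a$ the ones needed come from finite-field constructions, e.g.\ $D(2a,2a,a)$-type schemes over $\mathrm{GF}(a)$ and tabulated schemes of Hedayat--Sloane--Stufken. For $a=6,10,22$, I would take the difference matrices from the cited works \cite{WJ,Handbook}; these are known to exist but are found by computer search.

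\emph{Part (4).} For $u=2$, take an OA$(16;4^5,2)$, which exists since $4$ is a prime power, and split two of its $4$-level columns into pairs of $2$-level columns by Lemma \ref{split}. This gives $2^44^3$, and deleting a column gives $2^44^2$. For $u=3$, I would take an OA$(36;2^{k}6^2,2)$ with $k\ge4$ from the tables in \cite{Hedayat}. Alternatively, develop a suitable difference scheme over $\mathbb Z_6$ whose columns reduce modulo $2$ to the binary factors.

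\emph{Part (5).} For the sporadic arrays I would cite the explicit arrays. An MOA$(16;4^42^2,2)$ follows from the OA$(16;4^5,2)$ by splitting one column. An MOA$(20;2^55^1,2)$ comes from development with $D(10,\ge5,2)$, i.e.\ a normalized Hadamard-type construction of order $20$ with the row index compressed. An MOA$(24;2^44^16^1,2)$ and an MOA$(100;10^42^2,2)$ are taken from \cite{Handbook,Wangzhi}.

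\textbf{Main obstacle.} The main obstacle is supplying the difference matrices over non-prime-power orders, namely $D(12,6,6)$, $D(20,6,10)$ and $D(44,6,22)$, together with the order-$36$ and order-$100$ sporadic arrays. These cannot be produced by the general lemmas and must be quoted from computer-search results.
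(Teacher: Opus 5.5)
\textbf{The MOA$(20;2^55^1,2)$ construction fails.} You propose to develop a $D(10,c,2)$ with $c\ge 5$ and then compress the $10$-level column, but such a $D(10,c,2)$ cannot exist. Write its columns as $\pm1$ vectors. The difference condition says exactly that these vectors are pairwise orthogonal, and three pairwise orthogonal $\pm1$ vectors force the length to be divisible by $4$, so $c\le 2$. Choosing a different doubling or development tool will not help. In any MOA$(2n;2^cn^1,2)$, the two rows that share a level of the $n$-level column must be complementary in every binary column, so the array is necessarily a foldover of a $D(n,c,2)$. Hence no MOA$(20;2^c10^1,2)$ with $c\ge 3$ exists. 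This is the exceptional type $2^3d^1$, $d\equiv 2\pmod 4$, of Lemma \ref{k=4}, and it is also the non-existence of an MOA$(2b;2^5b^1,2)$ for $b\equiv 2\pmod 4$ in Lemma \ref{lem3.2.3}(2). In the $20$-run array, each level of the $5$-level factor occupies four rows, not two. It has to be quoted from the literature, for instance the known OA$(20;2^85^1,2)$, just as the paper quotes all the arrays in this lemma.

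\textbf{Your fallback for (4) with $u=3$ also does not work.} A development over a group of order $6$ in $36$ runs needs a $D(6,c,6)$. Any $c\ge 3$ would give an MOA$(36;6^4,2)$, that is, two MOLS of order $6$, which do not exist. So development yields at most an MOA$(36;6^3,2)$, and reducing modulo $2$ while keeping two $6$-level factors gives at most one binary factor, not four. For $2^46^2$ the citation is the only route in your plan.

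\textbf{The rest of your proposal is correct.} This covers the foldover of Hadamard arrays of orders $8$ and $12$ for (1), development of difference schemes for $a\in\{3,4\}$ in (2), and the splitting of an OA$(16;4^5,2)$ for (4) with $u=2$ and for $4^42^2$. These make those parts self-contained, whereas the paper only cites them. In (3) you need only five columns, namely $D(12,5,6)$, $D(20,5,10)$ and $D(44,5,22)$; asking for six is more than required. For $a=22$ you can avoid the citation altogether with a $\lambda=2$ version of Lemma \ref{22lam=3}:
\begin{itemize}
\item take two copies of the IRTD$(5,22;3)$;
\item fill the hole with the RTD$_2(5,3)$, which is equivalent to the MOA$(18;3^56^1,2)$;
\item the result is an RTD$_2(5,22)$, hence an MOA$(968;22^544^1,2)$ by Lemma \ref{rtd}.
\end{itemize}
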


\begin{lem}[\!\!\cite{Hedayat,WJ0,Wangzhi}]\label{36100}
(1) There is an MOA$(36;6^3a^1b^1c^1,2)$ for $abc\in \{2^3,3^3,2^23^1,3^22^1\}$.

(2) There is an MOA$(100;10^3a^1b^1c^1,2)$ for $abc\in \{2^3,5^3,2^25^1,5^22^1\}$.

(3) There is an MOA$(484;22^311^3,2)$.
\end{lem}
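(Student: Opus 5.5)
Lemma~\ref{36100} is quoted from the literature, so strictly one can point to the tables in \cite{Hedayat,WJ0,Wangzhi}. Still, I would try to explain as many entries as possible from the general constructions of Section~2.1 and treat the rest as sporadic arrays to be exhibited explicitly. In every case the run size equals $M$; for example $\operatorname{lcm}\{36,18,12,9,6,4\}=36$ for $6^3a^1b^1c^1$. So the task is to build arrays of run size exactly $M$, and taking multiples is not available.

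The first step is the product construction, Lemma~\ref{jb1}, for the entries where the small prime only needs to appear in three columns. Pad the OA$(4;2^3,2)$ with three constant columns to get an MOA$(4;2^31^3,2)$. For $p\in\{5,11\}$, take an OA$(p^2;p^6,2)$, which exists because $p$ is a prime and $p+1\ge 6$: the six columns are linear forms on $\mathrm{GF}(p)^2$ in pairwise independent directions. Their product is an MOA$(4p^2;(2p)^3p^3,2)$. This yields MOA$(100;10^35^3,2)$, which is part~(2) with $abc=5^3$, and MOA$(484;22^311^3,2)$, which is part~(3).

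For the remaining types in part~(2), namely $10^3$ combined with $2^3$, $2^25^1$ or $5^22^1$, the product trick fails. A $2$-level column would have to receive its $\mathbb Z_2$-component from the $4$-run factor, and that factor has only three nontrivial columns, all of which are already used by the three $10$-level columns. I would therefore build these arrays directly on the grid $\mathbb Z_{10}\times\mathbb Z_{10}$, with $\mathbb Z_{10}=\mathbb Z_2\times\mathbb Z_5$:
\begin{enumerate}
\item[(a)] rows and columns of the grid give two $10$-level factors, and a Latin square $L$ of order $10$ gives the third;
\item[(b)] the $\mathbb Z_5$-components of these three factors use three linear directions in $\mathrm{GF}(5)^2$, so the three unused directions remain available for the $5$-level factors;
\item[(c)] for a $2$-level factor, choose a $0/1$ colouring of the $100$ cells with five $1$'s in every row, every column and every symbol class of $L$;
\item[(d)] choose these colourings so that they are pairwise balanced with one another and balanced against the chosen $5$-level forms.
\end{enumerate}
I would search for such colourings by letting $L$ be non-linear on the $\mathbb Z_2$-part, starting from a pair of orthogonal Latin squares of order $10$ with a symmetry, and imposing invariance under $\mathbb Z_5$ to cut down the search.

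Part~(1) is the genuinely sporadic case and the main obstacle. The product approach cannot work there, because an OA$(9;3^k,2)$ has at most $4$ columns, while $6^33^3$ needs six columns with $3$-level components, so the three $3$-level factors cannot be spread over a $9$-run factor. I would start from the known OA$(36;2^a3^b6^3,2)$ in Table~12.7 of \cite{Hedayat}, which carries three $6$-level columns together with several $2$- and $3$-level columns. If the table array contains at least three $3$-level columns and at least three $2$-level columns, then Lemma~\ref{quyinz} (column deletion) gives all four types $2^3$, $3^3$, $2^23^1$ and $3^22^1$ at once.

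If such a master array is unavailable, I would construct it directly as follows:
\begin{enumerate}
\item[(a)] take a $6\times6$ Latin square $L$, not isotopic to a group table, whose cells can be partitioned into $2$-cell and $3$-cell transversal-like classes;
\item[(b)] read off the $3$-level and $2$-level factors as balanced partitions of the $36$ cells, each meeting every row, column and symbol of $L$ equally;
\item[(c)] check pairwise balance among these factors by computer.
\end{enumerate}
Verifying the final arrays is routine, since it only requires checking the $\binom62=15$ column pairs.
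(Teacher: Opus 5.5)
Your product argument is correct. Lemma~\ref{jb1} applied to an MOA$(4;2^31^3,2)$ and an MOA$(p^2;p^6,2)$, $p\in\{5,11\}$, gives MOA$(100;10^35^3,2)$ and MOA$(484;22^311^3,2)$. So you prove the $5^3$ case of (2) and all of (3) from scratch, which is more than the paper does: it gives no proof of Lemma~\ref{36100} and simply quotes \cite{Hedayat,WJ0,Wangzhi}.

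The gap is that these are the only two of the nine types you actually establish. For the four types in (1) and for $10^32^3$, $10^32^25^1$, $10^35^22^1$ you describe a search rather than a construction:
\begin{itemize}
\item The ``master array'' step is conditional on Table~12.7 of \cite{Hedayat} containing an OA$(36;2^a3^b6^3,2)$ with $a,b\ge 3$, which you have not checked.
\item Both fallbacks (a non-group Latin square of order $6$ with suitable plex partitions, and balanced colourings of the $\mathbb{Z}_{10}\times\mathbb{Z}_{10}$ grid) only list the properties the objects must have. You give no argument that they exist.
\end{itemize}
As you observe, the product construction cannot reach these types. Your write-up therefore leaves all seven of them resting on the citation, just as the paper does.

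Two of them can be closed with tools already in the paper. Compressing one $10$-level column of the MOA$(100;10^42^2,2)$ of Lemma~\ref{ua=2346}(5) to $2$ or to $5$ levels (Lemma~\ref{jj}) gives MOA$(100;10^32^3,2)$ and MOA$(100;10^32^25^1,2)$. That array is itself quoted from the literature, however.

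For $10^35^22^1$ and the four types in (1), no array listed in Section~2 helps via compression, splitting or column deletion:
\begin{itemize}
\item Keeping three $10$-level columns of the $10^42^2$ array leaves only one column that can become $5$-level.
\item The $36$-run arrays in Section~2 with three $6$-level columns have just one further column.
\end{itemize}
For these five types you must either exhibit explicit $100\times 6$ and $36\times 6$ arrays and check their $15$ column pairs, or rely on the cited sources.
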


\begin{lem}[\!\!\cite{Chen,Schoen}]\label{notexist}
There do not exist an MOA$(8;2^54^1,2)$, an MOA$(12;2^53^1,2)$, or an MOA$(36;6^ts^1,2)$ for $t\geq 4$ and $s \mid  6$.
\end{lem}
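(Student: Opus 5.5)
The statement bundles three nonexistence claims. I would treat them separately, using two tools: Lemma~\ref{bound} for the counting part, and column deletion (Lemma~\ref{quyinz}) to reduce to classical objects.

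\emph{MOA$(8;2^54^1,2)$.} I would apply Lemma~\ref{bound}(i) with $t=2$. It gives
\[
N\geq 1+\sum_{i}(s_i-1)=1+5\cdot 1+3=9>8,
\]
so this case is immediate.

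\emph{MOA$(36;6^ts^1,2)$ with $t\geq 4$ and $s\mid 6$.} Suppose such an array exists. Deleting all columns except four $6$-level columns (Lemma~\ref{quyinz}) leaves an MOA$(36;6^4,2)$. I would then use the standard equivalence between an OA$(n^2;n^4,2)$ and a pair of mutually orthogonal Latin squares of order $n$: the first two columns index the cells of an $n\times n$ grid, since each ordered pair occurs exactly once. The third and fourth columns then fill this grid with two Latin squares, and strength two between these columns makes the squares orthogonal. By Tarry's theorem there is no pair of orthogonal Latin squares of order $6$, which is a contradiction. The value of $s$ plays no role here.

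\emph{MOA$(12;2^53^1,2)$.} This is the real work, because Lemma~\ref{bound} only gives $N\geq 8$. I would code each $2$-level column as a $\pm1$ vector $x_1,\dots,x_5\in\{\pm1\}^{12}$. The $3$-level column splits the $12$ runs into three groups $G_1,G_2,G_3$ of size $4$. Strength two then means three things:
\begin{itemize}
\item each $x_i$ is balanced on every $G_g$;
\item the $x_i$ are pairwise orthogonal;
\item every $2^2$ projection is balanced, which is equivalent to the previous two conditions.
\end{itemize}
A balanced $\pm1$ vector of length $4$ is, up to sign, one of three patterns, corresponding to the three perfect matchings of $G_g$. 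Two such restricted vectors therefore have inner product $\pm4$ if they use the same pattern and $0$ otherwise.

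Consequently, for $i\neq j$ the number of groups in which $x_i,x_j$ share a pattern must be $0$ or $2$, and in the latter case the two signed contributions must cancel. In each group, five columns are distributed among three patterns. By pigeonhole this produces at least $2$ agreeing pairs per group, so at least $6$ agreements in total, and every agreeing pair of columns agrees in exactly two groups.

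I would then carry out a finite case analysis on the pattern triples $(p_1,p_2,p_3)\in\{1,2,3\}^3$ for the five columns. Each pair of triples must agree in $0$ or $2$ coordinates. I would first classify the families of five such triples, then check the sign-cancellation requirement on each surviving family to reach a contradiction.

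The main obstacle is this last step, especially the sign bookkeeping, where a clean parity argument may not be available. If it resists a short argument, I would fall back on citing Schoen's exhaustive enumeration \cite{Schoen}, which already records that no MOA$(12;2^53^1,2)$ exists.
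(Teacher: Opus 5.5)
Your first two cases are correct. The Rao bound (Lemma~\ref{bound}) gives $N\ge 9$ for type $2^54^1$. Deleting down to four $6$-level columns gives an MOA$(36;6^4,2)$, i.e.\ two orthogonal Latin squares of order $6$ (Lemma~\ref{equivalent}), which Tarry's theorem excludes. The paper gives no argument of its own for this lemma; it only cites \cite{Chen,Schoen}, so for type $2^53^1$ it rests on the known classification of small arrays. Your direct counting argument is therefore a genuinely different, self-contained route, and its set-up is right. The balance on each group, the three matching patterns, the inner products $\pm4$ or $0$, agreement in $0$ or $2$ groups, and at least $2$ agreements per group are all correct. The step you call the main obstacle is in fact short and needs neither a classification of families nor any sign bookkeeping.

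Here is the finish. Let $p^{(i)}\in\{1,2,3\}^3$ be the pattern triple of column $i$, and write $i\sim j$ when columns $i,j$ agree in two coordinates. By your parity observation, every other pair agrees in none, so in particular the five triples are distinct.

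\begin{itemize}
\item If $i\sim j$ differ only in coordinate $g$ and $j\sim k$ differ only in coordinate $h\neq g$, then $i,k$ agree in exactly one coordinate, which is forbidden.
\item If instead $h=g$, then $i,k$ agree in the two coordinates other than $g$, so $i\sim k$.
\item Hence the agreement graph is a disjoint union of cliques, and each clique consists of triples that coincide outside a single coordinate. So each clique has at most $3$ vertices.
\item A clique of size $3$ uses all three values in its free coordinate. Any further column agrees with one of its members there, hence agrees in two coordinates and lies in the clique. With five columns this is impossible.
\item So every clique has size at most $2$, and the graph on $5$ vertices has at most $2$ edges.
\end{itemize}

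Each edge accounts for exactly two (pair, group) agreements, so there are at most $4$ agreements in total. This contradicts your pigeonhole bound of at least $2$ per group, i.e.\ at least $6$. Thus the pattern constraints alone are already inconsistent, and no fallback to \cite{Schoen} is needed.

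The citation route is shorter to state. Yours is checkable by hand and shows exactly where the fifth $2$-level column breaks: with four columns the same count only requires $3$ agreements, which a two-edge matching supplies, consistent with the existence of an MOA$(12;2^43^1,2)$.
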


\begin{lem}\label{not16}
There does not exist an MOA$(16;4^52^1,2)$.
\end{lem}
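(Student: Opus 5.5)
The plan is to apply the Rao-type lower bound of Lemma \ref{bound}; no structural analysis of the array should be needed. Suppose, for a contradiction, that an MOA$(16;4^52^1,2)$ exists. Its strength is $t=2$, which is even, so part (i) of Lemma \ref{bound} applies with $\frac{t}{2}=1$. In that case the double sum reduces to the single sum over one-element subsets $I$, and the bound reads
\begin{center}
$N\geq 1+\sum\limits_{i=1}^{6}(s_i-1)$.
\end{center}

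Next I substitute the type $4^52^1$, that is, $s_1=\cdots=s_5=4$ and $s_6=2$. This gives
\begin{center}
$N\geq 1+5\cdot 3+1=17$.
\end{center}
Since $N=16<17$, this is a contradiction, so no MOA$(16;4^52^1,2)$ exists.

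I do not expect a real obstacle here; the only check is that the bound is being applied with $t$ even, which holds. If one wanted a design-theoretic explanation instead, there is a second route. An OA$(16;4^5,2)$ is an affine plane of order $4$. The $2$-level column would then have to be a set of $8$ points meeting every line in exactly $2$ points, and counting would rule this out. Lemma \ref{bound} makes that argument unnecessary.
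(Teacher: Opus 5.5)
Your proposal is correct and matches the paper's proof, which simply invokes Lemma~\ref{bound}: with $t=2$ the bound gives $N\geq 1+5\cdot 3+1=17>16$. Your alternative affine-plane counting sketch is also sound ($\binom{8}{2}=28$ pairs would have to equal the $20$ lines), but as you note it is not needed.
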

\begin{proof}
The conclusion follows immediately from Lemma \ref{bound}.
\end{proof}

\subsection{Construction via incomplete Latin squares}

To present further constructions of MOAs, we recall the notion of Latin squares.
A \emph{Latin square} of order $n$ is an $n\times n$ array in which
each cell contains a single symbol from an $n$-set $S$, such that
each symbol occurs exactly once in each row and exactly once in each
column. Let $L_1$ and $L_2$ be Latin squares of order $n$ with
entries from sets $X$ and $Y$, respectively. They are called \emph{orthogonal
Latin squares} if for every pair $(x,y) \in X\times Y$, there is a unique cell $(i, j)$ satisfying $L_1(i, j) = x$ and $L_2(i, j) = y$.
A set of Latin squares of order $n$, $L_1,L_2,\ldots,L_m$ is called
\emph{mutually orthogonal} if $L_i$ and $L_j$ are orthogonal for any $1\leq i<j\leq m$. Such a set is abbreviated as an $m$-MOLS$(n)$.
An $\alpha$-\emph{transversal} in a Latin square of order $n$ is a set
of $n\alpha$ cells containing \(\alpha\) cells from each row and \(\alpha\) cells from each column, such that every symbol appears exactly \(\alpha\) times.

An \emph{incomplete Latin square}, denoted by ILS$(n; b_1, b_2,\ldots, b_k)$, is an $n\times n$ array $A$ with entries from an $n$-set $V$, together with subsets $V_i\subseteq V$ for $1\leq i\leq k$, where $\mid V_i\mid  = b_i$ and $V_i\cap V_j =\emptyset$ for $1\leq i\neq j \leq k$. Moreover, the following conditions hold:

1. each cell of $A$ is either empty or contains an element of $V$;

2. the subarrays indexed by $V_i\times V_i$ are empty (these subarrays are called \emph{holes}); and

3. the elements in row or column $b$ are exactly those of $V\setminus V_i$ if $b\in V_i$, and of $V$ otherwise.

\vskip 6pt
To illustrate the definition of an incomplete Latin square, we present the following example.
\begin{exam}
The array $A$ listed below is an ILS$(7; 1,1,1,2,2)$.
\end{exam}
\begin{center}
 {\renewcommand\arraystretch{0.8}
\setlength{\arraycolsep}{4pt}
\footnotesize
\hspace{15pt}
$A=\begin{array}{| c| c| c| c| c| c| c| }\hline	
&3& 4& 5 &6& 1& 2\\\hline
2 &&5 &6 &0 &4 &3\\\hline
1& 6&& 0& 5 &3& 4\\\hline
6& 5& 1&& &2& 0\\\hline
5 &2& 6&&& 0& 1\\\hline
3 &4 &0& 1& 2&&\\\hline
4& 0& 3& 2& 1&&	\\\hline
\end{array}$\ }
\end{center}

Two incomplete Latin squares ILS$(n;b_{1},b_{2},\ldots,b_{k})$ are \emph{orthogonal} if, when superimposed, every ordered pair belonging to
$(V\times V)\setminus \cup_{i=1}^{k}(V_{i}\times V_{i })$ appears exactly once. A pair of such squares is denoted by IMOLS$(n;b_{1},b_{2},\ldots,b_{k})$.
Similarly, $r$-IMOLS$(n;b_{1},b_{2},\ldots,b_{k})$ denotes a set of $r$ ILS$(n;b_{1},b_{2},\ldots,b_{k})s$ that are pairwise orthogonal.

\begin{exam}\label{ILS(6)}
Let \begin{center}
 {\renewcommand\arraystretch{0.8}
\setlength{\arraycolsep}{4pt}
\footnotesize
\hspace{15pt}
$A_1=\begin{array}{| c| c| c| c| c| c| c| }\hline	
4& 5& 2& 3& 0& 1\\\hline
1& 0& 5& 4& 2& 3\\\hline
5&4& 0& 1& 3& 2\\\hline
3& 2& 4& 5& 1& 0\\\hline
0& 3& 1& 2&&\\\hline
2& 1& 3& 0&&\\\hline
\end{array}$\ ,\ \ \
$A_2=\begin{array}{| c| c| c| c| c| c| }\hline	
0& 1& 4& 5& 2& 3\\\hline
5& 4& 0& 1& 3& 2\\\hline
3& 2& 5& 4& 0& 1\\\hline
4& 5& 3& 2& 1& 0\\\hline
1& 3& 2& 0&&\\\hline
2& 0& 1& 3&&\\\hline
\end{array}$}\ .
\end{center}
It is clear that the arrays $A_1$ and $A_2$ are two ILS$(6; 2)$s, and
it is straightforward to verify that they are orthogonal.
\end{exam}

We summarize several known results on MOLSs and IMOLSs in the following lemma.

\begin{lem}[\!\!\cite{Handbook,Abel-1,Todorov}]\label{MOLS}
(1) There is a $2$-MOLS$(n)$ for $n\not\in\{2,6\}$.

(2) There is a $3$-MOLS$(n)$ for $n\not\in\{2,3,6,10\}$.

(3) There is a $4$-MOLS$(n)$ for $n\not\in\{2,3,4,6,10,22\}$.

(4) Both a $4$-IMOLS$(22;3)$ and a $5$-IMOLS$(30; 5)$ exist.
\end{lem}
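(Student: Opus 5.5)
This lemma collects known existence results on MOLS and IMOLS, so I will not construct anything new; the plan is to point to the precise source for each part and check that the cited statements match the parameters used here.

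For parts (1)--(3), I will use the table of lower bounds $N(n)$ on the maximum number of MOLS of order $n$ in the Handbook \cite{Handbook}. The following facts are standard.

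\begin{itemize}
\item $N(2)=1$ (a Latin square of order $2$ has no orthogonal mate).
\item $N(6)=1$ (Euler's 36 officers problem, settled by Tarry).
\item $N(3)=2$ and $N(4)=3$, since MOLS of prime-power order $q$ come from the affine plane $AG(2,q)$ and give $N(q)=q-1$.
\item $N(10)\ge 2$, and $N(n)\ge 3$ for all $n\notin\{2,3,6,10\}$ (the Brouwer--Wilson and Todorov results).
\item $N(n)\ge 4$ for all $n\notin\{2,3,4,6,10,22\}$.
\end{itemize}

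These give (1)--(3) directly. For $n\ge 5$ and $n\neq 6$, the existence of $2$-MOLS also follows by deleting squares from a larger set, so (1) is implied by (2) except at $n=3,10$. Those two cases are covered by $N(3)=2$ and by $2$-MOLS$(10)$ (Parker).

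Part (4) is a pair of specific incomplete objects. For the $4$-IMOLS$(22;3)$ I will cite \cite{Abel-1}, where such sets were found by computer to settle $N(22)$-type questions. For the $5$-IMOLS$(30;5)$ I will cite the constructions of Abel (or \cite{Todorov}), which use quasi-difference matrices with a hole of size $5$.

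The only step that needs care is (4). Here I must check that the cited objects really have $r=4$ and $r=5$ squares, and a single hole of sizes $3$ and $5$ respectively, rather than a weaker statement such as an ITD with fewer groups. I will check this by converting each object to the equivalent incomplete transversal design:

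\begin{itemize}
\item an ITD$(6;22,3)$ corresponds to a $4$-IMOLS$(22;3)$;
\item an ITD$(7;30,5)$ corresponds to a $5$-IMOLS$(30;5)$.
\end{itemize}

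I will then locate these ITDs in the Handbook's table of incomplete transversal designs.
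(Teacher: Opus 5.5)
Your approach is the same as the paper's: the lemma is stated purely by citation to \cite{Handbook,Abel-1,Todorov}. Your translation of part (4), via Lemma~\ref{ITD=IM}, into an ITD$(6,22;3)$ and an ITD$(7,30;5)$ is also correct. The gap is in how you assign the sources for part (3).

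You say parts (1)--(3) come from the Handbook's table of lower bounds on $N(n)$. That table predates the constructions of four MOLS of orders $14$ and $18$; it records only $N(14)\geq 3$ and $N(18)\geq 3$. Its four-MOLS statement therefore leaves $14$ and $18$ among the possible exceptions. As written, your argument for (3) fails at $n=14$ and $n=18$.

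These are exactly the two cases the paper's extra references cover: Todorov's four MOLS of order $14$, and Abel's five (hence four) MOLS of order $18$. That \cite{Abel-1} is the order-$18$ MOLS source is consistent with its use in Lemma~\ref{k=7}, where $18\notin X$. You instead use \cite{Todorov} for the three-MOLS bound, which the Handbook already covers. You also assign \cite{Abel-1} to the $4$-IMOLS$(22;3)$, which does not match how the paper uses that reference.

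To repair this, cite the order-$14$ and order-$18$ constructions explicitly for (3). For (4), find an actual source for the two incomplete objects, for instance by locating the ITDs in the Handbook's tables as you planned, rather than attributing them to \cite{Abel-1}. Your check that the source gives the right number of squares and a single hole of the right size is the right thing to verify.
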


The following construction is based on the classic hole-filling idea. For related discussions, the reader may also refer to \cite{Pang}.

\begin{con}\label{daidong}
If both a $k$-IMOLS$(n; n_{1})$ and a $k_{1}$-MOLS$(n_{1})$ exist, then there is an
MOA$(n^2; n^{2+k_{1}}(\frac{n}{n'})^{k-k_{1}}, 2)$, where \(n'\) is an integer satisfying $n'\mid n$, $n'\geq n_{1}$, and $k\geq k_{1}$.
\end{con}
\begin{proof}
Let $\mathcal{X}=\mathbb{Z}_{n}=\{0, 1, 2,\ldots, n-1 \}$ and
$\mathcal{H}=\{n-n_1, n-n_1+1, n-n_1+2,\ldots, n-1 \}$.
By assumption, let  $A_{1}, A_{2}, \ldots, A_{k}$ be a $k$-IMOLS$(n; n_{1})$
over $(\mathcal{X},\mathcal{H})$, and let $B_{1}, B_{2}, \ldots, B_{k_{1}}$
be a $k_{1}$-MOLS$(n_{1})$ over $\mathcal{H}$. Filling the hole of $A_{1}, A_{2}, \ldots, A_{k_1}$ with $B_{1}, B_{2}, \ldots, B_{k_{1}}$, respectively, yields
a $k_{1}$-MOLS$(n)$ over $\mathcal{X}$, denote these squares by
$C_{1}, C_{2},\ldots, C_{k_1}$, where $C_p=(c_{i,j}^{(p)})$, $0\leq i,j\leq n-1$,
$1\leq p\leq k_1$. Write $n=un'$ since $n' \mid  n$, and define a mapping
\begin{center}
$f:\ \ \mathbb{Z}_{n} \longrightarrow \mathbb{Z}_{u}$,
$s\longmapsto \lfloor \frac{s}{n'} \rfloor$,
\end{center}
where $\lfloor x\rfloor$ denotes the greatest integer not exceeding $x$. Let $A_l=(a_{i,j}^{(l)})$ for $k_1+1\leq l\leq k$, $0\leq i,j\leq n-1$. We then define $D_l=(d_{i,j}^{(l)})$, where

$$d_{i,j}^{(l)}=\left\{
\begin{array}{lll}
\frac{n}{n'}-1, \ \ \ \  \emph{if} \ \ \emph{the}\ \emph{cell}\ (i,j)\ \emph{of}\ A_l \emph{ is} \  \emph{empty}, \\
f(a_{i,j}^{(l)}), \ \ \ \  \emph{otherwise}.
\end{array}
\right. $$
We now construct an array $E$ as follows.

$$E=\left(\begin{array}{cccccccc}
\textbf{0}&[n]&C_{0}^{(1)}&\cdots&C_{0}^{(k_1)}&D_{0}^{(k_1+1)}&\cdots&D_{0}^{(k)}\\
\textbf{1}&[n]&C_{1}^{(1)}&\cdots&C_{1}^{(k_1)}&D_{1}^{(k_1+1)}&\cdots&D_{1}^{(k)}\\
\vdots&\vdots&\vdots&\vdots&\vdots&\vdots&\vdots&\vdots\\
\textbf{n-1}&[n]&C_{n-1}^{(1)}&\cdots&C_{n-1}^{(k_1)}&D_{n-1}^{(k_1+1)}&\cdots&D_{n-1}^{(k)}\\
\end{array}\right),$$
where
$$\textbf{m}=\left(\begin{array}{c}
m\\
m\\
\vdots\\
m
\end{array}\right)_{n\times 1}, \ \
[n]=\left(\begin{array}{c}
0\\
1\\
\vdots\\
n-1
\end{array}\right),\ \
C_m^{(p)}=\left(\begin{array}{c}
c_{m,0}^{(p)}\\
c_{m,1}^{(p)}\\
\vdots\\
c_{m,n-1}^{(p)}
\end{array}\right),\ \ \
D_m^{(l)}=\left(\begin{array}{c}
d_{m,0}^{(l)}\\
d_{m,1}^{(l)}\\
\vdots\\
d_{m,n-1}^{(l)}
\end{array}\right),$$
$m\in\{0,1,2,\ldots,n-1\}$, $1\leq p\leq k_1$, $k_1+1\leq l\leq k$.
It is straightforward to verify that $E$ is an
MOA$(n^2; n^{2+k_{1}}(\frac{n}{n'})^{k-k_{1}}, 2)$. This completes the proof.
\end{proof}

The following example illustrates the above construction.

\begin{exam}
An MOA$(36;6^33^1,2)$ and an MOA$(36;6^32^1,2)$ exist, and were first given in \cite{Finney}.
\end{exam}
\begin{proof}
Let the arrays $A_1=(a_{i,j}^{(1)})$ and $A_2=(a_{i,j}^{(2)})$ be as in Example \ref{ILS(6)}. We obtain a Latin square of order $6$, denoted by $C_1=(c_{i,j})$,  by filling the hole of $A_1$ with a Latin square $B_1$ of order $2$.
\begin{center}
{\renewcommand\arraystretch{0.9}
\setlength{\arraycolsep}{4pt}
\footnotesize
\hspace{15pt}
$C_1=\begin{array}{| c| c| c| c| c| c| c| }\hline	
4& 5& 2& 3& 0& 1\\\hline
1& 0& 5& 4& 2& 3\\\hline
5&4& 0& 1& 3& 2\\\hline
3& 2& 4& 5& 1& 0\\\hline
0& 3& 1& 2&{\color{red}4}&{\color{red}5}\\\hline
2& 1& 3& 0&{\color{red}5}&{\color{red}4}\\\hline
\end{array}$\ , ~~\ \
$B_1=\begin{array}{| c| c| }\hline
4 & 5 \\\hline
5 & 4 \\\hline
\end{array}$\ .}
\end{center}
Define two mappings
\begin{center}
$f:\ \mathbb{Z}_{6} \longrightarrow \mathbb{Z}_{3}$,
$s\longmapsto \lfloor \frac{s}{2} \rfloor$,~ \ and $g:\ \mathbb{Z}_{6} \longrightarrow \mathbb{Z}_{2}$,
$s\longmapsto \lfloor \frac{s}{3} \rfloor$,
\end{center}
where $\lfloor\cdot\rfloor$ denotes the floor function.
We then define $D_1=(d_{i,j}^{(1)})$ and $D_2=(d_{i,j}^{(2)})$, where

$$d_{i,j}^{(1)}=\left\{
\begin{array}{lll}
2, \ \ \ \ \ \ \ \ \emph{if} \ \ \emph{the}\ \emph{cell}\ (i,j)\ \emph{of}\ A_2\ \emph{is} \ \emph{empty},\\
f(a_{i,j}^{(2)}), \ \   \emph{otherwise}.
\end{array}
\right.$$\ \ \
$$d_{i,j}^{(2)}=\left\{
\begin{array}{lll}
1, \ \ \ \ \ \ \ \ \emph{if} \ \ \emph{the}\ \emph{cell}\ (i,j)\ \emph{of}\ A_2\ \emph{is} \ \emph{empty},\\
g(a_{i,j}^{(2)}), \ \   \emph{otherwise}.
\end{array}
\right. $$
\vskip 6pt
\begin{center}
 {\renewcommand\arraystretch{0.9}
\setlength{\arraycolsep}{4pt}
\footnotesize
\hspace{15pt}
$D_1=\begin{array}{| c| c| c| c| c| c| }\hline	
0& 0& 2& 2& 1&1\\\hline
2& 2& 0& 0& 1& 1\\\hline
1& 1& 2& 2& 0& 0\\\hline
2& 2& 1& 1& 0& 0\\\hline
0& 1& 1& 0&2&2\\\hline
1& 0& 0& 1&2&2\\\hline
\end{array}$\ ,~~\
$D_2=\begin{array}{| c| c| c| c| c| c| }\hline	
0& 0& 1& 1& 0& 1\\\hline
1& 1& 0& 0& 1& 0\\\hline
1& 0& 1& 1& 0& 0\\\hline
1& 1& 1& 0& 0& 0\\\hline
0& 1& 0& 0&1&1\\\hline
0& 0& 0& 1&1&1\\\hline
\end{array}$
}\ .
\end{center}
\noindent
It is straightforward to verify that the sets $\{(i, j, c_{i,j}, d_{i,j}^{(1)})\}$ and $\{(i, j,  c_{i,j}, d_{i,j}^{(2)})\}$,
$i,j\in \mathbb{Z}_6$, form an MOA$(36;6^33^1,2)$ and an MOA$(36;6^32^1,2)$, respectively.
\end{proof}

Taking $k_1=k$ in the above construction yields the following result.
\begin{lem}[\!\!\cite{Handbook}]
If a $k$-IMOLS$(n; n_{1})$ and a $k$-MOLS$(n_{1})$ exist, then a $k$-MOLS$(n)$ exists, which also yields an MOA$(n^2;n^{k+2},2)$.
\end{lem}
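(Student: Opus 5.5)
This follows directly from Construction~\ref{daidong} by taking $k_1=k$. I would not rebuild the hole-filling argument; I would just check what the construction produces when no squares are collapsed.

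\textbf{Step 1: obtain $k$-MOLS$(n)$.} Let $A_1,\ldots,A_k$ be the $k$-IMOLS$(n;n_1)$ over $(\mathcal{X},\mathcal{H})$, and let $B_1,\ldots,B_k$ be the $k$-MOLS$(n_1)$ over $\mathcal{H}$. Fill the hole of $A_p$ with $B_p$ to get $C_p$.

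First, each $C_p$ is a Latin square.
\begin{itemize}
\item A row indexed by $i\notin\mathcal{H}$ already contains every symbol of $\mathcal{X}$, by the defining property of an ILS.
\item A row indexed by $i\in\mathcal{H}$ contains exactly $\mathcal{X}\setminus\mathcal{H}$ outside the hole. The filled entries in the hole are exactly the symbols of $\mathcal{H}$, each once.
\item Columns are handled the same way.
\end{itemize}

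Second, $C_p$ and $C_q$ are orthogonal for $p\neq q$.
\begin{itemize}
\item By the IMOLS property, the cells outside the hole cover each ordered pair in $(\mathcal{X}\times\mathcal{X})\setminus(\mathcal{H}\times\mathcal{H})$ exactly once.
\item Since $B_p$ and $B_q$ are orthogonal, the cells inside the hole cover each pair in $\mathcal{H}\times\mathcal{H}$ exactly once.
\item These two sets of pairs are disjoint and together make up all of $\mathcal{X}\times\mathcal{X}$.
\end{itemize}

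\textbf{Step 2: obtain the MOA.} Form the array $E$ of Construction~\ref{daidong} with $k_1=k$. Its rows are $(i,j,c^{(1)}_{i,j},\ldots,c^{(k)}_{i,j})$ for $i,j\in\mathbb{Z}_n$. Take any two of its $k+2$ columns.
\begin{itemize}
\item Row index with column index: every pair $(i,j)$ occurs once, trivially.
\item Row or column index with some $C_p$: each pair occurs once, because $C_p$ is Latin.
\item $C_p$ with $C_q$: each pair occurs once, by the orthogonality from Step 1.
\end{itemize}
So $E$ is an MOA$(n^2;n^{k+2},2)$. This is exactly the case $k_1=k$ of Construction~\ref{daidong}; here the parameter $n'$ plays no role, since no $D_l$ columns appear.

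There is no real obstacle. The only point needing care is the bookkeeping in Step 1: the filled hole must supply precisely the pairs in $\mathcal{H}\times\mathcal{H}$ that the incomplete squares omit, so the two parts partition $\mathcal{X}\times\mathcal{X}$.
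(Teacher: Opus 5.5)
Your proposal is correct and follows the paper, which obtains this lemma in one line by setting $k_1=k$ in Construction~\ref{daidong}; your Steps 1 and 2 just make explicit the hole-filling and column-pair checks that the paper leaves as ``straightforward to verify.'' The one point you use implicitly is that no non-hole cell carries a pair from $\mathcal{H}\times\mathcal{H}$. This follows by counting: there are $n^2-n_1^2$ non-hole cells and exactly $n^2-n_1^2$ required pairs, each occurring once.
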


\begin{lem}\label{22-30}
An MOA$(22^{2};22^{4}2^{2},2)$, an MOA$(30^{2};30^{6}5^{1},2)$ and an MOA$(30^{2};30^{6}6^{1},2)$ exist.
\end{lem}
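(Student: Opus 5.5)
The plan is to apply Construction \ref{daidong} three times, once per array, with Lemma \ref{MOLS}(4) providing the incomplete squares.

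\emph{The array MOA$(22^2;22^42^2,2)$.} Take the $4$-IMOLS$(22;3)$ from Lemma \ref{MOLS}(4), so $n=22$, $n_1=3$, $k=4$. Since the factor pattern $22^4$ means $2+k_1=4$, we need $k_1=2$. A $2$-MOLS$(3)$ exists by Lemma \ref{MOLS}(1). The last $k-k_1=2$ factors must have level $n/n'=2$, so $n'=11$. This is admissible because $11\mid 22$ and $11\geq 3=n_1$. Construction \ref{daidong} then gives an MOA$(22^2;22^{4}2^{2},2)$.

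\emph{The arrays MOA$(30^2;30^65^1,2)$ and MOA$(30^2;30^66^1,2)$.} Take the $5$-IMOLS$(30;5)$ from Lemma \ref{MOLS}(4), so $n=30$, $n_1=5$, $k=5$. We want $2+k_1=6$, so $k_1=4$. A $4$-MOLS$(5)$ exists by Lemma \ref{MOLS}(3), since $5\notin\{2,3,4,6,10,22\}$; it also comes from the prime-power construction. This leaves $k-k_1=1$ extra factor of level $30/n'$.
\begin{itemize}
\item For level $5$, take $n'=6$: we have $6\mid 30$ and $6\geq 5$.
\item For level $6$, take $n'=5$: we have $5\mid 30$ and $5\geq n_1=5$.
\end{itemize}
Both choices satisfy the hypotheses of Construction \ref{daidong}, which yields the two arrays.

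\emph{The main check.} The substantive step is that Construction \ref{daidong} really produces an orthogonal array in the boundary situation $n'\ge n_1$. Consider the collapsed column $D_l$, built from an unfilled IMOLS square, paired with a filled square $C_p$ or with the row or column index. Its $n'$-blocks under $f$ are the consecutive intervals $\{0,\dots,n'-1\},\dots$. The hole symbols $\{n-n_1,\dots,n-1\}$ all lie in the last block because $n_1\le n'$. Each row of $A_l$ then contains every symbol of the first $u-1$ blocks once. In the last block it contains the $n'-n_1$ non-hole symbols once plus $n_1$ empty cells exactly in the hole rows; replacing those cells by $u-1$ makes that block's count $n'$ as well.

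Pairs involving $C_p$ are handled the same way. Orthogonality of $A_l$ with $A_p$ outside the hole, together with the fact that the empty cells of $A_l$ coincide with the hole region where $C_p$ carries $B_p$, gives each pair $(x, \text{block})$ exactly $n'$ times. Finally, the pair $(D_l,D_{l'})$ never arises here, since at most two collapsed columns occur and in the $22$ case $(2,2)$ is balanced by the same block argument. I would write this verification out once in general and then cite it for the three parameter sets.
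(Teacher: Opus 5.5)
Your proposal is correct and is the paper's proof: Construction \ref{daidong} with $(n,k,k_1,n_1,n')=(22,4,2,3,11)$ for the first array, and $(30,5,4,5,6)$ and $(30,5,4,5,5)$ for the arrays with last factor of level $5$ and $6$. Your extra verification of the construction, which the paper calls straightforward, is sound, but your last sentence should not say the pair $(D_l,D_{l'})$ ``never arises'': it arises only in the $22$ case, where the $n_1^2$ hole cells, valued $(u-1,u-1)$, exactly replace the pairs from $\mathcal{H}\times\mathcal{H}$ that are missing from block $(u-1,u-1)$.
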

\begin{proof}
A $4$-IMOLS$(22;3)$ and a $2$-MOLS$(3)$ exist by Lemma \ref{MOLS}.
Applying Construction \ref{daidong} with $n=22$, $k=4$, $k_1=2$, $n_1=3$ and $n'=11$, we obtain an MOA$(22^{2};22^{4}2^{2},2)$.

A $5$-IMOLS$(30; 5)$ and a $4$-MOLS$(5)$ exist by Lemma \ref{MOLS}.
Applying Construction \ref{daidong} with $n=30$, $k=5$, $k_1=4$, $n_1=5$ and $n'=5,6$, we obtain an MOA$(30^{2};30^{6}6^{1},2)$ and an MOA$(30^{2};30^{6}5^{1},2)$.
\end{proof}

\subsection{Construction via transversal designs}

A \emph{transversal design} TD$_\lambda(k, n)$ is a triple
$(\mathcal{X},\mathcal{G},\mathcal{A})$, where $\mathcal{X}$ is a
$kn$-element set of points, $\mathcal{G}$ is a collection of $n$-element subsets of
$\mathcal{X}$ (called \emph{groups}) that partition $\mathcal{X}$, and
$\mathcal{A}$ is a collection of subsets of $\mathcal{X}$ (called
\emph{blocks}), each block intersects every group in exactly one point, and
any pair of points from distinct groups appears in exactly
$\lambda$ blocks of $\mathcal{A}$. For a TD$_\lambda(k, n)$, if
$\lambda$ is omitted, it is taken to be one. A TD is said to be
\emph{$\alpha$-resolvable} if its blocks can be partitioned into
classes (called $\alpha$-\emph{resolution classes}) such that each point of
the design lies in precisely $\alpha$ blocks in each class. We call
a $1$-resolvable TD$_\lambda(k, n)$ a resolvable
TD$_\lambda(k, n)$, denoted by RTD$_\lambda(k, n)$.

\begin{lem}[\!\!\cite{Handbook}]\label{equivalent}
The following existence statements are equivalent:

(1) a $(k-2)$-MOLS$(n)$,

(2) a TD$(k,n)$,

(3) an RTD$(k-1,n)$,

(4) an MOA$(n^2;n^k,2)$.
\end{lem}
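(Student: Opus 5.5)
The four statements are equivalent through the standard identifications. I will prove them as a cycle (1)$\Rightarrow$(4)$\Rightarrow$(2)$\Rightarrow$(3)$\Rightarrow$(1), and in each step write the new object directly in terms of the old one.

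\emph{(1)$\Rightarrow$(4).} Let $L_1,\ldots,L_{k-2}$ be a $(k-2)$-MOLS$(n)$ on $\mathbb{Z}_n$. For each cell $(i,j)$ form the run $(i,j,L_1(i,j),\ldots,L_{k-2}(i,j))$, giving $n^2$ runs. For strength two we check three kinds of column pairs:
\begin{itemize}
\item columns $1,2$ contain every cell exactly once;
\item a coordinate column together with a square column covers all pairs because each $L_p$ is Latin;
\item two square columns cover all pairs by orthogonality.
\end{itemize}
This is exactly the case $k_1=k$ of the array $E$ in Construction \ref{daidong}, with no $D$-columns.

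\emph{(4)$\Rightarrow$(2).} Given an MOA$(n^2;n^k,2)$, take points $\{1,\ldots,k\}\times\mathbb{Z}_n$ and groups $G_c=\{c\}\times\mathbb{Z}_n$. Each run $(x_1,\ldots,x_k)$ gives the block $\{(c,x_c):1\le c\le k\}$. Two points in distinct groups $(c,x)$ and $(c',y)$ lie together in exactly the blocks of runs with $x$ in column $c$ and $y$ in column $c'$. Strength two with $N=n^2$ makes this number exactly one, so we have a TD$(k,n)$.

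\emph{(2)$\Rightarrow$(3).} Fix a group $G=\{g_1,\ldots,g_n\}$ of the TD$(k,n)$. For each $m$, let the class $\mathcal{C}_m$ consist of the blocks through $g_m$, each with $g_m$ deleted. Deleting $G$ leaves $k-1$ groups, and every pair of points from distinct remaining groups still lies in exactly one truncated block, so we have a TD$(k-1,n)$. A point $x\notin G$ lies together with $g_m$ in exactly one block, so $x$ lies in exactly one block of $\mathcal{C}_m$. Hence each $\mathcal{C}_m$ is a resolution class and the design is an RTD$(k-1,n)$.

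\emph{(3)$\Rightarrow$(1).} Reverse the last step. Given an RTD$(k-1,n)$ with resolution classes $\mathcal{C}_1,\ldots,\mathcal{C}_n$, add a new group $\{g_1,\ldots,g_n\}$ and adjoin $g_m$ to every block of $\mathcal{C}_m$. Then check the pairs involving the new group:
\begin{itemize}
\item two new points $g_m,g_{m'}$ lie in distinct groups' sense only through the new group, so no condition is needed there;
\item a new point $g_m$ and an old point $x$ lie together in exactly one block, because $x$ lies in exactly one block of $\mathcal{C}_m$.
\end{itemize}
Together with the old pairs, this gives a TD$(k,n)$.

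It remains to turn a TD$(k,n)$ into MOLS. Index two of its groups by $\mathbb{Z}_n$ to serve as rows and columns, and label the points of the remaining $k-2$ groups by $\mathbb{Z}_n$. Any row point $i$ and column point $j$ lie in a unique block, and we define $L_p(i,j)$ as that block's point in the $p$-th remaining group.
\begin{itemize}
\item $L_p$ is Latin: a row point together with a group-$p$ symbol lies in a unique block, so each symbol appears once per row, and likewise once per column.
\item $L_p$ and $L_q$ are orthogonal: each pair of symbols from groups $p$ and $q$ lies in a unique block, hence occupies exactly one cell.
\end{itemize}
So we obtain a $(k-2)$-MOLS$(n)$, which closes the cycle.

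None of these steps presents a real obstacle; the work is bookkeeping. The point needing most care is the degenerate range of $k$. For $k=2$ (and $k=3$ for the RTD) there are no squares, and (1) must be read as vacuous, which is consistent with a TD$(2,n)$ and an MOA$(n^2;n^2,2)$ always existing. I will check these small cases separately.
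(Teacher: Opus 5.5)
Your cycle (1)$\Rightarrow$(4)$\Rightarrow$(2)$\Rightarrow$(3)$\Rightarrow$(1) is correct and is the standard argument behind this result. The paper itself gives no proof and simply cites the Handbook. Your (1)$\Rightarrow$(4) step is also exactly the array $E$ of Construction~\ref{daidong} with $k_1=k$.

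One detail should be stated explicitly when you go back from (3). For $k-1\geq 2$, a TD$(k-1,n)$ has $n^2$ blocks and each resolution class has $n$ blocks, so there are exactly $n$ classes. This matches the $n$ new points you adjoin.
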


It is also well-known that an $\alpha$-parallel class of TD$(k,n)$ is
equivalent to a common $\alpha$-transversal in the $k-2$ mutually
orthogonal Latin squares of order $n$. Consequently, an
$\alpha$-resolvable TD$(k,n)$ exists if and only if the $k-2$ mutually
orthogonal Latin squares of size $n$ admit $\frac{n}{\alpha}$ disjoint
$\alpha$-transversals. The relationship between $\alpha$-resolvable TDs and MOAs is established in the following lemma.
\begin{lem}[\!\!\cite{Chen}]\label{rtd}
An $\alpha$-resolvable TD$_\lambda(k, n)$ exists if and only if
an MOA$(\lambda n^2;n^k(\frac{\lambda n}{\alpha})^1,2)$ exists.
\end{lem}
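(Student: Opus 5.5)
We prove both directions of Lemma~\ref{rtd} by a direct dictionary between the blocks of the design and the runs of the array. Write the TD$_\lambda(k,n)$ with groups $G_1,\ldots,G_k$, and identify each $G_i$ with $\mathbb{Z}_n$ via a fixed bijection. A TD$_\lambda(k,n)$ has exactly $\lambda n^2$ blocks. Indeed, fix two groups $G_1,G_2$; each of the $n^2$ pairs in $G_1\times G_2$ lies in exactly $\lambda$ blocks, and each block meets $G_1$ and $G_2$ in exactly one point each, so it contains exactly one such pair.

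\emph{From the design to the array.} Suppose the design is $\alpha$-resolvable. Each $\alpha$-resolution class covers every point of $G_1$ exactly $\alpha$ times and each block meets $G_1$ once, so a class contains $n\alpha$ blocks. Hence there are $\lambda n^2/(n\alpha)=\lambda n/\alpha$ classes; label them by $\mathbb{Z}_{\lambda n/\alpha}$. For each block $B$ we form the run
\[(x_1,\ldots,x_k,c),\]
where $x_i$ is the point of $B\cap G_i$ and $c$ is the label of the class containing $B$. We now check strength two.
\begin{itemize}
\item For two of the first $k$ columns, each pair appears exactly $\lambda=\lambda n^2/n^2$ times, by the TD property.
\item For a column $i\le k$ together with the last column, the pair $(x,c)$ appears exactly $\alpha$ times, since $x$ lies in exactly $\alpha$ blocks of class $c$. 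The required frequency is $\lambda n^2/(n\cdot \lambda n/\alpha)=\alpha$, so this matches.
\end{itemize}
This gives an MOA$(\lambda n^2;n^k(\frac{\lambda n}{\alpha})^1,2)$.

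\emph{From the array to the design.} Conversely, take an MOA$(\lambda n^2;n^k(\frac{\lambda n}{\alpha})^1,2)$. For its first $k$ columns, let the point set be $\{1,\ldots,k\}\times\mathbb{Z}_n$ and let the groups be the sets $\{i\}\times\mathbb{Z}_n$. Each run gives the block $\{(i,x_i)\}$. Strength two on the first $k$ columns means any two points from distinct groups lie in exactly $\lambda$ blocks, so this is a TD$_\lambda(k,n)$. Note that repeated blocks are allowed, which is permitted since $\mathcal{A}$ is a collection. Partition the blocks according to the symbol in the last column. Orthogonality of column $i$ with the last column says that the point $(i,x)$ lies in exactly $\alpha$ blocks of each part, so every part is an $\alpha$-resolution class.

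The only point needing care is the counting: identifying $\lambda n/\alpha$ as the number of classes and $\alpha$ as the required pair frequency. This uses only that $\alpha\mid\lambda n$, which is implicit in both hypotheses. No real obstacle is expected.
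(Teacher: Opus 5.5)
Your proof is correct and complete: the paper gives no argument of its own here and simply quotes the lemma from Chen, Ji and Lei. Your block-to-run correspondence is the standard argument behind that result: the resolution-class label becomes the extra factor, and the counts are $\lambda n^2$ blocks, $n\alpha$ blocks per class, $\lambda n/\alpha$ classes, and pair frequency $\alpha$ between a group column and the class column.
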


\begin{lem}\label{65181}
There is an MOA$(108;6^518^1,2)$.
\end{lem}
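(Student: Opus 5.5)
The plan is to apply Lemma \ref{rtd} with $\lambda=3$, $n=6$, $k=5$ and $\alpha=1$. With these values $\frac{\lambda n}{\alpha}=18$ and $\lambda n^2=108$, so a resolvable TD$_3(5,6)$ is exactly equivalent to an MOA$(108;6^518^1,2)$. The whole task is therefore to produce an RTD$_3(5,6)$, i.e.\ to arrange $108$ runs over $6^5$ with every pair of symbols in every pair of columns occurring exactly $3$ times, and to split these runs into $18$ classes of $6$ runs such that within each class every column shows each symbol exactly once. The class index then serves as the sixth factor, with $18$ levels.

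To build the resolvable TD, I would use a difference matrix. I would look for a $5\times 18$ matrix $D=(d_{ij})$ over an abelian group $G$ of order $6$, taking $G=\mathbb{Z}_6$ first. The requirement is that for any two rows $i\neq i'$, the $18$ differences $d_{ij}-d_{i'j}$ cover every element of $G$ exactly $3$ times; this is a $(6,5;3)$ difference matrix.

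Given $D$, each column $j$ and each $x\in G$ yield the run $(d_{1j}+x,\ldots,d_{5j}+x,\,j)$. The difference property makes the first five columns an orthogonal array of strength two with index $3$. For a fixed column $j$, the six translates $x\in G$ make each of the first five coordinates run through all of $G$ exactly once. Hence pairing the sixth factor (level $j$) with any of the other columns gives each pair exactly once, which is the required balance since $108/(6\cdot 18)=1$. This also verifies the construction directly, without needing Lemma \ref{rtd} at all.

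The main obstacle is the existence of the $(6,5;3)$ difference matrix itself, because order $6$ is notoriously bad: there is no $(6,3;1)$ difference matrix, and by Lemma \ref{notexist} not even an MOA$(36;6^5s^1,2)$. For $\lambda=3$, however, the extra freedom should suffice. I would first normalize the first row to all zeros and prescribe the second row to take each element of $\mathbb{Z}_6$ three times. I would then search, by hand or by a short computer search, for rows three to five subject to the pairwise difference conditions, and exploit a symmetry (for example the multiplier $-1$, or a $\mathbb{Z}_3\times\mathbb{Z}_2$ decomposition) to shrink the search. If $\mathbb{Z}_6$ fails, I would switch to the nonabelian group $S_3$, or instead cite a known $(6,k;3)$ difference matrix with $k\geq5$ from \cite{Handbook}. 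Deleting rows of such a matrix if $k>5$ still leaves a valid matrix and gives the required RTD$_3(5,6)$.
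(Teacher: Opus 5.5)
\textbf{The reduction is fine; the object you propose to build does not exist.} Your reduction matches the paper's. By Lemma~\ref{rtd} with $\lambda=3$, $n=6$, $\alpha=1$, it suffices to produce an RTD$_3(5,6)$, and your direct check that a developed difference matrix would yield the MOA is correct. The gap is in how you obtain the RTD. No $(6,5;3)$ difference matrix exists over any group of order $6$; in fact not even a $(6,3;3)$ difference matrix does.

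Both $\mathbb{Z}_6$ and $S_3$ map homomorphically onto $\mathbb{Z}_2$ (reduction mod $2$, resp.\ the sign map). Applying this entrywise to a $(6,3;3)$ difference matrix gives a $3\times 18$ matrix over $\mathbb{Z}_2$ in which any two rows differ in exactly $9$ positions. These are three pairwise orthogonal $\pm1$ vectors of length $18$. Normalize the first row to all $+1$; then rows two and three are balanced. If $x$ is the number of positions where both are $+1$, they agree in exactly $2x$ positions, so $2x=9$, which is impossible.

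Equivalently, directly over $\mathbb{Z}_6$: normalize the first row to $0$. Row two, row three, and their difference each contain every element three times, so each sums to $3\cdot 15\equiv 3 \pmod 6$. But the difference row sums to $3-3=0$. This is the odd-index analogue of the Hall--Paige obstruction for groups with a nontrivial cyclic Sylow $2$-subgroup. So your hand or computer search must fail, switching to $S_3$ does not help, and there is no such matrix in the Handbook to cite.

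\textbf{What is missing.} You need an RTD$_3(5,6)$ whose parallel classes are not orbits of a base block under a group acting regularly on the six points of each group, since any such design is exactly a difference matrix. The paper supplies this as an explicit, non-developed list of $108$ blocks on $\mathbb{Z}_5\times\mathbb{Z}_2\times\mathbb{Z}_3$, with its partition into $18$ parallel classes (Appendix I). That list has at most a partial symmetry: a $\mathbb{Z}_3$ acts on the third coordinate of three of the groups. Your proposal leaves the existence of the key object to a search that cannot succeed, so it does not prove the lemma.
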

\begin{proof}
By Lemma \ref{rtd}, it suffices to construct an RTD$_3(5,6)$, which is provided in Appendix I.
\end{proof}

More generally, given a list of positive integers $\Sigma = [\sigma_1,\sigma_2,\ldots,\sigma_s ]$, a TD$_\lambda(k, n)$
is said to be $\Sigma$-\emph{resolvable} if its collection of blocks $\mathcal{B}$ admits a partition into sub-collections $\mathcal{B}_1, \mathcal{B}_2,\ldots, \mathcal{B}_s$, where for each $i=1,2,\ldots,s$, there is a $\sigma_i\in \Sigma$ such that $\mathcal{B}_i$ forms a $\sigma_i$-parallel class (or a $\sigma_i$-resolution classes).

An \emph{incomplete transversal design}, TD$_\lambda(k, u)-$TD$_\lambda(k, v)$, is a TD$_\lambda(k, u)$ with a subdesign TD$_\lambda(k, v)$ removed. We adopt the notation ITD$_\lambda(k,u;v)$ for such an incomplete transversal design TD$_\lambda(k, u) -$TD$_\lambda(k, v)$. Such an ITD may exist even when the omitted TD$_\lambda(k, v)$
does not. However, if the omitted subdesign exists, it can be combined with the incomplete transversal design to produce a TD$_\lambda(k, u)$.

\begin{lem}[\!\!\cite{Handbook}]\label{ITD=IM}
The existence of an ITD$(k,u;v)$ is
equivalent to the existence of a $(k-2)$-IMOLS$(u;v)$.
\end{lem}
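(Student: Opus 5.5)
We prove the equivalence the same way one proves (1)$\Leftrightarrow$(2) in Lemma \ref{equivalent}, keeping track of the hole. Two bookkeeping conventions are used throughout. First, in an ITD$(k,u;v)$ on point set $\mathcal{X}$ with groups $G_1,\ldots,G_k$, the removed subdesign TD$(k,v)$ lives on sets $H_i\subseteq G_i$ with $|H_i|=v$. The defining property is that a pair of points from distinct groups lies in exactly one block, except when both points lie in $\bigcup_i H_i$, in which case it lies in no block. Second, we identify every $G_i$ with a fixed $u$-set $V$ and every $H_i$ with a fixed $v$-subset $H\subseteq V$.

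\textbf{From IMOLS to ITD.} Let $L_1,\ldots,L_{k-2}$ be a $(k-2)$-IMOLS$(u;v)$ on $(V,H)$. Take points $V\times\{1,\ldots,k\}$ and groups $V\times\{i\}$. For every filled cell $(r,c)\notin H\times H$, form the block
\[\{(r,1),(c,2),(L_1(r,c),3),\ldots,(L_{k-2}(r,c),k)\}.\]
We then check the pair conditions group by group.
\begin{itemize}
\item Groups $1$ and $2$: the pair $(r,c)$ is covered exactly once if $(r,c)\notin H\times H$, and never otherwise.
\item Groups $1$ and $j+2$: row $r$ contains every symbol of $V$ once when $r\notin H$, and every symbol of $V\setminus H$ once when $r\in H$ (condition 3 of the ILS definition). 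So the pair $(r,x)$ is covered once unless $r,x\in H$, in which case it is never covered. Columns with group $2$ are handled the same way.
\item Groups $i+2$ and $j+2$: orthogonality says every pair in $(V\times V)\setminus(H\times H)$ appears exactly once. The number of filled cells is $u^2-v^2$, which equals the number of such pairs, so no pair from $H\times H$ can appear.
\end{itemize}
Hence the blocks form a TD$(k,u)$ with the subdesign on $H\times\{1,\ldots,k\}$ missing, that is, an ITD$(k,u;v)$.

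\textbf{From ITD to IMOLS.} Conversely, given an ITD$(k,u;v)$ with the identifications above, we define the squares. For $(r,c)\in(V\times V)\setminus(H\times H)$, let $B_{r,c}$ be the unique block through $(r,1)$ and $(c,2)$, and set $L_i(r,c)$ to be the point of $B_{r,c}$ in group $i+2$. Cells in $H\times H$ are left empty.
\begin{itemize}
\item If $r\in H$, then $L_i(r,c)\notin H$, since otherwise $B_{r,c}$ would contain a pair of hole points from distinct groups. Each $x\in V\setminus H$ occurs exactly once in row $r$, because the pair $(r,1),(x,i+2)$ lies in exactly one block. Rows $r\notin H$ contain every symbol of $V$ exactly once by the same argument, and columns are symmetric.
\item Orthogonality of $L_i$ and $L_j$ follows from the unique coverage of pairs between groups $i+2$ and $j+2$, except for pairs in $H\times H$, which are not covered.
\end{itemize}
Thus $L_1,\ldots,L_{k-2}$ form a $(k-2)$-IMOLS$(u;v)$.

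The only delicate point is the counting step in the first direction, which shows that no block carries two hole symbols. It follows because the number of filled cells, $u^2-v^2$, equals the number of required pairs.
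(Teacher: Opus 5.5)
Your proof is correct. The paper gives no proof and simply cites the Handbook, and your cell-to-block correspondence is the standard argument behind that citation (the same correspondence as in Lemma \ref{equivalent}, with the hole tracked).

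One step is left implicit in the ITD-to-IMOLS direction. The unique block through $(r,1),(x,i+2)$, or through $(x,i+2),(y,j+2)$ with $(x,y)\notin H\times H$, must meet groups $1$ and $2$ in a cell outside $H\times H$, so that it is one of your $B_{r,c}$. This follows from the fact you already use, that no block contains two hole points.
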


\begin{lem}[\!\!\cite{Ge}]\label{TianD}
If an ITD$_\lambda(k,u;v)$ and a TD$_\lambda(k,v)$ exist, then
a TD$_\lambda(k,u)$ exists. Furthermore, if an IRTD$_\lambda(k,u;v)$ and an RTD$_\lambda(k,v)$ exist, then an RTD$_\lambda(k,u)$ exists.
\end{lem}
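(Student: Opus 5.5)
We fill the hole of the incomplete design with the given subdesign. The key steps are to check that the union of the two block sets is a TD$_\lambda(k,u)$, and, in the resolvable case, to glue resolution classes together in pairs.

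\textbf{Setting up.} Let $(\mathcal{X},\mathcal{G},\mathcal{A})$ be an ITD$_\lambda(k,u;v)$ with groups $G_1,\ldots,G_k$, $|G_i|=u$. Let the hole be $\mathcal{H}=\bigcup_{i=1}^k H_i$ with $H_i\subseteq G_i$ and $|H_i|=v$. By definition:
\begin{itemize}
\item every block of $\mathcal{A}$ meets each $G_i$ in exactly one point;
\item a pair of points from distinct groups, not both in $\mathcal{H}$, lies in exactly $\lambda$ blocks;
\item a pair of points from distinct groups with both points in $\mathcal{H}$ lies in no block.
\end{itemize}
Take a TD$_\lambda(k,v)$ $(\mathcal{H},\{H_1,\ldots,H_k\},\mathcal{B})$, relabelling its groups so that they coincide with the $H_i$. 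Set $\mathcal{A}'=\mathcal{A}\cup\mathcal{B}$.

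\textbf{The filled design is a TD$_\lambda(k,u)$.}
\begin{itemize}
\item Each block of $\mathcal{B}$ meets each $H_i$, and hence each $G_i$, in exactly one point. The same holds for blocks of $\mathcal{A}$.
\item Take a pair $\{x,y\}$ from distinct groups $G_i,G_j$. If $x$ and $y$ are not both in $\mathcal{H}$, the pair lies in $\lambda$ blocks of $\mathcal{A}$ and in no block of $\mathcal{B}$, since $\mathcal{B}\subseteq 2^{\mathcal{H}}$.
\item If both $x,y\in\mathcal{H}$, the pair lies in no block of $\mathcal{A}$ and in exactly $\lambda$ blocks of $\mathcal{B}$.
\end{itemize}
Hence $(\mathcal{X},\mathcal{G},\mathcal{A}')$ is a TD$_\lambda(k,u)$.

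\textbf{The resolvable statement: counting classes.} In an RTD$_\lambda(k,u)$ a point $x\in G_i$ is joined to each of the $u$ points of another group $\lambda$ times, one block at a time. So $x$ lies in $r=\lambda u$ blocks, and there are $\lambda u$ parallel classes.

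\textbf{The resolvable statement: gluing.} We use the standard notion of an IRTD$_\lambda(k,u;v)$: its blocks are partitioned into
\begin{itemize}
\item $\lambda(u-v)$ parallel classes of the whole set $\mathcal{X}$, and
\item $\lambda v$ \emph{holey} parallel classes, each partitioning $\mathcal{X}\setminus\mathcal{H}$.
\end{itemize}
As a check on this count, a point outside $\mathcal{H}$ lies in $\lambda u$ blocks, while a point of $\mathcal{H}$ lies in $\lambda(u-v)$ blocks. The RTD$_\lambda(k,v)$ on $\mathcal{H}$ has exactly $\lambda v$ parallel classes, each partitioning $\mathcal{H}$. Pair these bijectively with the $\lambda v$ holey classes. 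The union of a holey class with its partner partitions $(\mathcal{X}\setminus\mathcal{H})\cup\mathcal{H}=\mathcal{X}$, so it is a parallel class. Together with the $\lambda(u-v)$ full classes, this resolves $\mathcal{A}'$, giving an RTD$_\lambda(k,u)$.

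The only real point requiring care is this bookkeeping of resolution classes: the number of holey classes must match the number of classes of the filling RTD. Everything else is direct verification.
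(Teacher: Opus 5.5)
Your proof is correct and is the standard hole-filling argument. The paper gives no proof of its own: it cites Ge and only remarks that the omitted subdesign ``can be combined with the incomplete transversal design,'' which is exactly your construction. Your description of an IRTD$_\lambda(k,u;v)$, with $\lambda(u-v)$ full and $\lambda v$ holey parallel classes, matches how the paper obtains and uses an IRTD$_3(5,22;3)$ in Lemma~\ref{22lam=3}, so pairing each holey class with one of the $\lambda v$ classes of the RTD$_\lambda(k,v)$ is the right bookkeeping.
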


\begin{lem}\label{22lam=3}
There is an MOA$(1452;22^566^1,2)$.
\end{lem}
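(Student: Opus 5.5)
By Lemma \ref{rtd} with $\lambda=3$, $n=22$, $k=5$ and $\alpha=1$, an MOA$(3\cdot 22^2;22^5(66)^1,2)=$ MOA$(1452;22^566^1,2)$ exists if and only if an RTD$_3(5,22)$ exists. So the whole task is to build an RTD$_3(5,22)$: a TD$_3(5,22)$ whose $3\cdot 22^2/22=66$ parallel classes each cover every point exactly once. The index $\lambda=3$ is needed because an RTD$(5,22)$ is equivalent to a TD$(6,22)$ (Lemma \ref{equivalent}), i.e.\ a $4$-MOLS$(22)$, which is not available by Lemma \ref{MOLS}(3). We therefore construct a $\lambda=3$ design directly and prove existence by exhibiting the blocks.

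The plan follows the filling strategy of Lemma \ref{TianD}, using the available $4$-IMOLS$(22;3)$. By Lemma \ref{ITD=IM}, that IMOLS is an ITD$(6,22;3)$. Deleting one group and using the points of the deleted group to index classes gives an incomplete resolvable design, an IRTD$(5,22;3)$. Its blocks split into $19$ parallel classes covering all points, plus $3$ partial classes that miss exactly the $5\cdot 3$ hole points. Taking three copies gives an IRTD$_3(5,22;3)$.

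The hole must then be filled with a resolvable TD$_3(5,3)$ whose classes are compatible with the partial classes. Here we need an RTD$_3(5,3)$: it has $27$ blocks, forming $9$ parallel classes of size $3$. One candidate is to take $\mathbb{F}_3^2$ and all lines $\{(x,\ ax+b)\}$ over suitable coefficient choices, repeated appropriately, so that it yields $9$ classes on the hole points. The $9$ partial classes of the thrice-repeated IRTD (missing only the hole points) combine one-to-one with these $9$ classes to form full parallel classes. The second part of Lemma \ref{TianD} then gives an RTD$_3(5,22)$ with $3\cdot19+9=66$ classes, as required.

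The main obstacle is the hole filling, specifically the existence of an RTD$_3(5,3)$. By Lemma \ref{rtd}, that is equivalent to an MOA$(27;3^5 3^1,2)$, i.e.\ an OA$(27;3^6,2)$. Such an array does exist, since $3^{13}$ strength-two arrays with $27$ runs exist. It remains to check that this array genuinely decomposes into $9$ resolution classes given by one column. That check is the point of Lemma \ref{rtd} itself, so the route closes.

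If this combination fails in detail, the fallback is the approach the paper uses for Lemma \ref{65181}. There we find an RTD$_3(5,22)$ by computer, using a cyclic development over $\mathbb{Z}_{22}$ or $\mathbb{Z}_{11}\times\mathbb{Z}_2$ with base blocks chosen so that each orbit splits into parallel classes, and list it in an appendix.
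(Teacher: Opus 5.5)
Your skeleton matches the paper's proof. You pass from the $4$-IMOLS$(22;3)$ to an ITD$(6,22;3)$, then to an IRTD$(5,22;3)$, and take three copies. You fill the hole with an RTD$_3(5,3)$ via Lemma~\ref{TianD} and convert the RTD$_3(5,22)$ with Lemma~\ref{rtd}. Your class count $3\cdot 19+9=66$ is also correct.

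The gap is at the step you yourself call the main obstacle. Lemma~\ref{rtd} with $\lambda=3$, $n=3$, $k=5$, $\alpha=1$ gives an extra factor with $\lambda n/\alpha=9$ levels. So an RTD$_3(5,3)$ is equivalent to an MOA$(27;3^59^1,2)$, not to an MOA$(27;3^53^1,2)$. An OA$(27;3^6,2)$ corresponds to $\alpha=3$. Its sixth column splits the $27$ blocks into three classes of nine blocks, each covering every point three times, and nothing forces these to refine into nine $1$-parallel classes.

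Your remark that the remaining check ``is the point of Lemma~\ref{rtd} itself'' is therefore circular. That check is exactly the existence of a $9$-level column orthogonal to the five $3$-level columns. The $\mathbb{F}_3^2$-lines sketch does not supply it either. A TD$(5,3)$ cannot exist ($k\le n+1$), so the fifth group must come from a genuinely index-$3$ structure, which the sketch never specifies. The computer-search fallback is not a proof.

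The repair is a single citation, and it is the one the paper uses. Lemma~\ref{ua=2346}(2) with $a=3$, $u=3$ gives an MOA$(27;3^59^1,2)$, hence an RTD$_3(5,3)$ by Lemma~\ref{rtd}.

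You can also build it directly. Index the runs by $(x,y,z)\in\mathbb{F}_3^3$. Let $(x,y)$ be the $9$-level column, and let $z+\alpha_ix+\beta_iy$ ($1\le i\le 5$, with distinct $(\alpha_i,\beta_i)\in\mathbb{F}_3^2$) be the $3$-level columns. Every pair of columns is then a surjective linear image of $\mathbb{F}_3^3$, hence uniform. Each $9$-level class $\{(x,y)\}\times\mathbb{F}_3$ is a parallel class. With this in place, the rest of your argument goes through unchanged.
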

\begin{proof}
By Lemma \ref{MOLS}, a $4$-IMOLS$(22;3)$ exists. Hence, an ITD$(6,22;3)$ exists by Lemma \ref{ITD=IM}, which further implies the existence of an IRTD$(5,22;3)$, and repeated it three times to obtain an IRTD$_3(5,22;3)$.
An RTD$_3(5,3)$ exists, as it is equivalent to an MOA$(27;3^59^1,2)$ from Lemma \ref{ua=2346}. Applying Lemma \ref{TianD}, we then obtain an RTD$_3(5,22)$, which yields an MOA$(1452;22^566^1,2)$.
\end{proof}

The proof of the following result is similar to that of Construction \ref{daidong} and is therefore omitted.

\begin{con}\label{daidongTD}
If both an ITD$_\lambda(k,n;n_1)$ and a TD$_\lambda(k_1,n_1)$ exist, then there is an MOA$(\lambda n^2; n^{k_{1}}(\frac{n}{n'})^{k-k_{1}}, 2)$, where $n'\mid n$, $n'\geq n_{1}$ and $k\geq k_{1}$.
\end{con}

The following is a special case of Construction 2 in \cite{Rees1}, due to Rees. A similar construction by the same author later appeared in \cite{Rees,Rees2} in the context of transversal designs.

\begin{con}[\!\!\cite{Rees1}]\label{rtd1}
If there exists a $[1,m]$-resolvable TD$_{\lambda_1}(k, n)$ and an RTD$_{\lambda_2}(k,m)$, then
there exists an RTD$_{\lambda_1\lambda_2}(k,mn)$.
\end{con}

\begin{lem}\label{15RTD}
There is a $[1,5]$-resolvable TD$_3(5,2)$.
\end{lem}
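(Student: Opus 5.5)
The plan is to write down the design explicitly as a binary array. A TD$_3(5,2)$ has $\lambda n^2=12$ blocks. Each block meets each of the $5$ groups in exactly one point, so we can view it as a row of a $12\times 5$ array over $\{0,1\}$. The TD condition then says exactly that this array is an MOA$(12;2^5,2)$ (an OA of index $3$). A $1$-parallel class of a TD$(5,2)$ consists of $2$ blocks covering every point once, that is, a pair of complementary rows. A $5$-parallel class consists of $10$ blocks covering every point exactly $5$ times. Since $12=2+10$, the required partition is one $1$-parallel class together with one $5$-parallel class, and it suffices to find an MOA$(12;2^5,2)$ containing two complementary rows.

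\textbf{Step 1: the OA.} Start from a normalized Hadamard matrix of order $12$ (for instance the Plackett--Burman design). Deleting the all-ones column and replacing $\pm1$ by $1,0$ gives an OA$(12;2^{11},2)$ whose first row is $(0,0,\ldots,0)$. Any other row contains exactly six $1$'s among the $11$ columns. Fix such a row $r$, and select $5$ columns in which $r$ has entry $1$.

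\textbf{Step 2: the partition.} By Lemma \ref{quyinz}, the restriction to these $5$ columns is still an MOA$(12;2^5,2)$, so its rows form the blocks of a TD$_3(5,2)$. In this restricted array the first row is $00000$ and $r$ becomes $11111$. These two blocks cover every point exactly once, so they form a $1$-parallel class.

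Each symbol occurs $6$ times in each column of the OA. Removing the two complementary rows lowers each count by exactly one, so every point lies in exactly $5$ of the remaining $10$ blocks. Hence those $10$ blocks form a $5$-parallel class, and the design is $[1,5]$-resolvable.

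For the write-up I would list the $12$ rows explicitly, marking the two rows of the $1$-parallel class, so that the reader can check the index-$3$ pair condition directly. The only point needing care is choosing the $5$ columns so that the two rows are genuinely complementary. This is guaranteed because every non-initial row of the normalized Hadamard OA has six $1$'s, which is more than $5$, so I do not expect a real obstacle here.
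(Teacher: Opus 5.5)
Your proof is correct, and it reaches the design by a different route from the paper. The paper writes down twelve explicit blocks on $\mathbb{Z}_{10}$ with groups $\{i,i+5\}$. It declares $\{b_1,b_2\}$, which are the rows $00000$ and $11111$, to be the $1$-parallel class and the other ten blocks to be the $5$-parallel class. It then leaves both the index-$3$ pair condition and the resolution property to the reader as ``easily checked.''

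You instead obtain the underlying MOA$(12;2^5,2)$ as a projection of the Hadamard OA$(12;2^{11},2)$. You also prove the resolvability rather than verify it: in any two-level OA of index $\lambda$ with two complementary rows, each symbol occurs $2\lambda$ times per column, so those two rows form a $1$-parallel class and the rest a $(2\lambda-1)$-parallel class. This explains the paper's example, since its $b_1,b_2$ are exactly such a complementary pair. It also generalizes at once: a Hadamard matrix of order $4t$ gives a $[1,2t-1]$-resolvable TD$_t(k,2)$ for every $k\le 2t$. The paper's list, by contrast, is self-contained and finitely checkable without appeal to Hadamard matrices.

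One wording point needs fixing. For the first row to become $0\cdots 0$ and the other rows to have six $1$'s, the substitution must be $+1\mapsto 0$, $-1\mapsto 1$. Under the opposite reading of ``replacing $\pm1$ by $1,0$,'' choosing columns where $r$ has entry $1$ would make the two selected rows equal rather than complementary. The robust phrasing is to choose $5$ of the $6$ columns in which $r$ differs from the first row.
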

\begin{proof}
Let the point set be $\mathcal{X}=\mathbb{Z}_{10}$, the group set be
$\mathcal{G}=\{\{0,5\},\{1,6\},\{2,7\},\{3,8\},\{4,9\}\}$ and the block set be
$\mathcal{B}=\{b_1,b_2,\cdots,b_{12}\}$, where

\mbox{}\hspace{0.4in}
$b_1=\{0,1,2,3,4\}$, $b_2=\{5,6,7,8,9\}$, $b_3=\{0,1,2,8,9\}$,  $b_4=\{0,1,7,3,9\}$,

\mbox{}\hspace{0.4in}
$b_5=\{0,6,2,8,4\}$,   $b_6=\{0,6,7,3,9\}$,
$b_7=\{0,6,7,8,4\}$,  $b_8=\{5,1,2,8,9\}$,

\mbox{}\hspace{0.34in}
$b_9=\{5,1,7,3,4\}$,  $b_{10}=\{5,1,7,8,4\}$,
$b_{11}=\{5,6,2,3,9\}$,  $b_{12}=\{5,6,2,3,4\}$.

\noindent
It is easily checked that $(X,\mathcal{G},\mathcal{B})$ is a $[1,5]$-resolvable TD$_3(5,2)$, where
$\mathcal{P}_1=\{b_1,b_2\}$ is a $1$-parallel class and
 $\mathcal{P}_2=\{b_3,b_4, \ldots ,b_{12}\}$ is a $5$-parallel class.
\end{proof}

\begin{lem}\label{300}
There is an MOA$(300;10^530^1,2)$.
\end{lem}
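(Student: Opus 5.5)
By Lemma \ref{rtd} with $\lambda=3$, $n=10$, $k=5$ and $\alpha=1$, an MOA$(300;10^530^1,2)$ exists if and only if an RTD$_3(5,10)$ exists. Indeed $\lambda n^2=300$ and $\lambda n/\alpha=30$. So the plan is to construct an RTD$_3(5,10)$. We cannot simply take three copies of an RTD$(5,10)$, because that design would be equivalent to a $4$-MOLS$(10)$, and its existence is not available (Lemma \ref{MOLS}(3) excludes $n=10$). Instead I will use Rees' product-type Construction \ref{rtd1} with $n=2$, $m=5$, $\lambda_1=3$ and $\lambda_2=1$.

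\textbf{Ingredients.} Lemma \ref{15RTD} supplies a $[1,5]$-resolvable TD$_3(5,2)$. Construction \ref{rtd1} also needs an RTD$(5,5)$. By Lemma \ref{equivalent}, an RTD$(5,5)$ is equivalent to a TD$(6,5)$, hence to a $4$-MOLS$(5)$. This exists because $5$ is a prime power; Lemma \ref{MOLS}(3) also covers it, since $5\notin\{2,3,4,6,10,22\}$.

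\textbf{Construction.} Feeding these two designs into Construction \ref{rtd1} gives an RTD$_{3\cdot 1}(5,5\cdot 2)=$ RTD$_3(5,10)$. Applying Lemma \ref{rtd} with $\alpha=1$ then yields the desired MOA$(300;10^530^1,2)$.

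The main point to check is that the hypotheses of Construction \ref{rtd1} match exactly. The small design must be $[1,m]$-resolvable with $m$ equal to the order of the RTD being inflated, here $m=5$. The block size $k=5$ must agree in both ingredients. The parameter bookkeeping must also be right: the $1$-parallel class of the TD$_3(5,2)$ contributes one resolution class after inflation. The $5$-parallel class, consisting of ten blocks, is combined with the resolution classes of the RTD$(5,5)$ to produce parallel classes of the inflated design. Counting gives $\lambda n^2/n=3\cdot 10=30$ parallel classes in total. This count matches the $30$ levels of the last factor, which confirms the parameters.

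If Construction \ref{rtd1} were not accepted as a black box, the fallback is to carry out the inflation directly. Each block of the $5$-parallel class is replaced by a copy of the TD$(5,5)$ on the corresponding groups of size $5$. The $5$-fold parallel class is then split into five parallel classes, using the five-fold covering together with the resolution of the RTD$(5,5)$. I expect this splitting to be the only delicate step.
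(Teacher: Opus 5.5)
Your proof is correct and is exactly the paper's argument: the $[1,5]$-resolvable TD$_3(5,2)$ of Lemma~\ref{15RTD} and an RTD$(5,5)$ (from a $4$-MOLS$(5)$) are fed into Construction~\ref{rtd1} to give an RTD$_3(5,10)$, and Lemma~\ref{rtd} with $\alpha=1$ turns this into the MOA$(300;10^530^1,2)$. Your side bookkeeping has a slip, though it does not affect the main argument: after inflation the $1$-parallel class yields $5$ parallel classes (one per resolution class of the RTD$(5,5)$), and the $5$-parallel class yields $25$, not one and five respectively; this is how the total of $30$ actually arises.
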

\begin{proof}
By Lemma \ref{15RTD}, there exists a $[1,5]$-resolvable TD$_3(5,2)$, and Lemma \ref{MOLS} guarantees the existence of an RTD$(5,5)$.
Applying Construction \ref{rtd1} yields an RTD$_3(5,10)$. Consequently, the desired
MOA$(300;10^530^1,2)$ is obtained via Lemma \ref{rtd}.
\end{proof}

\section{Proof of Theorem \ref{mainR}}

This section is devoted to the proof of Theorem \ref{mainR}.

\begin{lem}\label{lem3.1}
Let $a,b,c,d,e$ and $f$ be positive integers greater that $1$,
$M=\operatorname{lcm}\{\alpha\beta:\{\alpha,\beta\}\subseteq \{a,b,c,d,e,f\}\}$.
If there is $\{x,y\}\subseteq\{a,b,c,d,e,f\}$ such that $\gcd\{x,y\}=1$, then

(1) there exists an MOA$(M;abcdef,2)$ for any $abcdef\not\in\{6^{4}3^{1}2^{1},10^{4}5^{1}2^{1},2^{5}3^{1}\}$,
and

(2) there exists an MOA$(uM;abcdef,2)$ for any $abcdef\in\{6^{4}3^{1}2^{1},10^{4}5^{1}2^{1},2^{5}3^{1}\}$, where $u>1$.
\end{lem}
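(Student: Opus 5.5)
Assume without loss of generality that $\gcd\{e,f\}=1$. The plan is to merge the two coprime factors into one factor, use the five-factor result, and then split the merged factor back apart.

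\textbf{Step 1: merge and split.} Put $g=ef$ and consider the five-factor type $abcdg$. By Lemma \ref{k=5} there is an MOA$(N';abcdg,2)$ with $N'=M'$ whenever $abcdg\notin\mathcal{L}_5$, where
\[
M'=\operatorname{lcm}\{\alpha\beta:\{\alpha,\beta\}\subset\{a,b,c,d,g\}\}.
\]
Applying Lemma \ref{-1} with $k=5$, $t=2$, $b=g=ef$ and $\gcd\{e,f\}=1$ gives an MOA$(N_0;abcdef,2)$ with $N_0=M'$. Lemma \ref{-1} also asserts that this equals $\operatorname{lcm}$ over pairs from $\{a,b,c,d,e,f\}$. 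As a check, $\operatorname{lcm}\{ae,af\}=a\cdot ef$ because $\gcd(e,f)=1$, and $ef$ divides $M$ since $ef$ is itself one of the pairwise products. Hence $N_0=M$, which proves part (1) for every type such that some coprime pair $\{x,y\}$ gives a merged type outside $\mathcal{L}_5$.

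\textbf{Step 2: merged types inside $\mathcal{L}_5$.} The remaining cases are those where, for every coprime pair, the merged type $abcd(xy)$ lies in $\mathcal{L}_5$. The members of $\mathcal{L}_5$ containing a level that is a product of two coprime integers greater than $1$ are:
\begin{itemize}
\item $6^42^1$ and $6^43^1$, where $6=2\cdot3$ comes from the merge, so the original types are $6^32^13^12^1=6^32^23^1$ or $6^33^22^1$;
\item $6^5$, giving $6^43^12^1$;
\item $10^5$, giving $10^45^12^1$;
\item $2^4g$ with $g\equiv2\pmod4$, $g=2h$ where $h$ is odd and $h>1$, giving $2^5h^1$; the subcase $h=3$ is $2^53^1$.
\end{itemize}

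\textbf{Step 3: the non-exceptional leftovers.}
\begin{itemize}
\item For $6^32^23^1$ and $6^33^22^1$, $M=36$, and Lemma \ref{36100}(1) supplies an MOA$(36;\cdot,2)$ directly.
\item For $2^5h^1$ with $h\ge5$ odd, $M=4h$, and I take another route. Compress (Lemma \ref{jj}) from an array obtained by taking multiples (Lemma \ref{jb}) of an MOA$(4;2^3,2)$. More directly, use Lemma \ref{jf}: juxtapose MOA$(8;2^54^1,2)$-type pieces. Since that array does not exist, I would instead juxtapose an MOA$(20;2^55^1,2)$ from Lemma \ref{ua=2346}(5) with copies of an MOA$(8;2^52^1,2)=$MOA$(8;2^6,2)$. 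This gives last level $5+2j$ at run size $4(5+2j)$, which covers every odd $h\ge5$.
\end{itemize}

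\textbf{Step 4: part (2), the three exceptional types.}
\begin{itemize}
\item $6^43^12^1$ with $M=36$: use Corollary \ref{jf11}, so I need arrays of sizes $72$ and $108$. Take multiples of an MOA$(36\cdot u;6^4\ldots)$; concretely, merge to $6^5$ and use an MOA$(ma^2;6^6,2)$ with $m=2,3$ from Lemma \ref{a=2346}. Compressing one column $6\to3$ and another $6\to2$ via Lemma \ref{jj} gives MOA$(72;6^43^12^1,2)$ and MOA$(108;6^43^12^1,2)$.
\item $10^45^12^1$: do the same with MOA$(200;10^6,2)$ and MOA$(300;10^6,2)$ from Lemma \ref{a=2346}.
\item $2^53^1$ with $M=12$: compress $2^6$ arrays of sizes $2\cdot4$ and $3\cdot4$ from Lemma \ref{a=2346}. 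These only give $2^6$, so instead use $2^53^1$ of sizes $24$ and $36$ from Lemma \ref{ua=2346}(1) with $u=2,3$: compress $4u\to$ level $3$ and, for $u=2$, take $8\cdot3=24$.
\end{itemize}

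\textbf{Main obstacle.} The main difficulty is the careful case enumeration in Step 2, namely confirming that no other coprime pair escapes $\mathcal{L}_5$, together with producing the $2^5h^1$ family and both sizes for $2^53^1$. For $2^53^1$ I would try the juxtaposition MOA$(24;2^4 4^1 6^1,2)$-based split first: split the $6$ into $2\cdot3$ and the $4$ into $2\cdot2$ via Lemma \ref{split}, then delete a column (Lemma \ref{quyinz}).
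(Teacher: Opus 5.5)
\textbf{Verdict: genuine gap, in the type $2^53^1$.} Your overall plan is the paper's: merge a coprime pair, apply Lemma \ref{k=5} and Lemma \ref{-1}, then treat the merged types that land in $\mathcal{L}_5$ separately. Several of your sub-steps are correct:
\begin{itemize}
\item $6^32^23^1$ and $6^33^22^1$ via Lemma \ref{36100}.
\item $2^5h^1$ with $h\geq5$ odd, by repeatedly juxtaposing MOA$(20;2^55^1,2)$ with MOA$(8;2^6,2)$.
\item $6^43^12^1$ and $10^45^12^1$, by compressing MOA$(ma^2;a^6,2)$, $m=2,3$, from Lemma \ref{a=2346}. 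This is a valid and slightly more direct alternative to the paper's splitting of MOA$(ua^2;a^5,2)$.
\end{itemize}

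The gap is that you never produce an array of size $36$ for $2^53^1$. Corollary \ref{jf11} needs arrays of sizes $2M=24$ and $3M=36$. Lemma \ref{ua=2346}(1) only gives MOA$(8u;2^5(4u)^1,2)$ for $u=2,3$, i.e.\ sizes $16$ and $24$, not $24$ and $36$.
\begin{itemize}
\item The $u=3$ array compresses ($12\to3$) to MOA$(24;2^53^1,2)$.
\item The $u=2$ array has level $8$, which cannot be compressed to $3$, and your patch ``take $8\cdot3=24$'' again yields size $24$.
\item Your fallback in the last paragraph, splitting MOA$(24;2^44^16^1,2)$, also gives only size $24$.
\end{itemize}
Arrays of size $24$ alone reach $12u$ only for even $u$. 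So MOA$(36;2^53^1,2)$, and with it every odd $u\geq3$, is missing.

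The repair is easy. Apply Lemma \ref{jb} with $n=3$ to MOA$(12;2^6,2)$ from Lemma \ref{a=2346} to get MOA$(36;2^56^1,2)$, then compress $6\to3$. This is exactly the route you discarded with ``these only give $2^6$''. Alternatively, as the paper does, compress the three $6$-level columns of MOA$(36;6^32^23^1,2)$ from Lemma \ref{36100} to level $2$.

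A smaller point in Step 2. The level $g=2h\equiv2\pmod 4$ can also split as $g=(2a')\cdot b$ with $a'>1$. This gives the type $2^4(2a')^1b^1$, not $2^5h^1$. It is not a leftover, because the coprime pair $(2,b)$ merges to $2^3(2a')^1(2b)^1\notin\mathcal{L}_5$, so your Step 1 covers it. You should say so explicitly, since this is precisely the ``no other coprime pair escapes'' check you flagged.
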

\begin{proof}
Without loss of generality, suppose that $x=a$ and $y=b$, and set $N=\operatorname{lcm}\{uv:\{u,v\}\subseteq\{ab,c,d,e,f\}\}$.
By Lemma \ref{k=5}, there exists an MOA$(N;(ab)^1cdef,2)$ except for $(ab)^1cdef\in \{6^42^1,6^43^1,6^5\}\cup \{(4k+2)^12^4: k\geq 1\}$
and except possibly for $(ab)^1cdef=10^5$. It is readily verified that
$M=N$ since $\gcd\{a,b\}=1$. By Lemma \ref{-1}, there exists an MOA$(M;abcdef,2)$ except for $abcdef\in \{6^33^12^2,6^33^22^1,6^43^12^1\}\cup \{(2k+1)^12^5: k\geq 1\}$ and except possibly for $abcdef=10^45^12^1$.

There does not exist an MOA$(36;6^{4}3^{1}2^{1},2)$, since there is no an MOA$(36;6^{4},2)$ by Lemma \ref{notexist}. However, both an MOA$(36;6^32^23^1,2)$ and an MOA$(36;6^33^22^1,2)$ exist by Lemma \ref{36100}. Furthermore, by Lemma \ref{k=5}, there exists an MOA$((2a)^2 u;(2a)^5,2)$ for any $u>1$ and $a\in\{3,5\}$.
Applying Lemma \ref{split} then yields an MOA$((2a)^2 u;(2a)^4a^12^1,2)$.

Next, we consider the existence of an MOA$(2(4k+2);2^5(2k+1)^1,2)$ for $k\geq1$.
For $k=1$, Lemma \ref{notexist} shows that an MOA$(12;2^53^1,2)$ does not exist.
On the other hand, Lemma \ref{ua=2346} gives an MOA$(24;2^44^16^1,2)$ and Lemma \ref{36100} gives an MOA$(36;6^32^23^1,2)$. Then applying Lemma \ref{jj} yields
an MOA$(24;2^53^1,2)$ and an MOA$(36;2^53^1,2)$. Consequently,
an MOA$(12u;2^53^1,2)$ exists any $u>1$ by Corollary \ref{jf11}.
For $k=2$, there exists an MOA$(20;2^55^1,2)$ by Lemma \ref{ua=2346}.
For $k\geq3$, Lemma \ref{a=2346} ensures the existence of an MOA$(8;2^6,2)$.
Applying Lemma \ref{jb} with $n=k-2$ then yields an MOA$(8(k-2);2^5(2(k-2))^1,2)$. Additionally, applying Lemma \ref{jf} with an MOA$(20;2^55^1,2)$ gives an MOA$(2(4k+2);2^5(2k+1)^1,2)$.
\end{proof}

In what follows, we only consider positive integers $a,b,c,d,e,f$ satisfying
$\gcd\{x,y\}>1$ for any subset $\{x,y\}\subseteq\{a,b,c,d,e,f\}$. We divide the analysis into the following six cases:\\
(1) $a=b=c=d=e=f$; \\
(2) $a=c=d=e=f\neq b$; \\
(3) $a=d=e=f$, $a\neq b$, $a\neq c$;\\
(4) $a=e=f$, $a\neq b$, $a\neq c$, $a\neq d$;\\
(5) $a=f$, $a\neq b$, $a\neq c$, $a\neq d$, $a\neq e$ and no three of
$b,c,d,e$ are all equal; and \\
(6) $a,b,c,d,e,f$ are pairwise distinct.

For convenience, we set $T=\{2,3,4,6,10,22\}$ throughout this section.

\subsection*{ \textbf{Case 1: \ $a=b=c=d=e=f$}}

\begin{lem}\label{lem3.2.1}
$(1)$ There is an MOA$(a^2;a^6,2)$ for any $a\not\in T$.

$(2)$ There is an MOA$(ua^2;a^6,2)$ for any $a\in T$, where $u>1$.
\end{lem}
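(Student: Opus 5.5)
Part (1) follows from the equivalence between orthogonal arrays and mutually orthogonal Latin squares. For $a\notin T=\{2,3,4,6,10,22\}$, Lemma \ref{MOLS}(3) gives a $4$-MOLS$(a)$. By Lemma \ref{equivalent}, with $k=6$ so that $k-2=4$, this is equivalent to a TD$(6,a)$ and hence to an MOA$(a^2;a^6,2)$. Since $M=a^2$ for type $a^6$, this is exactly the required statement. No further work is needed here.

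For part (2) we must produce an MOA$(ua^2;a^6,2)$ for every $u>1$ and every $a\in T$. The plan is to reduce to the two sizes $2a^2$ and $3a^2$ via Corollary \ref{jf11}, taking $M=a^2$. Lemma \ref{a=2346} supplies an MOA$(ma^2;a^6,2)$ for $m=2,3$ and $a\in\{2,3,4,6,10\}$. Corollary \ref{jf11} then gives an MOA$(ua^2;a^6,2)$ for all $u\ge 2$ and all these values of $a$.

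The remaining value, and the main obstacle, is $a=22$. Lemma \ref{a=2346} does not cover it, so we need an MOA$(2\cdot 22^2;22^6,2)$ and an MOA$(3\cdot 22^2;22^6,2)$ from results stated earlier.

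For size $3\cdot 22^2=1452$, Lemma \ref{22lam=3} gives an MOA$(1452;22^566^1,2)$. Since $22\mid 66$, Lemma \ref{jj} compresses the last factor to produce an MOA$(1452;22^6,2)$.

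For size $2\cdot 22^2$, Lemma \ref{ua=2346}(3) with $a=22$ gives an MOA$(2\cdot 22^2;22^5 44^1,2)$. Compressing $44$ to $22$ by Lemma \ref{jj} yields an MOA$(968;22^6,2)$.

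Corollary \ref{jf11} then completes the case $a=22$, and with it part (2).
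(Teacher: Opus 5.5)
Your proof is correct. Part (1) and the cases $a\in\{2,3,4,6\}$ coincide with the paper's argument. The difference is in how you handle $a\in\{10,22\}$.

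The paper treats these two values together using the product construction (Lemma~\ref{jb1}). It writes $a=2s$ with $s\in\{5,11\}$. It then multiplies an MOA$(s^2;s^6,2)$, which comes from a $4$-MOLS$(s)$, by an MOA$(4u;2^6,2)$. The latter is available for every $u>1$ from the already-settled case $a=2$, so this produces an MOA$(u(2s)^2;(2s)^6,2)$ in one step.

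You instead take $a=10$ directly from Lemma~\ref{a=2346}, which does list $10$. For $a=22$ you build the two sizes $2\cdot 22^2$ and $3\cdot 22^2$ by compression. One comes from the MOA$(968;22^544^1,2)$ of Lemma~\ref{ua=2346}(3), the other from the MOA$(1452;22^566^1,2)$ of Lemma~\ref{22lam=3}. You then apply Corollary~\ref{jf11}.

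Both routes are valid, and yours is not circular. Lemma~\ref{22lam=3} is proved from the $4$-IMOLS$(22;3)$ and an RTD$_3(5,3)$, without using the present lemma.

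The trade-off is this. Your version runs every $a\in T$ through the same ``sizes $2a^2$ and $3a^2$'' mechanism, but it relies on heavier ingredients: a cited array with a $44$-level factor, and the incomplete resolvable TD filling behind Lemma~\ref{22lam=3}. The paper's product argument needs only $4$-MOLS of orders $5$ and $11$ together with the $a=2$ case. It also extends at once to any $a=2s$ for which an MOA$(s^2;s^6,2)$ exists.
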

\begin{proof}
$(1)$ The conclusion immediately holds by Lemma \ref{MOLS} and Lemma \ref{equivalent}.

$(2)$ For $a\in \{2,3,4,6\}$, Lemma \ref{a=2346} guarantees the existence of both an  MOA$(2a^2;a^6,2)$ and an MOA$(3a^2;a^6,2)$. Consequently, by Corollary \ref{jf11}, an
MOA$(ua^2;a^6,2)$ exists for every integer $u>1$. For $a\in\{10,22\}$, write $a=2s$ for $s\in\{5,11\}$. Note that an MOA$(s^2;s^6,2)$ and an MOA$(4u;2^6,2)$ exist. Applying Lemma \ref{jb1} then yields an MOA$((2s)^2u;(2s)^6,2)$, which is precisely an MOA$(ua^2;a^6,2)$.
\end{proof}

\subsection*{ \textbf{Case 2: $a=c=d=e=f\neq b$}}

\begin{lem}\label{lem3.2.3}
Let $a,b$ be integers greater than $1$ with $a\neq b$, $\gcd\{a,b\}>1$, and let $M= \operatorname{lcm}\{a^2,ab\}=\frac{a^2b}{\gcd\{a,b\}}$.

(1) There exists an MOA$(M;a^5b^1,2)$ for any $a\not\in T$.

(2) An MOA$(8;2^54^1)$ does not exist. If $b\equiv 0 \pmod 4$ and $b\neq 4$, then there exists an MOA$(2b;2^5b^1,2)$; if
$b\equiv 2 \pmod 4$, then there does not exist an MOA$(2b;2^5b^1,2)$,
while there exists an MOA$(2bu;2^5b^1,2)$ for any integer $u>1$.

(3) There exists an MOA$(3b;3^5b^1,2)$ for any integer $b$ satisfying $3\mid b$ and $b>3$.

(4) There exists an MOA$(M;a^5b^1,2)$ for $a\in\{4,6\}$ except for $a^5b^1\in \mathcal{A}_1=\{4^52^1,6^52^1,6^53^1\}$, while
there exists an MOA$(ua^2;a^5b^1,2)$ for $a^5b^1\in\mathcal{A}_1$, where $u>1$.

(5) There exists an MOA$(M;a^5b^1,2)$ for $a\in\{10,22\}$
except possibly for $a^5b^1\in\mathcal{A}_2=\{10^52^1,10^55^1,22^52^1,22^511^1\}$.
For $a^5b^1\in\mathcal{A}_2$, there exists an MOA$(ua^2;a^5b^1,2)$, where $u>1$.
\end{lem}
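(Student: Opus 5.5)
The natural tool throughout is the juxtaposition/multiples pattern. Write $r=\gcd\{a,b\}$, so $M=a^2b/r=a\cdot(ab/r)$. The key observation is that $b/r$ and $a/r$ are coprime. Whenever an MOA$(a^2;a^6,2)$ exists, Lemma \ref{jb} with $n=b/r$ applied to the last column gives an MOA$(a^2 b/r; a^5 (ab/r)^1,2)$. Compressing the last column from $ab/r$ to $b$ by Lemma \ref{jj} (note $b\mid ab/r$) then yields an MOA$(M;a^5b^1,2)$.

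For part (1), with $a\notin T$, the required MOA$(a^2;a^6,2)$ comes from Lemma \ref{lem3.2.1}(1), and the argument above finishes it.

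The small values of $a$ need separate handling, and there I would combine the same trick with the explicit arrays of Lemma \ref{ua=2346} and Lemma \ref{a=2346}.

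\textbf{Part (2)} ($a=2$, $b$ even). Non-existence of MOA$(8;2^54^1,2)$ is Lemma \ref{notexist}. For $b\equiv 0\pmod 4$ with $b\ge 8$, write $b=4m$ with $m\ge2$.
\begin{itemize}
\item If $m$ is even, take $b=8\cdot(m/2)$: start from MOA$(8;2^6,2)$ and apply Lemma \ref{jb} with $n=b/2$ to get MOA$(4b;\dots)$. That is too large, so instead use Lemma \ref{jf}.
\item Concretely, MOA$(16;2^58^1,2)$ and MOA$(24;2^512^1,2)$ come from Lemma \ref{ua=2346}(1). MOA$(8j;2^5(2j)^1,2)$ comes from MOA$(8;2^6,2)$ by Lemma \ref{jb}. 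Juxtaposing MOA$(8j;2^5(2j)^1)$ with MOA$(16;2^58^1)$ (all have $N/s_6=4$) gives $2^5(2j+8)^1$ with $N=8j+16=2(2j+8)\cdot 2$.
\end{itemize}
The normalization here needs care. The needed quantity is $N=2b$, i.e.\ $N/b=2$; MOA$(16;2^58^1)$ has ratio $2$, and MOA$(4c;2^5(2c)^1)$ is obtained from MOA$(4;2^3)$-type pieces. So I would juxtapose copies of MOA$(16;2^58^1)$ and MOA$(24;2^512^1)$, both with ratio $2$, to get every $b=8x+12y$, i.e.\ all $b\equiv0\pmod4$ with $b\ge8$.

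For $b\equiv2\pmod4$, non-existence follows from Lemma \ref{lem3.1}-type parity counting. More simply, it follows from Lemma \ref{k=4}, since $2^3b^1$ with $b\equiv2\pmod 4$ needs $N>2b$, and column deletion would contradict that. Existence for $u>1$ comes from juxtaposing the pieces at ratio $2u$: MOA$(8u;2^5(4u)^1)$ and its multiples. Here I would use Corollary \ref{jf11} after building the $u=2,3$ cases via Lemma \ref{jf} from Lemma \ref{ua=2346}(1).

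\textbf{Part (3)} ($a=3$, $b=3m$, $m\ge2$). Juxtapose MOA$(18;3^56^1,2)$ and MOA$(27;3^59^1,2)$ from Lemma \ref{ua=2346}(2), both with $N/b=3$, to get $b=6x+9y$, which covers all multiples of $3$ that are at least $6$.

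\textbf{Part (4).} The same juxtaposition works for $a=4$ with MOA$(32;4^58^1)$ and MOA$(48;4^512^1)$. For $b$ not a multiple of $4$ (so $b=2b'$ with $b'$ odd), use MOA$(4^2;4^6)$ together with the multiples-and-compression trick above. For $a=6$, Lemma \ref{ua=2346}(3) gives MOA$(72;6^512^1)$, and Lemma \ref{65181} gives MOA$(108;6^518^1)$; juxtaposing these covers $b\in 6\mathbb{Z}$ with $b\ge12$. The remaining $b$ (with $\gcd>1$ and $b\nmid 6$) need MOA$(72;6^5\cdot)$-based multiples. The exceptions in $\mathcal{A}_1$ follow from Lemmas \ref{notexist} and \ref{not16}, with existence for $u>1$ coming from compressing MOA$(ua^2;a^6)$.

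\textbf{Part (5).} For $a=10,22$, use Lemma \ref{ua=2346}(3), Lemma \ref{300} and Lemma \ref{22lam=3} for ratios $2$ and $3$, and Lemma \ref{22-30}-style constructions otherwise.

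The main obstacle is the residual $b$ in parts (4) and (5) where $ab/r$ is not a multiple of $a$ suitable for juxtaposition. For these I would try product constructions, Lemma \ref{jb1}, with an MOA$(s^2;s^6)$ for $s\in\{5,11\}$.
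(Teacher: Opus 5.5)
Your parts (1) and (3), the case $b\equiv 0\pmod 4$ of (2) and (4), the case $6\mid b$ for $a=6$, the case $a\mid b$ for $a\in\{10,22\}$, and the exceptional types all follow the paper. Multiplying before compressing in (1), and deleting two columns to reach type $2^3b^1$ so as to cite Lemma~\ref{k=4} instead of Lemma~\ref{k=5}, are harmless variations.

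In (2) with $b\equiv 2\pmod 4$ and $u>1$, the pieces you name cannot work as described. Every array in Lemma~\ref{ua=2346}(1) has last level divisible by $4$, so juxtaposing them never reaches $b\equiv 2\pmod 4$. The right inputs are MOA$(8;2^6,2)$ and MOA$(12;2^6,2)$, each multiplied by $n=b/2$ via Lemma~\ref{jb}, followed by Corollary~\ref{jf11}.

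The genuine gap is in parts (4) and (5).

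\begin{itemize}
\item For $a=4$, $b\equiv 2\pmod 4$, you start from an MOA$(16;4^6,2)$. This array does not exist: it would be $4$ MOLS of order $4$, and Lemma~\ref{bound} forces $N\ge 19$. That is exactly why $4\in T$. Compressing it would also produce the MOA$(16;4^52^1,2)$ that you yourself exclude via Lemma~\ref{not16}.
\item For $a=6$ with $6\nmid b$, and for $a\in\{10,22\}$ with $a\nmid b$, you give no construction. Lemma~\ref{22-30} does not concern type $a^5b^1$.
\item Products with MOA$(s^2;s^6,2)$, $s\in\{5,11\}$, fail in general. For $10^522^1$ (where $M=1100$) the $5$-factor must carry last level $1$. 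That forces an MOA$(44;2^522^1,2)$, which your own part (2) rules out.
\end{itemize}

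The missing idea is a residue-class juxtaposition at the fixed ratio $N/b=a^2/r$, where $r=\gcd\{a,b\}$.

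\begin{itemize}
\item Multiples (Lemma~\ref{jb}) of MOA$(\frac{a}{r}a^2;a^6,2)$, available by Lemma~\ref{lem3.2.1}(2), supply last levels $na$ at this ratio.
\item A seed in each residue class of $b$ modulo $a$ comes from compressing MOA$(2a^2;a^5(2a)^1,2)$ to last level $2r$, or MOA$(3a^2;a^5(3a)^1,2)$ to last level $3r$ (Lemmas~\ref{ua=2346}, \ref{65181}, \ref{300}, \ref{22lam=3}).
\item You then multiply or juxtapose seeds to land in the right class, and Lemma~\ref{jf} finishes.
\end{itemize}

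Some instances:

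\begin{itemize}
\item $4^5(4k+2)^1$ comes from MOA$(48;4^56^1,2)$ together with multiples of MOA$(32;4^6,2)$.
\item $6^5(6k+3)^1$ comes from MOA$(108;6^59^1,2)$ together with multiples of MOA$(72;6^6,2)$.
\item $10^5(10k+2)^1$ comes from MOA$(600;10^512^1,2)$ together with multiples of MOA$(500;10^6,2)$. The seed is obtained by compressing MOA$(200;10^520^1,2)$ to level $4$ and multiplying by $3$.
\end{itemize}

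This also explains the exceptional types: when $b<2r$, or $b=r$, no seed is small enough.
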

\begin{proof}
(1) Let $\gcd\{a,b\}=r$. Then $M=\frac{a^2b}{r}$. By Lemmas \ref{MOLS} and  \ref{equivalent}, there exists an MOA$(a^2;a^6,2)$ for any $a\not\in T$. Since $r\mid  a$, an MOA$(a^2;a^5r^1,2)$ exists by Lemma \ref{jj}. Applying Lemma \ref{jb} with $n=\frac{b}{r}$ we derive an MOA$(\frac{a^2b}{r};a^5b^1,2)$.

(2) For $a=2$, $b$ must be even, so $M=2b$. When $b\equiv0 \pmod{4}$, write $b=4k$ with $k\geq 1$. For $k=1$, no MOA$(8;2^54^1,2)$ exists by Lemma \ref{notexist}.
By Lemma \ref{ua=2346}, an MOA$(24;2^512^1,2)$ exists. Applying Lemma \ref{jj} yields an MOA$(24;2^54^1,2)$. Lemma \ref{ua=2346} gives an MOA$(16;2^54^1,2)$.
It then follows from Corollary \ref{jf11} that an MOA$(8u;2^54^1,2)$ exists for all integers $u>1$. For $k\geq 2$, it can be written as $k=2x+3y$, where $x$ and $y$ are nonnegative integers with $x+y>0$. By Lemma \ref{ua=2346}, an MOA$(16;2^58^1,2)$ and an MOA$(24;2^512^1,2)$ exist. Applying Lemma \ref{jb} with $n=x$ and $n=y$, respectively, we obtain an MOA$(16x;2^5(8x)^1,2)$ and MOA$(24y;2^5(12y)^1,2)$. Then
applying Lemma \ref{jf} yields an MOA$(16x+24y;2^5(8x+12y)^1,2)$, that is, an MOA$(2b;2^5b^1,2)$.

When $b\equiv 2~\pmod {4} $. Suppose an MOA$(2b;2^5b^1,2)$ exists. Then applying Lemma \ref{quyinz} yields an MOA$(2b;2^4b^1,2)$, which contradicts Lemma \ref{k=5}.
Therefore, no MOA$(2b;2^5b^1,2)$ exists. By Lemma \ref{a=2346}, an MOA$(8;2^6,2)$ and an MOA$(12;2^6,2)$ exist. Applying Lemma \ref{jb} with $n=\frac{b}{2}$ in each case, we obtain an MOA$(4b;2^5b^1,2)$ and an MOA$(6b;2^5b^1,2)$. It then follows from Corollary \ref{jf11} that an MOA$(2bu;2^5b^1,2)$ exists for all integers $u>1$.

(3) For $a=3$ with $b\neq a$ and $\gcd\{a,b\}=r>1$, so $r=3$ and $b=3k$ with $k\geq 2$. Consequently, $M=3b$. For every integer $k\geq 2$, it can be written as $k=2x+3y$, where $x$ and $y$ are nonnegative integers with $x+y>0$. By Lemma \ref{ua=2346}, an MOA$(18;3^56^1,2)$ and an MOA$(27;3^59^1,2)$ exist. By Lemma \ref{jb} with
$n=x$ and $n=y$, respectively, we obtain an MOA$(18x;3^5(6x)^1,2)$ and an MOA$(27y;3^5(9y)^1,2)$. Applying Lemma \ref{jf} then yields an MOA$(18x+27y;3^5(6x+9y)^1,2)$, that is, an MOA$(3b;3^5b^1,2)$.

(4) For $a=4$ with $b\neq a$ and $\gcd\{a,b\}=r>1$, then $b$ must be even.
When $b\equiv 0 \pmod{4}$, write $b=4(2x+3y)=8x+12y$, where $x$ and $y$ are nonnegative integers with $x+y>0$. Lemma \ref{ua=2346} gives an MOA$(32;4^58^1,2)$ and an MOA$(48;4^512^1,2)$. We obtain an MOA$(32x;4^5(8x)^1,2)$ and an MOA$(48y;4^5(12y)^1,2)$ by Lemma \ref{jb} with $n=x$ and $n=y$, respectively.
Then by Lemma \ref{jf}, there exists an MOA$(32x+48y;4^5(8x+12y)^1,2)$, that is, an MOA$(4b;4^5b^1,2)$.

When $b\equiv 2\pmod {4}$, write $b=4k+2$, where $k\geq 0$.
When $k=0$, that is, $b=2$. No MOA$(16;4^52^1,2)$ exists by Lemma \ref{not16},
while Lemma \ref{lem3.2.1} gives an MOA$(16u;4^6,2)$ for $u>1$, then applying
Lemma \ref{jj} yields an MOA$(16u;4^52^1,2)$. When $k=1$, then $b=6$.
There exists an MOA$(48;4^512^1,2)$ by Lemma \ref{ua=2346}, then there exists an MOA$(48;4^56^1,2)$ by Lemma \ref{jj}. When $k\geq2$, an MOA$(32;4^6,2)$ exists by Lemma \ref{lem3.2.1}. Applying Lemma \ref{jb} with $n=k-1$, we obtain an MOA$(32k-32;4^5(4k-4)^1,2)$. Combining this with the MOA$(48;4^56^1,2)$ via Lemma \ref{jf}, we further get an MOA$(32k+16;4^5(4k+2)^1,2)$. Therefore, an MOA$(8b;4^5b^1,2)$ exists for all integers $b\equiv 2\pmod {4}$ with $b\geq 6$.

For $a=6$ with $b\neq a$ and $\gcd\{a,b\}=r>1$. By assumption, we have $b\equiv 0,2,3,4 \pmod {6}$. When $b\equiv 0\pmod{6}$, write $b=6(2x+3y)=12x+18y$,
where $x$ and $y$ are nonnegative integers with $x+y>0$, then $M=6b$.
An MOA$(72;6^512^1,2)$ and an MOA$(108;6^518^1,2)$ exist by Lemmas \ref{ua=2346} and \ref{65181}, respectively. Applying Lemmas \ref{jb} and \ref{jf} yields an MOA$(72x+108y;6^5(12x+18y)^1,2)$, which is precisely an MOA$(6b;6^5b^1,2)$.

When $b\equiv 2\pmod{6}$, write $b=6k+2$, where $k\geq 0$, which gives $M=18b$.
For $k=0$, no MOA$(36;6^52^1,2)$ exists by Lemma \ref{notexist}. However,
by Lemma \ref{lem3.2.1}, an MOA$(36u;6^6,2)$ exists for $u>1$. Applying Lemma \ref{jj} then yields an MOA$(36u;6^52^1,2)$. For $k=1$, Lemma \ref{ua=2346} gives an MOA$(72;6^512^1,2)$, then there exists an MOA$(72;6^54^1,2)$ by Lemma \ref{jj}, and subsequently, by Lemma \ref{jb} with $n=2$, we obtain an MOA$(144;6^58^1,2)$.
For $k\geq 2$, Lemma \ref{lem3.2.1} provides an MOA$(108;6^6,2)$. Applying Lemma \ref{jb} with $n=k-1$ produces an MOA$(108(k-1);6^5(6k-6)^1,2)$. Combining this with the MOA$(144;6^58^1,2)$ via Lemma \ref{jf} we obtain an
MOA$(108k+36;6^5(6k+2)^1,2)$, that is, an MOA$(18b;6^5b^1,2)$.

When $b\equiv 3 \pmod {6}$, we write $b=6k+3$, where $k\geq 0$, which implies  $M=12b$. For $k=0$, no MOA$(36;6^53^1,2)$ exists by Lemma \ref{notexist}. However,
by Lemma \ref{lem3.2.1}, an MOA$(36u;6^6,2)$ exists for any integer $u>1$. Applying Lemma \ref{jj} yields an MOA$(36u;6^53^1,2)$. For $k=1$, Lemma \ref{65181} gives an MOA$(108;6^518^1,2)$, then by Lemma \ref{jj}, we obtain an MOA$(108;6^59^1,2)$.
For $k\geq 2$, Lemma \ref{lem3.2.1} provides an MOA$(72;6^6,2)$. Applying Lemma
\ref{jb} with $n=k-1$ gives an MOA$(72(k-1);6^5(6k-6)^1,2)$. Combining this with the
MOA$(108;6^59^1,2)$ via Lemma \ref{jf}, we obtain an MOA$(72k+36;6^5(6k+3)^1,2)$, that is, an MOA$(12b;6^5b^1,2)$.

When $b\equiv 4 \pmod {6}$, we write $b=6k+4$, where $k\geq 0$, then $M=18b$.
For $k=0$, Lemma \ref{ua=2346} gives an MOA$(72;6^512^1,2)$. Applying Lemma \ref{jj} produces an MOA$(72;6^54^1,2)$.
For $k\geq 1$, Lemma \ref{lem3.2.1} provides an MOA$(108;6^6,2)$. Applying Lemma \ref{jb} with $n=k$ yields an MOA$(108k;6^5(6k)^1,2)$. Combining this with the MOA$(72;6^54^1,2)$ via Lemma \ref{jf}, we obtain an MOA$(108k+72;6^5(6k+4)^1,2)$, that is, an MOA$(18b;6^5b^1,2)$.

(5) For $a=10$ with $b\neq a$. Since $\gcd\{a,b\}=r>1$, we have $b\equiv 0,2,4,5,6,8 \pmod{10}$. When $b\equiv 0 \pmod {10}$, we write $b=10(2x+3y)=20x+30y$,
where $x$ and $y$ are nonnegative integers with $x+y>0$. Clearly, $M=10b$.
An MOA$(200;10^520^1,2)$ and an MOA$(300;10^530^1,2)$ exist by Lemmas \ref{ua=2346} and \ref{300}, respectively. Applying Lemma \ref{jb} with $n=x$ and $n=y$,
respectively, yields an MOA$(200x;10^5(20x)^1,2)$ and an MOA$(300y;10^5(30y)^1,2)$.
Then by Lemma \ref{jf}, we obtain an MOA$(200x+300y;10^5(20x+30y)^1,2)$, which is precisely an MOA$(10b;10^5b^1,2)$.

When $b\equiv 2 \pmod {10}$, we write $b=10k+2$, where $k\geq 0$. We have $M=50b$. For $k=0$, Lemma \ref{lem3.2.1} gives an MOA$(100u;10^6,2)$ for any integer $u>1$, and then applying Lemma \ref{jj} yields an MOA$(100u;10^52^1,2)$. For $k=1$, Lemma \ref{ua=2346} gives an MOA$(200;10^520^1,2)$. Applying Lemma \ref{jj} yields
an MOA$(200;10^54^1,2)$, and then by Lemma \ref{jb} with $n=3$, we obtain an MOA$(600;10^512^1,2)$. For $k\geq 2$, Lemma \ref{lem3.2.1} provides an MOA$(500;10^6,2)$. Applying Lemma \ref{jb} with $n=k-1$ produces an MOA$(500(k-1);10^5(10k-10)^1,2)$. Then, by Lemma \ref{jf} combined with the MOA$(600;10^512^1,2)$, we obtain an MOA$(50b;10^5b^1,2)$.

When $b\equiv w \pmod {10}$ for $w\in \{4,6,8\}$, write $b=10k+w$, where $k\geq 0$. Then $M=50b$. For $w\in \{4,6,8\}$. Lemma \ref{ua=2346} gives an MOA$(200;10^520^1,2)$. Applying Lemma \ref{jj} yields an MOA$(200;10^54^1,2)$, and then by Lemma \ref{jb} with $n=2$, we obtain an MOA$(400;10^58^1,2)$.
Lemma \ref{300} gives an MOA$(300;10^530^1,2)$, then by Lemma \ref{jj},
an MOA$(300;10^56^1,2)$ exists. Therefore, there exists an MOA$(50w;10^5w^1,2)$ for $w\in \{4,6,8\}$. Lemma \ref{lem3.2.1} gives an MOA$(500;10^6,2)$, then by Lemma \ref{jb} with $n=k$, we obtain an MOA$(500k;10^5(10k)^1,2)$. Applying Lemma \ref{jf}
yields an MOA$(500k+50w;10^5(10k+w)^1,2)$, that is, an MOA$(50b;10^5b^1,2)$.

When $b\equiv 5 \pmod{10}$, we write $b=10k+5$, where $k\geq 0$, then $M=20b$.
For $k=0$, Lemma \ref{lem3.2.1} gives an MOA$(100u;10^6,2)$ for any integer $u>1$, then applying Lemma \ref{jj} produces an MOA$(100u;10^55^1,2)$. For $k=1$,
Lemma \ref{300} guarantees the existence of an MOA$(300;10^530^1,2)$, then we obtain an MOA$(300;10^515^1,2)$ by Lemma \ref{jj}. For $k\geq 2$, Lemma \ref{lem3.2.1} ensures that the existence of an MOA$(200;10^6,2)$, then by Lemma \ref{jb} with $n=k-1$, an MOA$(200(k-1);10^5(10k-10)^1,2)$ exists. By Lemma \ref{jf} with the MOA$(300;10^515^1,2)$, we obtain an MOA$(20(10k+5);10^5(10k+5)^1,2)$, that is, an MOA$(20b;10^5b^1,2)$.

For $a=22$ with $b\neq a$. Since $\gcd\{a,b\}=r>1$, $b\equiv 0,2,4,6,8,10,11,12,14,16,18,20 \pmod{22}$.
When $b\equiv 0 \pmod {22}$, it can be written as $b=22(2x+3y)=44x+66y$,
where $x$ and $y$ are nonnegative integers with $x+y>0$, then $M=22b$.
Both an MOA$(22\times44;22^544^1,2)$ and MOA$(22\times66;22^566^1,2)$ exist
by Lemma \ref{ua=2346} and Lemma \ref{22lam=3}, respectively. Then by Lemma \ref{jb}
with $n=x$ and $n=y$, respectively, we obtain an MOA$(22\times44x;22^5(44x)^1,2)$ and MOA$(22\times66y;22^5(66y)^1,2)$. Consequently, by Lemma \ref{jf}, there exists an MOA$(22\times(44x+66y);22^5(44x+66y)^1,2)$, that is, an MOA$(22b;22^5b^1,2)$.

When $b\equiv w \pmod {22},\ w\in\{2,4,6,8,10,12,14,16,18,20\}$, let $b=22k+w$, where $k\geq 0$, then $M=22\times11b$. For $w=2$ and $k=0$, Lemma \ref{lem3.2.1} gives an MOA$(22^2u;22^6,2)$ for $u>1$, applying Lemma \ref{jj} yields an MOA$(22^2u;22^52^1,2)$. For  $w=2$ and $k=1$, there exists an MOA$(22\times44;22^544^1,2)$ by Lemma \ref{ua=2346}, we obtain an MOA$(22\times44;22^54^1,2)$ by Lemma \ref{jj}. Then by Lemma \ref{jb} with $n=6$, an MOA$(22\times11\times24;22^524^1,2)$ exists. For $w=2$ and $k\geq2$, Lemma \ref{lem3.2.1} gives an MOA$(22^2\times11;22^6,2)$, applying Lemma \ref{jb} with $n=k-1$ produces an MOA$(22^2\times11(k-1);22^5(22k-22)^1,2)$. Then by Lemma \ref{jf} with the MOA$(22\times11\times24;22^524^1,2)$, we obtain an MOA$(22\times11(22k+2);22^5(22k+2)^1,2)$. Next, Starting with an MOA$(22\times44;22^54^1,2)$, by Lemma \ref{jb}, we obtain an MOA$(22\times11w;22^5w^1,2)$ for $w\in\{4,8,12,16,20\}$. Lemma \ref{22lam=3} gives an MOA$(22\times66;22^566^1,2)$, then applying Lemma \ref{jj} yields an MOA$(22\times66;22^56^1,2)$. Consequently, applying Lemma \ref{jf} with an MOA$(22\times11\times w_1;22^5w_1^1,2)$ for $w_1\in\{4,8,12\}$ produces an
MOA$(22\times11w;22^5w^1,2)$ for $w\in\{10,14,18\}$. Therefore, there exists an MOA$(22\times11w;22^5w^1,2)$ for $w\in \{4,6,8,10,12,14,16,18,20\}$.
Lemma \ref{lem3.2.1} gives an MOA$(22^2\times11;22^6,2)$, applying Lemma \ref{jb} with $n=k$ produces an MOA$(22^2\times11k;22^5(22k)^1,2)$ for any integer $k\geq1$. Applying Lemma \ref{jf} with an MOA$(22\times11w;22^5w^1,2)$, we obtain an MOA$(22\times11b;22^5b^1,2)$.

When $b\equiv 11 \pmod {22}$, we write $b=22k+11$, where $k\geq 0$, then $M=22\times2b$. For $k=0$, an MOA$(22^2u;22^6,2)$ exists for any $u>1$ by Lemma \ref{lem3.2.1}, then there exists an MOA$(22^2u;22^511^1,2)$ by Lemma \ref{jj}. For $k=1$, by Lemma \ref{22lam=3}, an MOA$(22\times66;22^566^1,2)$ exists, then an MOA$(22\times66;22^533^1,2)$ exists. For $k\geq 2$, Lemma \ref{lem3.2.1} gives an MOA$(22\times44;22^6,2)$. Applying Lemma \ref{jb} with $n=k-1$ yields an MOA$(22\times44(k-1);22^5(22k-22)^1,2)$. Thus, by Lemma \ref{jf} with an MOA$(22\times66;22^533^1,2)$, an MOA$(22\times2b;22^5b^1,2)$ exists.
This completes the proof.
\end{proof}

\section*{Case 3: \ \ $a=d=e=f$, $a\neq b$ and $a\neq c$}

In this case, we consider the existence of an MOA$(M;a^4bc,2)$, where
$a\neq b$, $a\neq c$ and
\begin{center}
$M=
\operatorname{lcm}\{a^2,ab,ac,bc\}=\operatorname{lcm}\{\operatorname{lcm}\{a^2,ab\},\ \operatorname{lcm}\{ac,bc\}\}=\operatorname{lcm}
\{a,b\}\cdot \operatorname{lcm} \{a,c\}$.
\end{center}

\begin{lem}\label{lem3.2.4}
Let $a,b,c$ be positive integers greater than $1$ with $a\neq b$ and $a\neq c$. Suppose that $\gcd\{x,y\}>1$ for any subset $\{x,y\}\subseteq\{a,b,c\}$. Then

(1) an MOA$(M;a^4b^1c^1,2)$ exists for $a\not\in T$,

(2) an MOA$(M;a^4b^1c^1,2)$ exists for $a\in \{2,3,4\}$,

(3) an MOA$(M;a^4b^1c^1,2)$ exists for $a\in \{6,10,22\}$ except
for $a^4b^1c^1\in \mathcal{A}_3=\{6^42^2,6^42^13^1,\\6^43^2\}$, and except possibly for $a^4b^1c^1\in\mathcal{A}_4=\{10^45^2,10^45^12^1,22^411^2\}$, while there exists an MOA$(uM;a^4b^1c^1,2)$ for $a^4b^1c^1\in \mathcal{A}_3$, and an MOA$(uM;a^4b^1c^1,2)$ for $a^4b^1c^1\in \mathcal{A}_4$, where $u>1$.
\end{lem}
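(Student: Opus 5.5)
The plan is to reduce the construction of an MOA$(M;a^4b^1c^1,2)$ to five-factor arrays by merging $b$ and $c$ into one factor. The key identity is $M=\operatorname{lcm}\{a,b\}\cdot\operatorname{lcm}\{a,c\}$.

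For (1), with $a\notin T$, a $4$-MOLS$(a)$ gives an MOA$(a^2;a^6,2)$ by Lemmas \ref{MOLS} and \ref{equivalent}. Put $r=\gcd\{a,b\}$ and $s=\gcd\{a,c\}$. Compression (Lemma \ref{jj}) yields an MOA$(a^2;a^4r^1s^1,2)$. We want to multiply the $r$-column by $b/r$ and the $s$-column by $c/s$, reaching run size $a^2\frac{b}{r}\frac{c}{s}=M$. Lemma \ref{jb} only inflates one column and multiplies $N$, so applying it twice overshoots unless we do it more carefully. A cleaner route is the product construction (Lemma \ref{jb1}). Take the MOA$(a^2;a^4r^1s^1,2)$ and an MOA$(\frac{b}{r}\frac{c}{s};1^4(\frac{b}{r})^1(\frac{c}{s})^1,2)$. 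The latter is the trivial full factorial on two factors, padded with one-level columns. The product is an MOA$(M;a^4b^1c^1,2)$, so (1) is immediate.

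For (2) and (3), with $a\in T$, the same product idea works once we have a base array MOA$(a^2u_0;a^4r^1s^1,2)$, or a mildly inflated variant, for the relevant small divisors $r,s$ of $a$. The plan is to split by the residues of $b$ and $c$ modulo $a$, as in Lemma \ref{lem3.2.3}.

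\begin{itemize}
\item For $a=2$: both $b,c$ are even and $M=bc$ or close to it. Use an MOA$(4u;2^6,2)$ or Lemma \ref{ua=2346}(4), MOA$((2u)^2;2^4(2u)^2,2)$, combined with the product construction to handle all even $b,c$.
\item For $a=3$: use MOA$(9;3^6,2)$ scaled.
\item For $a=4$: use MOA$(16;4^42^2,2)$ from Lemma \ref{ua=2346}(5) for the case where both $b,c\equiv 2\pmod 4$; the other residue patterns follow from the $4$-MOLS-free arrays MOA$(16;4^5,2)$, obtained by deleting a column of MOA$(32;\ldots)$. Here we need to check that $M$ always absorbs the factor $u_0$.
\item For $a\in\{6,10,22\}$: the base arrays are MOA$(36;6^3a'b'c',2)$ and MOA$(100;10^3\ldots)$ from Lemma \ref{36100}, MOA$(100;10^42^2,2)$, and MOA$(22^2;22^42^2,2)$ from Lemma \ref{22-30}. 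When $\gcd$ structure gives $r=s=a$ or one of $r,s$ equal to $a$, use Lemma \ref{lem3.2.3} arrays with Lemma \ref{jb}.
\end{itemize}

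When the base array has run size exceeding $a^2rs/(\text{needed})$, compare with $M$. For example, when $b=2k$ or $c$ is a multiple of $a$, the factor $\operatorname{lcm}$ grows and absorbs the excess. The remaining residue classes of $(b,c)$ are then covered by juxtaposition (Lemma \ref{jf}), combining a small seed case with an inflated MOA$(\cdot;a^6,2)$-derived array, exactly as in Lemma \ref{lem3.2.3}.

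The exceptional set $\mathcal{A}_3$ is forced by Lemma \ref{notexist}: MOA$(36;6^4,2)$ does not exist. For these types we take MOA$(36u;6^6,2)$ (Lemma \ref{lem3.2.1}) and compress. For $\mathcal{A}_4$ we likewise compress MOA$(100u;10^6,2)$ and MOA$(484u;22^6,2)$.

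The main obstacle is the case analysis for $a\in\{6,10,22\}$ when $b$ and $c$ are both non-multiples of $a$ with mixed gcds, such as $b\equiv2$, $c\equiv3\pmod 6$. There $M=\operatorname{lcm}\{a,b\}\operatorname{lcm}\{a,c\}$ is large, and one must find seed arrays of exactly the right size. I would first try a product of the compressed base MOA$(a^2;a^4 r s,2)$ (where available, e.g.\ MOA$(36;6^33^12^1)$ plus a fourth column?) with a full factorial. Where no $a^2$-run base exists, I would use juxtaposition of inflated $a^6$ arrays across the two special columns sequentially.
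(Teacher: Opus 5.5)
Your part (1) is correct. Compressing the MOA$(a^2;a^6,2)$ to $a^4r^1s^1$ and multiplying by the full factorial on $\frac{b}{r}\cdot\frac{c}{s}$ is equivalent to what the paper does: two applications of Lemma \ref{jb}, then compression. Contrary to your remark, that route does not overshoot, since each application multiplies the run size only by its own factor, giving $a^2\frac{b}{r}\frac{c}{s}=M$.

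Parts (2) and (3), however, are not established, and several ingredients you name do not exist or cannot reach $M$:
\begin{itemize}
\item There is no MOA$(9;3^6,2)$: Lemma \ref{bound} gives $N\geq 13$, and this is exactly why $3\in T$.
\item An MOA$(16;4^5,2)$ has only five columns, so the sixth column must come entirely from the factorial factor. That gives at least $4bc$ runs, whereas $M\in\{bc,2bc\}$ in the residue patterns you assign to it.
\item No MOA$(36;6^4\cdots,2)$ exists, since there is no MOA$(36;6^4,2)$. So no fourth column of level $6$ can be added to MOA$(36;6^33^12^1,2)$, and the arrays of Lemma \ref{36100} have only three columns of level $a$.
\item More generally, for $a\in T$ the base MOA$(a^2;a^4r^1s^1,2)$ your product method needs exists only for a few types, such as $4^42^2$, $10^42^2$, $22^42^2$. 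Any larger base overshoots: for $4^48^2$, $M=64$, but MOA$(32;4^6,2)$ times the $2\times2$ factorial has $128$ runs; for $6^42^14^1$, $M=72$, but MOA$(72;6^42^2,2)$ times the factorial on $1\cdot 2$ has $144$.
\item For $a=2$, no product of your listed seeds with a factorial reaches $2^44^16^1$ ($M=24$) or $2^410^114^1$ ($M=140$).
\end{itemize}
Your closing appeal to juxtaposition ``as in Lemma \ref{lem3.2.3}'' names no seeds that would fill these gaps.

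The missing idea is a reduction to Lemma \ref{lem3.2.3} that works for every $c$. Suppose an MOA$(a\operatorname{lcm}\{a,b\};a^5b^1,2)$ exists. Compress one $a$-column to $s=\gcd\{a,c\}$ (Lemma \ref{jj}) and multiply that column by $c/s$ (Lemma \ref{jb}). The run size becomes $a\operatorname{lcm}\{a,b\}\cdot\frac{c}{s}=\operatorname{lcm}\{a,b\}\operatorname{lcm}\{a,c\}=M$ exactly. The paper uses this for $a\in\{3,4,6,10,22\}$, and not only when $r$ or $s$ equals $a$; for example, $6^42^14^1$ comes from MOA$(72;6^54^1,2)$.

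What remains are the pairs for which both $a^5b^1$ and $a^5c^1$ are exceptional in Lemma \ref{lem3.2.3}:
\begin{itemize}
\item $a=4$: only $4^42^2$, given by MOA$(16;4^42^2,2)$.
\item $a=6$: exactly $\mathcal{A}_3$.
\item $a=10$: $10^42^2$, given by MOA$(100;10^42^2,2)$, plus the $a=10$ members of $\mathcal{A}_4$.
\item $a=22$: $22^42^2$ (Lemma \ref{22-30}), $22^411^2\in\mathcal{A}_4$, and $22^42^111^1$. The paper's text does not actually reach this last type: it invokes an MOA$(484;22^511^1,2)$, which Lemma \ref{lem3.2.3} does not supply. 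It does follow by splitting one column of an MOA$(484;22^5,2)$ with Lemma \ref{split}.
\item $a=2$: the reduction covers only $b$ or $c\equiv 0\pmod 4$ with value at least $8$. The classes $\{4\}\cup\{x\equiv2\pmod4\}$ need the seeds MOA$(16;2^44^2,2)$, MOA$(24;2^44^16^1,2)$ and MOA$(36;2^46^2,2)$ of Lemma \ref{ua=2346}, combined via Lemmas \ref{jb} and \ref{jf}, as in the paper.
\end{itemize}
Your treatment of the exceptional types is fine: nonexistence via Lemma \ref{notexist}, and, for $u>1$, compressing MOA$(ua^2;a^6,2)$, using $M=a^2$ there.
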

\begin{proof}
Suppose that $\gcd\{a,b\}=d_1,\ \gcd\{a,c\}=d_2$. Then  $M=\operatorname{lcm}\{a,b\}\cdot \operatorname{lcm}\{a,c\}=\frac{a^2bc}{d_1d_2}$.

(1) For $a\not\in T$, an MOA$(a^2;a^6,2)$ exists by Lemma \ref{lem3.2.1}. Applying
Lemma \ref{jb}, we obtain an MOA$(\frac{a^2bc}{d_1d_2};a^4(\frac{ab}{d_1})^1(\frac{ac}{d_2})^1,2)$. Since $b\mid \frac{ab}{d_1}$ and $c\mid \frac{ac}{d_2}$, Lemma \ref{jj} yields an MOA$(\frac{a^2bc}{d_1d_2};a^4b^1c^1,2)$.

(2) For the case $a=2$ with $b\neq2$ and $c\neq2$, then both $b$ and $c$ are
even, and we have $\gcd\{a,b\}=\gcd\{a,c\}=2$, and $M=bc$.
When $b\equiv c\equiv0 \pmod {4}$, by Lemma \ref{ua=2346}, an MOA$(16;2^44^2,2)$ exists. Applying Lemma \ref{jb}, we obtain an MOA$(bc;2^4b^1c^1,2)$.

When $b\equiv0 \pmod{4}, c\equiv2\pmod{4}$ or $b\equiv2\pmod{4}, c\equiv0\pmod{4}$.
We assume without loss of generality that $b\equiv0 \pmod {4},\ c\equiv2 \pmod {4}$.
Write $b=4k_1$, $c=4k_2+6$ with integers $k_1\geq1$, $k_2\geq0$. When $k_1=1$ and $k_2=0$, an MOA$(24;2^44^16^1,2)$ exists by Lemma \ref{ua=2346}.
For all other $k_1$ and $k_2$, an MOA$(16;2^44^2,2)$ exists by Lemma \ref{ua=2346}. Applying Lemma \ref{jb} with $n=k_2$  yields an MOA$(16k_2;2^44^1(4k_2)^1,2)$. By Lemma \ref{jf} with an MOA$(24;2^44^16^1,2)$, we obtain an MOA$(24+16k_2;2^44^1(4k_2+6)^1,2)$, that is, an MOA$(4c;2^44^1c^1,2)$. Consequently, by Lemma \ref{jb} with $n=\frac{b}{4}$, there exists an MOA$(bc;2^4b^1c^1,2)$.

When $b\equiv c\equiv2 \pmod{4}$. Write $b=4x+6$, $c=4y+6$, where $x$ and $y$ are integers with $x,y\geq0$. For $x=y=0$, an MOA$(36;2^46^2,2)$ exists by Lemma \ref{ua=2346}.
Applying Lemma \ref{jb} with $n=x$ to an MOA$(24;2^44^16^1,2)$, we obtain an
MOA$(24x;2^4(4x)^16^1,2)$. Combining this with the MOA$(36;2^46^2,2)$ via Lemma \ref{jf}, we get an MOA$(24x+36;2^4(4x+6)^16^1,2)$. From the result above, an
MOA$((4x+6)(4y);2^4(4x+6)^1(4y)^1,2)$ exists. Then applying Lemma \ref{jf} yields an
MOA$((4x+6)(4y+6);2^4(4x+6)^1(4y+6)^1,2)$, that is, an MOA$(bc;2^4b^1c^1,2)$.

For the case $a=3$ with $b\neq3$ and $c\neq3$. Then $3\mid  b$, $3\mid  c$ and $M=bc$. An MOA$(3b;3^5b^1,2)$ exists by Lemma \ref{lem3.2.3}. Applying Lemma \ref{jb} with $n=\frac{c}{3}$, we obtain an MOA$(bc;3^4b^1c^1,2)$.

For the case $a=4$ with $b\neq4$, $c\neq4$. Then $b$ and $c$ are both even.
When $b\equiv 0 \pmod {4}$ or $c\equiv 0 \pmod {4}$. We assume without loss of generality that $b\equiv0 \pmod 4$. Lemma \ref{lem3.2.3} gives an MOA$(4b;4^5b^1,2)$.
If $c\equiv0 \pmod {4}$, applying Lemma \ref{jb} with $n=\frac{c}{4}$ produces an MOA$(bc;4^4b^1c^1,2)$. If $c\equiv2 \pmod {4}$, applying Lemma \ref{jj} to an MOA$(4b;4^5b^1,2)$, an MOA$(4b;4^4b^12^1,2)$ exists. Then by Lemma \ref{jb} with $n=\frac{c}{2}$, we obtain an MOA$(2bc;4^4b^1c^1,2)$.

If $b\equiv c\equiv2 \pmod 4$. Write $b=4x+2$, $c=4y+2$, where $x$ and
$y$ are nonnegative integers. It is easy to see that $M=4bc$.
For $x=y=0$, Lemma \ref{ua=2346} gives an MOA$(16;4^42^2,2)$.
For $x\geq1$, by Lemma \ref{lem3.2.3}, an MOA$(8b;4^5b^1,2)$ exists. Applying
Lemma \ref{jj} yields an MOA$(8b;4^4b^12^1,2)$. Furthermore, by Lemma \ref{jb} with $n=\frac{c}{2}$, there exists an MOA$(4bc;4^4b^1c^1,2)$.

(3) We now consider the cases $a\in\{6,10,22\}$.
For $a\in\{6,10,22\}$ with $a\neq b$ and $a\neq c$. If $a\mid b$ or $a\mid c$,
we assume without loss of generality that $a\mid  b$ and $\gcd\{a,c\}=s,\ s\in \{2,\frac{a}{2},a\}$, then we have $M=\frac{abc}{s}$.
Lemma \ref{lem3.2.3} gives an MOA$(ab;a^5b^1,2)$. Applying Lemma \ref{jj} yields an MOA$(ab;a^4b^1s^1,2)$. Consequently, by Lemma \ref{jb} with $n=\frac{c}{s}$, there exists an MOA$(\frac{abc}{s};a^4b^1c^1,2)$. It remains to consider the case where neither $b$ nor $c$ is divisible by $a$.

For $a=6$, suppose that $\gcd\{6,b\}=s_1$, $\gcd\{6,c\}=s_2$, $s_1,s_2\in\{2,3\}$, then we have $M=\frac{6b}{s_1}\cdot\frac{6c}{s_2}=\frac{36bc}{s_1s_2}$. When $a^4b^1c^1\in \mathcal{A}_3=\{6^42^2,6^42^13^1,6^43^2\}$, no MOA$(36;a^4b^1c^1,2)$ exists by Lemma \ref{notexist}. Lemma \ref{lem3.2.3} gives an MOA$(36u;a^5b^1,2)$ for $u>1$ and $b\in\{2,3\}$, applying Lemma \ref{jj} yields an MOA$(36u;a^4b^1c^1,2)$ for any $a^4b^1c^1\in \mathcal{A}_3$. When $a^4b^1c^1\not\in \mathcal{A}_3$, $b\in\{2, 3\}$ and $c\not\in\{2,3\}$. By Lemma \ref{lem3.2.3}, an MOA$(\frac{36c}{s_2};a^5c^1,2)$ exists, thus by Lemma \ref{jj}, there exists an MOA$(\frac{36c}{s_2};a^4s_1^1c^1,2)$. Applying Lemma \ref{jb} with $n=\frac{b}{s_1}$ yields an MOA$(\frac{36bc}{s_1s_2};6^4b^1c^1,2)$. The existence of an MOA$(\frac{36bc}{s_1s_2};6^4b^1c^1,2)$ for the case $a^4b^1c^1\not\in \mathcal{A}_3$, $b\not\in\{2, 3\}$ and $c\in\{2,3\}$ is the same as the foregoing assertion.
When $a^4b^1c^1\not\in \mathcal{A}_3$ and $b,c\not\in\{2,3\}$. Lemma \ref{lem3.2.3} gives an MOA$(\frac{36b}{s_1};6^5b^1,2)$, then by Lemma \ref{jj}, we have an MOA$(\frac{36b}{s_1};6^4b^1s_2^1,2)$. Applying Lemma \ref{jb} with $n=\frac{c}{s_2}$ produces an MOA$(\frac{36bc}{s_1s_2};6^4b^1c^1,2)$.

For $a\in\{10,22\}$. Suppose that $\gcd\{a,b\}=t_1$, $\gcd\{a,c\}=t_2$, then $t_1,t_2\in\{2,\frac{a}{2}\}$, and $M=\frac{a^2bc}{t_1t_2}$.
When $a^4b^1c^1\in\mathcal{A}_4=\{10^45^2,10^45^12^1,22^411^2\}$, Lemma \ref{lem3.2.3} gives an MOA$(a^2u;a^5(\frac{a}{2})^1,2)$ for $u>1$. Applying Lemma \ref{jj} yields an MOA$(a^2u;a^4b^1c^1,2)$ for $a^4b^1c^1\in\mathcal{A}_4$ and $u>1$.
When $t_1=t_2=2$, an MOA$(a^2;a^42^2,2)$ exists by Lemma \ref{ua=2346} or Lemma \ref{22-30}. Thus, by Lemma \ref{jb} with $n=\frac{b}{2}$ and $n=\frac{c}{2}$, respectively, there exists an MOA$(\frac{a^2bc}{4};a^4b^1c^1,2)$.
When $t_1=t_2=\frac{a}{2}$ and $a^4b^1c^1\not\in\mathcal{A}_4$, then at least one of $b$ and $c$ is not equal to $\frac{a}{2}$. We assume without loss of generality that $b\neq\frac{a}{2}$. Lemma \ref{lem3.2.3} gives an MOA$(2ab;a^5b^1,2)$, then by Lemma \ref{jj}, we have an MOA$(2ab;a^4b^1(\frac{a}{2})^1,2)$. Applying Lemma \ref{jb} with $n=\frac{2c}{a}$ produces an MOA$(4bc;a^4b^1c^1,2)$.
When $t_1=2$, $t_2=\frac{a}{2}$ and $a^4b^1c^1\neq10^42^15^1$. For $b=2$, then $c\neq5$.  Lemma \ref{lem3.2.3} gives an MOA$(2ac;a^5c^1,2)$, then by Lemma \ref{jj}, we have an MOA$(2ac;a^42^1 c^1,2)$. Applying Lemma \ref{jb} with $n=\frac{b}{2}$ produces an MOA$(abc;a^4b^1c^1,2)$. For $b\neq2$. Lemma \ref{lem3.2.3} gives an MOA$(\frac{a^2b}{2};a^5b^1,2)$, then by Lemma \ref{jj}, we have an MOA$(\frac{a^2b}{2};a^4(\frac{a}{2})^1b^1,2)$. Applying Lemma \ref{jb} with $n=\frac{2c}{a}$ produces an MOA$(abc;a^4b^1c^1,2)$.
The existence of an MOA$(abc;a^4b^1c^1,2)$ for the case $t_1=\frac{a}{2}$, $t_2=2$, and $a^4b^1c^1\neq10^45^12^1$ is the same as the foregoing assertion.
\end{proof}

\section*{Case 4: \ \ $a=e=f$, $a\neq b$, $a\neq c$ and $a\neq d$}

In this case, we investigate the existence of an MOA$(M;a^3bcd,2)$, where $a=e=f$,
$a\neq b, c,d$, and
$M=\operatorname{lcm}\{a^2,ab,ac,ad,bc,bd,cd\}$ throughout this case.

\begin{lem}\label{3.2.5}
Suppose that $a,b,c,d$ be positive integers greater than $1$ with $a\neq b, c,d$,  and $\gcd\{x,y\}>1$ for any $\{x,y\}\subseteq \{a,b,c,d\}$, then an MOA$(M;a^3bcd,2)$ exists.
\end{lem}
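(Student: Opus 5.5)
The plan is to reduce to arrays with five factors that are already known, then inflate one column. In Case~4, $b,c,d$ need not be distinct from each other but all differ from $a$. Since the statement asserts existence with no exceptions, exceptional types such as $6^3 2^2 3^1$ must be compatible: $6^3 2^1 3^2$ and $6^3 2^2 3^1$ exist with $36$ runs by Lemma~\ref{36100}. First I compute $M$ via Lemma~\ref{common}:
\[
M=\operatorname{lcm}\{a^2,ab,ac,ad,bc,bd,cd\}.
\]
The guiding construction is to take an MOA$(N';a^3 b c\, r,2)$ with $r\mid d$, where $N'=\operatorname{lcm}$ of the pairwise products of $\{a,a,a,b,c,r\}$. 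Then I apply Lemma~\ref{jb} with $n=d/r$ to obtain an MOA$(N'd/r;a^3bcd,2)$, and check that $N'd/r=M$. This equality holds when $r$ is chosen so that $d/r$ is coprime to everything already forced in $N'$, or more simply when $r=\gcd$ of $d$ with the relevant part of $N'$.

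For $a\notin T$ I will argue directly, as in Lemma~\ref{lem3.2.4}(1). Start from MOA$(a^2;a^6,2)$ and apply Lemma~\ref{jb} three times to inflate three columns to $\operatorname{lcm}$-type sizes, then use Lemma~\ref{jj}. The danger is that inflating three columns independently multiplies run size by $(b/d_1)(c/d_2)(d/d_3)$, which may exceed $M$ because of the $bc,bd,cd$ terms. So instead I will use a mixed route. Write $M=\operatorname{lcm}\{a,b\}\operatorname{lcm}\{a,c\}\cdot x$ for an appropriate factor $x\mid d$. Take the Case~3 array MOA$(\operatorname{lcm}\{a,b\}\operatorname{lcm}\{a,c\};a^4bc,2)$ from Lemma~\ref{lem3.2.4}, compress one $a$ column to $\gcd\{a,d\}$ or to a suitable divisor $r$ of $d$ dividing $a$ (Lemma~\ref{jj}), and multiply by $d/r$ (Lemma~\ref{jb}). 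One has to choose $r\mid\gcd\{a,d\}$ maximal so that $d/r$ is exactly the extra factor $M/(\operatorname{lcm}\{a,b\}\operatorname{lcm}\{a,c\})$. This requires checking prime by prime that $M$ equals $\operatorname{lcm}\{a,b\}\operatorname{lcm}\{a,c\}\cdot d/\gcd(\ldots)$. I would also permute the roles of $b,c,d$, choosing as the ``inflated'' column the one whose $p$-adic contributions are dominated. A $p$-adic valuation argument should show some ordering always works: for each prime, $v_p(M)$ is the maximum of $2v_p(a)$ and the sums of two distinct values among $v_p(a),v_p(b),v_p(c),v_p(d)$.

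The main obstacle is the small $a\in T$, where the Case~3 arrays have exceptions $\mathcal{A}_3,\mathcal{A}_4$ with no array of size $M$. There I would handle the types in which compressing lands on $6^42^2$, $6^43^2$, etc. directly with Lemma~\ref{36100}: $6^3 2^3$, $6^33^3$, $6^32^23^1$, $6^33^22^1$, and the $10$ analogues. I would also use Lemma~\ref{36100}(3) for $22^311^3$ and Lemma~\ref{22-30} for $22^42^2$. These serve as base arrays, which are then inflated via Lemma~\ref{jb} on one column and combined via Lemma~\ref{jf}, as in the $b\equiv w$ residue arguments of Lemma~\ref{lem3.2.3}. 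Sorting the residues of $b,c,d$ modulo $a$ into the required subcases is where I expect most of the work, and where an unavoidable gap might appear.
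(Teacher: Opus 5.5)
Your central reduction cannot reach $M$ runs in general. It fails in exactly the situation you were worried about for triple inflation. Put $r_1=\gcd\{a,b\}$, $r_2=\gcd\{a,c\}$, $r_3=\gcd\{a,d\}$ and $r_4=\gcd\{\frac{b}{r_1},\frac{c}{r_2},\frac{d}{r_3}\}$. Then $L=\operatorname{lcm}\{a,b\}\operatorname{lcm}\{a,c\}=\frac{a^2bc}{r_1r_2}$ and $M=\frac{a^2bcd}{r_1r_2r_3r_4}$. Suppose you compress an $a$-column of the Case~3 array to $r\mid a$ and multiply it by $n$ with $d\mid nr$. You get $Ln$ runs, and $Ln=M$ forces $n=\frac{d}{r_3r_4}$, hence $r_3r_4\mid r\mid a$. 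Since $r_4\mid\frac{d}{r_3}$ and $\gcd\{\frac{a}{r_3},\frac{d}{r_3}\}=1$, this means $r_4=1$.

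Because $r_4$ is symmetric in $b,c,d$, changing which column is inflated does not help. So your $p$-adic claim that some ordering always works is false: whenever some prime divides each of $b,c,d$ to a higher power than it divides $a$, every ordering overshoots by at least the factor $r_4$. For example, type $5^325^3$ (with $a=5\notin T$) has $M=625$, but your route gives at least $3125$ runs; $2^34^3$ has $M=16$, against at least $32$. In fact $5^325^3$ cannot arise from any smaller array by multiplying a single column. Whichever column you multiply, at least two other columns must already have levels divisible by $25$, forcing $625$ runs beforehand. So no choice of $r$ in your first-paragraph scheme rescues it. Moreover, $r_4=1$ is precisely the case in which the triple inflation you discarded already gives $a^2\frac{b}{r_1}\frac{c}{r_2}\frac{d}{r_3}=M$, so the mixed route gains nothing.

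The missing idea is to absorb $r_4$ with the product construction rather than with single-column inflation. The paper writes $M=a^2\cdot\frac{bcd}{r_1r_2r_3r_4}$ and uses Lemma~\ref{jb1} to multiply an MOA$(a^2;a^3r_1r_2r_3,2)$ by a three-factor array. The first factor comes by Lemma~\ref{jj} from MOA$(a^2;a^6,2)$ when $a\notin T$. The second is MOA$(\frac{bcd}{r_1r_2r_3r_4};1^3(\frac{b}{r_1})^1(\frac{c}{r_2})^1(\frac{d}{r_3})^1,2)$ from Lemma~\ref{k=23}, whose size is right because $\operatorname{lcm}\{xy,xz,yz\}=\frac{xyz}{\gcd\{x,y,z\}}$.

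For $a\in T$ the same product scheme is used with suitable base arrays. One kind is MOA$(s^2;a^3s^3,2)$ with $s=\gcd\{b,c,d\}$, obtained e.g.\ from MOA$(16;4^42^2,2)$, MOA$(36;6^32^3,2)$, MOA$(100;10^32^3,2)$ or MOA$(22^2;22^42^2,2)$ by compression. The other is MOA$(a^2;a^3r_1r_2r_3,2)$ from Lemmas~\ref{ua=2346}, \ref{36100} and \ref{lem3.2.4}. The paper uses your Case~3-plus-one-inflation step only where $\gcd\{b,c,d\}=a$, i.e.\ $r_4=1$. Cases where only some of $r_1,r_2,r_3$ equal $a$ are reduced to $r_1=r_2=r_3=a$ by enlarging $c,d$ to $\frac{a}{r_2}c,\frac{a}{r_3}d$, which leaves $M$ unchanged, and compressing back. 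No residue analysis modulo $a$ or juxtaposition is needed, so your planned treatment of $a\in T$ is both unfinished and aimed at the wrong difficulty.
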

\begin{proof}
Without loss of generality, let
\begin{center}
$\gcd\{a,b\}=r_1$,\ $\gcd\{a,c\}=r_2$,\ $\gcd\{a,d\}=r_3$,\ and $\gcd\{\frac{b}{r_1},\frac{c}{r_2},\frac{d}{r_3}\}=r_4$.
\end{center}
Then, by Lemma 4.7 in \cite{Chen}, we obtain $M=\frac{a^2bcd}{r_1r_2r_3r_4}$.

We now distinguish four cases according to the value of $a$:
(1) $a\not\in T=\{2,3,4,6,10,22\}$, (2) $a\in\{2,3\}$,
(3) $a=4$, (4) $a\in\{6,10,22\}$.

(1) For $a\not\in T$. Lemma \ref{MOLS} gives an MOA$(a^2;a^6,2)$, then applying Lemma \ref{jj} produces an MOA$(a^2;a^3r_1^1r_2^1r_3^1,2)$. By Lemma \ref{k=23}, there exists an MOA$(\frac{bcd}{r_1r_2r_3r_4}; (\frac{b}{r_1})^1(\frac{c}{r_2})^1(\frac{d}{r_3})^1,2)$, which can also be written as an MOA$(\frac{bcd}{r_1r_2r_3r_4}; 1^11^11^1(\frac{b}{r_1})^1(\frac{c}{r_2})^1(\frac{d}{r_3})^1,2)$. Then, by Lemma \ref{jb1}, it follows that there exists an MOA$(M;a^3b^1c^1d^1,2)$.

(2) For $a\in\{2,3\}$, $a\neq b, c,d$, and $a\mid  b$, $a\mid  c$,  $a\mid  d$.
Let $\gcd\{b,c,d\}=s$. It is straightforward to verify that $a\mid  s$ and $M=\frac{bcd}{s}$. When $s=a$, an MOA$(cd;a^4c^1d^1,2)$ exists by Lemma \ref{lem3.2.4}. Applying Lemma \ref{jb} with $n=\frac{b}{a}$, we obtain an MOA$(\frac{bcd}{a};a^3b^1c^1d^1,2)$.
When $a=2$ and $s\in\{4,6,10,22\}$, an MOA$(s^2;a^3s^3,2)$ exists by Lemmas \ref{ua=2346}-\ref{36100}, \ref{22-30} and \ref{jj}. When $a=3$ and $s=6$, an MOA$(s^2;a^3s^3,2)$ exists by Lemma \ref{36100}. For the remaining values of $a$ and $s$, an MOA$(s^2;s^6,2)$ exists by Lemmas \ref{MOLS} and \ref{equivalent}, then applying Lemma \ref{jj} yields an MOA$(s^2;a^3s^3,2)$. By Lemma \ref{k=23}, there exists an MOA$(\frac{bcd}{s^3};1^11^11^1(\frac{b}{s})^1(\frac{c}{s})^1(\frac{d}{s})^1,2)$. Therefore, an MOA$(\frac{bcd}{s};a^3b^1c^1d^1,2)$ exists by Lemma \ref{jb1}.

(3) For $a=4$. By assumption, we have $b\neq 4$, $c\neq 4$ and $d\neq 4$, and $r_i\in\{2,4\}$, $i=1,2,3$. Suppose that $\gcd\{b,c,d\}=s$. When $r_1=r_2=r_3=4$, then $ 4\mid  s$ and $M=\frac{bcd}{s}$. For $s=4$, by Lemma \ref{lem3.2.4}, an MOA$(cd;s^4c^1d^1,2)$ exists. Applying Lemma \ref{jb} with $n=\frac{b}{s}$ yields an MOA$(\frac{bcd}{s};s^3b^1c^1d^1,2)$.
For $ 4\mid  s$ and $s>4$, an MOA$(s^2;4^3s^3,2)$ exists by Lemma \ref{lem3.2.4} and Lemma \ref{jj}, and an
MOA$(\frac{bcd}{s^3};1^11^11^1(\frac{b}{s})^1(\frac{c}{s})^1(\frac{d}{s})^1,2)$ exists by Lemma \ref{k=23}. Then by Lemma \ref{jb1}, there exists an MOA$(M;a^3b^1c^1d^1,2)$. When $r_1=r_2=4$, $r_3=2$, it is straightforward to see that $r_4=\gcd\{\frac{b}{4},\frac{c}{4},\frac{d}{2}\}=\frac{s}{2}$ and $M=\frac{bcd}{s}$.
Since $\gcd\{4,d\}=r_3=2$, one can readily verify that $\operatorname{lcm}\{bc,b(2d),c(2d)\}=\frac{bcd}{s}$ by Lemma \ref{common}.
From the above, an MOA$(\frac{bcd}{s};4^3b^1c^1(2d)^1,2)$ exists, since $\gcd\{4,2d\}=4$. Applying Lemma \ref{jj}, we obtain an MOA$(M;4^3b^1c^1d^1,2)$.
When $\{r_1,r_2,r_3\}\in\{\{4,2,2\},\{2,2,2\}\}$, an MOA$(16;4^3r_1^1r_2^1r_3^1,2)$ exists by Lemmas \ref{lem3.2.4}
and \ref{jj}, and an MOA$(\frac{bcd}{r_1r_2r_3r_4}; 1^11^11^1(\frac{b}{r_1})^1(\frac{c}{r_2})^1(\frac{d}{r_3})^1,2)$ exists by Lemma \ref{k=23}. Then there exists an MOA$(M;a^3b^1c^1d^1,2)$ by Lemma \ref{jb1}.

(4) In what follows, we consider the remaining cases with $a\in\{6,10,22\}$. By our hypothesis, we have $b\neq a$, $c\neq a$, $d\neq a$ and $r_i\in\{2,\frac{a}{2},a\}$, $i=1,2,3$. Let $\gcd\{b,c,d\}=s$.  When $r_1=r_2=r_3=a$, then we have $ a\mid  s$ and  $M=\frac{bcd}{s}$. For $s=a$, an MOA$(cd;a^4c^1d^1,2)$ exists by Lemma \ref{lem3.2.4}. Applying Lemma \ref{jb} with $n=\frac{b}{a}$ yields an MOA$(\frac{bcd}{a};a^3b^1c^1d^1,2)$. For $ a\mid s$ and $s>a$, by Lemma \ref{lem3.2.1}, an MOA$(s^2;s^6,2)$ exists.
Applying Lemma \ref{jj} produces an MOA$(s^2;a^3s^3,2)$. By Lemma \ref{k=23},
an MOA$(\frac{bcd}{s^3};1^11^11^1(\frac{b}{s})^1(\frac{c}{s})^1(\frac{d}{s})^1,2)$ exists. Then there exists an MOA$(\frac{bcd}{s};a^3b^1c^1d^1,2)$ by Lemma \ref{jb1}.

When $r_1=a$, $\{r_2,r_3\}\in \{\{2,2\},\{2,\frac{a}{2}\},\{\frac{a}{2},\frac{a}{2}\},\{2,a\},\{\frac{a}{2},a\}\}$. Then we have $\gcd\{\frac{a}{r_2},c\}=1$, $\gcd\{\frac{a}{r_3},d\}=1$, $\frac{a}{r_2}\mid  b$ and $\frac{a}{r_3}\mid  b$. It follows that $\operatorname{lcm}\{b\cdot(\frac{a}{r_2}c),b\cdot(\frac{a}{r_3}d),(\frac{a}{r_2}c)\cdot
(\frac{a}{r_3}d)\}=\frac{a^2bcd}{r_1r_2r_3r_4}$ by Lemma \ref{common}.
From the above case $r_1=r_2=r_3=a$, since
$\gcd\{a,\frac{a}{r_2}c\}=a$ and $\gcd\{a,\frac{a}{r_3}d\}=a$, an MOA$(\frac{a^2bcd}{r_1r_2r_3r_4};a^3b^1(\frac{a}{r_2}c)^1(\frac{a}{r_3}d)^1,2)$ exists. It follows that an MOA$(\frac{a^2bcd}{r_1r_2r_3r_4};a^3b^1c^1d^1,2)$ exists by Lemma \ref{jj}.

When $\{r_1,r_2,r_3\}\in \{\{2,2,2\},\{2,2,\frac{a}{2}\},\{2,\frac{a}{2},\frac{a}{2}\},\{\frac{a}{2},\frac{a}{2},\frac{a}{2}\}\}$, an MOA$(a^2;a^3r_1^1r_2^1r_3^1,2)$ exists by Lemmas \ref{ua=2346}, \ref{36100} or Lemmas \ref{lem3.2.4} and \ref{jj}, and an MOA$(\frac{bcd}{r_1r_2r_3r_4}; 1^11^11^1(\frac{b}{r_1})^1(\frac{c}{r_2})^1(\frac{d}{r_3})^1,2)$ exists
by Lemma \ref{k=23}. Then by Lemma \ref{jb1}, an MOA$(M;a^3b^1c^1d^1,2)$ exists.
This completes the proof.
\end{proof}

\section*{Case 5:\ \ $a=f$, $a\neq b$, $a\neq c$, $a\neq d$, $a\neq e$ and no three of $b,c,d,e$ are all equal}

\begin{lem}\label{bbb}
Let $a,b,c,d,e,f$ be positive integers greater than $1$, suppose that
$\gcd\{a,f\}=r_1$, $\gcd\{r_1,b\}=r_2$, $\gcd\{r_1,c\}=r_3$,
$\gcd\{r_1,d\}=r_4$ and $\gcd\{r_1,e\}=r_5$. Let $M_6=\operatorname{lcm}\{xy:\{x,y\}\subseteq\{a,b,c,d,e,f\}\}$ and
$M'=\operatorname{lcm}\{\alpha\beta:\{\alpha,\beta\}\subseteq\{\frac{b}{r_2},
\frac{c}{r_3},\frac{d}{r_4},\frac{e}{r_5},\frac{a}{r_1},\frac{f}{r_1}\}\}$,
then $M_6=r_1^2M'$.
\end{lem}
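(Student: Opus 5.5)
The plan is to compare the two least common multiples one prime at a time, using $p$-adic valuations. Fix a prime $p$ and write $v(x)$ for the exponent of $p$ in $x$. For any list of positive integers $x_1,\ldots,x_6$, the exponent of $p$ in $\operatorname{lcm}\{x_ix_j: i\neq j\}$ equals $\max_{i\neq j}\bigl(v(x_i)+v(x_j)\bigr)$. This is the sum of the two largest entries of the multiset $\{v(x_1),\ldots,v(x_6)\}$, where the second largest is taken with multiplicity. So it suffices to prove, for every prime $p$, that the sum of the two largest valuations of $a,b,c,d,e,f$ equals $2v(r_1)$ plus the sum of the two largest valuations of the six quotients $\frac{a}{r_1},\frac{f}{r_1},\frac{b}{r_2},\frac{c}{r_3},\frac{d}{r_4},\frac{e}{r_5}$.

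Put $\alpha=v(r_1)=\min\{v(a),v(f)\}$. Since $r_2=\gcd\{r_1,b\}$, we have $v(r_2)=\min\{\alpha,v(b)\}$, and therefore
$$v\Big(\frac{b}{r_2}\Big)=v(b)-\min\{\alpha,v(b)\}=\max\{v(b)-\alpha,0\}.$$
The same formula holds for $c,d,e$ with $r_3,r_4,r_5$. For $a$ and $f$, both valuations are at least $\alpha$, so $v(\frac{a}{r_1})=v(a)-\alpha=\max\{v(a)-\alpha,0\}$, and likewise for $f$. Hence every quotient is obtained from the original number by the single map $\phi(t)=\max\{t-\alpha,0\}$ applied to valuations. (All quotients are integers, since each $r_i$ divides the corresponding number.)

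The key step is an order argument. The map $\phi$ is nondecreasing, so it sends the two largest entries $t_1\geq t_2$ of $\{v(a),\ldots,v(f)\}$ to two largest entries of the transformed multiset; ties cause no trouble because only the values are summed. Moreover, both $v(a)$ and $v(f)$ are at least $\alpha$, so at least two of the six valuations are $\geq\alpha$, and therefore $t_1\geq t_2\geq\alpha$. On $[\alpha,\infty)$ the map $\phi$ is just $t\mapsto t-\alpha$. This gives
$$\phi(t_1)+\phi(t_2)=t_1+t_2-2\alpha,$$
so $v(M_6)=2\alpha+v(M')=v(r_1^2M')$ for every prime $p$, and hence $M_6=r_1^2M'$.

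The only delicate point is ensuring that the two largest valuations lie in the region where $\phi$ is a pure shift. This is exactly where the hypothesis that $r_1$ is the gcd of two of the six numbers, $a$ and $f$, is used. Had $r_1$ divided only one of them, the identity could fail.
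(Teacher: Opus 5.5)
Your proof is correct, and it takes a genuinely different route from the paper's.

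The paper argues multiplicatively. By Lemma~\ref{common} it writes $M_6=\frac{abcdef}{P}$ and $M'=\frac{abcdef}{r_1^2r_2r_3r_4r_5P'}$, where $P$ and $P'$ are the gcds of the fifteen complementary four-fold products. It then establishes $P=r_2r_3r_4r_5P'$ in Appendix~II by a chain of gcd manipulations: grouping $\gcd\{bcda,bcdf\}=bcd\,r_1$, then $\gcd\{bcde,bcd\,r_1\}=bcd\,r_5$, and using $\gcd\{\frac{a}{r_1},\frac{f}{r_1}\}=1$ on the primed side.

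You work one prime at a time instead. The lcm of pairwise products becomes the sum of the two largest valuations, and every quotient is the image of the single monotone map $t\mapsto\max\{t-\alpha,0\}$. The identity then reduces to the fact that the two top valuations are at least $\alpha$.

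What this buys:
\begin{itemize}
\item Brevity and transparency. Some steps in Appendix~II, such as discarding the factor $\frac{r_1}{r_4}\frac{r_1}{r_5}$ inside a gcd, silently rely on coprimality facts like $\gcd\{\frac{d}{r_4},\frac{r_1}{r_4}\}=1$. Your valuation formula builds these in automatically.
\item A weaker hypothesis. You only use $r_1\mid a$ and $r_1\mid f$, not that $r_1$ is their gcd, so your closing remark slightly overstates what is needed.
\item Generality. The same argument works for any number of factors. With ``two largest'' replaced by ``$t$ largest'', it shows that the lcm of $t$-fold products equals $r_1^t$ times the corresponding lcm for the quotients, whenever $r_1$ divides $t$ of the entries.
\end{itemize}

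The paper's approach has the modest advantage of staying within the gcd/lcm toolkit of Lemma~\ref{common} used elsewhere in the paper. The cost is a long computation that is harder to check.
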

\begin{proof}
By Lemma \ref{common}, we have $M_6=\frac{abcdef}{P}$, where
\begin{center}
$P=\gcd\{wxyz:\{w,x,y,z\}\subseteq\{a,b,c,d,e,f\}\}$.
\end{center}
Similarly,
$M'=\frac{\frac{afbcde}{r_1r_1r_2r_3r_4r_5}}{P'}$, where
\begin{center}
$P'=\gcd\{\alpha \beta \gamma \delta:\{\alpha,\beta,\gamma,\delta\}\subset
\{\frac{b}{r_2},\frac{c}{r_3},\frac{d}{r_4},\frac{e}{r_5},\frac{a}{r_1},\frac{f}{r_1}\}\}$.
\end{center}
To prove that $M_6=r_1^2M'$, it suffices to show that $P=r_2r_3r_4r_5P'$.
The proof of this identity is provided in Appendix II.
\end{proof}

In the sequel, we consider the existence of an MOA$(M;a^2bcde,2)$, where
$a\neq b$, $a\neq c$, $a\neq d$, $a\neq e$ and no three of
$b,c,d,e$ are all equal, let
$M=\operatorname{lcm}\{a^2,ab,ac,ad,ae,bc,bd,be,cd,ce,de\}$.

Setting $f=a$ in Lemma \ref{bbb} gives the following lemma.
\begin{lem}\label{aaa}
Let $a,b,c,d,e$ be positive integers greater than $1$, $\gcd\{a,b\}=r_2$, $\gcd\{a,c\}=r_3$, $\gcd\{a,d\}=r_4$ and $\gcd\{a,e\}=r_5$. If
$M'=\operatorname{lcm}\{\frac{bc}{r_2r_3},\frac{bd}{r_2r_4},\frac{be}{r_2r_5},\frac{cd}{r_3r_4},\frac{ce}{r_3r_5},\frac{de}{r_4r_5}\}$,
then $M=a^2M'$.
\end{lem}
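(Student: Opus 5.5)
The plan is to obtain Lemma \ref{aaa} as a direct specialization of Lemma \ref{bbb}, taking $f=a$, and then to simplify the resulting expression for $M'$.

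With $f=a$ we have $r_1=\gcd\{a,a\}=a$. Hence $r_2=\gcd\{a,b\}$, $r_3=\gcd\{a,c\}$, $r_4=\gcd\{a,d\}$ and $r_5=\gcd\{a,e\}$, which are exactly the quantities named in Lemma \ref{aaa}. In the same substitution $M_6$ becomes $\operatorname{lcm}\{xy:\{x,y\}\subseteq\{a,b,c,d,e,a\}\}$. Here the pair formed by the two copies of $a$ contributes $a^2$, so $M_6$ equals the $M$ defined just before Lemma \ref{aaa}. Lemma \ref{bbb} then gives $M=a^2M''$, where
\[
M''=\operatorname{lcm}\Bigl\{\alpha\beta:\{\alpha,\beta\}\subseteq\Bigl\{\tfrac{b}{r_2},\tfrac{c}{r_3},\tfrac{d}{r_4},\tfrac{e}{r_5},1,1\Bigr\}\Bigr\},
\]
with the list read as a multiset of six entries.

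It remains to show that $M''$ coincides with the $M'$ of the statement. The pairs in $M''$ come in three kinds:
\begin{enumerate}
\item[(i)] the pair $\{1,1\}$, which gives $1$;
\item[(ii)] pairs with exactly one entry equal to $1$, which give the single quotients $\frac{b}{r_2},\frac{c}{r_3},\frac{d}{r_4},\frac{e}{r_5}$;
\item[(iii)] pairs of two quotients, which give the six products $\frac{bc}{r_2r_3},\dots,\frac{de}{r_4r_5}$ listed in $M'$.
\end{enumerate}
Every single quotient divides a product in which it appears, for instance $\frac{b}{r_2}\mid\frac{bc}{r_2r_3}$, and $1$ divides everything. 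So the terms of kinds (i) and (ii) do not change the least common multiple, and $M''=M'$, which proves Lemma \ref{aaa}.

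The only care point is bookkeeping. Lemma \ref{bbb} is stated for general $f$, so I would check that none of its hypotheses, or the Appendix II identity $P=r_2r_3r_4r_5P'$, needs $a\neq f$; they do not, since they are stated for arbitrary integers greater than $1$. If one preferred not to rely on the appendix, an alternative is a direct prime-by-prime check of $v_p(M)=2v_p(a)+v_p(M')$. For each prime $p$ this comes down to comparing maxima of the exponent sums $\max(v_p(x),v_p(a))$ over pairs $x,y\in\{b,c,d,e\}$, which is a routine case check. I expect no real obstacle once Lemma \ref{bbb} is taken as given.
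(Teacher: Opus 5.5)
Your proposal is correct and follows the paper's route exactly: the paper obtains this lemma by setting $f=a$ in Lemma \ref{bbb}. Your check that the extra products $1$ and $\frac{b}{r_2},\frac{c}{r_3},\frac{d}{r_4},\frac{e}{r_5}$ (arising from $\frac{a}{r_1}=\frac{f}{r_1}=1$) are absorbed into the lcm of the six two-quotient products is precisely the bookkeeping the paper leaves implicit.
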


\begin{lem}\label{ccc}
Let $a,b,c,d,e$ be positive integers greater than $1$, $a\not\in T=\{2,3,4,6,10,22\}$, $a\neq b,\ a\neq c,\ a\neq d,\ a\neq e$
and no three of $b,c,d,e$ are all equal. If $\gcd\{x,y\}>1$ for any $\{x,y\}\subseteq\{a,b,c,d,e\}$, then there exists an MOA$(M;a^2bcde,2)$.
\end{lem}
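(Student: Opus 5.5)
The plan is to use the product construction (Lemma \ref{jb1}) together with the factorisation $M=a^2M'$ from Lemma \ref{aaa}. Put $r_2=\gcd\{a,b\}$, $r_3=\gcd\{a,c\}$, $r_4=\gcd\{a,d\}$, $r_5=\gcd\{a,e\}$. Since $a\notin T$, Lemmas \ref{MOLS} and \ref{equivalent} give an MOA$(a^2;a^6,2)$. Each $r_i$ divides $a$, so the compression construction (Lemma \ref{jj}) turns it into an MOA$(a^2;a^2r_2^1r_3^1r_4^1r_5^1,2)$.

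The second ingredient is an MOA$(M';1^11^1(\frac{b}{r_2})^1(\frac{c}{r_3})^1(\frac{d}{r_4})^1(\frac{e}{r_5})^1,2)$. Here $M'$ is the least common multiple of the pairwise products of the four quotient levels, which is exactly the $M'$ of Lemma \ref{aaa}. Quotient levels equal to $1$ only reduce the number of factors, so there are two situations. If at most three quotients exceed $1$, Lemma \ref{k=23} applies directly. If all four exceed $1$, Lemma \ref{k=4} gives the array of size $M'$ provided the type $(\frac{b}{r_2})(\frac{c}{r_3})(\frac{d}{r_4})(\frac{e}{r_5})$ is not in $\mathcal{G}$. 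Taking the product of the two arrays coordinatewise gives an MOA$(a^2M';a^2bcde,2)$, and $a^2M'=M$ by Lemma \ref{aaa}. This settles every case except those where the quotient type lies in $\mathcal{G}=\{2^4,6^4\}\cup\{2^3g^1:g\equiv 2\pmod 4\}$.

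The main obstacle is these exceptional quotient types. In them $b,c,d,e$ all have the form $2r_i$ or $6r_i$, and the same element of $\mathcal{G}$ may occur for more than one of $b,c,d,e$. My first attempt would be to choose a different split of each level between the two factors of the product, so that the quotient part avoids $\mathcal{G}$ while the $a$-part remains a compression of $a^6$. For instance, I would try to move a factor $2$ (or $3$) from a quotient into the $a$-side when it divides $a/r_i$. I would then check, using Lemma \ref{common} as in the proof of Lemma \ref{3.2.5}(3), that the lcm of the new pairwise products is still $M'$. If no such rebalancing is available, the condition $\gcd\{a/r_i, b/r_i\}=1$ forces strong restrictions: for example, $a$ has fewer factors of $2$ than $b$, or $a$ is odd. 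In that event I would fall back on a direct construction. One option is to use an MOA$(2a^2;a^6,2)$ or a $4$-MOLS$(a)$ with extra parallel classes, combined via Lemmas \ref{jb} and \ref{jf}. Another is to use the hypothesis that no three of $b,c,d,e$ are equal to rule out the type $2^4$ outright, for example by showing it would force three equal levels. I would treat $2^4$, $6^4$ and $2^3g^1$ separately in this last step, and I expect the $2^3g^1$ family to need the juxtaposition argument.
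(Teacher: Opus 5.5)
\textbf{Verdict: genuine gap.} Your generic case is the paper's argument: compress an MOA$(a^2;a^6,2)$ to type $a^2r_2r_3r_4r_5$, take the product with an MOA$(M';1^11^1\frac{b}{r_2}\frac{c}{r_3}\frac{d}{r_4}\frac{e}{r_5},2)$, and use $M=a^2M'$ from Lemma \ref{aaa}. The gap is the exceptional quotient types $s^4$ ($s\in\{2,6\}$) and $2^3(4k+2)^1$. For these you list possibilities but give no construction, and the ones you list do not work.

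\begin{itemize}
\item \emph{Rebalancing cannot help.} Suppose the first factor of a product is a compression of an $a^6$ array. Its level in the column of $x\in\{b,c,d,e\}$ divides both $a$ and $x$, hence divides $r_i=\gcd\{a,x\}$. So the quotient level can only grow. Equivalently, $\gcd\{a/r_i,x/r_i\}=1$ means $s$ (resp.\ $2$) divides none of the $a/r_i$.
\item \emph{The hypothesis that no three of $b,c,d,e$ are equal does not exclude $2^4$.} For example, $a=105$ and $(b,c,d,e)=(6,10,14,30)$ satisfy every hypothesis of the lemma and have quotients $2,2,2,2$.
\item \emph{Small second factors are blocked.} The arrays you would need, MOA$(4;2^4,2)$, MOA$(36;6^4,2)$ and MOA$(2g;2^3g^1,2)$ with $g\equiv 2\pmod 4$, are exactly the ones that do not exist.
\item \emph{Your fallback is not developed into a construction.} Lemma \ref{jb} alone cannot do it: it enlarges one column and the run size by the same factor. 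Starting from run size $a^2$, at most two columns can be doubled within $4a^2$, whereas here four columns need levels $2r_i\nmid a$.
\end{itemize}

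\textbf{Missing idea: enlarge the first factor instead of splitting it.}

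\emph{Case $s^4$.} Here $b=sr_2,\dots,e=sr_5$ with $r_i\mid a$, so all six levels divide $sa$. Also $M'=s^2$, so $M=(sa)^2$. It therefore suffices to compress an MOA$((sa)^2;(sa)^6,2)$, which exists by Lemma \ref{lem3.2.1} once $sa\notin T$.
\begin{itemize}
\item For $s=6$ the condition $sa\notin T$ is automatic.
\item For $s=2$ you need $a\notin\{5,11\}$, and this is where the hypothesis is really used. Since $x/r_i=s$ for all four levels, two levels coincide if and only if the corresponding $r_i$ coincide. Hence no three of $r_2,\dots,r_5$ are equal. So $a$ has two distinct divisors greater than $1$ and is not prime.
\end{itemize}

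\emph{Case $2^3(4k+2)^1$.} Write $b=2r_2$, $c=2r_3$, $d=2r_4$, $e=(4k+2)r_5$.
\begin{itemize}
\item The same reasoning applied to $r_2,r_3,r_4$ gives $2a\notin T$.
\item Compress an MOA$((2a)^2;(2a)^6,2)$ to type $a^2(2r_2)(2r_3)(2r_4)(2r_5)$.
\item Apply Lemma \ref{jb} with $n=2k+1$ to the last column. This gives run size $(2a)^2(2k+1)=a^2\cdot 2(4k+2)=M$.
\end{itemize}
No juxtaposition is needed.
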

\begin{proof}
Suppose that $\gcd\{a,b\}=r_2$, $\gcd\{a,c\}=r_3$, $\gcd\{a,d\}=r_4$ and $\gcd\{a,e\}=r_5$. By assumption, $a\not\in T$, Lemma \ref{lem3.2.1} gives an MOA$(a^2;a^6,2)$, and then applying Lemma \ref{jj} yields an MOA$(a^2;a^2r_2r_3r_4r_5,2)$. By Lemma \ref{k=4}, there exists an MOA$(M';1^11^1\frac{b}{r_2}\frac{c}{r_3}\frac{d}{r_4}\frac{e}{r_5},2)$, where
$\frac{b}{r_2}\frac{c}{r_3}\frac{d}{r_4}\frac{e}{r_5}\not\in\mathcal{H}=\{6^4\}\cup\{2^3(4k+2)^1:k\geq0\}$ and $M'$ is defined as in Lemma \ref{aaa}. Consequently, applying Lemmas \ref{jb1} and \ref{aaa} yields an MOA$(M;a^2bcde,2)$.

It remains to consider the cases where $\frac{b}{r_2}\frac{c}{r_3}\frac{d}{r_4}\frac{e}{r_5}\in\mathcal{H}$.

For  the case $\frac{b}{r_2}\frac{c}{r_3}\frac{d}{r_4}\frac{e}{r_5}=s^4, \ s\in\{2,6\}$. we immediately have $b=s r_2$, $c=s r_3$, $d=s r_4$, $e=s r_5$.
Using the given $\gcd$ conditions $\gcd\{a,b\}=\gcd\{a,s r_2\}=r_2$, $\gcd\{a,c\}=\gcd\{a,s r_3\}=r_3$,
$\gcd\{a,d\}=\gcd\{a,sr_4\}=r_4$, $\gcd\{a,e\}=\gcd\{a,s r_5\}=r_5$,
we deduce that $r_i\mid a$ but $s r_i\nmid a$ for each $i=2,3,4,5$, with $r_i>1$.
Since $\frac{b}{r_2}=\frac{c}{r_3}=\frac{d}{r_4}=\frac{e}{r_5}=s$, it follows that
no three of $r_2,r_3,r_4,r_5$ are all equal; otherwise, the same equality would hold among $b,c,d,e$, contradicting the assumption. Hence, we obtain
$a\neq 3,5,11$ and $s a\not\in T$. By Lemma \ref{lem3.2.1}, there exists an MOA$((sa)^2;(sa)^6,2)$. Applying Lemma \ref{jj} then yields an MOA$((s a)^2;a^2(sr_2)^1(sr_3)^1(sr_4)^1(sr_5)^1,2)$. Finally, by Lemma \ref{aaa} with
$M=a^2M'=(sa)^2$, this is precisely an MOA$(M;a^2bcde,2)$ as required.

For the case $\frac{b}{r_2}\frac{c}{r_3}\frac{d}{r_4}\frac{e}{r_5}=2^3(4k+2)^1$, where $k>0$.
We may assume, without loss of generality, that $b=2r_2$, $c=2r_3$, $d=2r_4$, $e=(4k+2) r_5$. The given $\gcd$ conditions then become $\gcd\{a,e\}=\gcd\{a,(4k+2)r_5\}=r_5$, $\gcd\{a,b\}=\gcd\{a,2r_2\}=r_2$, $\gcd\{a,c\}=\gcd\{a,2r_3\}=r_3$, $\gcd\{a,d\}=\gcd\{a,2r_4\}=r_4$. Consequently,
we have $r_i\mid a$ but $s r_i\nmid a$ for each$i=2,3,4,5$, with $r_i>1$.
Since $b,c,d$ are not all equal, it follows that $r_2,r_3,r_4$ are not all equal either. Hence $a$ has at least two distinct factors greater than $1$, which implies that $2 a\not\in T$. By Lemma \ref{lem3.2.1}, there exists an MOA$((2a)^2;(2a)^6,2)$. Applying Lemma \ref{jj} then yields an MOA$((2a)^2;a^2(2r_2)^1(2r_3)^1(2r_4)^1(2r_5)^1,2)$. Now setting $n=2k+1$ in Lemma \ref{jb}, we obtain an
MOA$((2a)^2(2k+1);a^2(2r_2)^1(2r_3)^1(2r_4)^1(4k+2)r_5)^1,2)$, which, by Lemma \ref{aaa} with $M=a^2M'=a^2(2(4k+2))=(2a)^2(2k+1)$, is precisely an
MOA$(M;a^2bcde,2)$.
\end{proof}

\begin{lem}\label{fff}
Let $a,b,c,d$ and $e$ be positive integers greater than $1$, $a\in T=\{2,3,4,6,10,22\}$, $ a\neq b,\ a\neq c,\ a\neq d,\ a\neq e$
and no three of $b,c,d,e$ are all equal. If $\gcd\{x,y\}>1$ for any $\{x,y\}\subseteq\{a,b,c,d,e\}$ and there is a subset $\{x,y\}\subseteq\{a,b,c,d,e\}$ such that $\gcd\{x,y\}\not\in T$, then there exists an MOA$(M;a^2bcde,2)$.
\end{lem}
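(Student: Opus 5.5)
The plan is to imitate the proof of Lemma \ref{ccc}. The difference is that the role of the ``large'' factor $a$ is now played by a greatest common divisor that lies outside $T$. Fix a pair $\{x,y\}\subseteq\{a,b,c,d,e\}$ with $r_1:=\gcd\{x,y\}\notin T$; note $r_1>1$ by hypothesis. Relabel the six levels $a,a,b,c,d,e$ so that $x,y$ play the roles of $a,f$ in Lemma \ref{bbb}. For each remaining level $z$ set $r_z=\gcd\{r_1,z\}$; each $r_z>1$ is not guaranteed, but $r_z\mid r_1$ always. Lemma \ref{bbb} then gives
\[
M=r_1^2M',
\]
where $M'$ is the lcm of pairwise products of the reduced levels $\frac{x}{r_1},\frac{y}{r_1},\frac{z}{r_z}$.

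First I build the two ingredients and multiply them. Since $r_1\notin T$, Lemma \ref{lem3.2.1} gives an MOA$(r_1^2;r_1^6,2)$. Compressing by Lemma \ref{jj} yields an MOA$(r_1^2;r_1^2\prod_z r_z,2)$. Next I need an MOA$(M';\text{reduced type},2)$, where the reduced levels are $\frac{x}{r_1},\frac{y}{r_1}$ and the four $\frac{z}{r_z}$. Some of these may equal $1$; such factors are simply dropped, as the introduction allows. The two levels $\frac{x}{r_1}$ and $\frac{y}{r_1}$ are coprime.

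If at least two reduced levels exceed $1$ and two of them are coprime, I apply Lemma \ref{lem3.1} in the six-factor case. If there are fewer than six non-trivial factors, I apply Lemmas \ref{k=5}, \ref{k=4} and \ref{k=23} instead. In every one of these cases the reduced array has run size equal to the lcm of pairwise products, which is exactly $M'$. The product construction (Lemma \ref{jb1}) then gives an MOA$(r_1^2M';a^2bcde,2)$, that is, an MOA$(M;a^2bcde,2)$.

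The main obstacle is the exceptional reduced types, for which the reduced array does not exist with exactly $M'$ runs. These are $6^43^12^1$, $10^45^12^1$ and $2^53^1$ from Lemma \ref{lem3.1}, the five-factor types in $\mathcal{L}_5$, and the four-factor types in $\mathcal{G}$. For each exception I would exploit the known structure: the reduced levels are all small, namely $2,3,5,6,10$ or $4k+2$. The idea is to absorb one common factor $s$, for instance $s=2$, into the big array, exactly as in Lemma \ref{ccc}. Concretely, I would use an MOA$((sr_1)^2;(sr_1)^6,2)$, which exists as long as $sr_1\notin T$. I then compress it to levels $s r_z$, or $r_1$ where appropriate, and finish with Lemma \ref{jb} to account for a remaining factor such as $2k+1$.

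The bad subcases are $sr_1\in T$, which happens only for $r_1\in\{5,11\}$ with $s=2$. Here I would either choose a different pair $\{x,y\}$, or use the specific arrays already available. These are the MOA$(a^2;a^42^2,2)$ for $a\in\{10,22\}$ (Lemmas \ref{ua=2346} and \ref{22-30}), Lemma \ref{36100}(2),(3), and the Case 2--4 results Lemmas \ref{lem3.2.3}, \ref{lem3.2.4} and \ref{3.2.5}. I also need to check, using $a\in T$, $a\ne b,c,d,e$ and the condition that no three of $b,c,d,e$ are equal, that the pathological types in $\mathcal{E}\cup\mathcal{P}$ cannot arise here. I expect this verification of the exceptional reduced types to be the bulk of the work, with a short case analysis on $r_1\in\{5,11\}$ versus other values.
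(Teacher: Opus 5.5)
\textbf{Verdict: same approach as the paper, but with a real gap.} You split off $r_1=\gcd\{x,y\}\notin T$ via Lemma~\ref{bbb} and multiply a compressed MOA$(r_1^2;r_1^6,2)$ by a reduced array from Lemmas~\ref{k=23}--\ref{k=5} and~\ref{lem3.1}. For the exceptional reduced types you absorb a small factor $s$ into a compressed MOA$((sr_1)^2;(sr_1)^6,2)$ and finish with Lemma~\ref{jb} ($n=3$ for $2^53^1$, $n=2k+1$ for the $(4k+2)$-families). All of this is the paper's route, and your generic case is correct. But the exceptional types are essentially the whole content of the paper's proof, and you only announce a plan for them. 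The one real obstruction you isolate is $sr_1\in T$, which forces $s=2$ and $r_1\in\{5,11\}$. Your only answer to it is ``choose a different pair, or use the specific arrays already available'', and neither option is justified. There may be no other pair with gcd outside $T$, and you do not say which listed array has the required type and run size. As written, this step is a gap, not a proof.

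\textbf{The missing observation: this obstruction never arises.}

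\emph{Pair inside $\{b,c,d,e\}$.} In each exceptional type with $s=2$ (namely $2^51^1$, $2^41^2$, $2^4(4k+2)^11^1$, $2^3(4k+2)^11^2$), coprimality of $x/r_1$ and $y/r_1$ forces a reduced level $1$ at (say) $x$. Then $x=r_1$, so every $r_z=\gcd\{x,z\}>1$. For $2^53^1$, $r_z>1$ follows directly from $\gcd\{x,z\},\gcd\{y,z\}>1$. Both copies of $a$ and some other level ($d$, $e$ or $y$) have reduced value $2$. Since $a$ differs from that level, $r_1$ has two distinct divisors exceeding $1$. Hence $r_1$ is not prime and $2r_1\notin T$. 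This is the argument the paper uses.

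\emph{Pair involving $a$.} Your set-up is actually more general than the paper's, which simply takes the pair to be $\{b,c\}$. The only gcd outside $T$ may involve $a$: in type $10^2\,15^1\,6^1\,12^1\,18^1$, only $\gcd\{10,15\}=5\notin T$. There $r_1=a/2\in\{5,11\}$ genuinely occurs, so you must check separately that the reduced type is never exceptional. It contains $2^2$ from the two copies of $a$, together with either an odd level $y/r_1>1$ or $y=r_1$.
\begin{itemize}
\item An odd $y/r_1>1$ rules out every pattern in $\mathcal{G}$, $\mathcal{L}_5$ and Lemma~\ref{lem3.1} except $2^53^1$, and that one would make three of $b,c,d,e$ equal to $2$.
\item If $y=r_1$, then $r_z=r_1$ for every remaining $z$. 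Any possible match then forces some remaining level $z$ with $z/r_1=2$, i.e.\ $z=a$.
\end{itemize}

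With these two checks added, your plan becomes a complete proof.
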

\begin{proof}
Without loss of generality, set $\gcd\{b,c\}=r_1\not\in T$, $\gcd\{r_1,a\}=s_1$, $\gcd\{r_1,d\}=s_2$, $\gcd\{r_1,e\}=s_3$. Since $\gcd\{\frac{b}{r_1},\frac{c}{r_1}\}=1$, it follows from Lemmas \ref{k=4}-\ref{k=5} and Lemma \ref{lem3.1} that there exists an MOA$(M_1;\frac{b}{r_1}\frac{c}{r_1}(\frac{a}{s_1})^2\frac{d}{s_2}\frac{e}{s_3},2)$
for $\frac{b}{r_1}\frac{c}{r_1}(\frac{a}{s_1})^2\frac{d}{s_2}\frac{e}{s_3}\not\in\mathcal{T}
=\{s^51^1:s\in\{2,3,6,10\}\}\cup\{2^41^2,6^41^2,6^43^11^1,6^42^11^1,6^43^12^1,2^53^1,10^45^12^1\}$
$\cup\{2^4(4k+2)^11^1,2^3(4k+2)^11^2:k\geq1\}$, and $M_1=\operatorname{lcm}\{xy:\{x,y\}\subseteq\{\frac{b}{r_1},\frac{c}{r_1},\frac{a}{s_1},\frac{a}{s_1},\frac{d}{s_2},\frac{e}{s_3}\}\}$.
By Lemmas \ref{lem3.2.1} and \ref{jj}, there exists an MOA$(r_1^2;r_1^2s_1^2s_2^1s_3^1,2)$. Finally, applying Lemma \ref{jb1} together with Lemma \ref{bbb}, and setting $M=r_1^2M_{1}$, we obtain the desired MOA$(M;a^2bcde,2)$.

We next consider the case where $\frac{b}{r_1}\frac{c}{r_1}(\frac{a}{s_1})^2\frac{d}{s_2}\frac{e}{s_3}\in\mathcal{T}$.

Suppose first that $\frac{b}{r_1}\frac{c}{r_1}(\frac{a}{s_1})^2\frac{d}{s_2}\frac{e}{s_3}=s^51^1$,
$s\in\{2,3,6,10\}$, then $M=r_1^2M_1=r_1^2s^2$. Without loss of generality, we mey assume $\frac{b}{r_1}=1, \frac{c}{r_1}=\frac{a}{s_1}=\frac{d}{s_2}=\frac{e}{s_3}=s$
since $\gcd\{\frac{b}{r_1},\frac{c}{r_1}\}=1$. Hence $r_1=b$. Now, because $\gcd\{b,x\}>1$ for each $x\in\{a,c,d,e\}$, and no three of $b,c,d,e$ are all equal, it follows that at least two of $s_1,s_2,s_3$ are distinct. Consequently, $r_1$ has at least two distinct factors greater than $1$, which implies that $sr_1\not\in T$. By Lemma \ref{lem3.2.1}, there exists an MOA$((sr_1)^2;(sr_1)^6,2)$. Then, we obtain an MOA$((sr_1)^2;(sr_1)^1r_1^1(ss_1)^2(ss_2)^1(ss_3)^1,2)$ by Lemma \ref{jj}, which is precisely the desired MOA$(M;a^2bcde,2)$.

Now consider the case $\frac{b}{r_1}\frac{c}{r_1}(\frac{a}{s_1})^2\frac{d}{s_2}\frac{e}{s_3}=6^4t^11^1, t\in\{2,3\}$, then $M=r_1^2M_1=36r_1^2$.
Since $\gcd\{\frac{b}{r_1},\frac{c}{r_1}\}=1$, we may assume, without loss of generality, that either $\frac{b}{r_1}=1,\frac{c}{r_1}=t$ or
$\frac{b}{r_1}=1, \frac{c}{r_1}=6$. Given that $r_1>1$, we have $6r_1\not\in T$.
Applying Lemmas \ref{lem3.2.1} and \ref{jj}, we obtain either  MOA$((6r_1)^2;(6r_1)^1r_1^1(6s_1)^2(6s_2)^1(ts_3)^1,2)$ or  MOA$((6r_1)^2;(tr_1)^1r_1^1(6s_1)^2(6s_2)^1(6s_3)^1,2)$. Each of these is precisely the desired MOA$(M;a^2bcde,2)$.

For the case $\frac{b}{r_1}\frac{c}{r_1}(\frac{a}{s_1})^2\frac{d}{s_2}\frac{e}{s_3}=(2t)^4t^12^1$, $t\in\{3,5\}$, then $M=(2tr_1)^2$. Since $\gcd\{\frac{b}{r_1},\frac{c}{r_1}\}=1$, we may assume, without loss of generality, that either
$\frac{b}{r_1}=2, \frac{c}{r_1}=t$, or $\frac{b}{r_1}=t, \frac{c}{r_1}=2$. Given that $r_1>1$, we have $2t r_1\not\in T$. Applying Lemmas \ref{lem3.2.1} and \ref{jj},there exists an MOA$((2tr_1)^2;(2r_1)^1(tr_1)^1s_1^2s_2^1s_3^1,2)$, which is precisely the desired MOA$(M;a^2bcde,2)$.

For the case $\frac{b}{r_1}\frac{c}{r_1}(\frac{a}{s_1})^2\frac{d}{s_2}\frac{e}{s_3}=2^53^1$. Then $M=12r_1^2$. Since $\gcd\{\frac{b}{r_1},\frac{c}{r_1}\}=1$, we may assume that either $\frac{b}{r_1}=3, \frac{c}{r_1}=2, \frac{a}{s_1}=2,\frac{d}{s_2}=2, \frac{e}{s_3}=2$, or $\frac{b}{r_1}=2, \frac{c}{r_1}=3, \frac{a}{s_1}=2,\frac{d}{s_2}=2, \frac{e}{s_3}=2$. For the first case, we have $b=3r_1$, $c=2r_1$, $a=2s_1$, $d=2s_2$, $e=2s_3$, with $\gcd\{r_1,a\}=\gcd\{r_1,2s_1\}=s_1$,
$\gcd\{r_1,d\}=\gcd\{r_1,2s_2\}=s_2$, $\gcd\{r_1,e\}=\gcd\{r_1,2s_3\}=s_3$. It follows that $\gcd\{b,a\}=\gcd\{3r_1,2s_1\}=s_1>1$,
$\gcd\{b,d\}=\gcd\{3r_1,2s_2\}=s_2>1$, $\gcd\{b,e\}=\gcd\{3r_1,2s_3\}=s_3>1$ and
$s_i\mid r_1$ for $i=1,2,3$. Hence $2r_1\not\in T$. By Lemmas \ref{lem3.2.1} and  \ref{jj}, there exists an
MOA$((2r_1)^2;(2r_1)^1r_1^1(2s_1)^2(2s_2)^1(2s_3)^1,2)$. Applying Lemma \ref{jb} with $n=3$ yields an MOA$(12r_1^2;(2r_1)^1(3r_1)^1(2s_1)^2(2s_2)^1(2s_3)^1,2))$, which is precisely MOA$(M;a^2bcde,2)$. The second case follows by an identical argument.

For the case $\frac{b}{r_1}\frac{c}{r_1}(\frac{a}{s_1})^2\frac{d}{s_2}\frac{e}{s_3}=2^4(4k+2)^11^1,\ k>0$, then $M=2(4k+2)r_1^2$. Since $\gcd\{\frac{b}{r_1},\frac{c}{r_1}\}=1$, we may assume, without loss of generality, that either $\frac{b}{r_1}=1,\frac{c}{r_1}=2$ or
$\frac{b}{r_1}=1, \frac{c}{r_1}=4k+2$. In either case, we have $r_1=b$. Because $\gcd\{b,x\}>1$ for each $x\in\{a,c,d,e\}$, and no three of $b,c,d,e$ are all equal, at least two of $s_1,s_2,s_3$ must be distinct. Consequently, $r_1$ has at least two distinct factors greater than $1$, which implies that $2r_1\not\in T$.
By Lemmas \ref{lem3.2.1} and \ref{jj}, there exists an MOA$((2r_1)^2;(2r_1)^1r_1^1(2s_1)^2(2s_2)^1(2s_3)^1,2)$. Applying Lemma \ref{jb} then yields an MOA$(2(4k+2)r_1^2;(2r_1)^1r_1^1(2s_1)^2(2s_2)^1((4k+2)s_3)^1,2)$,
or an MOA$(2(4k+2)r_1^2;((4k+2)r_1)^1r_1^1(2s_1)^2(2s_2)^1(2s_3)^1,2)$. Each of these is precisely the desired MOA$(M;a^2bcde,2)$.
\end{proof}

\begin{lem}\label{fff1}
Let $a,b,c,d,e$ be positive integers greater than $1$, $a\in T=\{2,3,4,6,10,22\}$, $ a\neq b,\ a\neq c,\ a\neq d,\ a\neq e$ and no three of $b,c,d,e$ are all equal.
If $\gcd\{x,y\}\in T$ for any $\{x,y\}\subseteq\{a,b,c,d,e\}$, then there exists an MOA$(M;a^2bcde,2)$.
\end{lem}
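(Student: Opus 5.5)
Throughout, $a\in T$, every pairwise $\gcd$ among $a,b,c,d,e$ lies in $T=\{2,3,4,6,10,22\}$, $a\neq b,c,d,e$, and no three of $b,c,d,e$ are equal. I would split on the value of $a$. The aim in every subcase is a product construction (Lemma \ref{jb1}) of two arrays. One is a small "core" MOA$(g^2;\cdot,2)$ whose levels are the common divisors. The other is an MOA on the reduced levels $1^2\frac{b}{r_2}\frac{c}{r_3}\frac{d}{r_4}\frac{e}{r_5}$, which exists by Lemma \ref{k=4} (or Lemma \ref{k=23} if some reduced levels equal $1$). The required size then comes out as $M=a^2M'$ by Lemma \ref{aaa}, where $r_i=\gcd\{a,\cdot\}$ as there.

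\textbf{Step 1: $a\in\{2,3\}$.} Here every $r_i=a$ and $M=a^2M'$. The core MOA$(a^2;a^6,2)$ does not exist, so I would not use it. Instead I take a core of type $a^2\,\cdot$ with one inflated factor: Lemma \ref{lem3.2.3}(2),(3) gives MOA$(2\ell;2^5\ell^1,2)$ and MOA$(3\ell;3^5\ell^1,2)$. Then I pass through Lemma \ref{3.2.5}, which gives MOA$(\cdot;a^3xyz,2)$, and through Lemma \ref{lem3.2.4}: take $a^3 b c d$ from Lemma \ref{3.2.5} with $b,c,d$ suitably chosen, compress one $a$-column (Lemma \ref{jj}), and use Lemma \ref{jb} to inflate that column to $e$.

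Concretely, let $g=\gcd\{b,c,d,e\}$. Since no three of $b,c,d,e$ are equal, I would write $M=\operatorname{lcm}$ of the pairwise products and check, via Lemma \ref{common}, that $M=\frac{e}{\gcd\{e,\operatorname{lcm}\}}\cdot M_{a^3bcd}$. The needed array then comes from MOA$(M_{a^3bcd};a^3bcd,2)$ by turning one $a$-column into $e$ with Lemmas \ref{jj} and \ref{jb}. This requires $\gcd\{e,\cdot\}$ to fit, so I would choose which of $b,c,d,e$ plays the role of $e$ to make the lcm identity work.

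\textbf{Step 2: $a\in\{4,6,10,22\}$.} Here $r_i\in\{2,\frac a2,a\}$ (for $a=4$, $r_i\in\{2,4\}$). I would split on the multiset $\{r_2,r_3,r_4,r_5\}$.

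If some $r_i=a$, say $r_2=a$, I argue as in Lemma \ref{3.2.5}(4). I replace $c,d,e$ by $\frac{a}{r_j}c$ and so on, so that all gcds with $a$ equal $a$, and check via Lemma \ref{common} that $M$ is unchanged. I then compress at the end (Lemma \ref{jj}).

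If all $r_i<a$, I would use the core MOA$(a^2;a^2r_2r_3r_4r_5,2)$. This comes from Lemmas \ref{ua=2346}(5), \ref{36100}, \ref{22-30}, and Lemmas \ref{lem3.2.4} and \ref{jj}: MOA$(16;4^42^2,2)$, MOA$(36;6^3abc,2)$, MOA$(100;10^3abc,2)$, MOA$(484;22^311^3,2)$, MOA$(22^2;22^42^2,2)$. I take its product with the reduced MOA from Lemma \ref{k=4}.

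\textbf{Main obstacle.} The hardest part will be the exceptional reduced types $\frac b{r_2}\cdots\frac e{r_5}\in\{6^4\}\cup\{2^3(4k+2)^1\}$, together with cores whose existence is unknown, such as $10^2 5^4$ or $22^211^4$. For $6^4$ with $a\in T$, I would instead absorb the factor $6$ into the core, building MOA$((6a')^2;\cdot,2)$ from a larger-order MOLS, as in Lemma \ref{ccc}. For $2^3(4k+2)^1$, I would use Lemma \ref{jb} on a $2a$-core. For the core gaps, I would either use the condition that no three of $b,c,d,e$ are equal to rule them out, or reroute through Lemma \ref{lem3.2.4}(3) with a different pair playing the role of $a$.
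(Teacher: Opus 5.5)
\textbf{There is a genuine gap in Step~1.}

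For $a\in\{2,3\}$ you take MOA$(M_{a^3xyz};a^3xyz,2)$ from Lemma \ref{3.2.5}, with $\{x,y,z\}$ three of $b,c,d,e$, and turn one $a$-column into the fourth level $w$. Since $a$ is prime, compressing that column first cannot help, and Lemma \ref{jb} multiplies the run size by exactly $w/a$. Choosing larger levels and compressing later only makes things worse. Here $M_{a^3xyz}=\operatorname{lcm}\{xy,xz,yz\}$, so the best you can reach is $\frac{w}{a}\operatorname{lcm}\{xy,xz,yz\}$, and this can exceed $M$ for every choice of $w$.

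Take $a=2$ and $\{b,c,d,e\}=\{4,12,20,28\}$. All pairwise gcds are $2$ or $4$, no three levels coincide, and $M=2^4\cdot 3\cdot 5\cdot 7=1680$. Yet each of the four choices of $w$ gives $3360$: the power of $2$ overshoots, since $v_2(w)=2$ and the other three levels also have $v_2=2$. Likewise $a=3$, $\{6,12,30,42\}$ gives $5040$ against $M=2520$.

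The missing idea is to anchor not on $a$ but on the largest pairwise gcd $r_1$ of the whole set. For $a\in\{2,3\}$ this may be $4,6,10$ or $22$ and lie on a pair among $b,c,d,e$; this is how the paper organizes the proof. With $r_1=\gcd\{b,c\}$, Lemma \ref{bbb} gives $M=r_1^2M'$. One then multiplies a core MOA$(r_1^2;r_1^2\gcd\{r_1,a\}^2\gcd\{r_1,d\}\gcd\{r_1,e\},2)$ by an array on the quotient levels from Lemmas \ref{k=23}--\ref{k=5} and \ref{lem3.1}, using that $\frac{b}{r_1},\frac{c}{r_1}$ are coprime. When all pairwise gcds equal $r_1$, one instead uses Lemma \ref{3.2.5} with $r_1$ in the triple role. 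In the example, MOA$(16;4^42^2,2)$ times MOA$(105;3^15^17^1,2)$ gives the required MOA$(1680;2^2\,4\,12\,20\,28,2)$. So does Lemma \ref{3.2.5} on $4^3\,12\,20\,28$ followed by compressing two $4$'s to $2$.

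\textbf{Step~2 is sound when all $r_i<a$, but the case $r_i=a$ needs rerouting.} When all $r_i<a$, the needed cores exist. This includes $10^25^4$ and $22^211^4$, which you list as doubtful; they are compressions of the MOA$(100;10^35^3,2)$ and MOA$(484;22^311^3,2)$ of Lemma \ref{36100}. The hypotheses also exclude the exceptional reduced types of Lemma \ref{k=4} in this case.

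Those exceptional types do occur when some $r_i=a$. For example, $a=6$, $(b,c,d,e)=(4,4,12,20)$ gives the reduced type $2^310^1$. Your replacement $x\mapsto \frac{a}{r_x}x$ preserves $M$. However, the new type generally violates the hypotheses of the lemma you are proving: here it is $6^2\,12^3\,60$, and in general pairwise gcds that are proper multiples of $a$, hence outside $T$, appear. So the replaced type must be handed to Lemma \ref{fff} or Lemmas \ref{lem3.2.4}--\ref{3.2.5}, rather than treated as in Lemma \ref{3.2.5}(4).
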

\begin{proof}
Suppose that $\max\{\gcd\{x,y\}:\{x,y\}\subseteq\{a,b,c,d,e\}\}=r_1$, with $r_1\in T$. Without loss of generality, let
$\gcd\{b,c\}=r_1$, the other cases follow by an analogous argument. Define $\gcd\{r_1,a\}=r_2$, $\gcd\{r_1,d\}=r_3$, $\gcd\{r_1,e\}=r_4$.
The proof is then completed by considering five cases according to the possible values of $r_1$.

(1) For $r_1\in\{2,3\}$. In this case, we have $\gcd\{x,y\}=r_1$ for every pair $\{x,y\}\subseteq\{a,b,c,d,e\}$. By Lemma \ref{aaa}, it follows that $M=\frac{a^2bcde}{r_1^4}$.
Lemma \ref{3.2.5} then guarantees the existence of an MOA$(\frac{cde}{r_1};r_1^3cde,2)$. Applying Lemma \ref{jb} successively with
$n=\frac{a}{r_1}$, $n=\frac{a}{r_1}$ and $n=\frac{b}{r_1}$, yields the desired MOA$(M;a^2bcde,2)$. The same argument applies when $\gcd\{x,y\}=r_1\in\{4,6,10,22\}$ for all pairs $\{x,y\}\subseteq\{a,b,c,d,e\}$, and in each of these cases it also yields an MOA$(M;a^2bcde,2)$.

(2) For $r_1=4$. First, note that $a\neq3$, otherwise, we would have $3\mid b$ and $3\mid c$, contradicting $\gcd\{b,c\}=4$. If $a\in\{2,10,22\}$, then clearly $r_2=2$. By Lemmas \ref{k=23}-\ref{k=5}, there exists an  MOA$(M';(\frac{bc}{r_1^2})^1(\frac{a}{2})^2\frac{d}{r_3}\frac{e}{r_4},2)$. Applying Lemma \ref{split} yields an MOA$(M';\frac{b}{r_1}\frac{c}{r_1}(\frac{a}{2})^2\frac{d}{r_3}\frac{e}{r_4},2)$. Also, by Lemmas \ref{ua=2346} and \ref{jj}, there exists an MOA$(r_1^2;r_1^2r_2^2r_3r_4,2)$. Then Lemma \ref{bbb} gives the desired MOA$(M;a^2bcde,2)$.
If $a=4$, then clearly $r_2=4$. When $r_3=r_4=4$, this case has already been handled, since $\gcd\{x,y\}=4$ for any $\{x,y\}\subseteq\{a,b,c,d,e\}$. When $r_2=r_3=4,r_4=2$, there exists an MOA$(M;a^2bcd(\frac{4e}{r_4}),2)$ because $\gcd\{a,\frac{4e}{r_4}\}=4$. Applying Lemma \ref{jj} then yields an MOA$(M;a^2bcde,2)$ since $e \mid  \frac{4e}{r_4}$. When $\{r_2,r_3,r_4\}\in\{\{4,2,2\},\{2,2,2\}\}$, Lemmas \ref{lem3.2.4} and \ref{jj} gives an MOA$(a^2;a^2r_1^2r_3r_4,2)$. Moreover, since at most two of $b,c,d,e$ are equal, we have
$\frac{b}{r_1}\frac{c}{r_1}\frac{d}{r_3}\frac{e}{r_4}\not\in\mathcal{G}=\{2^4,6^4\}\cup \{2^3d^1:d\equiv 2\pmod 4\}$. Thus, by Lemma \ref{k=4}, there exists an MOA$(M';1^11^1\frac{b}{r_1}\frac{c}{r_1}\frac{d}{r_3}\frac{e}{r_4},2)$. Consequently, Lemma~\ref{jb1} yields an MOA$(a^2M';a^2bcde,2)$. If $a=6$, then $r_2=2$. By Lemma \ref{k=5}, there exists an  MOA$(M';(\frac{bc}{r_1^2})^1(\frac{a}{2})^2\frac{d}{r_3}\frac{e}{r_4},2)$
since $(\frac{bc}{r_1^2})^1(\frac{a}{2})^2\frac{d}{r_3}\frac{e}{r_4}\not\in \mathcal{L}_5$ for otherwise we would obtain a contradiction. Furthermore, by Lemmas \ref{MOLS} and\ref{jj}, there exists an MOA$(r_1^2;r_1^2r_2^2r_3r_4,2)$. Applying Lemma \ref{bbb} then gives the desired MOA$(M;a^2bcde,2)$.

(3) For $r_1=6$, with $a\in\{2,3\}$. In this case, we have $r_2=a$. When $r_3=r_4=6$, write $b=6p_1, c=6p_2,d=6p_3,e=6p_4>6$, with $\gcd\{p_i,p_j\}=1$ for $1\leq i\neq j \leq 4$. Lemma \ref{3.2.5} gives an MOA$(6e;a^26^3e,2)$. Applying Lemma \ref{jb} successively with $n=p_1,p_2,p_3$, respectively, then yields the desired MOA$(M;a^2bcde,2)$. When $r_3,r_4$ are not both equal to $6$, an MOA$(M';\frac{b}{r_1}\frac{c}{r_1}(\frac{a}{r_2})^2\frac{d}{r_3}\frac{e}{r_4},2)$ exists by Lemmas \ref{k=23} and \ref{k=4},
and an  MOA$(r_1^2;r_1^2r_2^2r_3r_4,2)$ exists by Lemmas \ref{36100} and \ref{jj}. Applying Lemma \ref{bbb} then gives the desired MOA$(M;a^2bcde,2)$.

For $r_1=6$, with $a=4$. Then $r_2=2$. When $r_3=r_4=6$, write $b=6p_1, c=6p_2,d=6p_3,e=6p_4$, with $\gcd\{p_i,p_j\}=1$ for $1\leq i\neq j \leq 4$.
Moreover, at least one $p_i$  is greater than 1. Without loss of generality, assume $p_1>1$. By Lemma \ref{3.2.5}, there exists an MOA$(M';a^26^3b,2)$. Applying Lemma \ref{jb} successively with $n=p_2,p_3,p_4$, respectively, then yields the desired MOA$(M;a^2bcde,2)$. When $r_3,r_4$ are not both equal to $6$, then for any $\{x,y\}\subseteq\{b,c,\frac{6d}{r_3},\frac{6e}{r_4}\}$, we have $\gcd\{x,y\}=6$. Hence, by the previously established result for the case where all pairwise $\gcd$s are equal to $6$, there exists an
MOA$(M;a^2bc(\frac{6d}{r_3})(\frac{6e}{r_4}),2)$. Applying Lemma \ref{jj} then yields
an MOA$(M;abcdef,2)$ exists since $d\mid \frac{6d}{r_3}$ and $e\mid \frac{6e}{r_4}$.

For $r_1=6$, with $a=6$. Then $r_2=6$. When $r_3=r_4=6$, then this case has already been handled, since $\gcd\{x,y\}=6$ for every pair $\{x,y\}\subseteq\{a,b,c,d,e\}$. When $r_3,r_4$ are not both equal to $6$, then for any pair $\{x,y\}\subseteq\{a,b,c,\frac{6d}{r_3},\frac{6e}{r_4}\}$, we have $\gcd\{x,y\}=6$.
Thus, by the previously established result for the case where all pairwise gcds are equal to $6$, there exists an MOA$(M;a^2bc(\frac{6d}{r_3})(\frac{6e}{r_4}),2)$.
Applying Lemma \ref{jj} then yields the desired MOA$(M;abcdef,2)$ since $d\mid \frac{6d}{r_3}$ and $e\mid \frac{6e}{r_4}$.

For $r_1=6$, with $a\in\{10,22\}$, we have $r_2=2$. When $r_3=r_4=6$, write $b=6p_1, c=6p_2,d=6p_3,e=6p_4>6$, with $\gcd\{p_i,p_j\}=1$ for $1\leq i\neq j \leq 4$, and $\gcd\{p_\alpha,\frac{a}{2}\}=1$, $\alpha=1,2,3$. By Lemma \ref{3.2.5}, there exists an MOA$(M_0;a^26^3e,2)$, where $M_0=\operatorname{lcm}\{a^2,6a,6e,36,ae\}=\frac{9a^2p_4}{\gcd\{\frac{a}{2},p_4\}}$. Applying Lemma \ref{jb} successively with $n=p_1,p_2,p_3$, respectively, then yields the desired MOA$(M;a^2bcde,2)$. When $r_3,r_4$ are not both equal to $6$, then by Lemma \ref{k=5}, there exists an  MOA$(M';(\frac{b}{r_1}\frac{c}{r_1})(\frac{a}{r_2})^2\frac{d}{r_3}\frac{e}{r_4},2)$, and by Lemmas \ref{36100} and \ref{jj}, there exists an MOA$(r_1^2;r_1^2r_2^2r_3r_4,2)$. Therefore, Lemma \ref{bbb} gives the desired MOA$(M;a^2bcde,2)$.

(5) For $r_1\in\{10,22\}$. In this case, we must have $a\neq3$, otherwise, $3\mid b$ and $3\mid c$, contradicting $\gcd\{b,c\}=r_1$. If $a=2$, then clearly $r_2=2$. By Lemmas \ref{k=23}-\ref{k=5}, there exists an  MOA$(M';(\frac{b}{r_1}\frac{c}{r_1})(\frac{a}{2})^2\frac{d}{r_3}\frac{e}{r_4},2)$, and by Lemmas \ref{MOLS} and \ref{jj}, there exists an MOA$(r_1^2;r_1^2r_2^2r_3r_4,2)$. Applying Lemma \ref{bbb} then yields the desired
MOA$(M;a^2bcde,2)$. If $a\in\{4,6,10,22\}$, the proof is analogous to that given for the case $r_1=6$.
\end{proof}

By Lemmas \ref{ccc}-\ref{fff1}, we obtain the following theorem.

\begin{thm}\label{6cc}
Let $a,b,c,d,e$ be positive integers greater than $1$ with $a\neq b$, $a\neq c$, $a\neq d$ and $a\neq e$, and no three of $b,c,d,e$ are all equal. If $\gcd\{x,y\}>1$ for any $\{x,y\}\subseteq\{a,b,c,d,e\}$, then there exists an MOA$(M;a^2bcde,2)$.
\end{thm}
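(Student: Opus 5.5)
The plan is to assemble Theorem \ref{6cc} from the three preceding lemmas by splitting on whether $a$ lies in $T=\{2,3,4,6,10,22\}$, and, when it does, on whether some pairwise $\gcd$ lies outside $T$. The three resulting cases are exhaustive, and each is covered by exactly one earlier lemma.

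\emph{Case $a\notin T$.} This is precisely the hypothesis set of Lemma \ref{ccc}: $a\neq b,c,d,e$, no three of $b,c,d,e$ equal, and all pairwise $\gcd$s exceed $1$. That lemma gives an MOA$(M;a^2bcde,2)$ directly.

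\emph{Case $a\in T$, and some pair $\{x,y\}\subseteq\{a,b,c,d,e\}$ has $\gcd\{x,y\}\notin T$.} Lemma \ref{fff} applies verbatim. Its internal argument factors out $r_1=\gcd\{x,y\}$ via Lemma \ref{bbb}. It then combines an MOA$(r_1^2;\cdots)$, available because $r_1\notin T$, with a smaller MOA from Lemmas \ref{k=4}, \ref{k=5} and \ref{lem3.1}, using the product construction. The exceptional types in $\mathcal{T}$ are handled inside that lemma.

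\emph{Case $a\in T$, and every pair has $\gcd\{x,y\}\in T$.} The hypothesis here includes $\gcd>1$, which is assumed in the theorem, so Lemma \ref{fff1} gives the array.

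The only step requiring care is checking that the trichotomy is complete. If $a\in T$, then either some pairwise $\gcd$ lies outside $T$ or all lie in $T$. Since every $\gcd$ exceeds $1$ by hypothesis, no value is left uncovered. The main potential obstacle is therefore not this theorem itself but the sub-cases inside Lemma \ref{fff1}. There the proof treats only $\gcd\{b,c\}=r_1$ as the maximal $\gcd$ and asserts that the other placements, in particular the maximal $\gcd$ involving $a$, are analogous. If I had to shore anything up, I would first verify the configuration in which the maximal $\gcd$ is $\gcd\{a,b\}$. In that situation one factors $r_1$ out of $a$ twice and out of $b$, and applies Lemma \ref{bbb} with $f=a$, exactly as in the displayed case. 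Granting the three lemmas, the theorem follows immediately by this case division.
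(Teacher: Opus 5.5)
Your proposal is correct and matches the paper's argument: the paper obtains Theorem \ref{6cc} directly from Lemmas \ref{ccc}, \ref{fff} and \ref{fff1}. It uses the same exhaustive split into $a\notin T$, $a\in T$ with some pairwise $\gcd$ outside $T$, and $a\in T$ with all pairwise $\gcd$s in $T$. Your closing remark about the unexamined placements of the maximal $\gcd$ inside Lemma \ref{fff1} concerns that lemma, not this theorem.
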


\section*{Case 6:  $a,b,c,d,e,f$ are pairwise distinct}

In this case, we consider the existence of MOA$(M;abcdef,2)$, where
$a,b,c,d,e$ and $f$ are pairwise distinct and $M=\operatorname{lcm}\{xy: \{x,y\}\subset \{a,b,c,d,e,f\}\}$, which runs through this section.

\begin{lem}\label{ddd}
Let $a,b,c,d,e$ and $f$ be pairwise distinct integers greater than $1$. Suppose that $\gcd\{x,y\}>1$ for any $\{x,y\}\subseteq \{a,b,c,d,e,f\}$ and there is a subset $\{x,y\}\subseteq \{a,b,c,d,e,f\}$ such that $\gcd\{x,y\}=r_1\not\in T=\{2,3,4,6,10,22\}$, then there exists an MOA$(M;abcdef,2)$.
\end{lem}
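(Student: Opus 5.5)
We follow the strategy of Lemma \ref{fff}: factor out a large common divisor and use the product construction. Without loss of generality let $\gcd\{a,b\}=r_1\notin T$. Set $s_1=\gcd\{r_1,c\}$, $s_2=\gcd\{r_1,d\}$, $s_3=\gcd\{r_1,e\}$ and $s_4=\gcd\{r_1,f\}$. Applying Lemma \ref{bbb}, with the pair $\{a,f\}$ there replaced by $\{a,b\}$, gives
$$M=r_1^2M_1,\qquad M_1=\operatorname{lcm}\{xy:\{x,y\}\subseteq\{\tfrac{a}{r_1},\tfrac{b}{r_1},\tfrac{c}{s_1},\tfrac{d}{s_2},\tfrac{e}{s_3},\tfrac{f}{s_4}\}\}.$$
Since $r_1\notin T$, Lemma \ref{lem3.2.1} gives an MOA$(r_1^2;r_1^6,2)$. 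Compressing by Lemma \ref{jj} yields an MOA$(r_1^2;r_1^2s_1s_2s_3s_4,2)$, because each $s_i\mid r_1$.

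The second ingredient is an MOA$(M_1;\frac{a}{r_1}\frac{b}{r_1}\frac{c}{s_1}\frac{d}{s_2}\frac{e}{s_3}\frac{f}{s_4},2)$. Since $\gcd\{\frac{a}{r_1},\frac{b}{r_1}\}=1$, we remove any factors equal to $1$ and obtain this array as follows:
\begin{itemize}
\item Lemma \ref{lem3.1} applies if six nontrivial levels remain.
\item Lemmas \ref{k=5}, \ref{k=4} or \ref{k=23} apply if fewer levels remain.
\end{itemize}
Either way, the array exists with size exactly $M_1$ unless the reduced type lies in the exceptional list. That list is the set $\mathcal{T}$ from Lemma \ref{fff}, together with the $6$-factor exceptions $\{6^43^12^1,10^45^12^1,2^53^1\}$ and the exceptional $4$- and $5$-factor types padded with $1$'s. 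The product construction, Lemma \ref{jb1}, applied to the two arrays then gives an MOA$(r_1^2M_1;abcdef,2)=$ MOA$(M;abcdef,2)$ in the generic case.

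The main obstacle is the exceptional reduced types, which must be handled one at a time as in Lemma \ref{fff}. In each case the reduced type determines the structure: most factors equal $s\,s_i$ or $s\,r_1$ for a small $s\in\{2,3,5,6,10\}$, and one or two of $\frac{a}{r_1},\frac{b}{r_1}$ equal $1$. The plan is as follows:
\begin{itemize}
\item Show that $sr_1\notin T$. This holds since $r_1\notin T$, $r_1>1$ and $s\ge2$, and one checks directly that $sr_1\in\{2,3,4,6,10,22\}$ is impossible: it would force $r_1\in\{2,3,5,11\}$, and $r_1\in\{5,11\}$ needs separate care.
\item Take an MOA$((sr_1)^2;(sr_1)^6,2)$ from Lemma \ref{lem3.2.1} and compress it with Lemma \ref{jj} to levels $sr_1$, $r_1$, $s\,s_i$, and so on.
\item For the types $2^4(4k+2)^11^1$, $2^3(4k+2)^11^2$ and $2^53^1$, inflate one factor by $2k+1$ or $3$ using Lemma \ref{jb}.
\end{itemize}
Pairwise distinctness of $a,\dots,f$ forces enough distinct $s_i$ to rule out degenerate coincidences.

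The delicate subcase is $r_1\in\{5,11\}$ with $s=2$, where $2r_1\in T$. There we would instead use the $a,b$ distinctness to show the reduced type cannot actually be exceptional. Failing that, we would use the available MOAs with $10^4$ or $22^4$ factors, namely Lemmas \ref{ua=2346}(5), \ref{36100} and \ref{22-30}, compressed to fit.

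Finally, we check in each subcase that the resulting size equals $M$. This follows from Lemma \ref{common}: $M=r_1^2M_1$ with $M_1=s^2$ or $s^2\cdot\frac{4k+2}{2}$.
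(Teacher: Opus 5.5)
Your proposal is correct in outline and reaches the exceptional cases by a more direct route than the paper. The generic step is the same as the paper's: pivot on a pair with $\gcd=r_1\notin T$, compress an MOA$(r_1^2;r_1^6,2)$, take the product with a reduced array of size $M'$, and apply Lemma \ref{bbb}.

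The routes differ on the exceptional reduced types. The paper re-pivots: it picks a second pair such as $\{b,f\}$ with $\gcd\{b,f\}=sr_2$ (or $2r_2$, $3r_2$) and forms a new reduced type. It then needs further subcases when that type is again exceptional or the new pivot lies in $T$. Only in those innermost subcases does it compress an MOA$((2r_1)^2;(2r_1)^6,2)$ or an MOA$((3r_1)^2;(3r_1)^6,2)$.

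You make that fallback the whole argument, and it works for the following reasons:
\begin{itemize}
\item In every exceptional type, all reduced levels divide a single $s\in\{2,3,6,10\}$, except possibly one level $3$ or $2(2k+1)$, which Lemma \ref{jb} inflates.
\item Since each $s_i\mid r_1$, all six levels divide $sr_1$ after that one deflation.
\item The required size $r_1^2M_1$ equals $(sr_1)^2$, $3(2r_1)^2$ or $(2k+1)(2r_1)^2$.
\end{itemize}
This gives a much shorter proof with no nested case analysis.

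Two small corrections. For $6^43^12^1$, $10^45^12^1$ and $2^53^1$, neither $a/r_1$ nor $b/r_1$ equals $1$; they form the coprime pair $\{2,3\}$, $\{2,5\}$ or $\{2,3\}$ respectively. Compression still applies in these cases. Also, your size formula omits the case $M_1=12=3s^2$ for $2^53^1$.

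The one real loose end is the delicate subcase, where you hedge between two options. Replace the hedge with the actual argument, which is short:
\begin{itemize}
\item If $2r_1\in T$, then $r_1=p\in\{5,11\}$ is prime, so each $s_i\in\{1,p\}$.
\item Hence any of the six levels whose reduced level is $2$ equals $2$ or $2p$.
\item By pairwise distinctness, at most two reduced levels can equal $2$.
\item Every exceptional type with $s=2$ has at least three reduced levels equal to $2$. These types are $2^51^1$, $2^41^2$, $2^53^1$, $2^4(4k+2)^11^1$ and $2^3(4k+2)^11^2$.
\end{itemize}
So this subcase never occurs, and the fallback to Lemmas \ref{ua=2346}(5), \ref{36100} and \ref{22-30} is unnecessary.
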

\begin{proof}
We assume, without loss of generality, that $\gcd\{a,f\}=r_1>1$, $r_1\not\in T$, and define $\gcd\{r_1,b\}=r_2$, $\gcd\{r_1,c\}=r_3$, $\gcd\{r_1,d\}=r_4$,
$\gcd\{r_1,e\}=r_5$. Let
\begin{center}
$\mathcal{T}=\{s^51^1:s\in\{2,3,6,10\}\}\cup\{2^41^2,6^41^2,6^32^23^1,6^32^13^2,6^43^11^1,6^42^11^1,6^43^12^1,2^53^1,10^45^12^1\}$
$\cup\{2^5(2k+1)^1,2^4(4k+2)^11^1,2^3(4k+2)^11^2:k\geq1\}$.
\end{center}
By Lemma \ref{lem3.2.1}, there exists an MOA$(r_1^2;r_1^6,2)$. We obtain an MOA$(r_1^2;r_1^2r_2r_3r_4r_5,2)$ by Lemma \ref{jj}. Since $\gcd\{\frac{a}{r_1},\frac{f}{r_1}\}=1$, by Lemmas \ref{k=5} and \ref{lem3.1}, there exists an MOA$(M';\frac{a}{r_1}\frac{f}{r_1}\frac{b}{r_2}\frac{c}{r_3}
\frac{d}{r_4}\frac{e}{r_5},2)$ for $\frac{a}{r_1}\frac{f}{r_1}\frac{b}{r_2}\frac{c}{r_3}\frac{d}{r_4}\frac{e}{r_5}
\not\in\mathcal{T}$, where $M'$ is as defined in Lemma \ref{bbb}. Consequently, by Lemmas \ref{jb1} and \ref{bbb}, we obtain the desired MOA$(M;abcdef,2)$. We next turn to the case where $\frac{a}{r_1}\frac{f}{r_1}\frac{b}{r_2}\frac{c}{r_3}\frac{d}{r_4}\frac{e}{r_5}\in\mathcal{T}$.

1. $\frac{a}{r_1}\frac{f}{r_1}\frac{b}{r_2}\frac{c}{r_3}\frac{d}{r_4}
\frac{e}{r_5}=s^51^1$ for $s\in\{2,3,6,10\}$, and $\frac{a}{r_1}\frac{f}{r_1}\frac{b}{r_2}\frac{c}{r_3}\frac{d}{r_4}
\frac{e}{r_5}=s^41^2$ for $s\in\{2,6\}$

By hypothesis, we have $\gcd\{\frac{a}{r_1},\frac{f}{r_1}\}=1$. Without loss of generality, assume $\frac{a}{r_1}=1$, $\frac{f}{r_1}=\frac{b}{r_2}=\frac{c}{r_3}=\frac{d}{r_4}=\frac{e}{r_5}=s$.
It then follows immediately that $a=r_1$, $f=sr_1=sa$, $b=sr_2$, $c=sr_3$, $d=sr_4$, $e=sr_5$. Consequently, $\gcd\{b,f\}=\gcd\{sr_2,sr_1\}=sr_2=b$, so $b\mid f$. Also, $\gcd\{b,a\}=\gcd\{b,r_1\}=r_2=\gcd\{sr_2,r_1\}>1$,
which implies $\gcd\{s,\frac{r_1}{r_2}\}=1$. Now define
$\gcd\{b,c\}=s_1$, $\gcd\{b,d\}=s_2$, $\gcd\{b,e\}=s_3$. Then we have
$\frac{b}{sr_2}\frac{f}{sr_2}\frac{a}{r_2}\frac{c}{s_1}\frac{d}{s_2}\frac{e}{s_3}
=1^1(\frac{r_1}{r_2})^2\frac{c}{s_1}\frac{d}{s_2}\frac{e}{s_3}$. For brevity, denote the right-hand side by $(*)$.

(1)\ Case $s=2$. Since $\gcd\{2,\frac{r_1}{r_2}\}=1$, it follows that either $(*)=1^1(\frac{r_1}{r_2})^2\frac{c}{s_1}\frac{d}{s_2}\frac{e}{s_3}= 1^1 3^5$ or $1^1(\frac{r_1}{r_2})^2\frac{c}{s_1}\frac{d}{s_2}\frac{e}{s_3}\not\in\mathcal{T}$. When $(*)=1^1(\frac{r_1}{r_2})^2\frac{c}{s_1}\frac{d}{s_2}\frac{e}{s_3}=1^13^5$, then $r_1=3r_2$. Note that $\gcd\{\frac{r_1}{r_3},\frac{c}{r_3}\}=\gcd\{\frac{r_1}{r_3},s\}=1$ since $\gcd\{r_1,c\}=r_3$. A direct computation gives
\begin{center}
$\gcd\{c,f\}=\gcd\{2r_3,2r_1\}=2r_3$,\ \ $\gcd\{c,a\}=\gcd\{2r_3,r_1\}=r_3$.
\end{center}
Now set $\gcd\{c,d\}=t_2$, $\gcd\{c,e\}=t_3$ and recall $\gcd\{c,b\}=\gcd\{b,c\}=s_1$. Then
\begin{center}
$c=3s_1=3\gcd\{b,c\}=3\gcd\{2r_2,2r_3\}=6 \gcd\{r_2,r_3\}=6 \gcd\{\frac{r_1}{3},r_3\}=2\gcd\{r_1,3r_3\}=2r_3$,
\end{center}
which implies $\gcd\{\frac{r_1}{r_3},3\}=1$. Consequently, $\frac{c}{2r_3}\frac{f}{2r_3}\frac{a}{r_3}\frac{b}{s_1}\frac{d}{t_2}\frac{e}{t_3}=1^1(\frac{r_1}{r_3})^2\frac{b}{s_1}\frac{d}{t_2}\frac{e}{t_3}$ $\not\in$ $\mathcal{T}$. Therefore, by Lemmas \ref{k=23}-\ref{k=5} and \ref{lem3.1}, there exists an MOA$(M_{2};\frac{c}{2r_3}\frac{f}{2r_3}\frac{a}{r_3}\frac{b}{s_1}
\frac{d}{t_2}\frac{e}{t_3},2)$, where
$M_{2}=\operatorname{lcm}\{xy:\{x,y\}\subseteq\{\frac{c}{2r_3},\frac{f}{2r_3},\frac{a}{r_3},\frac{b}{s_1},\frac{d}{t_2},\frac{e}{t_3}\}\}$.
If $2r_3\not\in T$, then by Lemmas \ref{lem3.2.1} and \ref{jj}, there exists an MOA$((2r_3)^2;(2r_3)^2r_3^1s_1^1t_2^1t_3^1,2)$. Consequently, applying Lemmas~\ref{jb1} and \ref{bbb} yields the desired MOA$(M;abcdef,2)$, where $M=(2r_3)^2M_{2}$. If $2r_3\in T$, then $r_3\in\{2,3,5,11\}$. We first consider the case $r_3=2$. Then $c=2r_3=4$ and $r_3=\gcd\{r_1,c\}=\gcd\{r_1,4\}=2$. Thus $r_3\mid  r_1$ but $2r_3\nmid r_1$. It is then easy to see that $r_1$ is even and $r_1\not\in T$, which implies $2r_1\not\in T$. By Lemma \ref{lem3.2.1}, there exists an MOA$((2r_1)^{2};(2r_1)^6,2)$. Applying Lemma \ref{jj} then yields an MOA$((2r_1)^{2};r_1^1(2r_1)^1(2r_2)^1(2r_3)^1(2r_4)^1(2r_5)^1,2)$, which is precisely the required MOA$(M;abcdef,2)$. The same argument applies, mutatis mutandis, to the cases $r_3=3,5,11$, and in each case we similarly obtain an MOA$(M;abcdef,2)$.

When $(\ast)\not\in\mathcal{T}$. By Lemmas \ref{k=5} and \ref{lem3.1}, an MOA$(M_{1};\frac{b}{2r_2}\frac{f}{2r_2}\frac{a}{r_2}\frac{c}{s_1}
\frac{d}{s_2}\frac{e}{s_3},2)$ exists, where
$M_{1}=\operatorname{lcm}\{xy:\{x,y\}\subseteq\{\frac{b}{2r_2},\frac{f}{2r_2},\frac{a}{r_2},\frac{c}{s_1},\frac{d}{s_2},\frac{e}{s_3}\}\}$.
If $2r_2\not\in T$, by Lemma \ref{lem3.2.1}, an MOA$((2r_2)^2;(2r_2)^6,2)$ exists. Applying Lemma \ref{jj} yields an MOA$((2r_2)^2;(2r_2)^2r_2^1s_1^1s_2^1s_3^1,2)$.
Then, by Lemma \ref{jb1}, we obtain an MOA$((2r_2)^2M_{1};abcdef,2)$, which, by Lemma \ref{bbb}, is precisely an MOA$(M;abcdef,2)$. If $2r_2\in T=\{2,3,4,6,10,22\}$, then since $r_2=\gcd\{a,b\}>1$, we have $r_2\in\{2,3,5,11\}$. We first consider the case $r_2=2$. Then $b=2r_2=4$, and $r_2=\gcd\{r_1,b\}=\gcd\{r_1,4\}=2$. Thus, $r_2\mid  r_1$ but $2r_2\nmid r_1$. It is then easy to see that $r_1$ is even and $r_1\not\in T$, which implies $2r_1\not\in T$. By Lemma \ref{lem3.2.1}, there exists an MOA$((2r_1)^{2};(2r_1)^6,2)$. Applying Lemma \ref{jj} then yields an MOA$((2r_1)^{2};r_1^1(2r_1)^1(2r_2)^1(2r_3)^1(2r_4)^1(2r_5)^1,2)$, which is precisely
MOA$(M;abcdef,2)$. The same reasoning applies analogously to the cases
$r_2=3,5, 11$, and in each case we likewise obtain the desired MOA$(M;abcdef,2)$.

(2) Case $s=3$. Since $\gcd\{3,\frac{r_1}{r_2}\}=1$, it follows that  either $(\ast)=1^1(\frac{r_1}{r_2})^2\frac{c}{s_1}\frac{d}{s_2}\frac{e}{s_3}\in \mathcal{U}$, where $\mathcal{U}=\{2^51^1,10^51^1,2^41^2\}\cup\{2^4(4k+2)^11^1,2^3(4k+2)^11^2:  k\geq1\}$, or $(\ast)\not\in \mathcal{T}$.

When $(\ast)\not\in\mathcal{T}$. By Lemmas \ref{k=5} and \ref{lem3.1}, there exists an MOA$(M_{1};\frac{b}{3r_2}\frac{f}{3r_2}\frac{a}{r_2}\frac{c}{s_1}
\frac{d}{s_2}\frac{e}{s_3},2)$, where $M_{1}=\operatorname{lcm}\{xy:\{x,y\}\subseteq\{\frac{b}{3r_2},\frac{f}{3r_2},\frac{a}{r_2},\frac{c}{s_1},\frac{d}{s_2},\frac{e}{s_3}\}\}$.
If $3r_2\not\in T$, by Lemma \ref{lem3.2.1}, there exists an MOA$((3r_2)^2;(3r_2)^6,2)$. Applying Lemma \ref{jj} yields an MOA$((3r_2)^2;(3r_2)^2r_2^1s_1^1s_2^1s_3^1,2)$. Thus, by Lemmas \ref{jb1} and \ref{bbb}, there exists an MOA$(M;abcdef,2)$, where $M=(3r_2)^2M_{1}$. If $3r_2\in T$. Since $r_2>1$, we must have $b=3r_2=6$ and hence $r_2=2$.
It is clear that $a=r_1$, $f=3r_1$, $c=3r_3$, $d=3r_4$, $e=3r_5$ and $r_1=\gcd\{a,f\}>1$. We claim that $r_1>2$. Indeed, if $r_1=2$, then  $\gcd\{a,c\}=\gcd\{r_1,c\}=r_3>1$, $\gcd\{a,d\}=\gcd\{r_1,d\}=r_4>1$, which forces $r_3=r_4=2$, contradicting the fact that $c\neq d$. Hence $r_1>2$. It follows that $3r_1\not\in T$. By Lemma \ref{lem3.2.1}, there exists an MOA$((3r_1)^2;(3r_1)^6,2)$. Applying Lemma \ref{jj} yields an MOA$((3r_1)^2;(3r_1)^1r_1^1(3r_2)^1(3r_3)^1(3r_4)^1(3r_5)^1,2)$, that is, an MOA$(M;abcdef,2)$.

When $(\ast)\in\mathcal{U}$, we have $r_1=wr_2$, where $w\in\{2,10\}$. A straightforward calculation gives $\gcd\{c,f\}=\gcd\{3r_3,3r_1\}=3r_3$, $\gcd\{c,a\}=\gcd\{c,r_1\}=r_3$. Since $\gcd\{c,r_1\}=\gcd\{3r_3, r_1\}=r_3$, we also have $\gcd\{\frac{r_1}{r_3},3\}=1$. Moreover, at least one of \(\frac{c}{s_1},\frac{d}{s_2},\frac{e}{s_3}\) belongs to $\{2,10\}$.
Suppose, without loss of generality, that $\frac{c}{s_1}\in\{2,10\}$. Define $\gcd\{c,d\}=t_2$, $\gcd\{c,e\}=t_3$. Then
\begin{center}
$c=3r_3=ws_1=w \gcd\{b,c\}=w \gcd\{3r_2,3r_3\}=3\gcd\{r_1,wr_3\}$, $w\in\{2,10\}$,
\end{center}
It follows that $\gcd\{\frac{r_1}{r_3},w\}=1$. Consequently, $\frac{c}{3r_3}\frac{f}{3r_3}\frac{a}{r_3}\frac{b}{s_1}\frac{d}{t_2}\frac{e}{t_3}=1^1(\frac{r_1}{r_3})^2\frac{b}{s_1}\frac{d}{t_2}\frac{e}{t_3}\not\in \mathcal{T}$. Thus, by Lemmas \ref{k=23}-\ref{k=5} and \ref{lem3.1}, there exists an MOA$(M_{2};\frac{c}{3r_3}\frac{f}{3r_3}\frac{a}{r_3}\frac{b}{s_1}\frac{d}{t_2}\frac{e}{t_3},2)$, where $M_{2}=\operatorname{lcm}\{xy:\{x,y\}\subseteq\{\frac{c}{3r_3},\frac{f}{3r_3},\frac{a}{r_3},\frac{b}{s_1},\frac{d}{t_2},\frac{e}{t_3}\}\}$.
If $3r_3\not\in T$, then there exists an MOA$((3r_3)^2;(3r_3)^2r_3^1s_1^1t_2^1t_3^1,2)$.
By Lemmas \ref{jb1} and \ref{bbb}, an MOA$(M;abcdef,2)$ exists, where $M=(3r_3)^2M_{2}$. If $3r_3\in T$, then since $r_3>1$, we must have $c=3r_3=6$,
hence $r_3=2$. It is clear that $a=r_1$, $f=3r_1$, $b=3r_2$, $d=3r_4$, $e=3r_5$ and $r_1=\gcd\{a,f\}>1$. We claim that $r_1>2$. Indeed, if $r_1=2$, then  $\gcd\{a,b\}=\gcd\{r_1,b\}=r_2>1$, $\gcd\{a,d\}=\gcd\{r_1,d\}=r_4>1$,
which forces $r_2=r_4=2$, contradicting $b\neq d$.
Therefore, $r_1>2$, and consequently $3r_1\not\in T$. By Lemmas \ref{lem3.2.1} and \ref{jj}, there exists an MOA$((3r_1)^2;(3r_1)^1r_1^1(3r_2)^1(3r_3)^1(3r_4)^1(3r_5)^1,2)$,
which is precisely the required MOA$(M;abcdef,2)$.

(3) Case $s=6$. Clearly, we have $\gcd\{6,\frac{r_1}{r_2}\}=1$ and $(\ast)\not\in \mathcal{T}$. By Lemmas \ref{k=23}-\ref{k=5} and \ref{lem3.1},
an MOA$(M_{1};\frac{b}{6r_2}\frac{f}{6r_2}\frac{a}{r_2}\frac{c}{s_1}
\frac{d}{s_2}\frac{e}{s_3},2)$ exists, where $M_{1}=\operatorname{lcm}\{xy:\{x,y\}\subseteq\{\frac{b}{6r_2},
\frac{f}{6r_2},\frac{a}{r_2},\frac{c}{s_1},\frac{d}{s_2},\frac{e}{s_3}\}\}$.
Since $r_2=\gcd\{a,b\}>1$ and $6r_2\not\in T$, Lemma \ref{lem3.2.1} gives an MOA$((6r_2)^2;(6r_2)^6,2)$. Applying Lemma \ref{jj} yields an MOA$((6r_2)^2;(6r_2)^2r_2^1s_1^1s_2^1s_3^1,2)$. Therefore, by Lemmas \ref{jb1} and \ref{bbb}, there exists an MOA$(M;abcdef,2)$, where $M=(6r_2)^2M_{1}$.

(4) Case $s=10$. Since $\gcd\{10,\frac{r_1}{r_2}\}=1$, it follows that either $(\ast)=1^1(\frac{r_1}{r_2})^2\frac{c}{s_1}\frac{d}{s_2}\frac{e}{s_3}=1^1 3^5$ or $(\ast)\not\in\mathcal{T}$. When $(\ast)\not\in\mathcal{T}$. By Lemmas \ref{k=23}-\ref{k=5} and \ref{lem3.1}, an MOA$(M_{1};\frac{b}{10r_2}\frac{f}{10r_2}\frac{a}{r_2}\frac{c}{s_1}
\frac{d}{s_2}\frac{e}{s_3},2)$ exists, where $M_{1}=\operatorname{lcm}\{xy:\{x,y\}\subseteq\{\frac{b}{10r_2},\frac{f}{10r_2},\frac{a}{r_2},\frac{c}{s_1},\frac{d}{s_2},\frac{e}{s_3}\}\}$.
Since $r_2=\gcd\{a,b\}>1$, we have $10r_2\not\in T$. By Lemma \ref{lem3.2.1}, there exists an MOA$((10r_2)^2;(10r_2)^6,2)$, and by Lemma \ref{jj}, there exists an MOA$((10r_2)^2;(10r_2)^2r_2^1s_1^1s_2^1s_3^1,2)$. Applying Lemmas \ref{jb1} and \ref{bbb} yields an MOA$(M;abcdef,2)$, where $M=(10r_2)^2M_{1}$.

When $(\ast)=1^1(\frac{r_1}{r_2})^2\frac{c}{s_1}\frac{d}{s_2}\frac{e}{s_3}=1^1 3^5$. It is easy to see that $r_1=3r_2$. Moreover, $\gcd\{c,f\}=\gcd\{10r_3,10r_1\}=10r_3$, $\gcd\{10r_3,a\}=\gcd\{c,a\}=r_3>1$. Let $\gcd\{c,d\}=t_2$, $\gcd\{c,e\}=t_3$.
Then $c=10r_3=3s_1=3 \gcd\{b,c\}=3 \gcd\{10r_2,10r_3\}=10\gcd\{r_1,3r_3\}$.
It follows that $\gcd\{\frac{r_1}{r_3},3\}=1$. Furthermore, from $\gcd\{r_1,c\}=\gcd\{r_1,10r_3\}=r_3$, we also obtain $\gcd\{\frac{r_1}{r_3},10\}=1$.
Consequently, $\frac{c}{10r_3}\frac{f}{10r_3}\frac{a}{r_3}\frac{b}{t_1}\frac{d}{t_2}\frac{e}{t_3}=1^1(\frac{r_1}{r_3})^2\frac{b}{t_1}\frac{d}{t_2}\frac{e}{t_3}\not\in \mathcal{T}$. By Lemmas \ref{k=23}-\ref{k=5} and \ref{lem3.1}, there exists an MOA$(M_{2};\frac{c}{10r_3}\frac{f}{10r_3}\frac{a}{r_3}\frac{b}{s_1}\frac{d}{t_2}\frac{e}{t_3},2)$,
where $M_{2}=\operatorname{lcm}\{xy:\{x,y\}\subseteq\{\frac{c}{10r_3},\frac{f}{10r_3},\frac{a}{r_3},\frac{b}{t_1},\frac{d}{t_2},\frac{e}{t_3}\}\}$.
Since $\gcd\{c,a\}=r_3>1$, we have $10r_3\not\in T$, by Lemmas \ref{lem3.2.1} and \ref{jj}, there exists an MOA$((10r_3)^2;(10r_3)^2r_3^1s_1^1t_2^1t_3^1,2)$. Applying Lemmas \ref{jb1} and \ref{bbb}, an MOA$(M;abcdef,2)$ exists,
where $M=(10r_3)^2M_{2}$.

The proof for the case $\frac{a}{r_1}\frac{f}{r_1}\frac{b}{r_2}\frac{c}{r_3}\frac{d}{r_4}
\frac{e}{r_5}=s^41^2$, $s\in\{2,6\}$, is analogous to the above argument.

2. $\frac{a}{r_1}\frac{f}{r_1}\frac{b}{r_2}\frac{c}{r_3}\frac{d}{r_4}\frac{e}{r_5}
\in\{2^53^1\}\cup\{2^5(2k+1)^1:k>0\}$.

We consider only the case
$\frac{a}{r_1}\frac{f}{r_1}\frac{b}{r_2}\frac{c}{r_3}\frac{d}{r_4}\frac{e}{r_5}
=2^53^1$, the other case is handled analogously.

We assume, without loss of generality, that $\frac{a}{r_1}=3, \frac{f}{r_1}=\frac{b}{r_2}=\frac{c}{r_3}=\frac{d}{r_4}=\frac{e}{r_5}=2$.
Consequently, $a=3r_1, f=2r_1, b=2r_2, c=2r_3, d=2r_4, e=2r_5$. Moreover, $r_1,r_2,r_3,r_4$ and $r_5$ are pairwise distinct, since $b,c,d,e,f$ are pairwise distinct.
Hence, there exists at least one element $u\in\{r_2,r_3,r_4,r_5,2r_2,
2r_3,2r_4,2r_5\}$ such that $u\not\in T$. Let $u=2r_2\not\in T$, and set $\gcd\{2r_2,c\}=s_1$, $\gcd\{2r_2,d\}=s_2$, $\gcd\{2r_2,e\}=s_3$. Then, by Lemmas \ref{lem3.2.1} and \ref{jj}, there exists an MOA$((2r_2)^2;(2r_2)^2r_2^1s_1^1s_2^1s_3^1,2)$. Now observe that $\gcd\{b,f\}=\gcd\{2r_2,2r_1\}=2r_2$ and $\gcd\{2r_2,a\}=\gcd\{2r_2,3r_1\}=r_2$. It follows that $\gcd\{\frac{r_1}{r_2},2\}=1$. Thus, $\frac{r_1}{r_2}> 1$ and is odd. Therefore, $\frac{b}{2r_2}\frac{f}{2r_2}\frac{a}{r_2}\frac{c}{s_1}
\frac{d}{s_2}\frac{e}{s_3}=1\frac{r_1}{r_2}\frac{3r_1}{r_2}\frac{c}{s_1}\frac{d}{s_2}\frac{e}{s_3}
\not\in \mathcal{T}$. By Lemmas \ref{k=23}-\ref{k=5} and \ref{lem3.1}, there exists an MOA$(M_{1};\frac{b}{2r_2}\frac{f}{2r_2}\frac{a}{r_2}
\frac{c}{s_1}\frac{d}{s_2}\frac{e}{s_3},2)$,
where $M_1=\operatorname{lcm}\{xy:\{x,y\}\subseteq\{\frac{b}{2r_2},
\frac{f}{2r_2},\frac{a}{r_2},\frac{c}{s_1},\frac{d}{s_2},\frac{e}{s_3}\}\}$. Finally, applying Lemmas \ref{jb1} and \ref{bbb}, we obtain an MOA$(M;abcdef,2)$, where $M=(2r_2)^2M_{1}$.

3. $\frac{a}{r_1}\frac{f}{r_1}\frac{b}{r_2}\frac{c}{r_3}\frac{d}{r_4}
\frac{e}{r_5}\in\{2^4(4k+2)^11^1, 2^3(4k+2)^11^2:k>0\}$.

We consider only the case $\frac{a}{r_1}\frac{f}{r_1}\frac{b}{r_2}\frac{c}{r_3}\frac{d}{r_4}
\frac{e}{r_5}\in\{2^4(4k+2)^11^1:k>0\}$. The proof for the case $\frac{a}{r_1}\frac{f}{r_1}\frac{b}{r_2}\frac{c}{r_3}\frac{d}{r_4}
\frac{e}{r_5}\in\{2^3(4k+2)^11^2:k>0\}$ is analogous and is therefore omitted.

Since $\gcd\{\frac{a}{r_1},\frac{f}{r_1}\}=1$, we may assume without loss of generality that either $\frac{a}{r_1}=1,\frac{f}{r_1}=4k+2$ or $\frac{a}{r_1}=1,\frac{f}{r_1}=2$. We first treat the former case, namely $\frac{a}{r_1}=1,\frac{f}{r_1}=4k+2$,
$\frac{b}{r_2}=\frac{c}{r_3}=\frac{d}{r_4}=\frac{e}{r_5}=2$. Thus, $a=r_1, f=(4k+2)r_1, b=2r_2,  c=2r_3,  d=2r_4,  e=2r_5$.
A direct computation gives $\gcd\{b,f\}=2r_2$, $\gcd\{2r_2,a\}=\gcd\{b,r_1\}=\gcd\{2r_2,r_1\}=r_2$.
Hence $\gcd\{\frac{r_1}{r_2},2\}=1$, so both $\frac{r_1}{r_2}$ and $\frac{(2k+1)r_1}{r_2}$ are odd.
Define $\gcd\{2r_2,c\}=s_1$, $\gcd\{2r_2,d\}=s_2$, $\gcd\{2r_2,e\}=s_3$.
It is clear that $\frac{b}{2r_2}\frac{f}{2r_2}\frac{a}{r_2}\frac{c}{s_1}\frac{d}{s_2}\frac{e}{s_3}=1\frac{r_1}{r_2}\frac{(2k+1)r_1}{r_2}\frac{c}{s_1}\frac{d}{s_2}\frac{e}{s_3}
\not\in \mathcal{T}$.
By Lemmas \ref{k=4}-\ref{k=5} and \ref{lem3.1}, there exists an
MOA$(M_{1};\frac{b}{2r_2}\frac{f}{2r_2}\frac{a}{r_2}\frac{c}{s_1}\frac{d}{s_2}\frac{e}{s_3},2)$,
where $M_1=\operatorname{lcm}\{xy:\{x,y\}\subseteq\{\frac{b}{2r_2},\frac{f}{2r_2},
\frac{a}{r_2},\frac{c}{s_1},\frac{d}{s_2},\frac{e}{s_3}\}\}$. We now distinguish two subcases according to whether $2r_2\in T$. When $2r_2\not\in T$,
by Lemmas \ref{lem3.2.1} and \ref{jj}, there exists an MOA$((2r_2)^2;(2r_2)^2r_2^1s_1^1s_2^1s_3^1,2)$. Then, applying Lemmas \ref{jb1} and \ref{bbb}, we obtain an MOA$(M;abcdef,2)$, where $M=(2r_2)^2M_{1}$.
When $2r_2\in T$, since $r_2=\gcd\{a,b\}>1$, we must have $r_2\in\{2,3,5,11\}$.
If $r_2=2$, then $b=2r_2=4$ and $\gcd\{r_1,b\}=\gcd\{r_1,4\}=2$, so $2r_2\nmid r_1$.
Moreover, because $b,c,d,e$ are pairwise distinct and $\frac{b}{r_2}=\frac{c}{r_3}=\frac{d}{r_4}=\frac{e}{r_5}=2$, it follows that $r_2,r_3,r_4$ and $r_5$ are pairwise distinct and $r_i\mid r_1$ for $i\in\{2,3,4,5\}$. Hence $2r_1\not\in T$. By Lemmas \ref{lem3.2.1} and \ref{jj}, there exists an MOA$((2r_1)^{2};(2r_2)^1(2r_3)^1(2r_4)^1(2r_5)^1r_1^1(2r_1)^1,2)$.
Applying Lemma \ref{jb} with $n=2k+1$ yields an
MOA$((2r_1)^{2}(2k+1);(2r_2)^1(2r_3)^1(2r_4)^1(2r_5)^1r_1^1((4k+2)r_1)^1,2)$, that is, an MOA$(M;abcdef,2)$. For the remaining cases $r_2=3,5,11$, the same argument applies mutatis mutandis, and we again obtain the desired MOA$(M;abcdef,2)$.

We now turn to the second case, where $\frac{a}{r_1}=1$, $\frac{f}{r_1}=2$, and assume $\frac{b}{r_2}=(4k+2)$, $\frac{c}{r_3}=\frac{d}{r_4}=\frac{e}{r_5}=2$.
Then clearly $a=r_1$, $f=2r_1$, $b=(4k+2)r_2$, $c=2r_3$, $d=2r_4$, $e=2r_5$.
Consequently, $\gcd\{b,f\}=\gcd\{(4k+2)r_2,2r_1\}=2r_2$ and $\gcd\{2r_2,a\}=\gcd\{2r_2,r_1\}=r_2>1$. It follows that $\gcd\{2k+1,\frac{r_1}{r_2}\}=1$ and $\gcd\{\frac{r_1}{r_2},2\}=1$.
Define $\gcd\{2r_2,c\}=s_1$, $\gcd\{2r_2,d\}=s_2$, $\gcd\{2r_2,e\}=s_3$.
Then $\frac{b}{2r_2}\frac{f}{2r_2}\frac{a}{r_2}\frac{c}{s_1}\frac{d}{s_2}\frac{e}{s_3}
=(2k+1)^1(\frac{r_1}{r_2})^2\frac{c}{s_1}\frac{d}{s_2}\frac{e}{s_3}
\not\in \mathcal{T}$.
By Lemmas \ref{k=4}-\ref{k=5} and \ref{lem3.1}, there exists an MOA$(M_{1};\frac{b}{2r_2}\frac{f}{2r_2}\frac{a}{r_2}\frac{c}{s_1}\frac{d}{s_2}\frac{e}{s_3},2)$,
where $M_1=\operatorname{lcm}\{xy:\{x,y\}\subseteq
\{\frac{b}{2r_2},\frac{f}{2r_2},\frac{a}{r_2},\frac{c}{s_1},\frac{d}{s_2},
\frac{e}{s_3}\}\}$. We again distinguish two subcases. When $2r_2\not\in T$, by Lemmas \ref{lem3.2.1} and \ref{jj}, there exists an MOA$((2r_2)^2;(2r_2)^2r_2^1s_1^1s_2^1s_3^1,2)$. Applying Lemmas \ref{jb1} and \ref{bbb}, we obtain an MOA$(M;abcdef,2)$, where $M=(2r_2)^2M_{1}$.
When $2r_2\in T$, since $r_2=\gcd\{a,b\}>1$, we must have $r_2\in\{2,3,5,11\}$.
If $r_2=2$, then $b=2r_2=4$, and $\gcd\{r_1,b\}=\gcd\{r_1,4\}=2$, so $2r_2\nmid r_1$.
Moreover, because $c,d,e$ are pairwise distinct and $\frac{c}{r_3}=\frac{d}{r_4}=\frac{e}{r_5}=2$, it follows that
$r_3,r_4,r_5$ are pairwise distinct and $r_i\mid r_1$ for $i\in\{3,4,5\}$.
Hence $2r_1\not\in T$. By Lemmas \ref{lem3.2.1} and \ref{jj}, there exists an MOA$((2r_1)^{2};(2r_2)^1(2r_3)^1(2r_4)^1(2r_5)^1r_1^1(2r_1)^1,2)$.
Applying Lemma \ref{jb} with $n=2k+1$ yields an
MOA$((2r_1)^{2}(2k+1);((4k+2)r_2)^1(2r_3)^1(2r_4)^1(2r_5)^1r_1^1(2r_1)^1,2)$, which is precisely an MOA$(M;abcdef,2)$. For the remaining cases $r_2=3,5,11$, the same argument applies mutatis mutandis, and we again obtain the desired MOA$(M;abcdef,2)$.

4. $\frac{a}{r_1}\frac{f}{r_1}\frac{b}{r_2}\frac{c}{r_3}\frac{d}{r_4}\frac{e}{r_5}
\in\{6^42^11^1,6^43^11^1,6^32^23^1,6^32^13^2\}$.

When $\frac{a}{r_1}\frac{f}{r_1}\frac{b}{r_2}\frac{c}{r_3}\frac{d}{r_4}\frac{e}{r_5}
=6^42^11^1$, the proof is analogous to that for the case $\frac{a}{r_1}\frac{f}{r_1}\frac{b}{r_2}\frac{c}{r_3}\frac{d}{r_4}\frac{e}{r_5}
=2^46^11^1$.

Now consider the case $\frac{a}{r_1}\frac{f}{r_1}\frac{b}{r_2}\frac{c}{r_3}\frac{d}{r_4}\frac{e}{r_5}
=6^43^11^1$. Since $\gcd\{\frac{a}{r_1},\frac{f}{r_1}\}=1$, we may assume without loss of generality that either $\frac{a}{r_1}=1, \frac{f}{r_1}=3$ or $\frac{a}{r_1}=1, \frac{f}{r_1}=6$.

(1) Consider the subcase $\frac{a}{r_1}=1, \frac{f}{r_1}=3$, with $\frac{b}{r_2}=\frac{c}{r_3}=\frac{d}{r_4}=\frac{e}{r_5}=6$. Thus,
$a=r_1, f=3r_1, b=6r_2,\ c=6r_3, d=6r_4, e=6r_5$. Since $b,c,d,e$ are pairwise distinct integers, it follows that $r_2,r_3,r_4$ and $r_5$ are also pairwise distinct. Therefore, there exists at least one element $u\in\{3r_2,3r_3,3r_4,3r_5\}$ such that $u\not\in T$. Without loss of generality, let $u=3r_2\not\in T$. Clearly, $\gcd\{b,f\}=\gcd\{6r_2,3r_1\}=3r_2$
and $\gcd\{3r_2,a\}=\gcd\{3r_2,r_1\}=r_2$. Moreover, since $\gcd\{a,b\}=\gcd\{r_1, 6r_2\}=r_2$, we have $\gcd\{\frac{r_1}{r_2},6\}=1$. Define
$\gcd\{3r_2,c\}=s_1$, $\gcd\{3r_2,d\}=s_2$, $\gcd\{3r_2,e\}=s_3$. Then $\frac{b}{3r_2}\frac{f}{3r_2}\frac{a}{r_2}\frac{c}{s_1}\frac{d}{s_2}\frac{e}{s_3}
=2^1(\frac{r_1}{r_2})^2\frac{c}{s_1}\frac{d}{s_2}\frac{e}{s_3}
\not\in \mathcal{T}$ or $2^1(\frac{r_1}{r_2})^2\frac{c}{s_1}\frac{d}{s_2}\frac{e}{s_3}
\in \{2^41^2,6^32^13^2\} \cup \{2^3(4k+2)^11^2:k\geq1\}$.

In the former case, namely $2^1(\frac{r_1}{r_2})^2\frac{c}{s_1}\frac{d}{s_2}\frac{e}{s_3}\not\in \mathcal{T}$. By Lemmas \ref{k=23}-\ref{k=5} and \ref{lem3.1}, there exists an MOA$(M_{1};\frac{b}{3r_2}\frac{f}{3r_2}\frac{a}{r_2}\frac{c}{s_1}\frac{d}{s_2}\frac{e}{s_3},2)$, where $M_{1}=\operatorname{lcm}\{xy:\{x,y\}\subseteq\{\frac{b}{3r_2},\frac{f}{3r_2},\frac{a}{r_2},\frac{c}{s_1},\frac{d}{s_2},\frac{e}{s_3}\}\}$. Since $3r_2\not\in T$, by Lemmas \ref{lem3.2.1} and \ref{jj}, there exists an MOA$((3r_2)^2;(3r_2)^2r_2^1s_1^1s_2^1s_3^1,2)$. Applying Lemmas \ref{jb1} and \ref{bbb}, there exists an MOA$(M;abcdef,2)$, where $M=(3r_2)^2M_{1}$.

Now consider the case where $2^1(\frac{r_1}{r_2})^2\frac{c}{s_1}\frac{d}{s_2}\frac{e}{s_3}\in\{2^41^2\} \cup \{2^3(4k+2)^11^2:k\geq1\}$. Then necessarily $r_1=r_2$. Moreover, we have $\gcd\{b,c\}=\gcd\{6r_1,6r_3\}=6r_3$, $\gcd\{c,a\}=\gcd\{6r_3,r_1\}=r_3$,
$\gcd\{c,f\}=3r_3$. It follows that $\gcd\{6,\frac{r_1}{r_3}\}=1$. Define $\gcd\{c,d\}=t_2$, $\gcd\{c,e\}=t_3$. Then $\frac{b}{6r_3}\frac{c}{6r_3}\frac{a}{r_3}\frac{f}{3r_3}\frac{d}{t_2}\frac{e}{t_3}=
1^1(\frac{r_1}{r_3})^3\frac{d}{t_2}\frac{e}{t_3}\not\in\mathcal{T}$.
By Lemmas \ref{k=23}-\ref{k=5} and \ref{lem3.1}, an
MOA$(M_2;\frac{c}{6r_3}\frac{b}{6r_3}\frac{a}{r_3}\frac{f}{3r_3}\frac{d}{t_2}\frac{e}{t_3},2)$ exists,
where $M_{2}=\operatorname{lcm}\{xy:\{x,y\}\subseteq\{\frac{c}{6r_3},
\frac{b}{6r_3},\frac{a}{r_3},\frac{f}{3r_3},\frac{d}{t_2},\frac{e}{t_3}\}\}$.
Since $r_3>1$, we have $6r_3\not\in T$. By Lemmas \ref{lem3.2.1} and \ref{jj}, there exists an MOA$((6r_3)^2;(6r_3)^2(3r_3)^1r_3^1t_2^1t_3^1,2)$. Applying Lemmas \ref{jb1} and \ref{bbb} yields an MOA$(M;abcdef,2)$, where $M=(6r_3)^2M_{2}$.

Consider the case $2^1(\frac{r_1}{r_2})^2\frac{c}{s_1}\frac{d}{s_2}\frac{e}{s_3}=2^13^26^3$. Then necessarily $r_1=3r_2$, $s_1=r_3$, $s_2=r_4$, $s_3=r_5$. Moreover, we have $\gcd\{b,f\}=\gcd\{2r_1,3r_1\}=r_1$, $\gcd\{r_1,a\}=r_1$,
$\gcd\{r_1,c\}=r_3$, $\gcd\{r_1,d\}=r_4$, $\gcd\{r_1,e\}=r_5$. Consequently, $\frac{b}{r_1}\frac{f}{r_1}\frac{a}{r_1}\frac{c}{r_3}\frac{d}{r_4}\frac{e}{r_5}=2^13^11^16^3$.
By Lemma \ref{36100}, there exists an MOA$(36;2^13^11^16^3,2)$, and by Lemma \ref{3.2.5}, there exists an MOA$(r_1^2;r_1^3r_3r_4r_5,2)$. Applying Lemmas \ref{jb1} and \ref{bbb} yields an MOA$(M;abcdef,2)$, where $M=36M_{2}$.

(2) Consider the subcase $\frac{a}{r_1}=1, \frac{f}{r_1}=6$. We assume
$\frac{e}{r_5}=3$, $\frac{b}{r_2}=\frac{c}{r_3}=\frac{d}{r_4}=6$.
Then $a=r_1$, $f=6r_1$, $b=6r_2$, $c=6r_3$, $d=6r_4$, $e=3r_5$. Since $r_2=\gcd\{r_1,b\}=\gcd\{r_1,6r_2\}=\gcd\{a,b\}>1$, we have
$\gcd\{\frac{r_1}{r_2},6\}=1$, and consequently $6r_2\not\in T$. Define
$\gcd\{6r_2,c\}=t_1$, $\gcd\{6r_2,d\}=t_2$, $\gcd\{6r_2,e\}=t_3$. Then
$\frac{f}{6r_2}\frac{b}{6r_2}\frac{a}{r_2}\frac{c}{t_1}\frac{d}{t_2}\frac{e}{t_3}
=1^1(\frac{r_1}{r_2})^2\frac{c}{t_1}\frac{d}{t_2}\frac{e}{t_3}
\not\in\mathcal{T}$.
By Lemmas \ref{k=23}-\ref{k=5} and \ref{lem3.1}, there exists an
MOA$(M_{1};\frac{f}{6r_2}\frac{b}{6r_2}\frac{a}{r_2}\frac{c}{t_1}\frac{d}{t_2}\frac{e}{t_3},2)$,
where $M_1=\operatorname{lcm}\{xy:\{x,y\}\subseteq\{\frac{f}{6r_2},\frac{b}{6r_2},\frac{a}{r_2},\frac{c}{t_1},\frac{d}{t_2},\frac{e}{t_3}\}\}$,
and by Lemmas \ref{lem3.2.1} and \ref{jj}, there exists an MOA$((6r_2)^2;(6r_2)^2r_2^1t_1^1t_2^1t_3^1,2)$. Applying Lemmas \ref{jb1} and  \ref{bbb}, there exists an MOA$(M;abcdef,2)$, where $M=(6r_2)^2M_{1}$.

When $\frac{a}{r_1}\frac{f}{r_1}\frac{b}{r_2}\frac{c}{r_3}\frac{d}{r_4}\frac{e}{r_5}
\in\{6^32^23^1,6^32^13^2\}$. the proofs are analogous to those of the preceding cases.

5. $\frac{a}{r_1}\frac{f}{r_1}\frac{b}{r_2}\frac{c}{r_3}\frac{d}{r_4}\frac{e}{r_5}
\in\{6^43^12^1,10^45^12^1\}$.

We consider only the case $\frac{a}{r_1}\frac{f}{r_1}\frac{b}{r_2}\frac{c}{r_3}\frac{d}{r_4}\frac{e}{r_5}
=2^13^16^4$. The proof for the other case $10^45^12^1$ is analogous.
Since $\gcd\{\frac{a}{r_1},\frac{f}{r_1}\}=1$, we may assume $\frac{a}{r_1}=2$, $\frac{f}{r_1}=3$. Then clearly $\frac{b}{r_2}=\frac{c}{r_3}=\frac{d}{r_4}=\frac{e}{r_5}=6$, so that
$a=2r_1, f=3r_1, b=6r_2, c=6r_3, d=6r_4, e=6r_5$. Since $b,c,d,e$ are pairwise distinct
and $\frac{b}{r_2}\frac{c}{r_3}\frac{d}{r_4}\frac{e}{r_5}=6^4$, it follows that
$r_2,r_3,r_4,r_5$ are pairwise distinct. Hence there must exist distinct indices $i,j\in\{2,3,4,5\}$ such that $3r_i, 3r_j\not\in T$. Without loss of generality, let
$3r_2,3r_3\not\in T$, which implies $6r_2, 6r_3\not\in T$.
Moreover, since $\gcd\{r_1,b\}=\gcd\{r_1,6r_2\}=r_2$, $\gcd\{r_1,c\}=\gcd\{r_1,6r_3\}=r_3$, we have $\gcd\{\frac{r_1}{r_2},6\}=1$ and $\gcd\{\frac{r_1}{r_3},6\}=1$.
It is straightforward to verify that
$\gcd\{b,f\}=\gcd\{6r_2,3r_1\}=3r_2$ and $\gcd\{3r_2,a\}=\gcd\{3r_2,2r_1\}=r_2$.
Define $\gcd\{3r_2,c\}=t_1$, $\gcd\{3r_2,d\}=t_2$, $\gcd\{3r_2,e\}=t_3$.
Then one readily checks that
$\frac{b}{3r_2}\frac{f}{3r_2}\frac{a}{r_2}\frac{c}{t_1}\frac{d}{t_2}\frac{e}{t_3}
=2^1\frac{r_1}{r_2}\frac{2r_1}{r_2}\frac{c}{t_1}\frac{d}{t_2}\frac{e}{t_3}$.

When $\frac{r_1}{r_2}=1$, we have $\gcd\{b,c\}=6r_3$, $\gcd\{6r_3,a\}=2r_3$, $\gcd\{6r_3,f\}=3r_3$. Let $\gcd\{6r_3,d\}=t_2$ and $\gcd\{6r_3,3\}=t_3$.
Then $\frac{b}{6r_3}\frac{c}{6r_3}\frac{a}{2r_3}\frac{f}{3r_3}
\frac{d}{t_2}\frac{e}{t_3}=\frac{r_1}{r_3}1^1\frac{r_1}{r_3}\frac{r_1}{r_3}\frac{d}{t_2}\frac{e}{t_3}
\not\in\mathcal{T}$. By Lemmas \ref{k=23}-\ref{k=5} and \ref{split}, there exists an
MOA$(M_1;\frac{b}{6r_3}\frac{c}{6r_3}\frac{a}{2r_3}\frac{f}{3r_3}
\frac{d}{t_2}\frac{e}{t_3},2)$, where $M_1=\operatorname{lcm}\{\alpha\beta:\{\alpha,\beta\}\subseteq
\{\frac{b}{6r_3},\frac{c}{6r_3},\frac{a}{2r_3},\frac{f}{3r_3},
\frac{d}{t_2},\frac{e}{t_3}\}\}$. There exists an MOA$((6r_3)^2;(6r_3)^2(2r_3)^1(3r_3)^1t_2t_3,2)$ by Lemmas \ref{lem3.2.1} and \ref{jj}. Applying Lemmas \ref{jb1} and \ref{bbb} yields an MOA$(M;abcdef,2)$.

Now suppose $\frac{r_1}{r_2}\neq1$. Then $\frac{b}{3r_2}\frac{f}{3r_2}\frac{a}{r_2}\frac{c}{t_1}\frac{d}{t_2}\frac{e}{t_3}=2^1\frac{r_1}{r_2}\frac{2r_1}{r_2}\frac{c}{t_1}\frac{d}{t_2}\frac{e}{t_3}
\not\in\mathcal{T}$ or $2^1\frac{r_1}{r_2}\frac{2r_1}{r_2}\frac{c}{t_1}\frac{d}{t_2}\frac{e}{t_3}=2^15^110^4$.
In the former case, by Lemmas \ref{k=4}-\ref{k=5} and \ref{split}, there exists an MOA$(M_1;\frac{b}{3r_2}\frac{f}{3r_2}\frac{a}{r_2}\frac{c}{t_1}\frac{d}{t_2}\frac{e}{t_3},2)$, where $M_1=\operatorname{lcm}\{\alpha\beta:\{\alpha,\beta\}\subseteq\{\frac{b}{3r_2},\frac{f}{3r_2},\frac{a}{r_2},\frac{c}{t_1},\frac{d}{t_2},\frac{e}{t_3}\}\}$. By Lemmas \ref{lem3.2.1} and \ref{jj}, we obtain an MOA$((3r_2)^2;(3r_2)^2r_2t_1t_2t_3,2)$. Applying Lemmas \ref{jb1} and \ref{bbb} gives an MOA$(M;abcdef,2)$. In the latter case, where  $2^1\frac{r_1}{r_2}\frac{2r_1}{r_2}\frac{c}{t_1}\frac{d}{t_2}\frac{e}{t_3}=2^15^110^4$, we have $2^1\frac{r_1}{r_3}\frac{2r_1}{r_3}\frac{c}{t_1}\frac{d}{t_2}\frac{e}{t_3}\neq2^15^110^4$. Moreover, $\gcd\{c,f\}=\gcd\{6r_3,3r_1\}=3r_3$, $\gcd\{3r_3,a\}=\gcd\{3r_3,2r_1\}=r_3$. Define $\gcd\{3r_3,b\}=t_1$, $\gcd\{3r_3,d\}=t_2$, $\gcd\{3r_3,e\}=t_3$. Then clearly $\frac{c}{3r_3}\frac{f}{3r_3}\frac{a}{r_3}\frac{b}{t_1}\frac{d}{t_2}\frac{e}{t_3}
=2^1\frac{r_1}{r_3}\frac{2r_1}{r_3}\frac{b}{t_1}\frac{d}{t_2}\frac{e}{t_3}\neq2^13^16^4$, otherwise, $r_1=3r_3$, which would imply $a=c$, contradicting the assumption that $a\neq c$. Thus, $\frac{c}{3r_3}\frac{f}{3r_3}\frac{a}{r_3}\frac{b}{t_1}\frac{d}{t_2}\frac{e}{t_3}\not\in
\mathcal{T}$ or $r_1=r_3$. In either case, the same argument as above yields an MOA$(M;abcdef,2)$. This completes the proof.
\end{proof}

\begin{lem}\label{eee}
Let $a,b,c,d,e$ and $f$ be pairwise distinct positive integers greater than $1$. If
$\gcd\{x,y\}\in T=\{2,3,4,6,10,22\}$ for any $\{x,y\}\subseteq \{a,b,c,d,e,f\}$,
then an MOA$(M;abcdef,2)$ exists.
\end{lem}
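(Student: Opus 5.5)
The plan is to mirror the proof of Lemma \ref{fff1}, now with six pairwise distinct levels. Let $r_1=\max\{\gcd\{x,y\}:\{x,y\}\subseteq\{a,b,c,d,e,f\}\}\in T$, and relabel so that $\gcd\{a,f\}=r_1$. Set $r_2=\gcd\{r_1,b\}$, $r_3=\gcd\{r_1,c\}$, $r_4=\gcd\{r_1,d\}$, $r_5=\gcd\{r_1,e\}$.

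The generic step is a product construction. First we want an MOA$(r_1^2;r_1^2r_2r_3r_4r_5,2)$. For $r_1\in\{2,3,4\}$ (and for $r_1\in\{6,10,22\}$ unless all four $r_i$ equal $r_1$) it should come from Lemmas \ref{a=2346}, \ref{ua=2346}, \ref{36100}, \ref{22-30} and \ref{lem3.2.4} via Lemma \ref{jj}. Second we want an MOA$(M';\frac{a}{r_1}\frac{f}{r_1}\frac{b}{r_2}\frac{c}{r_3}\frac{d}{r_4}\frac{e}{r_5},2)$. Since $\gcd\{\frac{a}{r_1},\frac{f}{r_1}\}=1$, Lemma \ref{lem3.1}, together with Lemmas \ref{k=23}--\ref{k=5} when some quotients equal $1$, should give this at size $M'$. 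The exception is when the quotient type falls in the set $\mathcal{T}$ from the proof of Lemma \ref{ddd}. In the generic case Lemma \ref{jb1} combined with Lemma \ref{bbb} ($M=r_1^2M'$) finishes.

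The cases are organised by $r_1$:
\begin{itemize}
\item[(i)] If every pairwise gcd equals the same $r\in T$, then $M=\frac{abcdef}{r^4}$. Lemma \ref{3.2.5} applied to $r^3$ together with three of the levels gives an MOA of the right size. Repeated use of Lemma \ref{jb} with $n=x/r$ then inflates the three $r$-columns to the remaining levels, as in part (1) of Lemma \ref{fff1}.
\item[(ii)] If $r_1\in\{4,6,10,22\}$ and some $r_i<r_1$, replace each level $x\in\{b,c,d,e\}$ with $\gcd\{r_1,x\}<r_1$ by $\frac{r_1x}{r_i}$. This makes the relevant gcds equal to $r_1$ without changing $M$, which we check via Lemma \ref{common}. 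We then reduce to case (i), or to the case of Lemma \ref{3.2.5}, and compress back using Lemma \ref{jj}, exactly as for $r_1=6$ in Lemma \ref{fff1}.
\item[(iii)] When $r_1\in\{2,3\}$, all gcds lie in $\{2,3\}$, so the pairwise-gcd condition forces every gcd to equal $r_1$. Hence this is already case (i).
\end{itemize}

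The main obstacle is the exceptional quotient types in $\mathcal{T}$, such as $6^43^12^1$, $10^45^12^1$, $2^53^1$, $2^4(4k+2)^11^1$ and $s^51^1$, together with the ingredient MOAs that fail: MOA$(36;6^4\ldots,2)$ and the possible exceptions at $10$ and $22$. In each such case I would first try to re-choose the pair realising $r_1$, or choose a different pair with gcd $r_1$, as in the subcases of Lemma \ref{ddd}, so that the quotient type leaves $\mathcal{T}$. Failing that, I would show that pairwise distinctness forces the levels into explicit forms such as $b=sr_2$ with the $r_i$ distinct. That should make some level, or $sr_1$, avoid $T$, so an MOA$((sr_1)^2;(sr_1)^6,2)$ from Lemma \ref{lem3.2.1}, compressed with Lemma \ref{jj} and stretched with Lemma \ref{jb}, directly gives size $M$. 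The hardest point is likely the case $r_1=6$ with quotient pattern $6^32^23^1$-type. There I would fall back on the explicit MOA$(36;6^3abc,2)$ of Lemma \ref{36100}, combined with Lemma \ref{jb1}.
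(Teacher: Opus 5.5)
\textbf{Gap in step (ii).} Your skeleton is the paper's: you fix a pair $\{a,f\}$ realising the maximal gcd $r_1$ and treat the case $r_2=r_3=r_4=r_5=r_1$ with Lemma \ref{3.2.5} and Lemma \ref{jb}. You note that $r_1\in\{2,3\}$ falls into that case. For $r_1\in\{4,6,10,22\}$ you lift each deficient level to $\frac{r_1x}{r_i}=\operatorname{lcm}\{r_1,x\}$ and compress back with Lemma \ref{jj}. The lifting does preserve $M$, because $r_1$ divides both $a$ and $f$.

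The step that fails is the next one, ``reduce to case (i)''. The lifted levels are all multiples of $r_1$, but their pairwise gcds can exceed $r_1$. For example, $(a,f,b,c,d,e)=(30,42,6,4,44,26)$ satisfies the hypotheses with $r_1=6$, $r_2=6$ and $r_3=r_4=r_5=2$. Lifting gives $(30,42,6,12,132,78)$, where $\gcd\{12,132\}=12$. Lifted levels can also coincide, e.g.\ $2\mapsto 6$ when $6$ is already a level. Your fallback ``or the case of Lemma \ref{3.2.5}'' does not cover this, since that lemma is about types $a^3bcd$.

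The fix is short. Any lifted gcd exceeding $r_1$ is a proper multiple of $r_1\in\{4,6,10,22\}$, and $T$ contains no such number. So the lifted type is covered by Lemma \ref{ddd}, or by Theorem \ref{6cc} when some lifted levels coincide (no three can). Then compress back with Lemma \ref{jj}. For $r_1=4$ no fix is needed, since the lifted gcds there are always exactly $4$. The paper's own lifting step for $r_1=6,10,22$ makes the same unstated reduction, so this is a shared soft spot rather than a departure from the paper.

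\textbf{Two smaller points.} First, your claim about when the ingredient MOA$(r_1^2;r_1^2r_2r_3r_4r_5,2)$ is available is wrong. For $r_1=4$ with three of the $r_i$ equal to $4$, it would be an MOA$(16;4^52^1,2)$, which Lemma \ref{not16} excludes. For $r_1=6$ with two or more $r_i$ equal to $6$, it would contain a factor $6^4$; after column deletion this contradicts Lemma \ref{notexist}. This is exactly why the paper, for each $r_1$, lifts in those sub-cases. It reserves the product construction with Lemma \ref{bbb} for sub-cases where the small ingredient exists, e.g.\ $r_1=6$ with all $r_i\in\{2,3\}$ via Lemma \ref{36100}, or $r_1=22$ via Lemma \ref{22-30}.

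Second, as written, your cases (i)--(iii) already exhaust all configurations. So your generic product step and your final paragraph on quotient types in $\mathcal{T}$ are never actually invoked. Once (ii) is repaired as above, your argument is a leaner variant of the paper's. It avoids the small ingredient arrays entirely, at the cost of depending on Lemma \ref{ddd} and Theorem \ref{6cc}.
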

\begin{proof}
We assume without loss of generality that
\begin{center}
$\gcd\{a,f\}=r_1=max\{\gcd\{x,y\}:\{x,y\}\subseteq\{a,b,c,d,e,f\}\}$,
\end{center}
$\gcd\{r_1,b\}=r_2,\ \gcd\{r_1,c\}=r_3,\ \gcd\{r_1,d\}=r_4,\ \gcd\{r_1,e\}=r_5$.

For $r_1=r_2=r_3=r_4=r_5$, we have $\gcd\{x,y\}=r_1$ for any $\{x,y\}\subseteq\{a,b,c,d,e,f\}$. It is easy to compute that $M=\operatorname{lcm}\{xy:\{x,y\}$$\subseteq$$\{a,b,c,d,e,f\}\}=\frac{abcdef}{r_1^4}$.
Lemma \ref{3.2.5} gives an MOA$(\frac{def}{r_1};r_1^3def,2)$. Applying Lemma \ref{jb} with $n=\frac{a}{r_1}, \frac{b}{r_1}$ and $\frac{c}{r_1}$, respectively,
an MOA$(M;abcdef,2)$ exists. In fact, by assumption, we have $r_i\mid r_1$ and $r_i>1$ for $i=2,3,4,5$.

(1) For $r_1\in\{2,3\}$. We have $r_1=r_2=r_3=r_4=r_5$, it follows that the conclusion holds.

(2) For $r_1=4$. It is clear that none of $r_2,r_3,r_4,r_5$ equals to $1$.
When exactly one of \(r_2,r_3,r_4,r_5\) equals $2$. We assume that $r_2=r_3=r_4=4$, $r_5=2$. Since $\gcd\{r_1,\frac{4e}{r_5}\}=4$, there exists an MOA$(M;bcd(\frac{4e}{r_5})af,2)$, then applying Lemma \ref{jj} yields an MOA$(M;abcdef,2)$.

When $\{r_2,r_3,r_4,r_5\}\in\{\{4,4,2,2\},\{4,2,2,2\},\{2,2,2,2\}\}$.
By Lemmas \ref{3.2.5} and \ref{jj}, there exists an MOA$(r_1^2; r_1^2r_2r_3r_4r_5, 2)$. Since $\gcd\{\frac{a}{r_1},\frac{f}{r_1}\}=1$, and $b,c,d,e$ are pairwise distinct, by Lemmas \ref{k=5} and \ref{split}, there exists an MOA$(M';\frac{a}{r_1}\frac{f}{r_1}\frac{b}{r_2}\frac{c}{r_3}\frac{d}{r_4}\frac{e}{r_5},2)$.
Then applying Lemma \ref{jb1} produces an MOA$(r_1^2M';abcdef,2)$, that is, an MOA$(M;abcdef,2)$ by Lemma \ref{bbb}.

(3) For $r_1=6$. Note that none of $r_2,r_3,r_4,r_5$ equals to $1$.
 When $r_2=6$, $r_3,r_4,r_5\in\{2,3,6\}$ and $r_3,r_4,r_5$ are not all equivalent to $6$, it is easy to see that
$\gcd\{r_1,\frac{6c}{r_3}\}=6,\ \gcd\{r_1,\frac{6d}{r_4}\}=6,\ \gcd\{r_1,\frac{6e}{r_5}\}=6$. Consequently, there exists an MOA$(M;b\frac{6c}{r_3}\frac{6d}{r_4}\frac{6e}{r_5}af,2)$,
where $M=\operatorname{lcm}\{\alpha\beta:\{\alpha,\beta\}\subseteq\{b,\frac{6c}{r_3},\frac{6d}{r_4},\frac{6e}{r_5},a,f\}\}=
\operatorname{lcm}\{\alpha\beta:\{\alpha,\beta\}\subseteq\{\frac{6b}{6},\frac{6c}{r_3},\frac{6d}{r_4},\frac{6e}{r_5},
\frac{6a}{6},\frac{6f}{6}\}\}=r_1^2M'$ by Lemma \ref{bbb}.

When $r_2,r_3,r_4,r_5\in\{2,3\}$. By Lemmas \ref{36100} and \ref{jj}, there exists an MOA$(r_1^2;r_1^2r_2r_3r_4r_5,2)$. Since $\gcd\{\frac{a}{r_1},\frac{f}{r_1}\}=1$ and $b,c,d,e$ are pairwise distinct, there exists an MOA$(M';\frac{a}{r_1}\frac{f}{r_1}\frac{b}{r_2}\frac{c}{r_3}\frac{d}{r_4}\frac{e}{r_5},2)$ by Lemmas \ref{k=5} and \ref{split} (note that $\frac{a}{r_1}\frac{f}{r_1}\frac{b}{r_2}\frac{c}{r_3}\frac{d}{r_4}\frac{e}{r_5}\not\in \mathcal{L}_5$ in Lemma \ref{k=5}, and there are similar situations below, we will not repeat them one by one). It follows that there exists an MOA$(r_1^2M';abcdef,2)$ by Lemma \ref{jb1}, that is, an MOA$(M;abcdef,2)$ by Lemma \ref{bbb}.

(4) For $r_1=10$. When $r_2=r_3=r_4=10$, $r_5\in\{1,2,5\}$. Since $\gcd\{r_1,\frac{10e}{r_5}\}=10$, there exists an MOA$(M;bcd(\frac{10e}{r_5})af,2)$.
By Lemma \ref{jj}, there exists an MOA$(M;abcdef,2)$.

When $r_2=r_3=10,r_4\in\{1,2,5\},r_5\in\{1,2,5\}$. Since $\gcd\{r_1,\frac{10d}{r_4}\}=10$ and $\gcd\{r_1,\frac{10e}{r_5}\}=10$, an MOA$(M;bc(\frac{10d}{r_4})(\frac{10e}{r_5})af,2)$ exists. Applying Lemma \ref{jj} yields an MOA$(M;abcdef,2)$.

When $r_2=10,\{r_3,r_4,r_5\}\in\{\{2,2,2\},\{5,2,2\},\{5,5,2\},\{5,5,5\}\}$. By Lemma \ref{3.2.5}, there exists an MOA$(r_1^2;r_1^2r_2r_3r_4r_5,2)$.
Since $\gcd\{\frac{a}{r_1},\frac{f}{r_1}\}=1$, and $b,c,d,e$ are pairwise distinct,
there exists an  MOA$(M';\frac{a}{r_1}\frac{f}{r_1}\frac{b}{r_2}\frac{c}{r_3}\frac{d}{r_4}\frac{e}{r_5},2)$
by Lemmas \ref{k=5} and \ref{split}. Then there exists an MOA$(M;abcdef,2)$ by Lemmas \ref{jb1} and \ref{bbb}.

When $r_2,r_3,r_4,r_5\in\{2,5\}$. By Lemma \ref{36100}, there exists an MOA$(r_1^2;r_1^2r_2r_3r_4r_5,2)$. Since $\gcd\{\frac{a}{r_1},\frac{f}{r_1}\}=1$, and $b,c,d$ and $e$ are pairwise distinct, there exists an MOA$(M';(\frac{a}{r_1}\frac{f}{r_1})\frac{b}{r_2}\frac{c}{r_3}\frac{d}{r_4}\frac{e}{r_5},2)$
by Lemma \ref{k=5}. Then there exists an MOA$(r_1^2M';abcdef,2)$ by Lemmas \ref{jb1} and \ref{bbb}.

(5) For $r_1=22$. When $r_2=r_3=r_4=22$, $r_5\in\{2,11\}$. Since $\gcd\{r_1,\frac{22e}{r_5}\}=22$, there exists an MOA$(M;bcd(\frac{22e}{r_5})af,2)$. Applying Lemma \ref{jj} produces an MOA$(M;abcdef,2)$.

When $r_2=r_3=22$, $r_4, r_5\in\{2,11\}$. Since $\gcd\{r_1,\frac{22d}{r_4}\}=22$ and $\gcd\{r_1,\frac{22e}{r_5}\}=22$,
there exists an MOA$(M;bc(\frac{22d}{r_4})(\frac{22e}{r_5})af,2)$. Applying
Lemma \ref{jj} yields an MOA$(M;abcdef,2)$.

When $r_2=22$, $r_i\in\{2,11\}$ for $i=3,4,5$. There exists an MOA$(r_1^2;r_1^2r_2r_3r_4r_5,2)$ by Lemma \ref{3.2.5}.
Since $\gcd\{\frac{a}{r_1},\frac{f}{r_1}\}=1$, and $b,c,d$ and $e$ are pairwise distinct, then there exists an MOA$(M';\frac{a}{r_1}\frac{f}{r_1}\frac{b}{r_2}\frac{c}{r_3}\frac{d}{r_4}\frac{e}{r_5},2)$.
Thus, there exists an MOA$(r_1^2M';abcdef,2)$ by Lemma \ref{jb1}.

When $r_i\in\{2,11\},i=2,3,4,5$.
There exists an MOA$(r_1^2;r_1^2r_2r_3r_4r_5,2)$ by Lemmas \ref{36100}, \ref{22-30} and \ref{jj}. Since $\gcd\{\frac{a}{r_1},\frac{f}{r_1}\}=1$, and $b,c,d,e$ are pairwise distinct, then there exists an MOA$(M';\frac{a}{r_1}\frac{f}{r_1}\frac{b}{r_2}\frac{c}{r_3}\frac{d}{r_4}\frac{e}{r_5},2)$.
Thus, applying Lemmas \ref{jb1} and \ref{bbb} yields an MOA$(r_1^2M';abcdef,2)$, that is, an MOA$(M;abcdef,2)$. The proof is completed.
\end{proof}

By Lemma \ref{ddd} and Lemma \ref{eee} we have the following theorem.

\begin{thm}\label{hbxt}
Let $a,b,c,d,e$ and $f$ be pairwise distinct positive integers greater than $1$.
If $\gcd\{x,y\}>1$ for any $\{x,y\}\subseteq \{a,b,c,d,e,f\}$, then there exists an MOA$(M;abcdef,2)$, where $M=\operatorname{lcm}\{xy: \{x,y\}\subset
\{a,b,c,d,e,f\}\}$.
\end{thm}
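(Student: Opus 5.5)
The statement follows by a dichotomy on the pairwise greatest common divisors, combined with Lemmas \ref{ddd} and \ref{eee}. Fix pairwise distinct $a,b,c,d,e,f>1$ with $\gcd\{x,y\}>1$ for every pair $\{x,y\}\subseteq\{a,b,c,d,e,f\}$, and let $M$ be as in the statement. Either some pair has $\gcd\{x,y\}\notin T$, or every pair has $\gcd\{x,y\}\in T$. Because all the gcds exceed $1$, these two alternatives are exhaustive. In the first alternative, Lemma \ref{ddd} applies directly and gives an MOA$(M;abcdef,2)$. In the second, Lemma \ref{eee} gives the same conclusion. So the proof of the theorem itself is a two-line case split.

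The substance therefore lies in checking that the two lemmas really cover their alternatives completely. I would verify this in the following order.

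First, in Lemma \ref{ddd}, the reduction works as follows. Choose $\gcd\{a,f\}=r_1\notin T$. Take an MOA$(r_1^2;r_1^2r_2r_3r_4r_5,2)$, which comes from Lemma \ref{lem3.2.1} together with compression (Lemma \ref{jj}). Take its product (Lemma \ref{jb1}) with an MOA$(M';\cdots,2)$ on the reduced levels. The reduced type has the coprime pair $\frac{a}{r_1},\frac{f}{r_1}$, so it is supplied by Lemmas \ref{k=5} and \ref{lem3.1}. Finally, Lemma \ref{bbb} identifies the run size as $r_1^2M'=M$.

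Second, I would check that each exceptional reduced type in the list $\mathcal{T}$ is handled. The method is to rechoose the "pivot" pair, say $\{b,f\}$ with gcd $s r_2$, so that the new reduced type avoids $\mathcal{T}$. Where the new pivot lies in $T$, one uses a larger complete array MOA$((sr_1)^2;(sr_1)^6,2)$ and compresses it.

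Third, in Lemma \ref{eee}, the case analysis runs over the maximal gcd $r_1\in T$. Each configuration is either lifted to a case with all relevant gcds equal to $r_1$, by replacing a level $x$ with $r_1x/\gcd\{r_1,x\}$ and compressing afterwards, or settled by a small base array from Lemmas \ref{ua=2346}, \ref{36100}, \ref{22-30} multiplied by a five-factor array from Lemma \ref{k=5}.

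The main obstacle is the exceptional reduced types in the first alternative, namely types such as $s^51^1$, $2^53^1$ and $6^43^12^1$ in $\mathcal{T}$. There one must show that pairwise distinctness of $b,c,d,e$ forces some alternative pivot gcd to lie outside $T$, or forces $r_1$ to have two distinct proper factors. I would handle this through the divisibility relations $b=sr_2$, etc., exactly as in the case analysis of Lemma \ref{ddd}. Granting both lemmas, the theorem follows at once.
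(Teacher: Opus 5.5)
Your proposal is correct and is exactly the paper's argument: the theorem is the split between ``some pairwise gcd lies outside $T$'' (Lemma \ref{ddd}) and ``every pairwise gcd lies in $T$'' (Lemma \ref{eee}), and your outline of the pivot-and-product reductions inside those lemmas matches the paper's proofs of them. One small precision: the split is exhaustive by pure logic, while the hypothesis $\gcd\{x,y\}>1$ is what Lemma \ref{ddd} needs; in the second case it holds automatically, since $1\notin T$.
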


\vskip 8pt

\textbf{The proof of Theorem \ref{mainR}}\ \ \
By Lemma \ref{jb}, Lemmas \ref{lem3.1}-\ref{3.2.5},
Theorem \ref{6cc} and Theorem \ref{hbxt}, the proof is completed.

\section{Concluding remarks}

In this paper, we investigate mixed orthogonal arrays of strength two with six factors and establish our main result, Theorem \ref{mainR}. However, some problems remain for further investigation. Let $a, b, c, d, e, f$ be integers greater than $1$,
\begin{center}
$M=\operatorname{lcm}\{\alpha\beta: \{\alpha,\beta\}\subset \{a,b,c,d,e,f\}\}$,
\end{center}
\begin{center}
$\mathcal{P}=\{(2s)^6, (2s)^5s^1, (2s)^52^1, (2s)^4s^2: s\in\{5,11\}\}\cup\{10^42^15^1\}$.
\end{center}
By Theorem \ref{mainR}, an MOA$(uM;abcdef,2)$ exists for $abcdef\in \mathcal{P}$, where $u>1$. Nine MOAs remain undetermined, and the existence of these nine MOAs forms an interesting open problem worthy of further investigation. Meanwhile, Zhu et al. \cite{Zhu-Pang-2026} obtained several isolated new results on mixed orthogonal arrays of strength two with seven factors. The existence theory for mixed orthogonal arrays of strength two with seven factors is far from complete and merits further study. We state these open problems below.

\textbf{Problem 1} \ Does there exist an MOA$(M;abcdef,2)$ for $abcdef\in \mathcal{P}$?

\textbf{Problem 2} \ Let $a,b,c,d,e,f,g$ be integers greater than $1$, and define
\begin{center}
$M^*=\operatorname{lcm}\{\alpha\beta: \{\alpha,\beta\}\subset \{a,b,c,d,e,f,g\}\}$.
\end{center}
Determine whether an MOA$(M^*;abcdefg,2)$ exists.

\vskip 12pt
\noindent {\bf Acknowledgements}

This work was supported by Science Research Project of Hebei Education Department (Grant No. JCZX2026032), Natural Science Foundation of Hebei Province (Grant Nos. A2025 205023 and A2023205045), National Natural Science Foundation of China (Grant No. 62372157) and Natural Science Foundation of Henan Province (Grant No. 262300421846).


\newpage
\section*{Appendix I}

\begin{center}
\textbf{The Construction of an RTD$_3(5,6)$}
\end{center}

\noindent \textbf{Point Set:} $\mathcal{X} = \mathbb{Z}_5\times \mathbb{Z}_2\times \mathbb{Z}_3$;

\noindent \textbf{Group Set:} $\mathcal{G} = \{\{i\}\times \mathbb{Z}_2\times \mathbb{Z}_3: i\in \mathbb{Z}_5\}$;

\noindent \textbf{The required block set:} $\mathcal{D}=\{d_1,d_2,\ldots,d_{108}\}$, where we simply write the point $(x,y,z)\in \mathcal{X}$  as $xyz$.

\noindent \textbf{Parallel classes:} $\mathcal{Q}_{i}(i=1,2,\ldots,18)$.

\vskip8pt
\noindent
$d_{1}$$=$$\{000,100,200,300,400\}$,~~~$d_{2}$$=$$\{000,101,201,301,400\}$,~~~$d_{3}$$=$$\{000,102,202,302,400\}$,\\
$d_{4}$$=$$\{001,112,211,302,400\}$,~~~$d_{5}$$=$$\{001,110,212,300,400\}$,~~~$d_{6}$$=$$\{001,111,210,301,400\}$,\\
$d_{7}$$=$$\{010,102,200,311,400\}$,~~~$d_{8}$$=$$\{010,100,201,312,400\}$,~~~$d_{9}$$=$$\{010,101,202,310,400\}$,\\
$d_{10}$$=$$\{011,100,211,302,400\}$,~~~$d_{11}$$=$$\{011,101,212,300,400\}$,~~~$d_{12}$$=$$\{011,102,210,301,400\}$,\\
$d_{13}$$=$$\{012,110,212,311,400\}$,~~~$d_{14}$$=$$\{012,111,210,312,400\}$,~~~$d_{15}$$=$$\{012,112,211,310,400\}$,\\
$d_{16}$$=$$\{002,110,202,311,400\}$,~~~$d_{17}$$=$$\{002,111,200,312,400\}$,~~~$d_{18}$$=$$\{002,112,201,310,400\}$,\\
$d_{19}$$=$$\{001,111,211,311,401\}$,~~~$d_{20}$$=$$\{001,112,212,312,401\}$,~~~$d_{21}$$=$$\{001,110,210,310,401\}$,\\
$d_{22}$$=$$\{002,101,202,311,401\}$,~~~$d_{23}$$=$$\{002,102,200,312,401\}$,~~~$d_{24}$$=$$\{002,100,201,310,401\}$,\\
$d_{25}$$=$$\{000,101,210,301,401\}$,~~~$d_{26}$$=$$\{000,102,211,302,401\}$,~~~$d_{27}$$=$$\{000,100,212,300,401\}$,\\
$d_{28}$$=$$\{012,111,202,300,401\}$,~~~$d_{29}$$=$$\{012,112,200,301,401\}$,~~~$d_{30}$$=$$\{012,110,201,302,401\}$,\\
$d_{31}$$=$$\{010,112,200,302,401\}$,~~~$d_{32}$$=$$\{010,110,201,300,401\}$,~~~$d_{33}$$=$$\{010,111,202,301,401\}$,\\
$d_{34}$$=$$\{011,102,211,310,401\}$,~~~$d_{35}$$=$$\{011,100,212,311,401\}$,~~~$d_{36}$$=$$\{011,101,210,312,401\}$,\\
$d_{37}$$=$$\{002,101,212,302,402\}$,~~~$d_{38}$$=$$\{002,102,210,300,402\}$,~~~$d_{39}$$=$$\{002,100,211,301,402\}$,\\
$d_{40}$$=$$\{011,111,201,301,402\}$,~~~$d_{41}$$=$$\{011,112,202,302,402\}$,~~~$d_{42}$$=$$\{011,110,200,300,402\}$,\\
$d_{43}$$=$$\{012,112,202,300,402\}$,~~~$d_{44}$$=$$\{012,110,200,301,402\}$,~~~$d_{45}$$=$$\{012,111,201,302,402\}$,\\
$d_{46}$$=$$\{010,102,210,311,402\}$,~~~$d_{47}$$=$$\{010,100,211,312,402\}$,~~~$d_{48}$$=$$\{010,101,212,310,402\}$,\\
$d_{49}$$=$$\{000,111,210,312,402\}$,~~~$d_{50}$$=$$\{000,112,211,310,402\}$,~~~$d_{51}$$=$$\{000,110,212,311,402\}$,\\
$d_{52}$$=$$\{001,101,201,312,402\}$,~~~$d_{53}$$=$$\{001,102,202,310,402\}$,~~~$d_{54}$$=$$\{001,100,200,311,402\}$,\\
$d_{55}$$=$$\{010,110,210,310,410\}$,~~~$d_{56}$$=$$\{010,111,211,311,410\}$,~~~$d_{57}$$=$$\{010,112,212,312,410\}$,\\
$d_{58}$$=$$\{000,110,200,310,410\}$,~~~$d_{59}$$=$$\{000,111,201,311,410\}$,~~~$d_{60}$$=$$\{000,112,202,312,410\}$,\\
$d_{61}$$=$$\{001,100,201,302,410\}$,~~~$d_{62}$$=$$\{001,101,202,300,410\}$,~~~$d_{63}$$=$$\{001,102,200,301,410\}$,\\
$d_{64}$$=$$\{002,112,212,301,410\}$,~~~$d_{65}$$=$$\{002,110,210,302,410\}$,~~~$d_{66}$$=$$\{002,111,211,300,410\}$,\\
$d_{67}$$=$$\{011,102,201,310,410\}$,~~~$d_{68}$$=$$\{011,100,202,311,410\}$,~~~$d_{69}$$=$$\{011,101,200,312,410\}$,\\
$d_{70}$$=$$\{012,100,212,301,410\}$,~~~$d_{71}$$=$$\{012,101,210,302,410\}$,~~~$d_{72}$$=$$\{012,102,211,300,410\}$,\\
$d_{73}$$=$$\{011,112,201,301,411\}$,~~~$d_{74}$$=$$\{011,110,202,302,411\}$,~~~$d_{75}$$=$$\{011,111,200,300,411\}$,\\
$d_{76}$$=$$\{012,102,212,312,411\}$,~~~$d_{77}$$=$$\{012,100,210,310,411\}$,~~~$d_{78}$$=$$\{012,101,211,311,411\}$,\\
$d_{79}$$=$$\{002,111,212,310,411\}$,~~~$d_{80}$$=$$\{002,112,210,311,411\}$,~~~$d_{81}$$=$$\{002,110,211,312,411\}$,\\
$d_{82}$$=$$\{000,101,200,310,411\}$,~~~$d_{83}$$=$$\{000,102,201,311,411\}$,~~~$d_{84}$$=$$\{000,100,202,312,411\}$,\\
$d_{85}$$=$$\{001,101,211,300,411\}$,~~~$d_{86}$$=$$\{001,102,212,301,411\}$,~~~$d_{87}$$=$$\{001,100,210,302,411\}$,\\
$d_{88}$$=$$\{010,111,200,302,411\}$,~~~$d_{89}$$=$$\{010,112,201,300,411\}$,~~~$d_{90}$$=$$\{010,110,202,301,411\}$,\\
$d_{91}$$=$$\{012,102,202,312,412\}$,~~~$d_{92}$$=$$\{012,100,200,310,412\}$,~~~$d_{93}$$=$$\{012,101,201,311,412\}$,\\
$d_{94}$$=$$\{010,100,210,300,412\}$,~~~$d_{95}$$=$$\{010,101,211,301,412\}$,~~~$d_{96}$$=$$\{010,102,212,302,412\}$,\\
$d_{97}$$=$$\{011,110,211,312,412\}$,~~~$d_{98}$$=$$\{011,111,212,310,412\}$,~~~$d_{99}$$=$$\{011,112,210,311,412\}$,\\
$d_{100}$$=$$\{001,110,201,312,412\}$,~~~$d_{101}$$=$$\{001,111,202,310,412\}$,~~~$d_{102}$$=$$\{001,112,200,311,412\}$,\\
$d_{103}$$=$$\{002,100,202,301,412\}$,~~~$d_{104}$$=$$\{002,101,200,302,412\}$,~~~$d_{105}$$=$$\{002,102,201,300,412\}$,\\
$d_{106}$$=$$\{000,112,210,300,412\}$,~~~$d_{107}$$=$$\{000,110,211,301,412\}$,~~~$d_{108}$$=$$\{000,111,212,302,412\}$.\\[.2cm]
~$\mathcal{Q}_{1}$$=$$\{d_{1},d_{55},d_{19},d_{73},d_{37},d_{91}\}$,
~$\mathcal{Q}_{2}$$=$$\{d_{2},d_{56},d_{20},d_{74},d_{38},d_{92}\}$,
~$\mathcal{Q}_{3}$$=$$\{d_{3},d_{57},d_{21},d_{75},d_{39},d_{93}\}$,\\
~$\mathcal{Q}_{4}$$=$$\{d_{58},d_{94},d_{76},d_{22},d_{4},d_{40}\}$,
~$\mathcal{Q}_{5}$$=$$\{d_{59},d_{95},d_{77},d_{23},d_{5},d_{41}\}$,
~$\mathcal{Q}_{6}$$=$$\{d_{60},d_{96},d_{78},d_{24},d_{6},d_{42}\}$,\\
~$\mathcal{Q}_{7}$$=$$\{d_{61},d_{97},d_{79},d_{43},d_{25},d_{7}\}$,
~$\mathcal{Q}_{8}$$=$$\{d_{62},d_{98},d_{80},d_{44},d_{26},d_{8}\}$,
~$\mathcal{Q}_{9}$$=$$\{d_{63},d_{99},d_{81},d_{45},d_{27},d_{9}\}$,\\
~$\mathcal{Q}_{10}$$=$$\{d_{100},d_{10},d_{46},d_{82},d_{64},d_{28}\}$,
~$\mathcal{Q}_{11}$$=$$\{d_{101},d_{11},d_{47},d_{83},d_{65},d_{29}\}$,
~$\mathcal{Q}_{12}$$=$$\{d_{102},d_{12},d_{48},d_{84},d_{66},d_{30}\}$,\\
~$\mathcal{Q}_{13}$$=$$\{d_{103},d_{13},d_{49},d_{31},d_{85},d_{67}\}$,
~$\mathcal{Q}_{14}$$=$$\{d_{104},d_{14},d_{50},d_{32},d_{86},d_{68}\}$,
~$\mathcal{Q}_{15}$$=$$\{d_{105},d_{15},d_{51},d_{33},d_{87},d_{69}\}$,\\
~$\mathcal{Q}_{16}$$=$$\{d_{16},d_{70},d_{34},d_{52},d_{106},d_{88}\}$,
~$\mathcal{Q}_{17}$$=$$\{d_{17},d_{71},d_{35},d_{53},d_{107},d_{89}\}$,
~$\mathcal{Q}_{18}$$=$$\{d_{18},d_{72},d_{36},d_{54},d_{108},d_{90}\}$.

\newpage
\section*{Appendix II}

\footnotesize{}
$$\begin{aligned}
P&=\gcd\{bcde,\ bcda,\ bcdf,\ bcea,\ bcef,\ bcaf,\ bdea,\ bdef,\ bdaf,\ beaf,\ cdea,\ cdef,\ cdaf,\ ceaf,\ deaf\}\\
&=\gcd\{bcde,\ \gcd\{bcda,bcdf\},\ \gcd\{bcea,bcef\}, \ bcaf,\ \gcd\{bdea,bdef\},\ bdaf,\ beaf,\ \gcd\{cdea,\\
& \  \  \  \  cdef\},\ cdaf,\ ceaf,\ deaf\}\\
&=\gcd\{bcde,\ bcdr_1,\ bcer_1,\ bder_1,\ cder_1,\ bcaf,\ bdaf,\ beaf,\ cdaf,\ ceaf,\ deaf\}\\
&=\gcd\{\gcd\{bcde, bcdr_1\},\ g,c,d,\{bcde, bcer_1\},\ \gcd\{bcde, bder_1\},\ \gcd\{bcde, cder_1\},\ bcaf,\ bdaf,\\
& \ \ \ \ beaf,\ cdaf,\ ceaf,\ deaf\}\\
&=\gcd\{bcdr_5,\ bcer_4,\ bder_3,\ cder_2,\ bcaf,\ bdaf,\ beaf,\ cdaf,\ ceaf,\ deaf\}\\
&=r_2r_3r_4r_5 \times \gcd\{\frac{b}{r_2}\frac{c}{r_3}\frac{a}{r_4}\frac{f}{r_5},\ \frac{b}{r_2}\frac{d}{r_4}\frac{a}{r_3}\frac{f}{r_5},
\ \frac{b}{r_2}\frac{e}{r_5}\frac{a}{r_3}\frac{f}{r_4},\ \frac{c}{r_3}\frac{d}{r_4}\frac{a}{r_2}\frac{f}{r_5},\
\frac{c}{r_3}\frac{e}{r_5}\frac{a}{r_2}\frac{f}{r_4},\ \frac{d}{r_4}\frac{e}{r_5}\frac{a}{r_2}\frac{f}{r_3},\
\frac{b}{r_2}\frac{c}{r_3}\frac{d}{r_4},\\
& \ \ \ \ \frac{b}{r_2}\frac{c}{r_3}\frac{e}{r_5},\ \frac{b}{r_2}\frac{d}{r_4}\frac{e}{r_5},\
\frac{c}{r_3}\frac{d}{r_4}\frac{e}{r_5}\}\\
&=r_2r_3r_4r_5 \times \gcd\{\frac{b}{r_2}\frac{c}{r_3}\frac{a}{r_1}\frac{f}{r_1}\frac{r_1}{r_4}\frac{r_1}{r_5},
\frac{b}{r_2}\frac{d}{r_4}\frac{a}{r_1}\frac{f}{r_1}\frac{r_1}{r_3}\frac{r_1}{r_5},\
\frac{b}{r_2}\frac{e}{r_5}\frac{a}{r_1}\frac{f}{r_1}\frac{r_1}{r_3}\frac{r_1}{r_4},\
\frac{c}{r_3}\frac{d}{r_4}\frac{a}{r_1}\frac{f}{r_1}\frac{r_1}{r_2}\frac{r_1}{r_5},\
\frac{c}{r_3}\frac{e}{r_5}\frac{a}{r_1}\frac{f}{r_1}\frac{r_1}{r_2}\frac{r_1}{r_4},\\
&\ \  \  \
\frac{d}{r_4}\frac{e}{r_5}\frac{a}{r_1}\frac{f}{r_1}\frac{r_1}{r_2}\frac{r_1}{r_3},\
\frac{b}{r_2}\frac{c}{r_3}\frac{d}{r_4},\frac{b}{r_2}\frac{c}{r_3}\frac{e}{r_5},\
\frac{b}{r_2}\frac{d}{r_4}\frac{e}{r_5},\frac{c}{r_3}\frac{d}{r_4}\frac{e}{r_5} \}\\
&=r_2r_3r_4r_5 \times \gcd\{\gcd\{\frac{b}{r_2}\frac{c}{r_3}\frac{a}{r_1}\frac{f}{r_1}\frac{r_1}{r_4}\frac{r_1}{r_5},\
\frac{b}{r_2}\frac{c}{r_3}\frac{d}{r_4},\frac{b}{r_2}\frac{c}{r_3}\frac{e}{r_5}\},\
\gcd\{\frac{b}{r_2}\frac{d}{r_4}\frac{a}{r_1}\frac{f}{r_1}\frac{r_1}{r_3}\frac{r_1}{r_5},\ \frac{b}{r_2}\frac{c}{r_3}\frac{d}{r_4},\ \frac{b}{r_2}\frac{d}{r_4}\frac{e}{r_5}\},
\\
&\ \  \ \
\gcd\{\frac{b}{r_2}\frac{e}{r_5}\frac{a}{r_1}\frac{f}{r_1}\frac{r_1}{r_3}\frac{r_1}{r_4},\
\frac{b}{r_2}\frac{c}{r_3}\frac{e}{r_5},\ \frac{b}{r_2}\frac{d}{r_4}\frac{e}{r_5}\},\
\gcd\{\frac{c}{r_3}\frac{d}{r_4}\frac{a}{r_1}\frac{f}{r_1}\frac{r_1}{r_2}\frac{r_1}{r_5},\
\frac{b}{r_2}\frac{c}{r_3}\frac{d}{r_4},\ \frac{c}{r_3}\frac{d}{r_4}\frac{e}{r_5}\},\\
&\ \  \  \  \gcd\{\frac{c}{r_3}\frac{e}{r_5}\frac{a}{r_1}\frac{f}{r_1}\frac{r_1}{r_2}\frac{r_1}{r_4},\
\frac{b}{r_2}\frac{c}{r_3}\frac{e}{r_5},\ \frac{c}{r_3}\frac{d}{r_4}\frac{e}{r_5} \},\
\gcd\{\frac{d}{r_4}\frac{e}{r_5}\frac{a}{r_1}\frac{f}{r_1}\frac{r_1}{r_2}\frac{r_1}{r_3},\
\frac{b}{r_2}\frac{d}{r_4}\frac{e}{r_5},\ \frac{c}{r_3}\frac{d}{r_4}\frac{e}{r_5}\}\}
\\
&=r_2r_3r_4r_5 \times \gcd\{\frac{b}{r_2}\frac{c}{r_3}\frac{a}{r_1}\frac{f}{r_1},\
\frac{b}{r_2}\frac{d}{r_4}\frac{a}{r_1}\frac{f}{r_1},\
\frac{b}{r_2}\frac{e}{r_5}\frac{a}{r_1}\frac{f}{r_1},\
\frac{c}{r_3}\frac{d}{r_4}\frac{a}{r_1}\frac{f}{r_1},\
\frac{c}{r_3}\frac{e}{r_5}\frac{a}{r_1}\frac{f}{r_1},\
\frac{d}{r_4}\frac{e}{r_5}\frac{a}{r_1}\frac{f}{r_1},\\
& \ \ \ \  \frac{b}{r_2}\frac{c}{r_3}\frac{d}{r_4},\frac{b}{r_2}\frac{c}{r_3}\frac{e}{r_5},\
\frac{b}{r_2}\frac{d}{r_4}\frac{e}{r_5},\
\frac{c}{r_3}\frac{d}{r_4}\frac{e}{r_5}\}.
\end{aligned}$$

$$\begin{aligned}
P'&=\gcd\{\frac{bcde}{r_2r_3r_4r_5},\frac{bcda}{r_2r_3r_4r_1},\
\frac{bcdf}{r_2r_3r_4r_1},\frac{bcea}{r_2r_3r_5r_1},\ \frac{bcef}{r_2r_3r_5r_1},\
\frac{bcaf}{r_2r_3r_1r_1},\ \frac{bdea}{r_2r_4r_5r_1},\ \frac{bdef}{r_2r_4r_5r_1},\\[.2cm]
& \ \ \ \ \ \ \frac{bdaf}{r_2r_4r_1r_1},\ \frac{beaf}{r_2r_5r_1r_1},\
\frac{cdea}{r_3r_4r_5r_1},\ \frac{cdef}{r_3r_4r_5r_1},\ \frac{cdaf}{r_3r_4r_1r_1},\
\frac{ceaf}{r_3r_5r_1r_1},\ \frac{deaf}{r_4r_5r_1r_1}\}\\[.2cm]
&=\gcd\{\frac{bcde}{r_2r_3r_4r_5},\ \gcd\{\frac{bcda}{r_2r_3r_4r_1},\
\frac{bcdf}{r_2r_3r_4r_1}\},\ \gcd\{\frac{bcea}{r_2r_3r_5r_1},\ \frac{bcef}{r_2r_3r_5r_1}\},\
\gcd\{\frac{bdea}{r_2r_4r_5r_1},\ \frac{bdef}{r_2r_4r_5r_1}\},\\[.2cm]
& \ \ \ \ \ \ \frac{bcaf}{r_2r_3r_1r_1},\ \frac{bdaf}{r_2r_4r_1r_1},\ \frac{beaf}{r_2r_5r_1r_1},\
\gcd\{\frac{cdea}{r_3r_4r_5r_1},\ \frac{cdef}{r_3r_4r_5r_1}\},\ \frac{cdaf}{r_3r_4r_1r_1},\
\frac{ceaf}{r_3r_5r_1r_1},\ \frac{deaf}{r_4r_5r_1r_1}\}\\[.2cm]
&=\gcd\{\frac{bcde}{r_2r_3r_4r_5},\ \frac{bcd}{r_2r_3r_4},\ \frac{bce}{r_2r_3r_5},\
\frac{bde}{r_2r_4r_5},\ \frac{bcaf}{r_2r_3r_1r_1},\ \frac{bdaf}{r_2r_4r_1r_1},\frac{beaf}{r_2r_5r_1r_1},
\ \frac{cde}{r_3r_4r_5},\frac{cdaf}{r_3r_4r_1r_1},\\
& \ \ \ \ \
\frac{ceaf}{r_3r_5r_1r_1},\ \frac{deaf}{r_4r_5r_1r_1}\}\\[.2cm]
&=\gcd\{\frac{bcd}{r_2r_3r_4},\ \frac{bce}{r_2r_3r_5},\ \frac{bde}{r_2r_4r_5},\
\frac{cde}{r_3r_4r_5},\ \frac{bcaf}{r_2r_3r_1r_1},\ \frac{bdaf}{r_2r_4r_1r_1},\
\frac{beaf}{r_2r_5r_1r_1},\ \frac{cdaf}{r_3r_4r_1r_1},\
\frac{ceaf}{r_3r_5r_1r_1},\ \frac{deaf}{r_4r_5r_1r_1}\}.
\end{aligned}$$
Thus, $P=r_2r_3r_4r_5P'$.

\end{document}